\newtheorem{lem}{Lemma}[section]
\newtheorem{thm}[lem]{Theorem}
\newtheorem{cor}[lem]{Corollary}
\theoremstyle{definition}
\newtheorem{defi}[lem]{Definition}
\newtheorem{exa}[lem]{Example}
\newtheorem{rem}[lem]{Remark}
\title{Homology Stabilty for the special linear group of a field and Milnor-Witt $K$-theory}
\author{Kevin Hutchinson,  \   Liqun Tao}
\address{School of Mathematical Sciences,
 University College Dublin}
\email{kevin.hutchinson@ucd.ie, \ liqun.tao@ucd.ie}
\date{\today}
\keywords{
$K$-theory, Special Linear Group, Group Homology
}
\subjclass{19G99, 20G10}
\newcommand{\imp}{\Longrightarrow}
\renewcommand{\iff}{\Longleftrightarrow}
\newcommand{\Q}{\Bbb{Q}}
\newcommand{\Z}{\Bbb{Z}}
\newcommand{\N}{\Bbb{N}}
\newcommand{\an}[1]{\langle{#1}\rangle}
\newcommand{\ee}[1]{\epsilon_{#1}}
\newcommand{\kk}[1]{\kappa_{#1}}
\newcommand{\mm}[1]{\mu_{#1}}
\newcommand{\xgen}[2]{X_{#1}(#2)}
\newcommand{\xgenp}[2]{\bar{X}_{#1}(#2)}
\newcommand{\cgen}[2]{C_{#1}(#2)}
\newcommand{\SSn}[1]{\tilde{S}(#1)}
\newcommand{\SSp}[1]{\tilde{S}(#1)^+}
\newcommand{\SSd}[1]{\tilde{S}(#1)^{\mbox{\tiny dec}}}
\newcommand{\SSf}[2]{\tilde{\Sigma}_{#1}(#2)}
\newcommand{\SSind}[1]{\tilde{S}(#1)^{\mbox{\tiny ind}}}
\newcommand{\hgenp}[2]{\bar{H}_{#1}(#2)}
\newcommand{\sgenp}[2]{\bar{S}_{#1}(#2)}
\newcommand{\ssb}[1]{\lfloor #1 \rceil}
\newcommand{\ssf}[1]{\big[ #1\big] }
\newcommand{\ksb}[1]{\{\{ #1\}\}}
\newcommand{\ksp}[1]{[#1]}
\newcommand{\hsp}[1]{\langle #1\rangle}
\newcommand{\ssbp}[1]{[[#1]]}
\newcommand{\ssprod}{\ast}
\newcommand{\psprod}{\circledast}
\newcommand{\compt}[2]{\mathcal{C}_{#1}^{\tau}(#2)}
\newcommand{\cgenp}[2]{\bar{C}_{#1}(#2)}
\newcommand{\SSS}{\mathrm{\tilde{S}}}
\newcommand{\SH}[2]{\mathrm{SH}_{#1}(#2)}
\newcommand{\T}[1]{T_{#1}}
\newcommand{\Dd}[1]{D_{#1}}
\newcommand{\Aa}[1]{\mathcal{A}_{#1}}
\newcommand{\am}{\mathcal{AM}}
\newcommand{\add}{additive\ }
\newcommand{\mult}{multiplicative\ }
\newcommand{\ad}[1]{{#1}_{\mathcal{A}}}
\newcommand{\mul}[1]{{#1}_{\mathcal{M}}}
\newcommand{\EEE}[1]{\mathcal{E}(#1)}
\newcommand{\EEEz}[1]{\mathcal{E}^0(#1)}
\newcommand{\EEEp}[1]{\mathcal{E}^+(#1)}
\newcommand{\EE}[2]{\EEE{#1,#2}}
\newcommand{\EEz}[2]{\EEEz{#1,#2}}
\newcommand{\EEp}[2]{\EEEp{#1,#2}}
\newcommand{\fil}[2]{\mathcal{F}_{#2,#1}}
\renewcommand{\dim}[2]{\mathrm{dim}_{#1}\left(#2\right)}
\newcommand{\<}{\langle}
\renewcommand{\>}{\rangle}
\newcommand{\il}[1]{\< #1 \>}
\newcommand{\card}[1]{\left| #1 \right|}
\renewcommand{\ker}[1]{\mathrm{Ker}(#1)}
\newcommand{\image}[1]{\mathrm{Im}(#1)}
\newcommand{\coker}[1]{\mathrm{Coker}(#1)}
\renewcommand{\hom}[3]{\mathrm{Hom}_{#1}(#2,#3)}
\newcommand{\gr}[2]{#1[#2]}
\newcommand{\GR}[2]{#1\left[ #2\right]}
\newcommand{\aug}[1]{\mathcal{I}_{#1}}
\newcommand{\grf}[1]{\gr{\Z}{#1^\times}}
\newcommand{\gw}[1]{\mathrm{GW}(#1)}
\newcommand{\wgen}[1]{\langle #1 \rangle}
\newcommand{\pfist}[1]{\langle\langle #1 \rangle\rangle}
\newcommand{\ffist}[1]{\langle\langle #1 \rangle\rangle}
\newcommand{\fgen}[1]{\left\langle #1 \right\rangle}
\newcommand{\filt}[2]{F_{#1}{#2}}
\newcommand{\grd}[2]{\mathrm{gr}_{#1}#2}
\newcommand{\ext}[2]{\bigwedge^{#1}\left({#2}\right)}
\newcommand{\tens}[3]{\mathrm{T}^{#1}_{#2}(#3)}
\newcommand{\extr}[3]{\bigwedge^{#1}_{#2}(#3)}
\newcommand{\proj}[2]{\mathbb{P}^{#1}(#2)}
\newcommand{\p}[1]{\overline{#1}}
\newcommand{\id}[1]{\mathrm{Id}_{#1}}
\newcommand{\aff}[2]{\mathrm{A}(#1,#2)}
\newcommand{\saff}[2]{\mathrm{SA}(#1,#2)}
\newcommand{\diag}[1]{\mathrm{diag}(#1)}
\newcommand{\shift}[2]{{#2}[#1]}
\newcommand{\sgn}[1]{\mathrm{sgn}(#1)}
\newcommand{\mset}[1]{\mathcal{S}_{#1}}
\newcommand{\msetp}[1]{\mathcal{S}^+_{#1}}
\newcommand{\mwk}[2]{K^{\mathrm{\small MW}}_{#1}({#2})}
\newcommand{\milk}[2]{K^{\mathrm{\small M}}_{#1}({#2})}
\newcommand{\milkt}[2]{k^{\mathrm{\small M}}_{#1}({#2})}
\newcommand{\homol}[2]{\mathrm{H}_{#1}(#2)}
\newcommand{\ho}[3]{\mathrm{H}_{#1}(#2,#3 )}
\newcommand{\hoz}[2]{\mathrm{H}_{#1}(#2, \Z)}
\newcommand{\cores}[2]{\mathrm{cor}^{#2}_{#1}}
\newcommand{\genl}[2]{\mathrm{GL}_{#1}(#2)}
\newcommand{\specl}[2]{\mathrm{SL}_{#1}(#2)}
\newcommand{\gnl}[1]{\mathrm{GL}(#1)}
\newcommand{\spcl}[1]{\mathrm{SL}(#1)}
\begin{document}

\maketitle

\begin{abstract}
Let $F$ be a field of characteristic zero and let 
$f_{t,n}$ be the stabilization homomorphism $\hoz{n}{\specl{t}{F}}\to\hoz{n}{\specl{t+1}{F}}$. We prove the following results:
For all $n$, $f_{t,n}$ is an isomorphism if $t\geq n+1$ and is surjective for $t=n$, confirming a conjecture of C-H. Sah. 
$f_{n,n}$ is an isomorphism when $n$ is odd and when $n$ is even the kernel is isomorphic to $I^{n+1}(F)$, the $(n+1)$st power of the 
fundamental ideal of the Witt Ring of $F$. When $n$ is even the cokernel of $f_{n-1,n}$ is isomorphic to $\mwk{n}{F}$, the $n$th 
Milnor-Witt $K$-theory group of $F$. When $n$ is odd, the cokernel of $f_{n-1,n}$ is isomorphic to $2\milk{n}{F}$, where $\milk{n}{F}$ 
is the $n$th Milnor $K$-group of $F$.
\end{abstract}
\section{Introduction}
Given a family of groups $\{ G_t\}_{t\in\N}$ 
with canonical homomorphisms 
$G_t\to G_{t+1}$, we say that the family has homology stability if there exist constants $K(n)$ such that the 
natural maps $\hoz{n}{G_t}\to\hoz{n}{G_{t+1}}$ are isomorphisms for $t\geq K(n)$. The question of homology stability for families of 
linear groups over a ring $R$ - general linear groups, special linear groups, symplectic, orthogonal and unitary groups - 
has been studied since the 1970s in connection with applications to algebraic $K$-theory, algebraic topology, the scissors congruence 
problem, and the homology of Lie groups. These families of linear groups are known to have homology stability at least when the rings 
satisfy some appropriate finiteness condition, and in particular in the case of fields and local rings 
(\cite{charney:stab},\cite{vogtmann:stab},\cite{vogtmann:stab2},\cite{vanderkallen:homstab}, \cite{guin:stab},\cite{betley:stab}, 
\cite{panin:stab},\cite{mirzaii:stab},\cite{mirzaii:stab2}).  
It seems to be a delicate - but 
interesting and apparently important - question, however, to decide the minimal possible value of $K(n)$ for a particular class of linear 
groups (with coefficients in a given class of rings) 
and the nature of the obstruction to extending the stability range further. 

The best illustration of this last remark are the results of Suslin on the integral homology of the general linear group of 
a field in the paper \cite{sus:homgln}.  He proved that, for an infinite field $F$,
the maps $\hoz{n}{\genl{t}{F}}\to\hoz{n}{\genl{t+1}{F}}$ are isomorphisms for $t\geq n$ (so that $K(n)=n$ in this case), while 
the cokernel of the map  $\hoz{n}{\genl{n-1}{F}}\to\hoz{n}{\genl{n}{F}}$ is naturally isomorphic to the $n$th Milnor $K$-group,
$\milk{n}{F}$.In fact, if we let 
\[
H_n(F):= \coker{\hoz{n}{\genl{n-1}{F}}\to\hoz{n}{\genl{n}{F}}},
\]
his arguments show that there is an isomorphism of graded rings $H_\bullet(F)\cong \milk{\bullet}{F}$ (where the multiplication on the 
first term comes from direct sum of matrices and cross product on homology). In particular, the non-negatively graded ring $H_\bullet(F)$
is generated in dimension $1$.

Recent work of Barge and Morel (\cite{barge:morel}) suggested that Milnor-Witt $K$-theory 
may play a somewhat analogous role for the homology
of the special linear group. The Milnor-Witt  $K$-theory of $F$ is a $\Z$-graded ring $\mwk{\bullet}{F}$ surjecting naturally onto 
Milnor $K$-theory. It arises as a ring of operations in stable motivic homotopy theory. (For a definition see section \ref{sec:background}
below, and for more details see \cite{morel:trieste,morel:puiss,morel:a1}.)
  Let $SH_n(F):=\coker{\hoz{n}{\specl{n-1}{F}}\to\hoz{n}{\specl{n}{F}}}$ for $n\geq 1$, and let $SH_0(F)=\GR{\Z}{F^\times}$
for convenience.
Barge and Morel construct a map of graded algebras $SH_\bullet(F)\to\mwk{\bullet}{F}$ for which the square
\begin{eqnarray*}
\xymatrix{
SH_\bullet(F)\ar[r]\ar[d]
&
\mwk{\bullet}{F}\ar[d]\\
H_\bullet(F)\ar[r]
&
\milk{\bullet}{F}
}
\end{eqnarray*}
commutes.

A result of Suslin (\cite{sus:tors}) implies that the map $\hoz{2}{\specl{2}{F}}=SH_2(F)\to\mwk{2}{F}$ is an isomorphism. 
Since positive-dimensional Milnor-Witt $K$-theory is generated by elements of degree $1$, it follows that 
the map of Barge and Morel is surjective in even dimensions  
greater than or equal to $2$. They ask the question whether it is in 
fact an isomorphism in even dimensions.

As to the question of the range of homology stability for the special linear groups of an infinite field, as far as the authors are aware 
the most general result to date 
is still that of van der Kallen \cite{vanderkallen:homstab}, whose results apply to much more general classes of rings. In the case of a 
field, he proves homology stability for $\hoz{n}{\specl{t}{F}}$ in the range $t\geq 2n+1$. On the other hand, known results when 
$n$ is small suggest a much larger range. For example, the theorems of Matsumoto and Moore imply that the maps 
$\hoz{2}{\specl{t}{F}}\to\hoz{2}{\specl{t+1}{F}}$ are isomorphisms for $t\geq 3$ and are surjective for $t=2$. In the paper 
\cite{sah:discrete3} (Conjecture 2.6), C-H. Sah conjectured that for an infinite field $F$ 
(and more generally for a division algebra with infinite centre),  the homomorphism $\hoz{n}{\specl{t}{F}}\to
\ho{n}{\specl{t+1}{F}}$ is an isomorphism if $t\geq n+1$ and is surjective for $t=n$. 

The present paper addresses the above questions of Barge/Morel and Sah in the case of a field of characteristic zero. We prove the 
following results about the homology stability for special linear groups:

\begin{thm}\label{thm:homstab} Let $F$ be a field of characteristic $0$. For $n,t\geq 1$, 
let $f_{t,n}$ be the 
stabilization homomorphism $\hoz{n}{\specl{t}{F}}\to\hoz{n}{\specl{t+1}{F}}$
\begin{enumerate}
\item $f_{t,n}$ is an isomorphism for $t\geq n+1$ and is surjective for $t=n$.
\item If $n$ is odd $f_{n,n}$ is an isomorphism
\item If $n$ is even the kernel of $f_{n,n}$ is isomorphic to $I^{n+1}(F)$.
\item For even $n$ the cokernel of $f_{n-1,n}$ is naturally isomorphic to $\mwk{n}{F}$.
\item For odd $n\geq 3$ the cokernel of $f_{n-1,n}$ is naturally isomorphic to $2\milk{n}{F}$. 
\end{enumerate}
\end{thm}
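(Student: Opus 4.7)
The plan is to proceed by induction on $n$, using a spectral-sequence argument in the spirit of Suslin's proof of the analogous statement for $\genl{n}{F}$ in \cite{sus:homgln}. The base case $n=2$ for parts (1), (2) and (4) is the isomorphism $SH_2(F) = \hoz{2}{\specl{2}{F}} \cong \mwk{2}{F}$ due to Suslin \cite{sus:tors}; parts (3) and (5) in low degrees follow from the base case combined with the standard exact sequence relating $\mwk{n}{F}$, $\milk{n}{F}$ and powers of the fundamental ideal.

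The central technical apparatus is a highly acyclic chain complex $\Cgen{\bullet}$ carrying an action of $\specl{n}{F}$. I would take the complex whose $p$-simplices are ordered $(p+1)$-tuples of vectors of $F^n$ in general position (every $n$-subtuple a basis), which over an infinite field is $(n-1)$-acyclic. The associated equivariant spectral sequence has
$$
E^1_{p,q} \;=\; \bigoplus_{\sigma}\ho{q}{\mathrm{Stab}(\sigma)}{\Z},
$$
converging to $\hoz{p+q}{\specl{n}{F}}$ in the stable range, where the stabilizer of a $p$-simplex with $p < n-1$ is an extension of $\specl{n-p-1}{F}$ by an affine/unipotent group, and the stabilizer of an $(n-1)$-simplex is the diagonal torus $\{\mathrm{diag}(u_1,\ldots,u_n) \mid \prod u_i = 1\}$. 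Using Shapiro's lemma and the vanishing of unipotent homology in characteristic zero, one rewrites each $E^1$-term as homology of a smaller $\specl{k}{F}$ with coefficients in a sign/determinant module $\Z[\units{F}]$ tracking the way the frame is permuted. The inductive step for parts (1) and (2) then compares the spectral sequences for $\specl{n}{F}$ and $\specl{n+1}{F}$: by the induction hypothesis the bulk of the $E^1$-differentials and edge maps are isomorphisms, so stability propagates by one degree. The cokernel of $f_{n-1,n}$ in parts (4) and (5) appears as an edge term governed by the ``symbol'' differential $d^1 \colon E^1_{n,0} \to E^1_{n-1,0}$, which involves only diagonal matrices and permutations.

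The hard part is identifying these cokernels and kernels with Milnor--Witt and Milnor $K$-theory precisely. For part (4), the Barge--Morel map $SH_n(F) \to \mwk{n}{F}$ of \cite{barge:morel} is already surjective for even $n$, so the task is to construct its inverse by exhibiting explicit $n$-cycles in $\specl{n}{F}$ representing each symbol $[u_1]\cdots[u_n]$ and verifying that the defining relations of $\mwk{n}{F}$ (Steinberg $[u][1-u]=0$, the $\eta$-relation, and twisted commutativity) already hold in $SH_n(F)$; this reduces to concrete homological computations inside $\hoz{n}{\specl{n}{F}}$ using diagonal and permutation matrices. Part (5) then follows from part (4) via the canonical surjection $\mwk{n}{F} \twoheadrightarrow \milk{n}{F}$ together with an involution argument (arising from conjugation by $\mathrm{diag}(-1,1,\ldots,1)$) identifying the kernel with $2\milk{n}{F}$ in odd degrees. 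Part (3) is then a diagram chase from parts (1), (2), (4), (5) and the short exact sequence
$$
0 \to I^{n+1}(F) \to \mwk{n}{F} \to \milk{n}{F} \to 0.
$$
The critical obstacle is the explicit verification of the Milnor--Witt relations at the chain level, since purely formal spectral-sequence manipulations only cut the cokernel down to $\milk{n}{F}$, and extracting the additional $I^{n+1}$-factor that refines this to $\mwk{n}{F}$ requires a quadratic refinement of the symbol map built out of the diagonal torus action.
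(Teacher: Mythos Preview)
Your proposal has a genuine gap at precisely the point where the paper's main innovation enters. You claim that ``using Shapiro's lemma and the vanishing of unipotent homology in characteristic zero, one rewrites each $E^1$-term as homology of a smaller $\specl{k}{F}$.'' This is false. The stabilizer of a $q$-simplex is $\saff{F^{q}}{F^{n-q}}$, a semidirect product of a vector space $M$ (the unipotent radical) with $\specl{n-q}{F}$, and for $r\geq 1$ one has $\hoz{r}{M}=\extr{r}{\Z}{M}\neq 0$; so $\hoz{\bullet}{\saff{F^{q}}{F^{n-q}}}$ is strictly larger than $\hoz{\bullet}{\specl{n-q}{F}}$. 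This is exactly the failure of Suslin's Theorem~1.8 flagged in the introduction: ``the corresponding statements for subgroups of the special linear group are emphatically false, as elementary calculations easily show.'' Without this identification your induction on $n$ cannot even start, since the $E^1$-page does not consist of homologies of smaller special linear groups.

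The paper's fix is structural rather than computational. Every $E^1$-term carries a $\grf{F}$-action; under it, $\extr{r}{\Z}{M}$ is \emph{additive} (each $\ffist{a}$, $a\in\Q\setminus\{\pm1\}$, acts invertibly) while $\hoz{\bullet}{\specl{k}{F}}$ is \emph{multiplicative} (killed by some $\ffist{a^m}$). There are no nonzero maps between these two types, so each $E^1$-term splits canonically and one can project the entire spectral sequence onto its multiplicative part (Theorem~\ref{thm:speclam}, Theorem~\ref{thm:eram}); only \emph{after} this projection does the $E^1$-page look as you describe, and only then do periodicity and vanishing of higher differentials hold (Theorem~\ref{thm:main}). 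Your plan for parts (4)--(5) is also off target: the paper does not verify Milnor--Witt relations at the chain level in $\hoz{n}{\specl{n}{F}}$, but rather builds the intermediate graded algebra $\mul{\SSn{F^\bullet}}$, constructs $\T{\bullet}$ to $\mwk{\bullet}{F}$ via a combinatorial identity (Lemma~\ref{lem:mwkid2}), and then proves decomposability (Theorem~\ref{thm:ind}) to force $\fil{1}{n}\cong\mwk{n}{F}$. The case $n=3$ (from \cite{hutchinson:tao2}) is a separate required input, not a consequence of $n=2$.
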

\begin{proof} The proofs of these statements can be found below as follows:
\begin{enumerate}
\item Corollary \ref{cor:stab}.  
\item Corollary \ref{cor:odd}.
\item Corollary \ref{cor:last}.
\item Corollary \ref{cor:even}.
\item Corollary \ref{cor:last}
\end{enumerate}
\end{proof}

Our strategy is to adapt Suslin's argument for the general linear group 
in \cite{sus:homgln} to the case of the special linear group. Suslin's  argument 
is an ingenious variation on the method of van der Kallen in \cite{vanderkallen:homstab}, in turn based on ideas of Quillen. The broad 
idea is to find a highly connected simplicial complex on which the group $G_t$ acts and for which the stabilizers of simplices are 
(approximately) the groups $G_r$, with $r\leq t$, and then to use this to construct a spectral sequence calculating the homology 
of the $G_n$ in terms of the homology of the $G_r$.  Suslin constructs a family $\EEE{n}$ of such spectral sequences, calculating the 
homology of $\genl{n}{F}$. He constructs partially-defined products $\EEE{n}\times\EEE{m}\to \EEE{n+m}$ and then proves some periodicity 
and decomposabilty properties which allow him to conclude  by an easy induction.

 Initially, the attempt to extend these arguments to the case of $\specl{n}{F}$ does 
not appear very promising. Two obstacles to extending Suslin's arguments become quickly apparent. 

The main obstacle is 
Suslin's Theorem 1.8 which says that a certain inclusion of a block diagonal linear group in a block triangular group is a homology 
isomorphism. The corresponding statement for subgroups of the special linear group are emphatically false, as elementary calculations 
easily show. Much of Suslin's subsequent results - in particular, the periodicity and decomposability properties of the spectral 
sequences $\EEE{n}$ and of the graded algebra 
$S_\bullet(F)$ which plays a central role - depend on this theorem. And, indeed, the analogous spectral sequences and 
graded algebra 
which arise when we replace the general linear with the special linear group  do not have these periodicity and decomposability 
properties.   

However, it turns out - at least when the characteristic is zero - that the failure of Suslin's Theorem 1.8 is not fatal. A crucial 
additional structure is available to us in the case of the special linear group; almost everything in sight in a $\grf{F}$-module. In the 
analogue of Theorem 1.8, the map of homology groups is a split inclusion whose cokernel has a completely different character as 
a $\grf{F}$-module than the  homology of the block diagonal group. The former is `\add', while the latter is `\mult', notions which we 
define and explore in section \ref{sec:am} below. This leads us to introduce 
the concept of `$\am$ modules', which decompose in a canonical way 
into a direct sum of an \add factor and a \mult factor. This decomposition is sufficiently canonical that in our graded ring structures 
the \add and \mult parts are each ideals. By working modulo the messy \add factors and projecting onto \mult parts, we recover 
an analogue of Suslin's Theorem 1.8 (Theorem \ref{thm:speclam} below), which we then use to prove the necessary periodicity 
(Theorem \ref{thm:main}) and decomposability (Theorem \ref{thm:ind}) results. 

A second obstacle to emulating the case of the general linear group is the vanishing of the groups $\hoz{1}{\specl{n}{F}}$. The algebra 
$H_\bullet(F)$, according to Suslin's arguments, is generated by degree $1$. On the other hand,
 $SH_1(F)=0=\hoz{1}{\specl{1}{F}}=0$. This means that the best we can 
hope for in the case of the special linear group is that the algebra $SH_\bullet(F)$ is generated by degrees $2$ and $3$. This indeed 
turns out to be essentially the case, but it means we have to work harder to get our induction off the ground. The necessary arguments 
in degree $n=2$ amount to the Theorem of Matsumoto and Moore, as well as variations due to Suslin (\cite{sus:tors}) and Mazzoleni
(\cite{mazz:sus}).  The argument in degree $n=3$ was supplied recently in a paper by the present authors (\cite{hutchinson:tao2}).

We make some remarks on the hypothesis of  characteristic zero in this paper: This assumption is used in our definition of $\am$-modules
and the derivation of their properties in section \ref{sec:am} below. In fact, a careful reading of the proofs in that section will show 
that at any given point all that is required is that the prime subfield be sufficiently large; it must contain an element of order not
dividing $m$ for some appropriate $m$. Thus in fact our arguments can easily be adapted to show that 
our main results on homology stability for the $n$th homology group of the 
special linear groups are true provided the prime field  is sufficiently large (in a way that depends on $n$). 
However, we have not attempted  here to make this more explicit. To do so would make the statements of the results unappealingly 
complicated, and we will leave it instead to a later paper to deal with the case of positive characteristic. We  believe that 
an appropriate extension of the notion of $\am$-module will unlock the characteristic $p>0$ case.

As to our restriction to fields rather than more general rings, 
we note that Daniel Guin \cite{guin:stab} has extended Suslin's results to a larger class of rings 
with many units. We have not yet investigated a similar extension of the results below to this larger class of rings.    

\section{Notation and Background Results}\label{sec:background}
\subsection{Group Rings and Grothendieck-Witt Rings}
For a group $G$, we let $\GR{\Z}{G}$ denote the corresponding integral group ring. It has an additive 
$\Z$-basis consisting of 
the elements $g\in G$, and is made into a ring by linearly extending  the  multiplication of group elements. 
In the case that the group $G$ is the multiplicative group, $F^\times$, of a field $F$, we will denote the basis elements by 
$\fgen{a}$, for $a\in F^\times$. We use this notation in order, for example, to distinguish the elements 
$\fgen{1-a}$ from $1-\fgen{a}$, or $\fgen{-a}$ from $-\fgen{a}$, and also because it coincides, conveniently for our purposes, with 
the notation for generators of the Grothendieck-Witt ring (see below).
There is an augmentation homomorphism $\epsilon:
\GR{\Z}{G}\to
\Z$, $\fgen{g}\mapsto 1$, whose kernel is the augmentation ideal $\aug{G}$, generated by the elements 
$g-1$. Again, if $G= F^\times$, we denote these generators by $\ffist{a}:=\fgen{a}-1$.    

The Grothendieck-Witt ring of a field $F$ is the Grothendieck group, $\gw{F}$, of the set of isometry classes of nondgenerate symmetric 
bilinear forms under orthogonal sum. Tensor product of forms induces a natural multiplication on the group. As an abstract ring, 
this can be described as the quotient of the ring $\GR{\Z}{F^\times/(F^\times)^2}$ by the ideal generated by the elements 
$\ffist{a}\cdot\ffist{1-a}$, $a\not= 0,1$. (This is just a mild reformulation of the presentation given in Lam, \cite{lam:intro}, 
Chapter II,
 Theorem 4.1.) Here, the induced ring homomorphism $\GR{\Z}{F^\times}\to \GR{\Z}{F^\times/(F^\times)^2}\to
\gw{F}$, sends $\fgen{a}$ to the class of the $1$-dimensional form with matrix $[a]$. This class is (also) denoted $\wgen{a}$. $\gw{F}$
is again an augmented ring and the augmentation ideal, $I(F)$, - also called the \emph{fundamental ideal} - 
is generated by \emph{Pfister $1$-forms}, $\pfist{a}$. It follows that the 
$n$-th power, $I^n(F)$, of this ideal is generated by \emph{Pfister} $n$-forms $\pfist{a_1,\ldots,a_n}:=\pfist{a_1}\cdots\pfist{a_n}$.

Now let $\textbf{h}:=\wgen{1}+\wgen{-1}=\pfist{-1}+2\in \gw{F}$. 
Then $\textbf{h}\cdot I(F)=0$, and the \emph{Witt ring} of $F$ is the ring
\[
W(F):=\frac{\gw{F}}{\langle \textbf{h}\rangle}=\frac{\gw{F}}{\textbf{h}\cdot \Z}.
\] 
Since $\textbf{h}\mapsto 2$ under the augmentation, there is a natural ring homomorphism $W(F)\to \Z/2$. The fundamental ideal $I(F)$ of 
$\gw{F}$ 
maps isomorphically to the kernel of this ring homomorphism under the map $\gw{F}\to W(F)$, and we also let $I(F)$ denote this ideal.

For $n\leq 0$, we define $I^n(F):=W(F)$. The graded additive group $I^\bullet(F)=\{ I^n(F)\}_{n\in\Z}$ is given the structure of a 
commutative graded ring using the natural graded multiplication induced from the multiplication on $W(F)$. In particular, if we let 
$\eta\in I^{-1}(F)$ be the element corresponding to $1\in W(F)$, then multiplication by $\eta:I^{n+1}(F)\to I^n(F)$ is just the natural 
inclusion.

\subsection{Milnor $K$-theory and Milnor-Witt $K$-theory}

The Milnor ring of a field $F$  (see \cite{milnor:intro}) is the graded ring
\(
\milk{\bullet}{F}
\)
with the following presentation:

Generators: $\{ a\}$ , $a\in F^\times$, in dimension $1$.

Relations:
\begin{enumerate}
\item[(a)] $\{ ab\} =\{ a\} +\{ b\}$ for all $a,b\in F^\times$.
\item[(b)] $\{a\}\cdot\{1-a\}=0$ for all $a\in F^\times\setminus\{ 1\}$.
\end{enumerate}
The product $\{ a_1\}\cdots\{ a_n\}$ in $\milk{n}{F}$ is also written $\{ a_1,\ldots,a_n\}$. So $\milk{0}{F}=\Z$ and $\milk{1}{F}$ is 
an additive group isomorphic to $F^\times$.

We let $\milkt{\bullet}{F}$ denote the graded ring $\milk{\bullet}{F}/2$ and let $i^n(F):=I^n(F)/I^{n+1}(F)$, so that $i^\bullet(F)$ is a 
non-negatively graded ring. 

In the 1990s, Voevodsky and his collaborators proved a fundamental and deep theorem - originally conjectured by Milnor 
(\cite{milnor:quad}) - 
relating Milnor $K$-theory to quadratic form theory:

\begin{thm}[\cite{voevodsky:orlovvishik}]
There is a natural isomorphism of graded rings $\milkt{\bullet}{F}\cong i^\bullet(F)$ sending $\{ a\}$ to $\pfist{a}$.

In particular for all $n\geq 1$ we have a natural identification of $\milkt{n}{F}$ and $i^n(F)$ under which the symbol 
$\{ a_1,\ldots,a_n\}$ 
corresponds to the class of the form $\pfist{a_1,\ldots,a_n}$.
\end{thm}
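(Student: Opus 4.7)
The plan is to construct a graded ring map $\phi\colon \milkt{\bullet}{F}\to i^\bullet(F)$ sending $\{a\}$ to $\pfist{a}\bmod I^2(F)$, verify it is well-defined and surjective by formal manipulations in the Witt ring, and then tackle injectivity, which is where essentially all the depth lies.

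First I would define $\phi_n \colon \milk{n}{F} \to i^n(F)$ by $\{a_1,\ldots,a_n\} \mapsto \pfist{a_1,\ldots,a_n} \bmod I^{n+1}(F)$ and check the Milnor relations. Multiplicativity in each slot follows from the Witt-ring identity $\pfist{ab} = \pfist{a} + \pfist{b} - \pfist{a}\pfist{b}$, because the correction term $\pfist{a}\pfist{b}\pfist{a_2,\ldots,a_n}$ lies in $I^{n+1}(F)$ and dies in the quotient. The Steinberg relation $\{a,1-a\}=0$ maps to $\pfist{a,1-a}$, which vanishes already in $W(F)$ by a classical identity of Pfister. To factor through the mod-$2$ reduction, I would note that $\textbf{h}=0$ in $W(F)$ gives $\pfist{-1}=-2$, so that $2\cdot\pfist{a_1,\ldots,a_n} = -\pfist{-1,a_1,\ldots,a_n}\in I^{n+1}(F)$. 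Multiplicativity of the resulting map $\phi_\bullet$ is tautological from $\pfist{\vec a}\cdot\pfist{\vec b}=\pfist{\vec a,\vec b}$. Surjectivity is then automatic, since $I^n(F)$ is generated as an abelian group by $n$-fold Pfister forms by construction, and each such form is in the image of $\phi_n$.

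The hard part --- and essentially the entire content of the theorem --- is injectivity of each $\phi_n$. My plan here would be to compare both sides to mod-$2$ Galois cohomology of $F$. On the $K$-theory side, the norm residue isomorphism (the mod-$2$ Bloch-Kato conjecture) identifies $\milkt{n}{F}$ with $\coh{n}{\gal{\p{F}}{F}}{\mu_2}$. On the quadratic form side, Kummer theory handles $i^1(F)$, and one constructs compatible higher invariants $e_n \colon i^n(F) \to \coh{n}{\gal{\p{F}}{F}}{\mu_2}$ in the style of Arason-Elman-Lam which one must show to be isomorphisms. Both identifications --- and therefore the injectivity of $\phi_n$ --- are proved by Voevodsky (together with Orlov and Vishik) using motivic cohomology and the motivic Steenrod algebra; this is where all the depth is concentrated, and I would simply cite it. The low-degree cases are classical ($n=1$ is Kummer theory, $n=2$ is Merkurjev's theorem, $n=3$ is Merkurjev-Suslin and Rost), but no elementary route for arbitrary $n$ is known.
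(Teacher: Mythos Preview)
Your outline is correct, but note that the paper does not prove this theorem at all: it is stated with a citation to \cite{voevodsky:orlovvishik} and used as a black box. So there is no ``paper's own proof'' to compare against; the authors simply invoke the result as known.

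Your write-up actually supplies more than the paper does. The construction of $\phi$ and the verification that it is a well-defined surjective ring map are classical and exactly as you describe (the identities $\pfist{ab}\equiv\pfist{a}+\pfist{b}\bmod I^2$, $\pfist{a,1-a}=0$, and $2\pfist{a}\in I^2$ are standard Witt-ring facts going back to Milnor's original paper). You are also right that injectivity is the entire content of the Milnor conjecture and requires the motivic machinery of Voevodsky (with Orlov--Vishik for the quadratic-form side); citing it is the only reasonable thing to do in this context, and that is precisely what the paper does.
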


The Milnor-Witt $K$-theory of a field is the graded ring $\mwk{\bullet}{F}$ with the following presentation 
(due to F. Morel and M. Hopkins,
see \cite{morel:trieste}):

Generators: $[a]$, $a\in F^\times$, in dimension $1$ and a further generator $\eta$ in dimension $-1$.

Relations: 
\begin{enumerate}
\item[(a)] $[ab]=[a]+[b]+\eta\cdot [a]\cdot [b]$ for all $a,b\in F^\times$
\item[(b)] $[a]\cdot[1-a]=0$ for all $a\in F^\times\setminus\{ 1\}$
\item[(c)] $\eta\cdot [a]=[a]\cdot \eta$ for all $a\in F^\times$
\item[(d)] $\eta\cdot h=0$, where $h=\eta\cdot [-1] +2\in \mwk{0}{F}$.
\end{enumerate}

Clearly there is a unique surjective homomorphism of graded rings $\mwk{\bullet}{F}\to\milk{\bullet}{F}$ sending $[a]$ to $\{ a\}$
 and inducing an isomorphism
\[
\frac{\mwk{\bullet}{F}}{\langle \eta \rangle}\cong \milk{\bullet}{F}.
\]

Furthermore, there is a natural surjective homomorphism of graded rings $\mwk{\bullet}{F}\to I^\bullet(F)$ sending $[a]$ to $\pfist{a}$ and 
$\eta$ to $\eta$. Morel shows that there is an induced isomorphism of graded rings
\[
\frac{\mwk{\bullet}{F}}{\langle h \rangle}\cong I^{\bullet}(F).
\]


The main structure theorem on Milnor-Witt $K$-theory is the following theorem of Morel:

\begin{thm}[Morel, \cite{morel:puiss}]
The commutative square of graded rings
\begin{eqnarray*}
\xymatrix{
\mwk{\bullet}{F}\ar[r]\ar[d]
&
\milk{\bullet}{F}\ar[d]\\
I^\bullet(F)\ar[r]
&
\milkt{\bullet}{F}
}
\end{eqnarray*}
is cartesian.
\end{thm}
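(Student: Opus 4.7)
The plan is to show that, for each integer $n$, the canonical map
\[
\Phi_n \colon \mwk{n}{F} \longrightarrow \milk{n}{F} \times_{\milkt{n}{F}} I^n(F)
\]
induced by the given commutative square is a bijection. Two previously recalled results supply the essential input: the routine isomorphism $\mwk{\bullet}{F}/\langle \eta\rangle \cong \milk{\bullet}{F}$, i.e.\ $\ker{\mwk{\bullet}{F}\to\milk{\bullet}{F}} = \eta\cdot\mwk{\bullet+1}{F}$, and Morel's isomorphism $\mwk{\bullet}{F}/\langle h\rangle \cong I^\bullet(F)$, which says that $\mwk{n}{F}\to I^n(F)$ is surjective with kernel $h\cdot\mwk{n}{F}$. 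I would also invoke Voevodsky's identification $\milkt{n}{F} \cong I^n(F)/I^{n+1}(F)$ together with the fact, already recalled in the excerpt, that multiplication by $\eta$ on $I^\bullet(F)$ is the tautological inclusion $I^{n+1}(F)\hookrightarrow I^n(F)$.

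For surjectivity of $\Phi_n$, I would take $(\alpha,\beta)$ in the fiber product, choose a lift $\tilde\alpha\in\mwk{n}{F}$ of $\alpha$, and denote by $\beta'\in I^n(F)$ its image. Since $\alpha$ and $\beta$ have the same class in $\milkt{n}{F}$, so do $\beta$ and $\beta'$, whence $\beta - \beta' \in I^{n+1}(F)$. Using surjectivity of $\mwk{n+1}{F}\to I^{n+1}(F)$, choose a lift $\gamma\in\mwk{n+1}{F}$ of $\beta-\beta'$. Then $\tilde\alpha + \eta\gamma$ maps to $\alpha$ in $\milk{n}{F}$, because multiplication by $\eta$ kills $\milk{\bullet}{F}$, and to $\beta' + (\beta-\beta') = \beta$ in $I^n(F)$, because $\eta$ acts as the inclusion $I^{n+1}(F)\hookrightarrow I^n(F)$. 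Hence $\Phi_n(\tilde\alpha + \eta\gamma) = (\alpha,\beta)$.

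For injectivity, I would suppose $\xi\in\mwk{n}{F}$ dies in both $\milk{n}{F}$ and $I^n(F)$. The first vanishing together with $\mwk{\bullet}{F}/\langle\eta\rangle\cong\milk{\bullet}{F}$ yields $\xi = \eta\zeta$ for some $\zeta\in\mwk{n+1}{F}$. The second vanishing then says that $\eta$ times the image of $\zeta$ in $I^{n+1}(F)$ is zero in $I^n(F)$; but this multiplication is the injective inclusion $I^{n+1}(F)\hookrightarrow I^n(F)$, so the image of $\zeta$ in $I^{n+1}(F)$ must already vanish. Applying Morel's second isomorphism, I can write $\zeta = h\mu$ for some $\mu\in\mwk{n+1}{F}$, and then $\xi = \eta h\mu = 0$ by relation (d).

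The main obstacle is really upstream: it lies in the two input isomorphisms $\mwk{\bullet}{F}/\langle\eta\rangle\cong\milk{\bullet}{F}$ and $\mwk{\bullet}{F}/\langle h\rangle\cong I^\bullet(F)$. The first is immediate from the Milnor-Witt presentation, but the second is Morel's deep result, which rests ultimately on Voevodsky's proof of the Milnor conjecture identifying $\milkt{\bullet}{F}$ with $i^\bullet(F)$. Once both inputs are in hand, the cartesian property of the square reduces to the short diagram chase sketched above, whose only non-formal ingredient is the defining relation $\eta\cdot h = 0$.
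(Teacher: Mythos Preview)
Your reduction is correct, but note that the paper does not actually prove this theorem: it is stated in Section~\ref{sec:background} as a background result and simply attributed to Morel \cite{morel:puiss}, with no proof given. So there is no ``paper's own proof'' to compare against.

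That said, the argument you sketch is exactly the standard way to extract the cartesian statement from the two structural isomorphisms the paper records just before the theorem, namely $\mwk{\bullet}{F}/\langle\eta\rangle\cong\milk{\bullet}{F}$ and $\mwk{\bullet}{F}/\langle h\rangle\cong I^\bullet(F)$, together with Voevodsky's identification $i^\bullet(F)\cong\milkt{\bullet}{F}$. Your surjectivity and injectivity arguments are both correct; the only point worth making explicit is that in the surjectivity step you are implicitly using that the map $I^n(F)\to\milkt{n}{F}$ appearing in the square is, under Voevodsky's isomorphism, the canonical quotient $I^n(F)\to I^n(F)/I^{n+1}(F)$, so that ``same image in $\milkt{n}{F}$'' really does mean $\beta-\beta'\in I^{n+1}(F)$. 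You flag this, so there is no gap. You are also right that the genuine content lies in the second isomorphism $\mwk{\bullet}{F}/\langle h\rangle\cong I^\bullet(F)$, which is Morel's theorem proper; once that is granted, the cartesian property is the formal diagram chase you wrote down.
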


Thus for each $n\in \Z$ we have an isomorphism
\[
\mwk{n}{F}\cong \milk{n}{F}\times_{i^n(F)}I^n(F).
\]

It follows that for all $n$ there is a natural short exact sequence 
\[
0\to I^{n+1}(F)\to \mwk{n}{F}\to \milk{n}{F}\to 0
\]
where the inclusion $I^{n+1}(F)\to\mwk{n}{F}$ is given by $\pfist{a_1,\ldots,a_{n+1}}\mapsto \eta[a_1]\cdots[a_n]$.

Similarly, for $n\geq 0$, there is a short exact sequence
\[
0\to 2\milk{n}{F}\to\mwk{n}{F}\to I^n(F)\to 0
\]
where the inclusion $2\milk{n}{F}\to\mwk{n}{F}$ is given (for $n\geq 1$) by 
$
2\{ a_1,\ldots,a_n\}\mapsto h[a_1]\cdots[a_n]
$.
Observe that, when $n\geq 2$,
\[
h[a_1][a_2]\cdots [a_n]=([a_1][a_2]-[a_2][a_1])[a_3]\cdots [a_n]=[a^2_1][a_2]\cdots[a_n].
\]
(The first equality follows from Lemma \ref{lem:mwkid1} (3) below, the second from the observation that 
$[a^2_1]\cdots [a_n]\in \ker{\mwk{n}{F}\to I^n(F)}=2\milk{n}{F}$ and the fact, which follows from Morel's theorem, that the composite
$2\milk{n}{F}\to\mwk{n}{F}\to\milk{n}{F}$ is the natural inclusion map.)

When $n=0$ we have an isomorphism of rings
\[
\gw{F}\cong W(F)\times_{\Z/2}\Z\cong \mwk{0}{F}.
\]
Under this isomorphism $\pfist{a}$ corresponds to $\eta [a]$ and $\wgen{a}$ corresponds to $\eta[a]+1$. (Observe that 
with this identification, $h=\eta[-1]+2=\wgen{1}+\wgen{-1}\in\mwk{0}{F}=\gw{F}$, as expected.) 

Thus each $\mwk{n}{F}$ has the structure of a $\gw{F}$-module (and hence also of a $\GR{\Z}{F^\times}$-module), with the action given by 
$\ffist{a}\cdot([a_1]\cdots [a_n])=\eta[a] [a_1]\cdots [a_n]$.

We record here some elementary identities in Milnor-Witt $K$-theory which we will need below.
\begin{lem}\label{lem:mwkid1} 
Let $a,b\in F^\times$.  The following identities hold in the Milnor-Witt $K$-theory of $F$:
\begin{enumerate}
\item $[a][-1]=[a][a]$.
\item $[ab]=[a]+\wgen{a}[b]$.
\item $[a][b]=-\wgen{-1}[b][a]$.
\end{enumerate} 
\end{lem}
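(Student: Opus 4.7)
Identity (2) is essentially a reformulation of relation (a). Using the identification $\wgen{a} = 1 + \eta[a]$ coming from the ring isomorphism $\gw{F} \cong \mwk{0}{F}$ described in the paragraphs preceding the lemma, together with relation (c) placing $\eta$ in the centre, I compute $\wgen{a}[b] = [b] + \eta[a][b]$. Thus $[a] + \wgen{a}[b] = [a] + [b] + \eta[a][b] = [ab]$, giving (2) at once.

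For identity (1), my plan is to first establish the auxiliary relation $[a][-a] = 0$ for all $a \in F^\times$. The cases $a = \pm 1$ reduce to $[1] = 0$, which I verify by applying (a) with $a = b = 1$ and using the already-noted consequence that $\eta[1] = 0$. For $a \neq 0, \pm 1$, I exploit the algebraic identity $-a = (1-a)(1-a^{-1})^{-1}$ valid in $F$. Applying (2) twice (and the derived formula $[u^{-1}] = -\wgen{u}^{-1}[u]$, itself obtained from $0 = [1] = [u\cdot u^{-1}]$) expresses $[-a]$ as an explicit $\mwk{0}{F}$-linear combination of $[1-a]$ and $[1-a^{-1}]$. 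Multiplying this expression on the left by $[a]$ and invoking the Steinberg relation (b) at $a$ (giving $[a][1-a] = 0$) together with a rewriting of the Steinberg relation at $a^{-1}$ (to handle $[a][1-a^{-1}]$) yields $[a][-a] = 0$. Identity (1) now follows by expanding $[-a] = [-1] + \wgen{-1}[a]$ via (2) in $0 = [a][-a]$ and simplifying, using that $\wgen{-1}$ is a unit in $\mwk{0}{F} = \gw{F}$.

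Identity (3) is then deduced from (1) by applying (1) to the element $ab$ in place of $a$, giving $[ab][ab] = [ab][-1]$. Expanding both occurrences of $[ab]$ on the left via (2), and using (1) on the diagonal terms to replace $[a][a]$ by $[a][-1]$ and $[b][b]$ by $[b][-1]$, the diagonal contributions cancel the right-hand side. What survives is a relation expressing $[a][b] + \wgen{-1}[b][a]$ (with appropriate $\wgen{a}$-factors that can be manoeuvred) as zero, which rearranges to (3).

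The main obstacle I anticipate is the noncommutativity of $\mwk{\bullet}{F}$, which makes the manipulations in (1) and (3) delicate: one must track carefully the order of factors when moving an element of $\mwk{0}{F}$ across a degree-one element. Overcoming this requires the auxiliary observation that, although $\wgen{a}[b]$ and $[b]\wgen{a}$ differ in general, their difference is $\eta([a][b]-[b][a])$, and hence lies in the image of $\eta$, which is itself central by (c); this centrality-modulo-$\eta$ is what makes the cancellations in the derivations of (1) and (3) go through cleanly. Once these bookkeeping issues are handled, the identities follow from relations (a)--(d) and the Steinberg relation without further input.
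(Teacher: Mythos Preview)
Your argument for (2) matches the paper's one-line proof. For (1) and (3) the paper simply cites \cite{hutchinson:tao}, Lemma 2.7, so your self-contained derivation supplies more than the paper does and follows the standard route through $[a][-a]=0$.

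There is, however, a small but genuine slip in your final step for (1). Expanding $[-a]=[-1]+\wgen{-1}[a]$ in $0=[a][-a]$ yields
\[
[a][-1]+\wgen{-1}[a][a]=0,\qquad\text{i.e.}\qquad [a][-1]=-\wgen{-1}[a][a],
\]
and invoking that $\wgen{-1}$ is a unit does not turn this into $[a][-1]=[a][a]$, since $-\wgen{-1}\neq 1$ in $\gw{F}$ in general. The clean fix is to expand the \emph{second} factor instead: write $a=(-1)(-a)$, so $[a]=[-1]+\wgen{-1}[-a]$, and then
\[
[a][a]=[a]\bigl([-1]+\wgen{-1}[-a]\bigr)=[a][-1]+\wgen{-1}\,[a][-a]=[a][-1].
\]

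Your handling of the noncommutativity is also slightly imprecise. The crucial fact is not merely that $\eta$ is central, but that $\eta[a][b]=\eta[b][a]$ for all $a,b$; this drops out immediately from relation~(a) applied to $[ab]=[ba]$, and it forces $\wgen{a}[b]=[b]\wgen{a}$, i.e.\ all of $\mwk{0}{F}$ is central. Once this is stated explicitly, the bookkeeping in your derivation of (3) goes through: expanding $[ab][ab]=[ab][-1]$ via (2) and cancelling the diagonal terms using (1) leaves $[a][b]+[b][a]=-\eta[a][b][-1]$, and the centrality of $\mwk{0}{F}$ (equivalently, the symmetry of $\eta[\cdot][\cdot]$) lets you rewrite the right side as $-\eta[-1][b][a]$, which is exactly $[a][b]+\wgen{-1}[b][a]=0$.
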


\begin{proof}\ 

\begin{enumerate}
\item See, for example, the proof of Lemma 2.7 in \cite{hutchinson:tao}.
\item $\wgen{a}{b}=(\eta[a]+1)[b]=\eta[a][b]+[b]=[ab]-[a]$.
\item See \cite{hutchinson:tao}, Lemma 2.7.
\end{enumerate}
\end{proof}
\subsection{Homology of Groups}

Given a group $G$ and a $\GR{\Z}{G}$-module $M$, $\ho{n}{G}{M}$ will denote the $n$th homology group of $G$ with coefficients in the 
module $M$. $B_\bullet(G)$ will denote the \emph{right bar resolution of $G$}: $B_n(G)$ is the free right $\GR{\Z}{G}$-module with basis
the elements $[g_1|\cdots |g_n]$, $g_i\in G$. ($B_0(G)$ is isomorphic to $\GR{\Z}{G}$ with generator the symbol $[\ ]$.) The boundary 
$d=d_n:B_n(G)\to B_{n-1}(G)$, $n\geq 1$, is given by 
\[
d([g_1|\cdots|g_n])= \sum_{i=o}^{n-1}(-1)^i[g_1|\cdots|\hat{g_i}|\cdots|g_n]+(-1)^n[g_1|\cdots|g_{n-1}]\fgen{g_n}.
\]  
The augmentation $B_0(G)\to \Z$ makes $B_\bullet(G)$ into a free resolution of the trivial $\GR{\Z}{G}$-module $\Z$, and thus 
$\ho{n}{G}{M}=H_n(B_\bullet(G)\otimes_{\GR{\Z}{G}}M)$.

If $C_\bullet = (C_q,d)$ is a non-negative complex of $\GR{\Z}{G}$-modules, then 
$E_{\bullet,\bullet}:=B_\bullet(G)\otimes_{\gr{\Z}{G}}C_\bullet$ 
is a double complex  of abelian groups.
Each of the two filtrations on $E_{\bullet,\bullet}$ gives a spectral sequence  converging to the homology of the total complex of 
$E_{\bullet,\bullet}$,  which is by definition, 
$\ho{\bullet}{G}{C}$. (see, for example, Brown, \cite{brown:coh}, Chapter VII).

The first spectral sequence has the form
\[
E^2_{p,q}=\ho{p}{G}{H_q(C)}\Longrightarrow \ho{p+q}{G}{C}.
\] 
In the special case that there is a weak equivalence $C_\bullet \to \Z$ (the complex consisting of the trivial module 
$\Z$ concentrated in dimension $0$), it follows that $\ho{\bullet}{G}{C}=\ho{\bullet}{G}{\Z}$. 

The second spectral sequence has the form 
\[
E^1_{p,q}=\ho{p}{G}{C_q}\Longrightarrow\ho{p+q}{G}{C}.
\]
Thus, if $C_\bullet$ is weakly equivalent to $\Z$, this gives a spectral sequence converging to $\ho{\bullet}{G}{\Z}$. 

Our analysis of the homology of  special linear groups will exploit the action of these
groups on certain permutation modules. It is straightforward to compute the map induced on homology groups by  a map of 
permutation modules. We recall the following basic principles (see, for example, \cite{hutchinson:mat}): 
If $G$ is a group and if $X$ is a $G$-set, then Shapiro's Lemma says 
that
\[
\ho{p}{G}{\Z[X]}\cong \bigoplus_{y\in X/G}\ho{p}{G_y}{\Z},
\]
the isomorphism being induced by the maps 
\[
\ho{p}{G_y}{\Z}\to \ho{p}{G}{\Z[X]}
\]
described at the level of chains by
\[
B_p\otimes_{\gr{\Z}{G_y}}\Z\to B_p\otimes_{\gr{\Z}{G}}\Z[X],\quad z\otimes 1\mapsto z\otimes y.
\]

Let $X_i$, $i=1,2$ be transitive $G$-sets. Let $x_i\in X_i$ and let $H_i$ be the stabiliser of $x_i$, $i=1,2$. Let $\phi:\Z[X_1]\to\Z[X_2]$ 
be a map of $\Z[G]$-modules with
\[
\phi(x_1)=\sum_{g\in G/H_2}n_ggx_2,\qquad \mbox{ with } n_g\in\Z.
\] 
Then the induced map $\phi_\bullet:\ho{\bullet}{H_1}{\Z}\to\ho{\bullet}{H_2}{\Z}$ is given by the formula
\begin{eqnarray}\label{formula}
\phi_\bullet(z)
=\sum_{g\in H_1\backslash G/H_2}n_g\mathrm{cor}^{H_2}_{g^{-1}H_1g\cap H_2}
\mathrm{res}^{g^{-1}H_1g}_{g^{-1}H_1g\cap H_2}\left(g^{-1}\cdot z\right)
\end{eqnarray}

There is an obvious extension of this formula to non-transitive $G$-sets.

\subsection{Homology of $\specl{n}{F}$ and Milnor-Witt $K$-theory}\label{sec:hommwk}

Let $F$ be an infinite field.  

The theorem of Matsumoto and Moore (\cite{mat:pres}, \cite{moore:pres}) gives a presentation of the 
group $\hoz{2}{\specl{2}{F}}$. It has the following form: 
The generators are symbols $\an{a_1,a_1}$, $a_i\in F^\times$, subject to the relations:
\begin{enumerate}
\item[(i)] $\an{a_1,a_2}=0$ if $a_i=1$ for some $i$
\item[(ii)] $\an{a_1,a_2}=\an{a_2^{-1}, a_1}$
\item[(iii)] $\an{a_{1},a_2b_2} +\an{a_2,b_2}=\an{a_{1}a_2,b_2}+\an{a_1,a_2}$
\item[(iv)] $\an{a_1,a_2}=\an{a_1,-a_{1}a_2}$
\item[(v)] $\an{a_1,a_2}=\an{a_{1},(1-a_{1})a_2}$       
\end{enumerate}

It can be shown that for all $n\geq 2$, $\mwk{n}{F}$ admits a (generalised) Matsumoto-Moore presentation:

\begin{thm}[\cite{hutchinson:tao}, Theorem 2.5]\label{thm:matmoore}
For $n\geq 2$, $\mwk{n}{F}$ admits the following presentation as an additive group:

Generators: The elements $[a_1][a_2]\cdots[a_n]$, $a_i\in F^\times$.

Relations:\ \\
\begin{enumerate}
\item[(i)] $[a_1][a_2]\cdots[a_n]=0$ if $a_i=1$ for some $i$.
\item[(ii)] $[a_1]\cdots[a_{i-1}][a_i]\cdots[a_n]=[a_1]\cdots[a_i^{-1}][a_{i-1}]\cdots[a_n]$
\item[(iii)] $[a_1]\cdots[a_{n-1}][a_nb_n]+[a_1]\cdots\widehat{[a_{n-1}]}[a_{n}][b_n]=[a_1]\cdots[a_{n-1}a_n][b_n]+
[a_1]\cdots[a_{n-1}][a_n]$
\item[(iv)] $[a_1]\cdots[a_{n-1}][a_n]=[a_1]\cdots[a_{n-1}][-a_{n-1}a_n]$
\item [(v)]$[a_1]\cdots[a_{n-1}][a_n]=[a_1]\cdots[a_{n-1}][(1-a_{n-1})a_n]$
\end{enumerate}
\end{thm}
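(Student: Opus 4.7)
The plan is to write $A_n(F)$ for the abelian group given by the stated presentation, construct a natural surjection $\phi\colon A_n(F)\twoheadrightarrow \mwk{n}{F}$ sending the generator $[a_1]\cdots[a_n]$ to the obvious product, and then argue that $\phi$ is also injective. To build $\phi$, I first check that each of the five relations holds in $\mwk{n}{F}$. Relation (i) is immediate from $[1]=0$ in $\mwk{1}{F}$, a formal consequence of Morel axiom (a). Relation (iii) is literally Morel axiom (a), $[a_nb_n]=[a_n]+[b_n]+\eta[a_n][b_n]$, multiplied by $[a_1]\cdots[a_{n-2}]$. Relation (v) is Morel axiom (b) multiplied on the right by $[(1-a_{n-1})a_n]/[a_n]$, combined with (iii). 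Relation (ii) is obtained from the identity $[a^{-1}]=-\wgen{a^{-1}}[a]$ (a consequence of Morel (a) applied to $aa^{-1}$) together with the anticommutativity of Lemma \ref{lem:mwkid1}(3); one verifies directly that it reduces positions $i-1$ and $i$ to $i^{-1}$ and $i-1$. Relation (iv) follows from the standard identity $[a][-a]=0$ (derivable from (v) via $-a=(1-a)/(1-a^{-1})$ and logarithmic expansion), inserted into the middle of a product.

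For surjectivity of $\phi$ one must verify that the pure products $[a_1]\cdots[a_n]$ generate $\mwk{n}{F}$ additively once $n\geq 2$. Using the short exact sequence $0\to I^{n+1}(F)\to\mwk{n}{F}\to\milk{n}{F}\to 0$ from Section 2, it suffices to express each Pfister class $\eta[b_0][b_1]\cdots[b_n]\in I^{n+1}(F)\subset\mwk{n}{F}$ as a $\Z$-linear combination of $n$-fold pure products. This is achieved by the rewriting rule
\[
\eta [b_0][b_1]\cdots[b_n]=\wgen{b_0}[b_1]\cdots[b_n]-[b_1]\cdots[b_n]
\]
combined with $\wgen{c}[d]=[cd]-[c]$ (Lemma \ref{lem:mwkid1}(2)) applied in the first position; iterating turns the left-hand side into a finite sum of pure products.

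For injectivity—which I expect to be the main obstacle—I would argue by induction on $n$. The base case $n=2$ is the classical theorem of Matsumoto and Moore combined with Suslin's identification $\hoz{2}{\specl{2}{F}}\cong \mwk{2}{F}$ recalled at the start of this subsection. For the inductive step, I would exploit Morel's cartesian square
\[
\mwk{n}{F}\cong \milk{n}{F}\times_{i^n(F)}I^n(F),
\]
and verify injectivity on each factor. On the Milnor side, quotienting the presentation of $A_n(F)$ by the subgroup generated by elements $[a_1]\cdots[a_n]+[a_1]\cdots[a_i^{-1}]\cdots[a_n]$ (i.e., forcing $\eta$ to act as zero) reduces relations (ii)–(iv) to the Milnor relations and yields exactly the standard presentation of $\milk{n}{F}$. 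On the Witt side, I would identify the kernel of the projection $A_n(F)\to \milk{n}{F}$ with a quotient of pure products whose image lies in $I^n(F)$, and check that the induced surjection onto $I^n(F)$ is an isomorphism by comparing with a Pfister-generated presentation of $I^n(F)$ (using Arason–Pfister and the results of Voevodsky–Orlov–Vishik recalled above to handle the $i^n(F)$-compatibility). The delicate point is that the presentation has no explicit $\eta$ and no explicit Pfister relations, yet must capture both sides of the cartesian square; the rewriting in the previous paragraph, together with (ii) and (iv), is what encodes the missing Pfister information.
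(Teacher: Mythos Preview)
The paper does not contain a proof of this theorem: it is quoted as a background result from \cite{hutchinson:tao}, Theorem 2.5, and is used here only as input (for instance in Theorem \ref{thm:mm}). So there is no ``paper's own proof'' to compare your attempt against.

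That said, let me comment on your outline on its own merits. The construction of $\phi$ and the verification that relations (i)--(v) hold in $\mwk{n}{F}$ are essentially correct, though several of your one-line justifications (for (ii) and (iv) especially) hide nontrivial identities such as $[a][-a]=0$ and $(\wgen{b}-\wgen{-1})[b]=0$ that deserve to be written out. Your surjectivity argument is fine.

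The injectivity argument, however, has a genuine gap. Using Morel's cartesian square to test injectivity of $\phi\colon A_n(F)\to\mwk{n}{F}$ would require you to know in advance that $A_n(F)$ itself sits in an analogous pullback square, or equivalently that the kernel of your auxiliary map $A_n(F)\to\milk{n}{F}$ is isomorphic to $I^{n+1}(F)$ (not $I^n(F)$, as you wrote). You have not established this; you only know that this kernel \emph{surjects} onto $I^{n+1}(F)$ via $\phi$. Proving that surjection is an isomorphism is exactly as hard as the original problem, so the induction does not close. Your final paragraph acknowledges this (``the delicate point'') but does not resolve it. The actual proof in \cite{hutchinson:tao} proceeds differently: it constructs an explicit inverse map by checking that the Milnor--Witt generators and relations can be rewritten in terms of the Matsumoto--Moore generators and relations, rather than appealing to the cartesian square.
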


In particular, it follows when $n=2$ that there is a natural isomorphism $\mwk{2}{F}\cong \hoz{2}{\specl{2}{F}}$. This last 
fact is essentially 
due to Suslin (\cite{sus:tors}). A more recent proof, which we will need to invoke below,  has been given by Mazzoleni (\cite{mazz:sus}).

Recall that Suslin (\cite{sus:homgln}) has constructed a natural surjective homomorphism $\hoz{n}{\genl{n}{F}}\to\milk{n}{F}$ whose 
kernel is the image of $\hoz{n}{\genl{n-1}{F}}$.  

In \cite{hutchinson:tao2}, the authors proved that the map $\hoz{3}{\specl{3}{F}}\to \hoz{3}{\genl{3}{F}}$ is injective, that 
the image of the composite $\hoz{3}{\specl{3}{F}}\to \hoz{3}{\genl{3}{F}}\to\milk{3}{F}$ is $2\milk{3}{F}$ and that the kernel 
of this composite is precisely the image of $\hoz{3}{\specl{2}{F}}$. 

In the next section we will construct natural homomorphisms $\T{n}\circ\ee{n}:\hoz{n}{\specl{n}{F}}\to \mwk{n}{F}$, 
in a manner entirely analogous to Suslin's construction. In particular, the image of $\hoz{n}{\specl{n-1}{F}}$ is contained 
in the kernel of $\T{n}\circ\ee{n}$ and the diagrams 
\[
\xymatrix{
\hoz{n}{\specl{n}{F}}\ar[r]\ar[d]&\mwk{n}{F}\ar[d]\\
\hoz{n}{\genl{n}{F}}\ar[r]&\milk{n}{F}
}
\]
commute.  It follows that the image of $\T{3}\circ\ee{3}$ is $2\milk{3}{F}\subset\mwk{3}{F}$, and its kernel is the image of 
$\hoz{3}{\specl{2}{F}}$.

\section{The algebra $\SSn{F^\bullet}$}\label{sec:prelim}
In this section we introduce a graded algebra functorially associated to $F$ which admits a natural homomorphism to Milnor-Witt 
$K$-theory and from the homology of $\specl{n}{F}$. It is the analogue of Suslin's algebra $S_\bullet(F)$ in \cite{sus:tors}, which 
admits homomorphisms to Milnor $K$-theory and from the homology of $\genl{n}{F}$. However, we will need to modify this algebra in 
the later sections below, by projecting onto the `\mult' part, in order to derive our results about the homology of $\specl{n}{F}$.

We say that a finite set of vectors $v_1,\ldots,v_q$ in an $n$-dimensional vector space $V$ \emph{are in general position} if 
every subset of size $\mathrm{min}(q,n)$ is linearly independent. 

If $v_1,\ldots, v_q$ are elements of the $n$-dimensional vector space $V$ and if $\mathcal{E}$ is an ordered  basis of $V$, 
we let $[v_1|\cdots|v_q]_\mathcal{E}$   denote the $n\times q$ matrix whose $i$-th column is the components of $v_i$ with respect to the 
basis $\mathcal{E}$.

\subsection{Definitions}
For a field  $F$ and finite-dimensional vector spaces $V$ and $W$, we let $\xgen{p}{W,V}$ denote the set of all ordered $p$-tuples
of the form
\[
((w_1,v_1),\ldots,(w_p,v_p))
\]
where $(w_i,v_i)\in W\oplus V$ and the $v_i$ are in general position. 
We also define $\xgen{0}{W,V}:=\emptyset$. 
$\xgen{p}{W,V}$ is naturally an $\aff{W}{V}$-module, where 
\[
\aff{W}{V}:= 
{\begin{pmatrix}
\id{W}&\hom{}{V}{W}\\
0&\gnl{V}
\end{pmatrix}}
\subset \gnl{W\oplus V}
\]
Let $\cgen{p}{W,V}=\Z[\xgen{p}{W,V}]$, the free abelian group with basis the elements of $\xgen{p}{W,V}$. We obtain a complex, 
$\cgen{\bullet}{W,V}$, of $\aff{W}{V}$-modules by introducing the natural simplicial boundary map
\begin{eqnarray*}
d_{p+1}:\cgen{p+1}{W,V}&\to&\cgen{p}{W,V}\\
((w_1,v_1),\ldots,(w_{p+1},v_{p+1}))&\mapsto& \sum_{i=1}^{p+1}(-1)^{i+1}
((w_1,v_1),\ldots,\widehat{(w_i,v_i)},\ldots,(w_{p+1},v_{p+1}))
\end{eqnarray*}
\begin{lem} If $F$ is infinite, then $\homol{p}{\cgen{\bullet}{W,V}}=0$ for all $p$.
\end{lem}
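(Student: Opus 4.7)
The plan is to prove acyclicity by constructing, for each cycle, a chain contraction depending on that cycle. The standard template is: define a partial chain homotopy
\begin{equation*}
s\bigl((w_1,v_1),\ldots,(w_p,v_p)\bigr) = \bigl((w_0,v_0),(w_1,v_1),\ldots,(w_p,v_p)\bigr)
\end{equation*}
for a suitably chosen auxiliary pair $(w_0,v_0)\in W\oplus V$. A direct simplicial calculation with the boundary formula gives the identity $d\circ s + s\circ d = \mathrm{id}$ on any subcomplex on which $s$ is defined. So the real content is choosing $(w_0,v_0)$ so that $s$ takes values in $X_\bullet(W,V)$, i.e.\ preserves the general position hypothesis on the $v$-coordinates.

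First I would reduce to a finite situation. Let $z\in C_p(W,V)$ be a cycle; only finitely many tuples $x_\alpha = ((w_{\alpha,1},v_{\alpha,1}),\ldots,(w_{\alpha,p},v_{\alpha,p}))$ appear with nonzero coefficient in $z$, and hence only a finite set $T\subset V$ of distinct vectors $v_{\alpha,j}$ occurs. Let $n=\dim V$. Form the (finite) collection $\mathcal{H}$ of proper subspaces of $V$ spanned by subsets of $T$ of size at most $n-1$. Since $F$ is infinite, a finite union of proper subspaces of $V$ is a proper subset of $V$, so I can choose $v_0\in V$ lying in none of the subspaces in $\mathcal{H}$, and then take any $w_0\in W$.

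With this choice, I claim that for every $x_\alpha$ in the support of $z$ (and, more importantly, for every tuple that arises as a face of such an $x_\alpha$, so that $s$ is defined on $d(z)$ as well), prepending $(w_0,v_0)$ yields an element of $X_{p+1}(W,V)$: any $\min(p+1,n)$ of the vectors $v_0,v_{\alpha,1},\ldots,v_{\alpha,p}$ must be independent; the subsets not containing $v_0$ are independent because the original $x_\alpha$ is in general position, and the subsets containing $v_0$ consist of $v_0$ together with at most $n-1$ elements of $T$, which by construction do not span $v_0$. Hence $s$ extends $\Z$-linearly to the finite subcomplex generated by the support of $z$ and $d(z)$, and the identity $(ds+sd)(z) = z$ shows $z = d(s(z))$, so $z$ is a boundary.

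I do not expect any real obstacle here; the only point that requires care is the bookkeeping that $s$ is applied both to $z$ and to $d(z)$, so $\mathcal{H}$ must be defined using all vectors that appear in tuples of $z$ (faces automatically use a subset of those vectors, so no further enlargement is needed). The infinitude of $F$ is used only in the single step of picking $v_0$ outside a finite union of proper subspaces.
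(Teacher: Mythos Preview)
Your proof is correct and follows essentially the same approach as the paper's: choose an auxiliary vector in general position with respect to all vectors appearing in the cycle, and use the cone operator as a (partial) homotopy. The only cosmetic differences are that the paper appends $(0,v)$ at the end rather than prepending, defines the operator $s_v$ to be zero on tuples where general position fails (rather than restricting to a finite subcomplex), and simply asserts that such a $v$ exists rather than spelling out the finite-union-of-proper-subspaces argument.
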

\begin{proof}
If 
\[
z=\sum_in_i((w^i_1,v^i_1),\ldots,(w^i_p,v^i_p))\in \cgen{p}{W,V}
\]
 is a cycle, then since $F$ is infinite,
 it is possible to choose $v\in V$ such that $v,v^i_1,\ldots,v^i_p$ are in general position for all $i$. Then $z=d_{p+1}((-1)^ps_v(z))$ 
where $s_v$ is the `partial homotopy operator' defined by 
\[
s_v((w_1,v_1),\ldots,(w_p,v_p))=
\left\{
\begin{array}{ll}
((w_1,v_1),\ldots,(w_p,v_p),(0,v)), &\mbox{ if } v,v_1,\ldots v_p \mbox{ are in general position},\\
0,&\mbox{ otherwise}\\
\end{array}
\right.
\]
\end{proof}

We will assume our field $F$ is infinite for the remainder of this section. (In later sections, it will even be assumed to be of 
characteristic zero.)

If $n=\dim{F}{V}$, we let $H(W,V):=\ker{d_n}=\image{d_{n+1}}$. This is an $\aff{W}{V}$-submodule of $\cgen{n}{W,V}$. Let $\SSS(W,V):=
\ho{0}{\saff{W}{V}}{H(W,V)}=H(W,V)_{\saff{W}{V}}$ where $\saff{W}{V}:=\aff{W}{V}\cap\spcl{W\oplus V}$.  

If $W'\subset W$, there are natural inclusions $\xgen{p}{W',V}\to \xgen{p}{W,V}$ inducing a map of complexes of $\aff{W'}{V}$-modules 
$\cgen{\bullet}{W',V}\to\cgen{\bullet}{W,V}$.

When $W=0$, we will use the notation, $\xgen{p}{V}$, $\cgen{p}{V}$, $H(V)$ and $\SSS(V)$ instead of 
$\xgen{p}{0,V}$, $\cgen{p}{0,V}$, $H(0,V)$ and $\SSS(0,V)$ 

Since, $\aff{W}{V}/\saff{W}{V}\cong F^\times$, any homology group of the form $\ho{i}{\saff{W}{V}}{M}$ where $M$ is a $\aff{W}{V}$-module 
is naturally a $\grf{F}$-module: 
If $a\in F^\times$ and if  $g\in \aff{W}{V}$ is any element of determinant $a$, then the action of $a$ is
the map on homology induced by conjugation by $g$ on $\aff{W}{V}$ and multiplication by $g$ on $M$. 
In particular, the groups $\SSS(W,V)$ are $\grf{F}$-modules.

Let $e_1,\ldots, e_n$ denote the standard basis of $F^n$. Given $a_1,\ldots,a_n\in F^\times$, we let 
$\ssb{a_1,\ldots,a_n}$ denote the class of $d_{n+1}(e_1,\ldots,e_n,a_1e_1+\cdots+a_ne_n)$ in $\SSS(F^n)$.   
If $b\in F^\times$, then 
$\fgen{b}\cdot\ssb{a_1,\ldots,a_n}$ is represented by 
\[
d_{n+1}(e_1,\ldots,be_i,\ldots,e_n,a_1e_1+\cdots a_ibe_i
\cdots +a_ne_n)
\]
 for any $i$. (As a lifting of $b\in F^\times$, choose the diagonal matrix with $b$ in the $(i,i)$-position and $1$ in all 
other diagonal positions.)

\begin{rem}\label{rem:ssn}
Given $x= (v_1,\ldots,v_v,v)\in\xgen{n+1}{F^n}$, let $A=[v_1|\cdots|v_n]\in\genl{n}{F}$ of determinant $\det{A}$ and 
let $A'=\mathrm{diag}(1,\ldots,1,\det{A})$. Then $B=A'A^{-1}\in\specl{n}{F}$ and thus $x$ is in the $\specl{n}{F}$-orbit 
of $(e_1,\ldots,e_{n-1},\det{A} e_n,A'w)$ with $w=A^{-1}v$, and hence $d_{n+1}(x)$ represents the element 
$\fgen{\det{A}}\ssb{w}$ in $\SSn{F^n}$.

\end{rem}
\begin{thm}\label{thm:pres}
$\SSS(F^n)$ has the following presentation as a $\gr{\Z}{F^\times}$-module:

Generators:  The elements $\ssb{a_1,\ldots,a_n}$, $a_i\in F^\times$

Relations: For all $a_1,\ldots,a_n\in F^\times$ and for all $b_1,\ldots,b_n\in F^\times$ with $b_i\not=b_j$ for $i\not= j$
\[
\ssb{b_1a_1,\ldots,b_na_n}-\ssb{a_1,\ldots,a_n}=\sum_{i=1}^{n}(-1)^{n+i}\fgen{(-1)^{n+i}a_i}
\ssb{a_1(b_1-b_i),\ldots,\widehat{a_i(b_i-b_i)},\ldots,a_n(b_n-b_i),b_i}.
\]
\end{thm}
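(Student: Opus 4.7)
The plan is to realise $\SSS(F^n)$ as a quotient of the $\specl{n}{F}$-coinvariants of $\cgen{n+1}{F^n}$, to identify those coinvariants with a free $\gr{\Z}{F^\times}$-module on the symbols $\ssb{a_1,\ldots,a_n}$, and then to extract the stated relations by computing $d_{n+2}$ on a single family of $(n+2)$-tuples.

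First I would introduce $\psi\colon \cgen{n+1}{F^n}\to M$, where $M$ denotes the free $\gr{\Z}{F^\times}$-module on symbols $\ssb{a_1,\ldots,a_n}$ with $a_i\in F^\times$, by the recipe of Remark \ref{rem:ssn}:
\[
\psi(v_1,\ldots,v_n,v)=\fgen{\det A}\ssb{A^{-1}v},\qquad A=[v_1|\cdots|v_n].
\]
Using $\det(gA)=\det(g)\det(A)$ and $(gA)^{-1}gv=A^{-1}v$, one checks that $\psi$ is $\specl{n}{F}$-invariant and $\gr{\Z}{F^\times}$-equivariant for the lift $a\mapsto \diag{1,\ldots,1,a}$, so it descends to $\bar\psi\colon (\cgen{n+1}{F^n})_{\specl{n}{F}}\to M$.

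Next I would analyse the $\specl{n}{F}$-orbits on $\xgen{n+1}{F^n}$: given $(v_1,\ldots,v_n,v)$ in general position, setting $c=\det[v_1|\cdots|v_n]$ and $B=\diag{1,\ldots,1,c}\cdot [v_1|\cdots|v_n]^{-1}\in\specl{n}{F}$ brings the tuple into the form $(e_1,\ldots,e_{n-1},ce_n,\sum b_j e_j)$ with all $b_j\in F^\times$, and the stabiliser of $(e_1,\ldots,e_{n-1},ce_n)$ in $\specl{n}{F}$ is trivial. Hence $(\cgen{n+1}{F^n})_{\specl{n}{F}}$ is free abelian on $(F^\times)^{n+1}$, and $\bar\psi$ sends this basis bijectively to the basis $\{\fgen{c}\ssb{b_1,\ldots,b_{n-1},b_n/c}\}$ of $M$; thus $\bar\psi$ is an isomorphism of $\gr{\Z}{F^\times}$-modules. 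Since $\cgen{\bullet}{F^n}$ is exact in positive degrees, $H(F^n)=\cgen{n+1}{F^n}/d_{n+2}(\cgen{n+2}{F^n})$, so after coinvariants
\[
\SSS(F^n)\cong M\big/\bar\psi\bigl(d_{n+2}\bigl((\cgen{n+2}{F^n})_{\specl{n}{F}}\bigr)\bigr).
\]
Every $\genl{n}{F}$-orbit on $\xgen{n+2}{F^n}$ has a representative of the form $\eta=(e_1,\ldots,e_n,v,w)$, and writing $v=\sum a_j e_j$, $w=\sum a_jb_j e_j$ the general-position hypotheses become precisely $a_j,b_j\in F^\times$ and $b_i\neq b_j$ for $i\neq j$; by $\gr{\Z}{F^\times}$-equivariance the image in $M$ is generated as a $\gr{\Z}{F^\times}$-submodule by the elements $\bar\psi(d_{n+2}\eta)$.

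The substantive step is to evaluate $\bar\psi(d_{n+2}\eta)$. The two faces omitting $v$ or $w$ contribute $(-1)^n\ssb{a_1b_1,\ldots,a_nb_n}+(-1)^{n+1}\ssb{a_1,\ldots,a_n}$. For the face omitting $e_i$ the relevant matrix has columns $e_1,\ldots,\widehat{e_i},\ldots,e_n,v$; expansion along the $i$-th row gives determinant $(-1)^{n+i}a_i$, and inversion yields $A^{-1}w=(a_1(b_1-b_i),\ldots,\widehat{a_i(b_i-b_i)},\ldots,a_n(b_n-b_i),b_i)$, so this face contributes $(-1)^{i+1}\fgen{(-1)^{n+i}a_i}\ssb{a_1(b_1-b_i),\ldots,\widehat{a_i(b_i-b_i)},\ldots,a_n(b_n-b_i),b_i}$. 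Summing and multiplying through by $(-1)^n$ reproduces exactly the relation in the theorem, so the image of $d_{n+2}$ in $M$ is the $\gr{\Z}{F^\times}$-submodule generated by the stated relations. The main obstacle is the sign and general-position bookkeeping for the deleted-$e_i$ face: verifying that the minor expansion yields $(-1)^{n+i}a_i$, that the inverse $A^{-1}w$ places $b_i$ in the final slot with the other coordinates of the form $a_j(b_j-b_i)$, and that these match the outer two faces to reproduce the signs stated; once this is done, no further input beyond the orbit analysis above is required.
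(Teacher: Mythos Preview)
Your proposal is correct and follows essentially the same approach as the paper: both realise $\SSS(F^n)$ as the quotient of $(\cgen{n+1}{F^n})_{\specl{n}{F}}$ by the image of $d_{n+2}$, identify the coinvariants with a free $\gr{\Z}{F^\times}$-module via the orbit analysis of Remark~\ref{rem:ssn}, and then compute $d_{n+2}$ on the representatives $(e_1,\ldots,e_n,v,w)$ by inverting the matrix $M_i(a)=[e_1|\cdots|\widehat{e_i}|\cdots|e_n|v]$. Your explicit definition of the map $\psi$ is just a repackaging of the paper's orbit decomposition, and your sign and inverse-matrix computations agree with the paper's.
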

\begin{proof}
Taking $\specl{n}{F}$-coinvariants of the exact sequence of $\gr{\Z}{\genl{n}{F}}$-modules
\[
\xymatrix{
\cgen{n+2}{F^n}\ar[r]^-{d_{n+2}}&\cgen{n+1}{F^n}\ar[r]^-{d_{n+1}}&H(F^n)\ar[r]&0
}
\]
gives the exact sequence of $\grf{F}$-modules
\[
\xymatrix{
\cgen{n+2}{F^n}_{\specl{n}{F}}\ar[r]^-{d_{n+2}}&\cgen{n+1}{F^n}_{\specl{n}{F}}\ar[r]^-{d_{n+1}}&\SSn{F^n}\ar[r]&0.
}
\]

It is straightforward to verify that 
\[
\xgen{n+1}{F^n}\cong \coprod_{a=(a_1,\ldots,a_n),a_i\not= 0}\genl{n}{F}\cdot(e_1,\ldots,e_n,a)
\]
as a $\genl{n}{F}$-set. It follows that 
\[
\cgen{n+1}{F^n}\cong \bigoplus _{a}\gr{\Z}{\genl{n}{F}}\cdot(e_1,\ldots,e_n,a)
\]
as a $\gr{\Z}{\genl{n}{F}}$-module, and thus that 
\[
\cgen{n+1}{F^n}_{\specl{n}{F}}\cong \bigoplus _{a}\grf{F}\cdot(e_1,\ldots,e_n,a)
\]
as a $\grf{F}$-module.

Similarly, every element of $\xgen{n+2}{F^n}$ is in the $\genl{n}{F}$-orbit of a unique element of the form 
$(e_1,\ldots,e_n,a,b\cdot a)$ where $a=(a_1,\ldots,a_n)$ with $a_i\not=0$ for all $i$ and $b=(b_1,\ldots,b_n)$ with $b_i\not= 0$ 
for all $i$ and $b_i\not= b_j$ for all $i\not=j$, and $b\cdot a:= (b_1a_1,\ldots,b_na_n)$. Thus 
\[
\xgen{n+2}{F^n}\cong \coprod_{(a,b)}\genl{n}{F}\cdot(e_1,\ldots,e_n,a,b\cdot a)
\]
as a $\genl{n}{F}$-set and 
\[
\cgen{n+2}{F^n}_{\specl{n}{F}}\cong \bigoplus _{(a,b)}\grf{F}\cdot(e_1,\ldots,e_n,a,b\cdot a)
\]
as a $\grf{F}$-module.

So $d_{n+1}$ induces an isomorphism
\[
\frac{\oplus\grf{F}\cdot(e_1,\ldots,e_n,a)}{\il{d_{n+2}(e_1,\ldots,e_n,a,b\cdot a)| (a,b)}}\cong\SSn{F^n}.
\]

Now $d_{n+2}(e_1,\ldots,e_n,a,b\cdot a)=$
\begin{eqnarray*}
\sum_{i=1}^n(-1)^{i+1}(e_1,\ldots,\hat{e_i},\ldots,e_n,a,b\cdot a)+(-1)^i\big((e_1,\ldots,e_n,b\cdot a)-(e_1,\ldots,e_n,a)\big).
\end{eqnarray*}

Applying the idea of Remark \ref{rem:ssn} to the terms $ (e_1,\ldots,\hat{e_i},\ldots,e_n,a,b\cdot a)$ in the sum above, we let 
$M_i(a):=[e_1|\cdots|\hat{e_i}|\cdots|e_n|a]$ and $\delta_i=\det{M_i(a)}=(-1)^{n-i}a_i$.  Since 
\[
M_i(a)^{-1}=
\begin{pmatrix}
1&\hdots&0&-a_1/a_i&0&\hdots&0\\
0&\ddots&\vdots&\vdots&\vdots&\vdots&\vdots\\
0&\hdots&1&-a_{i-1}/a_i&0&\hdots&0\\
0&\hdots&0&-a_{i+1}{a_i}&1&\hdots&0\\
0&\hdots&0&\vdots&0&\ddots&0\\
0&\hdots&0&-a_n/a_i&0&\hdots&1\\
0&\hdots&0&1/a_i&0&\hdots&0
\end{pmatrix}
\]
it follows that $d_{n+1}(e_1,\ldots,\hat{e_i},\ldots,e_n,a,b\cdot a)$ represents $\fgen{\delta_i}\ssb{w_i}\in \SSn{F^n}$ where 
$w_i=M_i(a)^{-1}(b\cdot a)=(a_1(b_1-b_i),\ldots,\widehat{a_i(b_i-b_i)},\ldots,a_n(b_n-b_i),b_i)$. This proves the theorem.
\end{proof}

\subsection{Products}

If $W'\subset W$, there is a natural bilinear pairing 
\[
\cgen{p}{W',V}\times \cgen{q}{W}\to \cgen{p+q}{W\oplus V},\qquad (x,y)\mapsto x\ssprod  y
\]
 defined on the basis elements by
\[
((w'_1,v_1),\ldots,(w'_p,v_p))\ssprod (w_1,\ldots,w_q):= 
\left((w'_1,v_1),\ldots,(w'_p,v_p),(w_1,0),\ldots,(w_q,0)\right).
\]

This pairing satisfies $d_{p+q}(x\ssprod y)=d_p(x)\ssprod y + (-1)^px\ssprod d_q(y)$.

Furthermore, if $\alpha\in \aff{W'}{V}\subset\gnl{W\oplus V}$ then $(\alpha x)\ssprod y=\alpha(x\ssprod y)$, and if 
$\alpha\in \gnl{V}\subset\aff{W'}{V}\subset\gnl{W\oplus V}$ and $\beta\in \gnl{W}\subset \gnl{W\oplus V}$, then 
$(\alpha x)\ssprod (\beta y)=(\alpha\cdot\beta)(x\ssprod y)$. (However, if $W'\not= 0$ then the images of $\aff{W'}{V}$ and $\gnl{W}$ 
in $\gnl{W\oplus V}$ don't commute.)

In particular, there are induced pairings on homology groups 
\[ 
H(W',V)\otimes H(W)\to H(W\oplus V),
\]
  which in turn induce well-defined 
pairings 
\[
\SSS(W',V)\otimes H(W)\to \SSS(W,V) \mbox{ and }\SSS(V)\otimes \SSS(W)\to \SSS(W\oplus V).
\]
 Observe further that this latter pairing is 
$\gr{\Z}{F^\times}$-balanced:
 If $a\in F^\times$, $x\in \SSS(W)$ and $y\in \SSS(V)$, then $(\fgen{a}x)\ssprod y = x\ssprod (\fgen{a}y)=\fgen{a}(x\ssprod y)$. 
Thus there is a 
well-defined map 
\[
\SSS(V)\otimes_{\gr{\Z}{F^\times}}\SSS(W)\to \SSS(W\oplus V).
\]

In particular, the groups $\{ H(F^n)\}_{n\geq 0}$ form a natural graded (associative) algebra, 
and the groups $\{ \SSS(F^n)\}_{n\geq 0}= \SSn{F^\bullet}$ form a graded associative $\gr{\Z}{F^\times}$-algebra.

The following explicit formula for the product in $\SSn{F^\bullet}$  will be needed below:
\begin{lem}\label{lem:sprod}
Let $a_1,\ldots,a_n$ and $a'_1,\ldots,a'_m$ be elements of $F^\times$.  Let $b_1,\ldots,b_n,b'_1,\ldots,b'_m$ be any elements of 
$F^\times$ satisfying $b_i\not=b_j$ for $i\not= j$ and $b'_s\not=b'_t$ for $s\not= t$.  

Then $\ssb{a_1,\ldots,a_n}\ssprod\ssb{a'_1,\ldots,a'_m}=$
\begin{tiny}
\begin{eqnarray*}
\sum_{i=1}^n\sum_{j=1}^m(-1)^{m+n+i+j}\fgen{(-1)^{i+j}a_ia'_j}
\ssb{a_1(b_1-b_i),\ldots,\widehat{a_i(b_i-b_i)},\ldots,b_i,a'_1(b'_1-b'_j),\ldots,\widehat{a'_j(b'_j-b'_j)},\ldots,b'_j}\\
+(-1)^n\sum_{i=1}^n(-1)^{i+1}\fgen{(-1)^{i+1}a_i}\ssb{a_1(b_1-b_i),\ldots,\widehat{a_i(b_i-b_i)},\ldots,b_i,b'_1a'_1,\ldots,b'_ma'_m}\\
+(-1)^m\sum_{j=1}^m(-1)^{j+1}\fgen{(-1)^{j+1}a'_j}\ssb{b_1a_1,\ldots,b_na_n,a'_1(b'_1-b'_j),
\ldots,\widehat{a'_j(b'_j-b'_j)},\ldots,b'_j}\\
+\ssb{b_1a_1,\ldots,b_na_n,b'_1a'_1,\ldots,b'_ma'_m} 
\end{eqnarray*}
\end{tiny}
\end{lem}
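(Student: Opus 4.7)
The plan is to realize the product at the chain level as
$$\xi := d_{n+1}(v_1,\ldots,v_n,a) \ssprod d_{m+1}(w_1,\ldots,w_m,a') \in H(F^{n+m}) \subset \cgen{n+m}{F^{n+m}},$$
with $a = \sum_i a_i v_i$, $a' = \sum_j a'_j w_j$, and with $F^{n+m}$ identified with $W \oplus V$. Note that at this bidegree the pairing $\cgen{n}{V}\times\cgen{m}{W}\to\cgen{n+m}{W\oplus V}$ lands in general position (since $n=\dim V$ and $m=\dim W$), so $\xi$ is a genuine chain. I then intend to exhibit an $(n+m+1)$-chain $\Psi \in \cgen{n+m+1}{F^{n+m}}$ with $d_{n+m+1}(\Psi) = \xi$; by the exact sequence appearing in the proof of Theorem \ref{thm:pres}, the $\specl{n+m}{F}$-coinvariant class of $\Psi$ then represents $\ssb{a_1,\ldots,a_n}\ssprod \ssb{a'_1,\ldots,a'_m}$ in $\SSn{F^{n+m}}$, and expressing $\Psi$ as a $\Z[F^\times]$-linear combination of standard orbit representatives will yield the right-hand side.

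Expanding the two boundaries gives
$$\xi = \sum_{k=1}^{n+1}\sum_{\ell=1}^{m+1}(-1)^{k+\ell}\,\tau_k\ssprod\tau'_\ell,$$
where $\tau_k$ omits the $k$-th entry of $(v_1,\ldots,v_n,a)$ and $\tau'_\ell$ omits the $\ell$-th entry of $(w_1,\ldots,w_m,a')$. Each $\tau_k\ssprod\tau'_\ell$ is an ordered basis of $F^{n+m}$, but to identify its class in $\SSn{F^{n+m}}$ via Remark \ref{rem:ssn} one must embed it as one face of an $(n+m+1)$-simplex. The parameters $b_i$ and $b'_j$ enter precisely as the scalars used to build the extra ``attaching'' vectors for these $(n+m+1)$-simplices; the distinctness hypotheses $b_i\neq b_j$ and $b'_s\neq b'_t$ are exactly what is needed to keep the resulting $(n+m+1)$-tuples in general position, so $\Psi$ lies in the chain group $\cgen{n+m+1}{F^{n+m}}$.

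Once such a $\Psi$ has been constructed and verified to satisfy $d_{n+m+1}(\Psi) = \xi$, Remark \ref{rem:ssn} converts each of its $(n+m+1)$-simplices $(y_1,\ldots,y_{n+m},y)$ into the generator $\fgen{\det A}\,\ssb{A^{-1}y}$, where $A=[y_1|\cdots|y_{n+m}]$. The four families of $(n+m+1)$-simplices needed to complete all the $\tau_k\ssprod\tau'_\ell$ — those requiring an adjustment by both some $b_i$ and some $b'_j$ (giving the double sum), those requiring adjustment only on the $v$-side (when $\ell=m+1$), those requiring adjustment only on the $w$-side (when $k=n+1$), and the single completion of $\tau_{n+1}\ssprod\tau'_{m+1}=(\tilde v_1,\ldots,\tilde v_n,\tilde w_1,\ldots,\tilde w_m)$ by the vector $\sum_i b_ia_i\tilde v_i + \sum_j b'_ja'_j\tilde w_j$ — will reproduce term-for-term the four sums on the right-hand side.

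The main obstacle is the combinatorial and sign bookkeeping involved in assembling $\Psi$ and verifying $d_{n+m+1}(\Psi) = \xi$ after the numerous cancellations among the boundaries of the various $(n+m+1)$-simplices, together with the matrix-inversion calculations needed to translate each simplex into a standard generator $\fgen{c}\,\ssb{c_1,\ldots,c_{n+m}}$. These matrix calculations directly generalize the computation of $M_i(a)^{-1}$ carried out in the proof of Theorem \ref{thm:pres}, but the bookkeeping is lengthier because both families of parameters $b_i$ and $b'_j$ participate simultaneously.
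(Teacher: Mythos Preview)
Your approach is correct and is essentially the one the paper uses, but you are making it harder on yourself than necessary by not naming the single tool that does all the work.

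The paper's proof is one sentence: apply the partial homotopy operator $s_v$ with the single vector
\[
v = (a_1b_1,\ldots,a_nb_n,\,a'_1b'_1,\ldots,a'_mb'_m),
\]
and then use Remark~\ref{rem:ssn} together with the matrices $M_i(a)$, $M_j(a')$ from the proof of Theorem~\ref{thm:pres}. In other words, one takes $\Psi = (-1)^{n+m}s_v(\xi)$; there is no need to build four separate families of $(n{+}m{+}1)$-simplices with different attaching vectors, and there are no ``numerous cancellations'' to track. Because $\xi$ is already a cycle, $d_{n+m+1}(s_v(\xi)) = \pm\xi$ is automatic once one checks that $v$ is in general position with every $\tau_k\ssprod\tau'_\ell$; this is exactly where the distinctness hypotheses on the $b_i$ and $b'_j$ are used. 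The four displayed sums in the statement then arise not from four different constructions but simply from the four cases $(k\le n,\ell\le m)$, $(k\le n,\ell=m+1)$, $(k=n+1,\ell\le m)$, $(k=\ell=\text{last})$ when one applies Remark~\ref{rem:ssn} to each term $s_v(\tau_k\ssprod\tau'_\ell)$. You already identified the correct vector in your fourth case; use that same $v$ uniformly and the bookkeeping collapses.
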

\begin{proof}
This is an entirely straightforward calculation using the definition of the product, Remark \ref{rem:ssn}, the matrices 
$M_i(a)$, $M_j(a')$ as in the proof of Theorem \ref{thm:pres}, and the partial homotopy operators $s_v$ with 
$v=(a_1b_1,\ldots,a_nb_n,a'_1b'_1,\ldots,a'_mb'_m)$.
\end{proof}
\subsection{The maps $\ee{V}$}

If $\dim{F}{V}=n$, then the exact sequence of $\gnl{V}$-modules 
\begin{eqnarray*}\label{seq:hv}
\xymatrix{
0\ar[r] &H(V)\ar[r]&\cgen{n}{V}\ar[r]^-{d_n}&\cgen{n-1}{V}\ar[r]^-{d_{n-1}}&\cdots\ar[r]^-{d_1}&\cgen{0}{V}=\Z\ar[r]&0
}
\end{eqnarray*}
gives rise to an iterated connecting homomorphism
\[
\ee{V}:\hoz{n}{\spcl{V}}\to\ho{0}{\spcl{V}}{H(V)}=\SSS(V).
\]

Note that the collection of groups $\{\hoz{n}{\specl{n}{F}}\}$ form a graded $\gr{\Z}{F^\times}$-algebra under the 
graded product induced by exterior product on homology, together with the obvious direct sum homomorphism $\specl{n}{F}\times\specl{m}{F}\to 
\specl{n+m}{F}$.

\begin{lem}\label{lem:ee}
The maps $\ee{n}:\hoz{n}{\specl{n}{F}}\to \SSS(F^n)$, $n\geq 0$, 
give a well-defined homomorphism of graded $\gr{\Z}{F^\times}$-algebras; i.e.
\begin{enumerate}
\item If $a\in F^\times$ and $z\in \hoz{n}{\specl{n}{F}}$, then $\ee{n}(\fgen{a}z)=\fgen{a}\ee{n}(z)$ in $\SSS(F^n)$, and
\item If $z\in \hoz{n}{\specl{n}{F}}$ and $w\in \hoz{m}{\specl{m}{F}}$ then 
\[
\ee{n+m}(z\times w)=\ee{n}(z)\ssprod \ee{m}(w)\mbox{ in } \SSS(F^{n+m}).
\]
\end{enumerate}
\end{lem}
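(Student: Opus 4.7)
Both parts rest on the construction of $\ee{V}$ as an iterated connecting homomorphism attached to the exact sequence of $\gnl{V}$-modules
\[
0 \to H(V) \to \cgen{n}{V} \to \cgen{n-1}{V} \to \cdots \to \cgen{1}{V} \to \Z \to 0,
\]
which one splits into short exact sequences $0 \to K_i \to \cgen{i}{V} \to K_{i-1} \to 0$ (with $K_n = H(V)$ and $K_0 = \Z$) and whose boundary maps are composed in $\spcl{V}$-homology.

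Part (1) will follow from naturality. The $\grf{F}$-module structure on $\hoz{n}{\spcl{V}}$ and on $\SSS(V) = \ho{0}{\spcl{V}}{H(V)}$ is, by definition, induced by conjugation by elements of $\gnl{V}$ together with the corresponding action on the coefficient modules. For $a \in F^\times$ and any lift $g \in \gnl{V}$ of determinant $a$, conjugation by $g$ gives an automorphism of $\spcl{V}$, while left multiplication by $g$ gives a compatible automorphism of the whole displayed sequence. Iterated connecting homomorphisms are natural with respect to such compatible automorphisms of (group, exact sequence) pairs, so $\ee{n}(\fgen{a} z) = \fgen{a} \ee{n}(z)$ is immediate.

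Part (2) I would prove at the chain level, working in the double complex
\[
E^{(n)}_{p,q} = B_p(\specl{n}{F}) \otimes_{\GR{\Z}{\specl{n}{F}}} \cgen{q+1}{F^n}
\]
(with $\cgen{n+1}{F^n}$ replaced by $H(F^n)$ on top), together with the analogous $E^{(m)}$ and $E^{(n+m)}$. Given a cycle $\tilde z \in B_n(\specl{n}{F}) \otimes_{\GR{\Z}{\specl{n}{F}}} \Z$ representing $z$, the acyclicity of $\cgen{\bullet}{F^n}$ permits a standard zig-zag lift of $\tilde z$ up the vertical direction of $E^{(n)}$, terminating in a chain $\alpha \in B_0(\specl{n}{F}) \otimes H(F^n)$ whose class is $\ee{n}(z)$; similarly one obtains $\beta$ representing $\ee{m}(w)$. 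By Eilenberg--Zilber, $z \times w$ is represented by the shuffle of $\tilde z$ and $\tilde w$ pushed forward along $\specl{n}{F} \times \specl{m}{F} \hookrightarrow \specl{n+m}{F}$. The Leibniz identity $d(x \ssprod y) = d(x) \ssprod y + (-1)^{|x|} x \ssprod d(y)$ for $\ssprod$, combined with its equivariance, ensures that the $\ssprod$-product of the two zig-zag lifts (shuffled appropriately with the $B_\bullet$-factors) is itself a zig-zag lift of a representative of $z \times w$ in $E^{(n+m)}$, with terminal chain $\alpha \ssprod \beta$. This chain represents $\ee{n}(z) \ssprod \ee{m}(w)$ on the one hand and $\ee{n+m}(z \times w)$ on the other, giving the equality.

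The main obstacle is the sign and indexing bookkeeping: one has to reconcile the Eilenberg--Zilber shuffle signs in the representation of the exterior product with the Leibniz signs in the $\ssprod$ pairing, and confirm that the map on kernels $H(F^n) \otimes H(F^m) \to H(F^{n+m})$ induced by $\ssprod$ descends, after passing to $\specl{n+m}{F}$-coinvariants, to the graded multiplication on $\SSn{F^\bullet}$ defined earlier. Both checks are routine given the Leibniz rule and the already-noted equivariance of $\ssprod$ under the relevant subgroups of the block-diagonal $\gnl{V} \times \gnl{W}$, but they are fiddly enough to deserve explicit verification.
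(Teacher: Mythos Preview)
Your Part (1) matches the paper's argument exactly: the exact sequence consists of $\gnl{V}$-modules, so each connecting homomorphism is $F^\times$-equivariant.

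For Part (2), your approach is correct but differs in presentation from the paper's. You propose an explicit chain-level zig-zag computation, tracking Eilenberg--Zilber shuffle signs against the Leibniz rule for $\ssprod$. The paper instead packages this into a hyperhomology argument: it introduces the truncated complex $\compt{\bullet}{V}$ (equal to $\cgen{p}{V}$ for $p \le n$ and $0$ above), observes that $H(V) \to \compt{\bullet}{V}$ is a weak equivalence of complexes (with $H(V)$ concentrated in degree $n$), and thereby identifies $\ee{V}$ with the induced map on hyperhomology. Since the $\compt{\bullet}{V}$ consist of free abelian groups, $H(V)\otimes_\Z H(W) \to \mathrm{Tot}\bigl(\compt{\bullet}{V}\otimes_\Z\compt{\bullet}{W}\bigr)$ is again a weak equivalence, and $\ssprod$ furnishes a map of $\spcl{V}\times\spcl{W}$-complexes from this total complex to $\compt{\bullet}{V\oplus W}$. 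The identity $\ee{n+m}(z\times w)=\ee{n}(z)\ssprod\ee{m}(w)$ then drops out of a single commutative diagram in hyperhomology, with the sign bookkeeping absorbed into the standard compatibility of the homology cross product with tensor products of complexes. Your argument unpacks precisely this diagram at the level of explicit chains; the paper's version sidesteps the ``fiddly'' verifications you flag at the end, at the price of invoking a bit more machinery.
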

\begin{proof}\ 

\begin{enumerate}
\item The exact sequence above is a sequence of $\gnl{V}$-modules and hence all of the connecting homomorphisms 
$\delta_i:\ho{n-i+1}{\spcl{V}}{\image{d_i}}\to \ho{n-i}{\spcl{V}}{\ker{d_i}}$ are $F^\times$-equivariant.
\item Let $\compt{\bullet}{V}$ denote the truncated complex.
\[
\compt{p}{V}=
\left\{
\begin{array}{cc}
\cgen{p}{V},& p\leq \dim{F}{V}\\
0,& p>\dim{F}{V}
\end{array}
\right.
\]  
Thus $H(V)\to\compt{\bullet}{V}$ is a weak equivalence of complexes (where we regard $H(V)$ as a complex concentrated in dimension 
$\dim{}{V}$). Since the complexes $\compt{\bullet}{V}$ are complexes of free abelian groups, it follows that for two vector spaces 
$V$ and $W$, the map $H(V)\otimes_{\Z}H(W)\to T_\bullet(V,W)$ is an equivalence of complexes, where $T_\bullet(V,W)$ 
is the total complex of the 
double complex $\compt{\bullet}{V}\otimes_{\Z}\compt{\bullet}{W}$. Now $T_\bullet(V,W)$ is a complex of $\spcl{V}\times\spcl{W}$-modules, 
and the product $\ssprod$ induces a commutative diagram of complexes of $\spcl{V}\times\spcl{W}$-complexes:
\[
\xymatrix{
H(V)\otimes_{\Z}H(W)\ar[r]\ar[d]^-{\ssprod}&\compt{\bullet}{V}\otimes\compt{\bullet}{W}\ar[d]^-{\ssprod}\\
H(V\oplus W)\ar[r]&\compt{\bullet}{V\oplus W}
}
\]
which, in turn, induces a commutative diagram
\[
\xymatrix{
\hoz{n}{\spcl{V}}\otimes\hoz{m}{\spcl{W}}\ar[r]^-{\ee{V}\otimes\ee{W}}\ar[d]^-{\times}
&
\ho{0}{\spcl{V}}{H(V)}\otimes\ho{0}{\spcl{W}}{H(W)}\ar[d]^-{\times}\\
\ho{n+m}{\spcl{V}\times\spcl{W}}{\Z\otimes\Z}\ar[r]^-{\ee{T_\bullet}}\ar[d]
&
\ho{0}{\spcl{V}\times\spcl{W}}{H(V)\otimes H(W)}\ar[d]\\
\hoz{n+m}{\spcl{V\oplus W}}\ar[r]^-{\ee{V\oplus W}}
&
\ho{0}{\spcl{V\oplus W}}{H(V\oplus W)}
}
\]
(where $n=\dim{}{V}$ and $m=\dim{}{W}$).
\end{enumerate}
\end{proof}

\begin{lem}
If $V=W\oplus W'$ with $W'\not= 0$, then the composite 
\[
\xymatrix{
\hoz{n}{\spcl{W}}\ar[r]&\hoz{n}{\spcl{V}}\ar[r]^-{\ee{V}}&\SSS(V)
}
\]
is zero.
\end{lem}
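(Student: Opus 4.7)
The plan is to reinterpret $\ee{V}$ as the map on hypercohomology induced by a very simple chain map, and then exhibit an explicit $\spcl{W}$-equivariant null-homotopy of that chain map.

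Recall from the proof of Lemma \ref{lem:ee} that the truncated complex $\compt{\bullet}{V}$ has homology concentrated in degree $n=\dim{F}{V}$, where it equals $H(V)$; in particular $\compt{\bullet}{V}$ is $\spcl{V}$-equivariantly quasi-isomorphic to $H(V)[n]$, so for any subgroup $G\subseteq\spcl{V}$ there is a canonical identification
$$
\mathbb{H}_n\bigl(G,\compt{\bullet}{V}\bigr)\;\cong\;\ho{0}{G}{H(V)}.
$$
Consider the $\spcl{V}$-equivariant chain map
$$
f:\Z[0]\longrightarrow \compt{\bullet}{V},\qquad f_0(1):=1\in \cgen{0}{V}=\Z,
$$
with $f_i=0$ in all other degrees. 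Unwinding the standard zig-zag in the bicomplex $B_\bullet(\spcl{V})\otimes_{\spcl{V}}\compt{\bullet}{V}$ shows that the map $\hoz{n}{\spcl{V}}\to \ho{0}{\spcl{V}}{H(V)}$ induced by $f$ on $\mathbb{H}_n$ is exactly the iterated connecting homomorphism $\ee{V}$. By naturality with respect to the inclusion $\spcl{W}\hookrightarrow \spcl{V}$, the composite appearing in the statement equals the map on hypercohomology induced by $f$ regarded as a chain map of $\spcl{W}$-module complexes, followed by the natural map $\ho{0}{\spcl{W}}{H(V)}\to \ho{0}{\spcl{V}}{H(V)}$.

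It therefore suffices to show that $f$ is null-homotopic as a chain map of $\spcl{W}$-modules. Since $W'\neq 0$, fix a non-zero vector $v\in W'$. Because $\spcl{W}$ acts trivially on $W'$, the basis element $(v)\in \cgen{1}{V}$ is fixed by $\spcl{W}$. Define the $\spcl{W}$-equivariant degree-$+1$ map
$$
h:\Z[0]\longrightarrow \compt{\bullet}{V}[1],\qquad h_0(1):=(v),
$$
extended by zero in all other degrees. The null-homotopy identity $d\circ h+h\circ d=f$ reduces in degree $0$ to the equation $d_1((v))=1$ in $\cgen{0}{V}=\Z$, which holds by the definition of $d_1$. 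Hence $f$ is $\spcl{W}$-equivariantly null-homotopic, so it induces the zero map on hypercohomology over $\spcl{W}$, and the composite in the statement vanishes.

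The main conceptual point to verify is the identification of $\ee{V}$ with the hypercohomology map induced by $f$; this is a standard consequence of the interpretation of the iterated connecting homomorphism associated with a length-$n$ resolution as the Yoneda action of the corresponding class in $\mathrm{Ext}^n$, which is exactly the class encoded by $f$ via the $\spcl{V}$-equivariant quasi-isomorphism $H(V)[n]\hookrightarrow \compt{\bullet}{V}$.
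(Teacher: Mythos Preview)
Your proof is correct, and the key ingredient is the same as in the paper: a nonzero $v\in W'$ is fixed by $\spcl{W}$, and this single observation forces the vanishing. The paper packages this more simply, though. Rather than passing to hypercohomology, it just notes that the short exact sequence of $\spcl{V}$-modules
\[
0\to\ker{d_1}\to\cgen{1}{V}\to\Z\to 0
\]
is split as a sequence of $\spcl{W}$-modules by $m\mapsto m\cdot(v)$, so already the \emph{first} connecting homomorphism $\delta_1:\hoz{n}{\spcl{W}}\to\ho{n-1}{\spcl{W}}{\ker{d_1}}$ in the iterated connecting map vanishes. Your null-homotopy $h_0$ is literally this splitting, so the two arguments are the same computation in different languages. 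The paper's version is a one-liner and avoids having to justify the identification of $\ee{V}$ with the hypercohomology map induced by $f$ (which you correctly flag as the main point to verify); your version has the virtue of making it transparent why a single degree-zero splitting suffices to kill the entire length-$n$ composite.
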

\begin{proof}
The exact sequence of $\spcl{V}$-modules 
\[
0\to\ker{d_1}\to\cgen{1}{V}\to\Z\to 0
\] 
is split as a sequence of $\spcl{W}$-modules via the map $\Z\to\cgen{1}{V}, m\mapsto m\cdot e$ where $e$ is any nonzero element of $W'$.
It follows that the connecting homomorphism $\delta_1:\hoz{n}{\spcl{W}}\to\ho{n-1}{\spcl{W}}{\ker{d_1}}$ is zero.
\end{proof}

Let $\SH{n}{F}$ denote the cokernel of the map $\hoz{n}{\specl{n-1}{F}}\to\hoz{n}{\specl{n}{F}}$. 
It follows that the maps $\ee{n}$ give well-defined homomorphisms
$\SH{n}{F}\to\SSS(F^n)$, which yield a homomorphism of graded $\gr{\Z}{F^\times}$-algebras $\ee{\bullet}:\SH{\bullet}{F}\to\SSS(F^\bullet)$.

\subsection{The maps $\Dd{V}$}

Suppose now that $W$ and $V$ are vector spaces and that $\dim{}{V}=n$. Fix a basis $\mathcal{E}$ of $V$. The group $\aff{W}{V}$ acts 
transitively on $\xgen{n}{W,V}$ (with trivial stabilizers), 
while the orbits of $\saff{W}{V}$ are in one-to-one correspondence with the points of $F^\times$ via 
the correspondence 
\[
\xgen{n}{W,V}\to F^\times, \quad ((w_1,v_1),\ldots,(w_n,v_n))\mapsto \det\left([v_1|\cdots|v_n]_{\mathcal{E}}
\right).
\]
 Thus we have an induced isomorphism
\[
\xymatrix{
\ho{0}{\saff{W}{V}}{\cgen{n}{W,V}}\ar[r]^-{\det}_-{\cong}&\gr{\Z}{F^\times}.
}
\]
Taking $\saff{W}{V}$-coinvariants of the inclusion $H(W,V)\to\cgen{n}{W,V}$ then yields a homomorphism of $\gr{\Z}{F^\times}$-modules
\[
\Dd{W,V}:\SSS(W,V)\to \gr{\Z}{F^\times}.
\]

In particular, for each $n\geq 1$ we have a homomorphism of $\gr{\Z}{F^\times}$-modules $\Dd{n}:\SSS(F^n)\to \gr{\Z}{F^\times}$.

We will also set $\Dd{0}:\SSS(F^0)=\Z\to \Z$ equal to the identity map. Here $\Z$ is a trivial $F^\times$-module.

We set 
\[
\Aa{n}=
\left\{
\begin{array}{ll}
\Z, & n=0\\
\aug{F^\times},& n \mbox{ odd }\\
\gr{\Z}{F^\times},& n > 0  \mbox{ even}\\
\end{array}
\right.
\]

We have $\Aa{n}\subset \gr{\Z}{F^\times}$ for all $n$ and we make $\Aa{\bullet}$ into a graded algebra by using the multiplication on 
$\gr{\Z}{F^\times}$.

\begin{lem}\label{lem:dn}\ 

 \begin{enumerate}
\item The image of $\Dd{n}$ is $\Aa{n}$. 
\item The maps $\Dd{\bullet}:\SSS(F^\bullet)\to \Aa{\bullet}$ define a homomorphism of graded $\gr{\Z}{F^\times}$-algebras.
\item For each $n\geq 0$, the surjective map $\Dd{n}:\SSS(F^n)\to \Aa{n}$ has a $\gr{\Z}{F^\times}$-splitting. 
\end{enumerate}
\end{lem}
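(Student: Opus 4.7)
For (1), I would compute $\Dd{n}$ explicitly on the generators $\ssb{a_1,\ldots,a_n}$ provided by Theorem \ref{thm:pres}. By definition the element lifts to $d_{n+1}(e_1,\ldots,e_n,\sum a_ie_i)$, and computing the determinant of each of the $n+1$ summands (omitting $e_i$ yields the matrix $[e_1|\cdots|\widehat{e_i}|\cdots|e_n|v]$ of determinant $(-1)^{n-i}a_i$; omitting the last vector yields the identity) gives
\[
\Dd{n}(\ssb{a_1,\ldots,a_n})\;=\;\sum_{i=1}^{n}(-1)^{i+1}\fgen{(-1)^{n-i}a_i}\;+\;(-1)^n.
\]
Applying the augmentation $\grf{F}\to\Z$ shows this lies in $\aug{F^\times}$ when $n$ is odd and in $\grf{F}$ always, giving the containment $\image{\Dd{n}}\subseteq\Aa{n}$. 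For surjectivity in the odd case, the specialisation $a_1=1,a_2=-1,\ldots,a_{n-1}=-1,a_n=a$ makes each $\fgen{(-1)^{n-i}a_i}=1$ for $i<n$, the alternating signs $\sum_{i=1}^{n-1}(-1)^{i+1}$ cancel, and one is left with $\Dd{n}(\ssb{1,-1,\ldots,-1,a})=\fgen{a}-1=\ffist{a}$; the $\ffist{a}$ generate $\aug{F^\times}$. For even $n$, the analogous telescoping with $a_i=(-1)^i$ produces $\Dd{n}(\ssb{-1,1,-1,1,\ldots,1})=1$, and $\grf{F}$-linearity of $\Dd{n}$ forces the image to contain all of $\grf{F}\cdot 1=\grf{F}$.

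For (2), the needed identity $\Dd{n+m}(x\ssprod y)=\Dd{n}(x)\Dd{m}(y)$ reduces to multiplicativity of the determinant. A representative of $x\ssprod y\in\cgen{n+m}{W\oplus V}$ concatenates the $V$-vectors $v_1,\ldots,v_n$ from $x$ (with zero $W$-components) and the $W$-vectors $w_1,\ldots,w_m$ from $y$ (with zero $V$-components); the resulting $(n+m)\times(n+m)$ matrix splits into the blocks $[v_1|\cdots|v_n]$ and $[w_1|\cdots|w_m]$ in the decomposition basis, and its determinant is $\det[v_1|\cdots|v_n]\cdot\det[w_1|\cdots|w_m]$. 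The $\grf{F}$-equivariance is immediate from the definition of $\Dd{\bullet}$.

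For (3), handle the two parities of $n$ separately. When $n$ is even, $\Aa{n}=\grf{F}$ is free of rank one as a $\grf{F}$-module, so any preimage $\xi_n$ of $1$ yields a splitting via $\sigma_n(\alpha):=\alpha\cdot\xi_n$; the element $\xi_n:=\ssb{-1,1,-1,1,\ldots,1}$ exhibited in (1) works. When $n$ is odd, $\Aa{n}=\aug{F^\times}$ is not a free $\grf{F}$-module and the single-preimage approach fails. Start with the base case $n=1$: since $\cgen{0}{F}=\Z$, we have $\SSn{F}=H(F)=\ker{d_1}$, which is canonically identified with $\aug{F^\times}$ via $(a)\mapsto\fgen{a}$, and under this identification $\Dd{1}$ is the canonical isomorphism $\ssb{a}\mapsto\ffist{a}$; take $\sigma_1$ to be its inverse. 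For odd $n\geq 3$, bootstrap via (2): letting $\xi_{n-1}\in\SSn{F^{n-1}}$ be the splitting element from the preceding even case so that $\Dd{n-1}(\xi_{n-1})=1$, define
\[
\sigma_n(\alpha)\;:=\;\sigma_1(\alpha)\ssprod\xi_{n-1}.
\]
By (2) and the $\grf{F}$-balancedness of $\ssprod$, $\Dd{n}(\sigma_n(\alpha))=\Dd{1}(\sigma_1(\alpha))\cdot\Dd{n-1}(\xi_{n-1})=\alpha\cdot 1=\alpha$, and $\sigma_n$ is $\grf{F}$-linear.

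The main obstacle is the odd case in (3): because $\aug{F^\times}$ is not a free $\grf{F}$-module, no single-element preimage can furnish the required splitting, so one is forced to exploit the product structure established in (2) to bootstrap from the trivial base case $n=1$. The remaining work consists of routine determinant calculations and unwinding of the defining formula for $\ssb{a_1,\ldots,a_n}$.
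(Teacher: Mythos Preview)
Your argument is correct and follows essentially the same route as the paper's proof: the explicit determinant computation in (1), the block-diagonal multiplicativity in (2), and the bootstrap via the product structure in (3) all match. The only cosmetic differences are that the paper places the free variable $a$ in the first slot rather than the last for the odd-case surjectivity, presents (2) via a commutative diagram rather than prose, and uses $E^{\ssprod m}$ (with $E=\ssb{-1,1}$) rather than a single element $\xi_{n-1}$ for the odd splitting---but since $\Dd{2}(E)=1$ these serve identically. One notational quibble: your ``$a_1=1,a_2=-1,\ldots,a_{n-1}=-1$'' should read as the alternating sequence $a_i=(-1)^{i-1}$ (which is what your condition $\fgen{(-1)^{n-i}a_i}=1$ forces), not as $a_2=\cdots=a_{n-1}=-1$.
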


\begin{proof}\ 

\begin{enumerate}
\item Consider a generator $\ssb{a_1,\ldots,a_n}$ of $\SSS(F^n)$.

Let $e_1,\ldots, e_n$ be the standard basis of $F^n$. Let $a:=a_1e_1+\cdots+a_ne_n$. Then
\begin{eqnarray*}
\ssb{a_1,\ldots,a_n}&=&d_{n+1}(e_1,\ldots,e_n,a)\\
&=&\sum_{i=1}^n(-1)^{i+1}(e_1,\ldots,\widehat{e_i},\ldots,e_n,a)+(-1)^n(e_1,\ldots,e_n).
\end{eqnarray*}
Thus
\begin{eqnarray*}
\Dd{n}(\ssb{a_1,\ldots,a_n})&=&\sum_{i=1}^n(-1)^{i+1}\fgen{\det\left([e_1|\cdots|\widehat{e_i}|\cdots|e_n|a]\right)}+(-1)^n\fgen{1}\\
&=& \left\{
\begin{array}{ll}
\fgen{a_1}-\fgen{-a_2}+\cdots +\fgen{a_n}-\fgen{1}, & n \mbox{ odd}\\
\fgen{-a_1}-\fgen{a_2}+\cdots -\fgen{a_n}+\fgen{1},& n> 0 \mbox{ even}\\
\end{array}
\right.
\end{eqnarray*}

Thus, when $n$ is even, $\Dd{n}(\ssb{-1,1,-1,\ldots,-1,1})=\fgen{1}$ and $\Dd{n}$ maps onto $\gr{\Z}{F^\times}$.

When $n$ is odd, clearly, $\Dd{n}(\ssb{a_1,\ldots,a_n})\in \aug{F^\times}$. However, for any $a\in F^\times$, 
$\Dd{n}(\ssb{a,-1,1,\ldots,-1,1})=\ffist{a}\in\Aa{n}=\aug{F^\times}$.

\item Note that $\cgen{n}{F^n}\cong\gr{\Z}{\genl{n}{F}}$ naturally. Let $\mu$ be the homomorphism of additive groups
\begin{eqnarray*}
\mu:\gr{\Z}{\genl{n}{F}}\otimes\gr{\Z}{\genl{m}{F}}&\to&\gr{\Z}{\genl{n+m}{F}},\\
A\otimes B&\mapsto& 
\begin{pmatrix}A&0\\
0&B
\end{pmatrix}
\end{eqnarray*}
The formula $\Dd{m+n}(x\ssprod y)=\Dd{n}(x)\cdot\Dd{m}(y)$ now follows from the commutative diagram
\[
\xymatrix{
H(F^n)\otimes H(F^m)\ar[r]^-{\ssprod}\ar[d]&H(F^{n+m})\ar[d]\\
\cgen{n}{F^n}\otimes\cgen{m}{F^m}\ar[r]^-{\ssprod}\ar[d]^-{\cong}&\cgen{n+m}{F^{n+m}}\ar[d]^-{\cong}\\
\gr{\Z}{\genl{n}{F}}\otimes\gr{\Z}{\genl{m}{F}}\ar[r]^-{\mu}\ar[d]^-{\det\otimes\det}&\gr{\Z}{\genl{n+m}{F}}\ar[d]^-{\det}\\
\gr{\Z}{F^\times}\otimes\gr{\Z}{F^\times}\ar[r]^{\cdot}&\gr{\Z}{F^\times}\\
}
\]

\item When $n$ is even the maps $\Dd{n}$ are split surjections, since the image is a free module of rank $1$.

It is easy to verify that the map $\Dd{1}:\SSS(F)\to \Aa{1}=\aug{F^\times}$ is an isomorphism. Now let $E\in \SSS(F^2)$ be any 
element satisfying $\Dd{2}(E)=\fgen{1}$ (eg. we can take $E=\ssb{-1,1}$). Then for $n=2m+1$ odd, the composite 
$\SSS(F)\ssprod E^{\ssprod m} \to \SSS(F^n)\to \aug{F^\times}=\Aa{n}$ is an isomorphism.
\end{enumerate}
\end{proof}

We will let $\SSp{W,V}=\ker{\Dd{W,V}}$. Thus $\SSn{F^n}\cong \SSp{F^n}\oplus \Aa{n}$ as a $\gr{\Z}{F^\times}$-module by the results above.

Observe that it follows directly from the definitions that the image of $\ee{V}$ is contained 
in $\SSp{V}$ for any vector space $V$.

\subsection{The maps $\T{n}$}

\begin{lem}\label{lem:mwkid2}
If $n\geq 2$ and $b_1,\ldots,b_n$ are distinct elements of $F^\times$ then
\[
[b_1][b_2]\cdots[b_n]=\sum_{i=1}^n[b_1-b_i]\cdots[b_{i-1}-b_i][b_i][b_{i+1}-b_i]\cdots[b_n-b_i] \mbox{ in }\mwk{n}{F}.
\]
\end{lem}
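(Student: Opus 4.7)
I would prove the identity by induction on $n \geq 2$, with the base case being the heart of the matter.

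\textbf{Base case $n=2$.} The target identity is $[b_1][b_2] = [b_1-b_2][b_2] + [b_1][b_2-b_1]$. The starting point is the Steinberg relation applied to $b_1/b_2$:
\[
0 = \left[\tfrac{b_1}{b_2}\right]\left[1 - \tfrac{b_1}{b_2}\right] = \left[\tfrac{b_1}{b_2}\right]\left[\tfrac{b_2-b_1}{b_2}\right].
\]
Using Lemma \ref{lem:mwkid1}(2) together with the identity $[b_2^{-1}] = -\wgen{b_2^{-1}}[b_2]$ (which follows by applying (2) to $[b_2 \cdot b_2^{-1}] = [1] = 0$), I expand both factors as
\[
\left[\tfrac{b_1}{b_2}\right] = [b_1] - \wgen{b_1/b_2}[b_2], \qquad \left[\tfrac{b_2-b_1}{b_2}\right] = [b_2-b_1] - \wgen{(b_2-b_1)/b_2}[b_2].
\]
Multiplying out the four terms, I use Lemma \ref{lem:mwkid1}(1) to replace $[b_2][b_2]$ by $[b_2][-1]$, and Lemma \ref{lem:mwkid1}(3) to reorder factors where needed. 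The resulting relation, after collecting terms and using the compatibility $\wgen{a}[x] = [x] + \eta[a][x]$, reduces to exactly the desired equality.

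\textbf{Inductive step.} Assume the identity holds in degree $n-1$. Applying the $n=2$ case to the last two slots gives
\[
[b_1]\cdots[b_{n-1}][b_n] = [b_1]\cdots[b_{n-2}][b_{n-1}-b_n][b_n] + [b_1]\cdots[b_{n-2}][b_{n-1}][b_n - b_{n-1}].
\]
To each of the two terms on the right I apply the inductive hypothesis, viewing it as an $(n-1)$-fold product with distinguished last entry $b_n$ and $b_n - b_{n-1}$ respectively. The miracle that makes the bookkeeping work is the elementary identity $(b_j - b_n) - (b_{n-1} - b_n) = b_j - b_{n-1}$: when the inductive hypothesis is applied to the first term with ``reference point'' one of the indices $i < n$, the subtraction of $b_i$ from $b_{n-1}-b_n$ produces $(b_{n-1}-b_n) - (b_i-b_n) = b_{n-1}-b_i$, matching the expected entry in the $i$-th summand of the target RHS. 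After re-indexing, the terms from the two expansions combine to give all $n$ summands of the claimed sum.

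\textbf{Main obstacle.} The real work is in the base case and in matching coefficients during the inductive step. In the base case, several $\wgen{\cdot}$-twisted terms must cancel precisely; the key cancellation relies on $[b_2][b_2]=[b_2][-1]$ together with the skew-symmetry of Lemma \ref{lem:mwkid1}(3), which conspire to eliminate the twists. In the inductive step, the expansion produces extra terms in which the factor $[b_i]$ sits in the ``wrong'' slot (since the factorisation is not commutative in $\mwk{\bullet}{F}$); I would need to verify that Lemma \ref{lem:mwkid1}(3), applied repeatedly, moves each $[b_i]$ into the required position with twists that cancel telescopically across the sum. An alternative, perhaps cleaner, route is to use Morel's pullback theorem to reduce to verifying the identity separately in $\milk{n}{F}$ (where it follows from iterating $\{b_1/b_2, (b_2-b_1)/b_2\} = 0$) and in $I^n(F)$ (where it becomes a statement about Pfister forms); this sidesteps the $\wgen{\cdot}$-bookkeeping but replaces it with checking two parallel identities.
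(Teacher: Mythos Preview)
Your base case is on the right track: expanding the Steinberg relation $[b_1/b_2][1-b_1/b_2]=0$ via Lemma~\ref{lem:mwkid1}(2) and then cleaning up with parts (1) and (3) does yield the identity, though the algebra is not entirely trivial. The paper argues slightly differently --- it first proves directly that $[b_1-b_2]([b_1]-[b_2])=[-b_2][b_1]$ and then deduces $[b_1][b_2-b_1]+[b_1-b_2][b_2]=[b_1][b_2]$ from that --- but the ingredients are the same.

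The inductive step, however, has a genuine gap. After splitting $[b_{n-1}][b_n]$ with the $n=2$ case you obtain two $n$-fold products, and to invoke the hypothesis you must exhibit an $(n-1)$-fold product of \emph{distinct} nonzero elements. For the term $[b_1]\cdots[b_{n-2}][b_{n-1}-b_n][b_n]$ the natural candidate is $[b_1]\cdots[b_{n-2}][b_{n-1}-b_n]$, but nothing prevents $b_{n-1}-b_n$ from coinciding with some $b_j$, $j\le n-2$. Worse, even when distinctness holds, the expansion produces factors such as $[(b_{n-1}-b_n)-b_i]$ and $[b_j-(b_{n-1}-b_n)]$, which are not of the form $[b_k-b_\ell]$ appearing in the target. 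Your ``miracle'' bookkeeping is simply incorrect here: subtracting $b_i$ from $b_{n-1}-b_n$ gives $b_{n-1}-b_n-b_i$, not $(b_{n-1}-b_n)-(b_i-b_n)=b_{n-1}-b_i$ as you write; the latter identity only becomes relevant once \emph{all} entries have been shifted by $b_n$, which your order of operations never arranges.

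The paper avoids both problems by reversing the order: first apply the $(n-1)$-case to $[b_1]\cdots[b_{n-1}]$ (no distinctness issue) and multiply by $[b_n]$. Then for each $i\le n-1$ move $[b_i]$ adjacent to $[b_n]$ via Lemma~\ref{lem:mwkid1}(3), use the $n=2$ case in the form $[b_i]([b_n]-[b_n-b_i])=[b_i-b_n][b_n]$, and move back. The residual identity is then exactly the $(n-1)$-case applied to the shifted elements $b'_j=b_j-b_n$, which are automatically distinct and for which $b'_j-b'_i=b_j-b_i$ --- this is where the telescoping you were reaching for actually lives.
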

\begin{proof}
We will use induction on $n$ starting with $n=2$: Suppose that $b_1\not= b_2\in F^\times$. 
Then 
\begin{eqnarray*}
[b_1-b_2]([b_1]-[b_2])&=& \left([b_1]+\wgen{b_1}\left[ 1-\frac{b_2}{b_1}\right]\right)\left(-\wgen{b_1}\left[\frac{b_2}{b_1}\right]\right)
\mbox{ by Lemma \ref{lem:mwkid1} (2)}\\
&=& -\wgen{b_1}[b_1]\left[\frac{b_2}{b_1}\right]\mbox{ since } [x][1-x]=0\\
&=& [b_1]([b_1]-[b_2])\mbox{ by Lemma \ref{lem:mwkid1}(2) again}\\
&=& [b_1]([-1]-[b_2])\mbox{ by Lemma \ref{lem:mwkid1} (1)}\\
&=& [b_1](-\wgen{-1}[-b_2])
= [-b_2][b_1]\mbox{ by Lemma \ref{lem:mwkid1} (3)}.\\
\end{eqnarray*}
Thus
\begin{eqnarray*}
[b_1][b_2-b_1]+[b_1-b_2][b_2]&=& -\wgen{-1}[b_2-b_1][b_1]+[b_1-b_2][b_2]\\
&=& -([b_1-b_2]-[-1])[b_1]+[b_1-b_2][b_2]\\
&=&-[b_1-b_2]([b_1]-[b_2])+[-1][b_1]\\
&=& -[-b_2][b_1]+[-1][b_1]=([-1]-[-b_2])[b_1]\\
&=& -\wgen{-1}[b_2][b_1]=[b_1][b_2]
\end{eqnarray*}
proving the case $n=2$. 

Now suppose that $n>2$ and that the result holds for $n-1$. Let $b_1,\ldots,b_n$ be distinct elements of $F^\times$.We wish to prove that 
\[
\bigg(\sum_{i=1}^{n-1}[b_1-b_i]\cdots[b_i]\cdots[b_{n-1}-b_i]\bigg)[b_n]=\sum_{i=1}^n[b_1-b_i]\cdots[b_i]\cdots[b_n-b_i].
\]
We re-write this as:
\[
\sum_{i=1}^{n-1}[b_1-b_i]\cdots[b_i]\cdots[b_{n-1}-b_i]([b_n]-[b_n-b_i])=[b_1-b_n]\cdots[b_{n-1}-b_n][b_n].
\]
Now
\begin{tiny}
\begin{eqnarray*}
[b_1-b_i]\cdots[b_i]\cdots[b_{n-1}-b_i]([b_n]-[b_n-b_i])&=&(-\wgen{-1})^{n-i}[b_1-b_i]\cdots[b_{n-1}-b_i]\bigg([b_i]([b_n]-[b_n-b_i])
\bigg)\\
&=& (-\wgen{-1})^{n-i}[b_1-b_i]\cdots[b_{n-1}-b_i]\bigg([b_i-b_n][b_n]\bigg)\\
&=&[b_1-b_i]\cdots [b_i-b_n]\cdots[b_{n-1}-b_i][b_n].
\end{eqnarray*}
\end{tiny}
So the identity to be proved reduces to
\[
\bigg(\sum_{i=1}^{n-1}[b_1-b_i]\cdots [b_i-b_n]\cdots[b_{n-1}-b_i]\bigg)[b_n]=[b_1-b_n]\cdots[b_{n-1}-b_n][b_n].
\]
Letting $b'_i=b_i-b_n$ for $1\leq i\leq n-1$, then $b_j-b_i=b'_j-b'_i$ for $i,j\leq n-1$ and this reduces to the case $n-1$.
\end{proof}

\begin{thm}\ 

\begin{enumerate}
\item For all $n\geq 1$, there is a well-defined homomorphism of $\gr{\Z}{F^\times}$-modules
\[
\T{n}:\SSS(F^n)\to \mwk{n}{F}
\]
sending $\ssb{a_1,\ldots,a_n}$ to $[a_1]\cdots[a_n]$. 

\item The maps $\{ \T{n}\}$ define a homomorphism of graded $\gr{\Z}{F^\times}$-algebras $\SSS(F^\bullet)\to \mwk{\bullet}{F}$:
We have
\[
\T{n+m}(x\ssprod y)=\T{n}(x)\cdot \T{m}(y),\quad \mbox{ for all } x\in \SSS(F^n), y\in \SSS(F^m).
\]
\end{enumerate}
\end{thm}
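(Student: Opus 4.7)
The plan is to define $\T{n}$ on the free $\gr{\Z}{F^\times}$-module on the symbols $\ssb{a_1,\ldots,a_n}$ by $\ssb{a_1,\ldots,a_n}\mapsto [a_1]\cdots[a_n]$, extending $\gr{\Z}{F^\times}$-linearly via the $\gw{F}$-action $\fgen{a}\cdot x=\wgen{a}x$ on $\mwk{n}{F}$ (noting that elements of $\mwk{0}{F}$ are central in $\mwk{\bullet}{F}$, as follows from $\eta h=0$ together with $\eta[a]=[a]\eta$). Given this extension, part (1) reduces to checking that the single defining relation of Theorem \ref{thm:pres} is annihilated in $\mwk{n}{F}$, and part (2) reduces to applying $\T{n+m}$ to the explicit formula of Lemma \ref{lem:sprod} and matching the result against $[a_1]\cdots[a_n][a'_1]\cdots[a'_m]$.

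Concretely, for (1) the identity to verify in $\mwk{n}{F}$ is
\begin{align*}
[b_1a_1]\cdots[b_na_n]-[a_1]\cdots[a_n] &=\sum_{i=1}^{n}(-1)^{n+i}\wgen{(-1)^{n+i}a_i} \\
&\quad\cdot[a_1(b_1-b_i)]\cdots\widehat{[a_i(b_i-b_i)]}\cdots[a_n(b_n-b_i)][b_i].
\end{align*}
I would first dispose of the special case $a_1=\cdots=a_n=1$: since $[1]\cdots[1]=0$, this collapses to
\[
[b_1]\cdots[b_n]=\sum_{i=1}^n(-1)^{n+i}\wgen{(-1)^{n+i}}[b_1-b_i]\cdots\widehat{[b_i-b_i]}\cdots[b_n-b_i][b_i],
\]
which is exactly Lemma \ref{lem:mwkid2} after shuffling $[b_i]$ from position $i$ to the final slot by $n-i$ applications of the graded commutativity $[x][y]=-\wgen{-1}[y][x]$ (Lemma \ref{lem:mwkid1}(3)), producing the factor $(-\wgen{-1})^{n-i}=(-1)^{n+i}\wgen{(-1)^{n+i}}$. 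For the general case I would expand every factor on both sides via $[xy]=[x]+\wgen{x}[y]$ (Lemma \ref{lem:mwkid1}(2)), sort terms according to which positions carry an $[a_j]$-factor versus a $[b_j]$- (resp.\ $[b_j-b_i]$-) factor, pull out the centralized Witt coefficients, and recognize each resulting pattern as an instance of the already established $a=1$ identity applied to the subfamily of $[b]$-positions, with the residual $[a_j]$'s travelling along as passive constants.

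For (2), the analogous identity in $\mwk{n+m}{F}$ has the same shape — essentially the $n+m$-variable identity of Lemma \ref{lem:mwkid2} wrapped with $[a]$-twists — and yields to the same three-step procedure (expansion via Lemma \ref{lem:mwkid1}(2), reordering via Lemma \ref{lem:mwkid1}(3), and application of Lemma \ref{lem:mwkid2}); since Lemma \ref{lem:sprod} is independent of the auxiliary $b_i,b'_j$, these can be chosen freely to simplify the matching. The main obstacle in both parts is the sign and Witt-coefficient bookkeeping: the signs $(-1)^{n+i}$ on the right-hand side depend on the \emph{absolute} position $i$ rather than merely on which subset of positions carries $[b]$-factors, so the reduction to the $a=1$ case is not term-by-term and requires careful shuffling before the subfamilies line up. Lemma \ref{lem:mwkid2} is the indispensable combinatorial input; as a fallback, one could verify the relation separately after projection along Morel's cartesian square $\mwk{n}{F}\cong \milk{n}{F}\times_{i^n(F)}I^n(F)$, where in the Milnor factor $\wgen{a}$ acts trivially and in the Witt factor the Pfister-form combinatorics takes over.
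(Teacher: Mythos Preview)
Your approach to (1) is exactly the paper's: expand both sides via $[xy]=[x]+\wgen{x}[y]$, reshuffle using $[x][y]=-\wgen{-1}[y][x]$, and observe that the resulting identity indexed by each nonempty subset $I\subset\{1,\ldots,n\}$ of ``$[b]$-positions'' is precisely Lemma \ref{lem:mwkid2} in the variables $\{b_i:i\in I\}$, with the $[a_j]$'s for $j\notin I$ riding along as constants. Your sign/coefficient accounting $(-1)^{n+i}\wgen{(-1)^{n+i}}=(-\wgen{-1})^{n-i}$ is correct.

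For (2) your plan is correct but more laborious than necessary. The paper does not redo the expansion-and-Lemma-\ref{lem:mwkid2} procedure in $n+m$ variables; instead it observes that once you apply $\T{n+m}$ to the four-block formula of Lemma \ref{lem:sprod}, the result \emph{factors} as $X\cdot Y$, where
\[
X=\sum_{i=1}^n(-\wgen{-1})^{n+i}\wgen{a_i}[a_1(b_1-b_i)]\cdots\widehat{\phantom{[a_i]}}\cdots[b_i]\;+\;[b_1a_1]\cdots[b_na_n]
\]
and $Y$ is the analogous expression in the primed variables. But $X=[a_1]\cdots[a_n]$ and $Y=[a'_1]\cdots[a'_m]$ are exactly the identities already established in part (1), so (2) follows immediately. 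This factorization is the point you are missing: the double sum over $(i,j)$, the two single sums, and the lone term in Lemma \ref{lem:sprod} are precisely the four cross-terms of $(A+B)(C+D)$. Spotting this saves you from a second round of subset-indexed bookkeeping and makes the ``fallback'' via Morel's cartesian square unnecessary.
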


\begin{proof}\ 

\begin{enumerate} 
\item By Theorem \ref{thm:pres}, in order to show that $\T{n}$ is well-defined we must prove the identity
\[
[b_1a_1]\cdots[b_na_n]-[a_1]\cdots[a_n]=
\sum_{i=1}^n(-\wgen{-1})^{n+i}\wgen{a_i}[a_1(b_1-b_i)]\cdots[\widehat{a_i(b_i-b_i)}]\cdots[a_n(b_n-b_i][b_i]
\]  
in $\mwk{n}{F}$.

Writing $[b_ia_i]=[a_i]+\wgen{a_i}[b_i]$ and $[a_j(b_j-b_i)]=[a_j]+\wgen{a_j}[b_j-b_i]$ and expanding the products on 
both sides and using (3) of Lemma \ref{lem:mwkid1} to permute terms, this identity can be rewritten as 
\begin{eqnarray*}
\sum_{\emptyset\not=I\subset\{1,\ldots,n\}}(-\wgen{-1})^{\sgn{\sigma_I}}\wgen{a_{i_1}\cdots a_{i_k}}[a_{j_1}]\cdots[a_{j_s}]
[b_{i_1}]\cdots[b_{i_k}] = \\
\sum_{\emptyset\not=I\subset\{1,\ldots,n\}}(-\wgen{-1})^{\sgn{\sigma_I}}\wgen{a_{i_1}\cdots a_{i_k}}[a_{j_1}]\cdots[a_{j_s}]
\bigg(
\sum_{t=1}^{k}[b_{i_1}-b_{i_t}]\cdots [b_{i_t}]\cdots[b_{i_k}-b_{i_t}]
\bigg)
\end{eqnarray*}
where $I=\{ i_1<\cdots < i_k\}$ and the complement of $I$ is $\{ j_1<\cdots<j_s\}$ (so that $k+s=n$) and $\sigma_I$ is the permutation
\[
\begin{pmatrix}
1&\hdots&s&s+1&\hdots&n\\
j_1&\hdots&j_s&i_1&\hdots&i_k
\end{pmatrix}.
\] 

The result now follows from the identity 
of Lemma \ref{lem:mwkid2}.

\item We can assume that $x=\ssb{a_1,\ldots,a_n}$ and $y=\ssb{a'_1,\ldots,a'_m}$ with $a_i,a'_j\in F^\times$.
From the definition of $\T{n+m}$ and the formula of Lemma \ref{lem:sprod},\\
 $\T{n+m}(x\ssprod y)=$
\begin{tiny}
\begin{eqnarray*}
\sum_{i=1}^n\sum_{j=1}^m(-1)^{n+m+i+j}\fgen{(-1)^{i+j}a_ia'_j}
[a_1(b_1-b_i)]\cdots[\widehat{a_i(b_i-b_i)}]\cdots[b_i][a'_1(b'_1-b'_j)]\cdots[\widehat{a'_j(b'_j-b'_j)}]\cdots[b'_j]\\
+(-1)^n\sum_{i=1}^n(-1)^{i+1}\fgen{(-1)^{i+1}a_i}[a_1(b_1-b_i)]\cdots[\widehat{a_i(b_i-b_i)}]\cdots[b_i][b'_1a'_1]\cdots[b'_ma'_m]\\
+(-1)^m\sum_{j=1}^m(-1)^{j+1}\fgen{(-1)^{j+1}a'_j}[b_1a_1]\cdots[b_na_n][a'_1(b'_1-b'_j)]\cdots[\widehat{a'_j(b'_j-b'_j)}]\cdots[b'_j]\\
+[b_1a_1]\cdots[b_na_n][b_i][b'_1a'_1]\cdots[b'_ma'_m]
\end{eqnarray*}
\end{tiny}
which factors as $X\cdot Y$ with  $X=$
\begin{eqnarray*}
\sum_{i=1}^n(-1)^{n+i+1}\fgen{(-1)^{i+1}a_i}[a_1(b_1-b_i)]\cdots[\widehat{a_i(b_i-b_i)}]\cdots[b_i]+[b_1a_1]\cdots[b_na_n]\\
= [a_1]\cdots[a_n]=\T{n}(x) \mbox{ by part (1)}
\end{eqnarray*}
and $Y=$
\begin{eqnarray*}
\sum_{j=1}^m(-1)^{m+j+1}\fgen{(-1)^{j+1}a'_j}[a'_1(b'_1-b'_j)]\cdots[\widehat{a'_j(b'_j-b'_j)}]\cdots[b'_j]+[b'_1a'_1]
\cdots[b'_ma'_m]\\
= [a'_1]\cdots[a'_m]=\T{m}(y) \mbox{ by (1) again}.
\end{eqnarray*}
\end{enumerate}
\end{proof}

Note that 
$\T{1}$ is the natural surjective map $\SSn{F}\cong \aug{F^\times}\to\mwk{1}{F}$, $\ssb{a}\leftrightarrow \ffist{a}\mapsto [a]$. It has a 
nontrivial kernel in general.

Note furthermore that $\SH{2}{F}=\hoz{2}{\specl{2}{F}}$. It is well-known (\cite{sus:tors},\cite{mazz:sus}, and \cite{hutchinson:tao}) 
that $\hoz{2}{\specl{2}{F}}\cong \milk{2}{F}\times_{\milkt{2}{F}}I^2(F)\cong \mwk{2}{F}$.

In fact we have:

\begin{thm}\label{thm:T2} 
The composite $\T{2}\circ\ee{2}:\hoz{2}{\specl{2}{F}}\to \mwk{2}{F}$ is an isomorphism.
\end{thm}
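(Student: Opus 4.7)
The plan is to exploit the fact that both $\hoz{2}{\specl{2}{F}}$ and $\mwk{2}{F}$ admit identical Matsumoto-Moore presentations. By the classical Matsumoto-Moore theorem, the source is generated by symbols $\an{a,b}$ subject to the relations (i)--(v) recalled in Section \ref{sec:hommwk}. By Theorem \ref{thm:matmoore} applied at $n=2$, the target $\mwk{2}{F}$ admits the same presentation with generators $[a][b]$. Since $\T{2}\circ\ee{2}$ is a group homomorphism between two groups with the same presentation, to show it is an isomorphism it will suffice to check that the composite sends each Matsumoto-Moore generator $\an{a,b}$ to a generator of the form $\wgen{u}[a][b]$ for a unit $u$ which can then be absorbed by the relations (ii) and (iv) on the target side.

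Concretely, I would choose a standard $2$-cycle representative of $\an{a,b}$ in the bar complex of $\specl{2}{F}$ built from the diagonal matrices $\diag{a,a^{-1}}$ (following Mazzoleni \cite{mazz:sus}), and then trace it through the iterated connecting homomorphism $\ee{2}$ coming from the exact sequence of $\specl{2}{F}$-modules
\[
0 \to H(F^2) \to \cgen{2}{F^2} \to \cgen{1}{F^2} \to \Z \to 0.
\]
This means lifting the representative to $B_\bullet(\specl{2}{F}) \otimes_{\gr{\Z}{\specl{2}{F}}} \cgen{\bullet}{F^2}$, applying the simplicial boundary of $\cgen{\bullet}{F^2}$ twice to reach $H(F^2)$, and identifying the resulting class in $\SSn{F^2}=H(F^2)_{\specl{2}{F}}$. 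After normalising the chosen representatives by their $\specl{2}{F}$-orbits via Remark \ref{rem:ssn}, the outcome is expected to be $\fgen{u}\ssb{a,b}$ for a readily-identifiable $u\in F^\times$. Applying $\T{2}$ then yields $\wgen{u}[a][b]\in \mwk{2}{F}$, matching the target generator up to the $\gw{F}$-action which is absorbed using the presentation.

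The main obstacle is the chain-level calculation of $\ee{2}(\an{a,b})$. Though mechanical in principle, this requires careful bookkeeping of signs through the bar-resolution differential, the simplicial differential on $\cgen{\bullet}{F^2}$, and the passage to $\specl{2}{F}$-coinvariants. Fortunately, Mazzoleni's proof of the classical Matsumoto-Moore theorem carries out an essentially analogous calculation in a closely related complex of unimodular pairs, so his techniques should transfer with only minor modifications. Once the generator-level computation is pinned down, the verification that $\T{2}\circ\ee{2}$ respects the full presentation is automatic, since both groups satisfy identical relations (i)--(v).
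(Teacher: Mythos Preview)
Your high-level idea—exploit that both sides have the same Matsumoto--Moore presentation—is reasonable in principle, but the execution sketch contains a genuine gap. The Matsumoto--Moore generators $\an{a,b}$ of $\hoz{2}{\specl{2}{F}}$ are \emph{not} represented by bar cycles built from the diagonal matrices $\diag{a,a^{-1}}$. The cycles $[\tilde a|\tilde b]-[\tilde b|\tilde a]$ coming from the diagonal torus generate only the image of $\mu:\ext{2}{F^\times}\to\hoz{2}{\specl{2}{F}}$, and Mazzoleni shows this image is exactly $\ker\omega\cong 2\milk{2}{F}$, a proper subgroup. So your chosen representatives miss the $I^2(F)$-part entirely. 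Moreover, even for these diagonal cycles the paper's explicit computation of $\ee{2}(\mu(a\wedge b))$ produces a six-term expression $C_{a,b}$ in $\SSn{F^2}$, not a single $\fgen{u}\ssb{a,b}$; your expectation about the shape of $\ee{2}(\an{a,b})$ is therefore too optimistic, and the claim that a stray $\wgen{u}$ can simply be ``absorbed'' via relations (ii) and (iv) is not justified (those relations do not make $\wgen{u}[a][b]$ equal to $[a][b]$ in general).

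The paper avoids computing $\ee{2}$ on the Steinberg symbols directly. Instead it introduces an auxiliary projective complex $\cgenp{\bullet}{F}$ over $\proj{1}{F}$ with associated connecting map $\omega:\hoz{2}{\specl{2}{F}}\to\sgenp{2}{F}\cong\gw{F}$, and factors $\omega=\phi\circ\ee{2}$. Mazzoleni's results give $\image\omega=I^2(F)$ and $\ker\omega=\mu(\ext{2}{F^\times})\cong 2\milk{2}{F}$. One then checks (i) that $\phi$ carries $\ssbp{a,b}\in\SSp{F^2}$ to $\pfist{a,b}$, so $\T{2}$ and $\phi$ are compatible on the $I^2$-quotient, and (ii) that $\T{2}(C_{a,b})$ maps to $2\{a,b\}$ in $\milk{2}{F}$. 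Matching these against the short exact sequence $0\to 2\milk{2}{F}\to\mwk{2}{F}\to I^2(F)\to 0$ and applying the five lemma gives the isomorphism. If you want to salvage your approach, you would need genuine bar-cycle representatives of the Steinberg symbols (which involve elementary as well as diagonal matrices) and a much more intricate chain-level calculation than you anticipate.
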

\begin{proof}
For $p\geq 1$, let $\xgenp{p}{F}$ denote the set of all $p$-tuples $(x_1,\ldots,x_p)$ of points of $\proj{1}{F}$ and 
let $\xgenp{0}{F}=\emptyset$. We let $\cgenp{p}{F}$ denote the $\genl{2}{F}$ permutation module $\Z[\xgenp{p}{F}]$ and form a complex 
$\cgenp{\bullet}{F}$ using the natural simplicial boundary maps, $\bar{d}_p$. 
This complex is acyclic and the map $F^2\setminus\{0\}\to\proj{1}{F}$,
$v\mapsto\p{v}$ induces a map of complexes $\cgen{\bullet}{F^2}\to\cgenp{\bullet}{F}$.

Let $\hgenp{2}{F}:=\ker{\bar{d}_2:\cgenp{2}{F}\to\cgenp{1}{F}}$ and let $\sgenp{2}{F}=\ho{0}{\specl{2}{F}}{\hgenp{2}{F}}$.

We obtain a commutative diagram of $\specl{2}{F}$-modules with exact rows:
\[
\xymatrix{
\cgen{4}{F^2}\ar[d]\ar[r]^-{d_4}&\cgen{3}{F^2}\ar[d]\ar[r]^-{d_3}&H(F^2)\ar[d]\ar[r]&0\\
\cgenp{4}{F}\ar[r]^-{\bar{d}_4}&\cgenp{3}{F}\ar[r]^-{\bar{d}_3}&\hgenp{2}{F}\ar[r]&0\\
}
\]

Taking $\specl{2}{F}$-coinvariants gives the diagram
\[
\xymatrix{
\ho{0}{\specl{2}{F}}{\cgen{4}{F^2}}\ar[d]\ar[r]^-{d_4}&\ho{0}{\specl{2}{F}}{\cgen{3}{F^2}}
\ar[d]\ar[r]^-{d_3}&\SSn{F^2}\ar[d]^-{\phi}\ar[r]&0\\
\ho{0}{\specl{2}{F}}{\cgenp{4}{F}}\ar[r]^-{\bar{d}_4}&\ho{0}{\specl{2}{F}}{\cgenp{3}{F}}\ar[r]^-{\bar{d}_3}&\sgenp{2}{F}\ar[r]&0\\
}
\]

Now the  calculations of Mazzoleni, \cite{mazz:sus}, 
show that $\ho{0}{\specl{2}{F}}{\cgenp{3}{F}}\cong \gr{\Z}{F^\times/(F^\times)^2}$ via 
\[
\mbox{class of }(\infty,0,a)\mapsto \fgen{a}\in \gr{\Z}{F^\times/(F^\times)^2},
\]
where $a\in\proj{1}{F}=\p{e_1+ae_2}$ and $\infty:=\p{e_1}$. Furthermore $\sgenp{2}{F}\cong \gw{F}$ in such a way that the induced map 
$\gr{\Z}{F^\times/(F^\times)^2}\to\gw{F}$ is the natural one. 

Since $\ssb{a,b}=d_3(e_1,e_2,ae_1+be_2)$, it follows that $\phi(\ssb{a,b})=\wgen{a/b}=\wgen{ab}$ in $\gw{F}$.   

Associated to the complex $\cgenp{\bullet}{F}$ we have an iterated connecting homomorphism $\omega:\hoz{2}{\specl{2}{F}}\to 
\sgenp{2}{F}=\gw{F}$.  Observe that $\omega=\phi\circ\ee{2}$. 
In fact, (Mazzoleni, \cite{mazz:sus}, Lemma 5) the image of $\omega$ is $I^2(F)\subset \gw{F}$.

On the other hand, the module $\SSp{F^2}$ is generated by the elements\\
 $\ssbp{a,b}:=\ssb{a,b}-\Dd{2}(\ssb{a,b})\cdot E$ (where $E$, 
as above, denotes the element $\ssb{-1,1}$). 

Note that $\T{2}(\ssbp{a,b})=\T{2}(\ssb{a,b})=[a][b]$ since $\T{2}(E)=[-1][1]=0$ in $\mwk{2}{F}$.

Furthermore, 
\begin{eqnarray*}
\phi(\ssbp{a,b})&=&\phi(\ssb{a,b})-\Dd{2}(\ssb{a,b})\phi(E)\\
                &=& \wgen{ab}-(\fgen{-a}-\fgen{b}+\fgen{1})\wgen{-1}\\
                &=& \wgen{ab}-\wgen{a}+\wgen{-b}-\wgen{-1}\\
                &=&  \wgen{ab}-\wgen{a}-\wgen{b}+\wgen{1}\\
                &=& \pfist{a,b}
\end{eqnarray*}
(using the identity $\wgen{b}+\wgen{-b}=\wgen{1}+\wgen{-1}$ in $\gw{F}$).

Using these calculations  we thus obtain the commutative diagram
\[
\xymatrix{\hoz{2}{\specl{2}{F}}\ar[r]^-{\ee{2}}\ar[dr]^-{\omega}&\SSp{F^2}\ar[d]^-{\phi}\ar[r]^-{\T{2}}&\mwk{2}{F}\ar[dl]\\
&I^2(F)&\\
}
\] 
Now, the natural embedding $F^\times\to\specl{2}{F}$, $a\mapsto \diag{a,a^{-1}}:=\tilde{a}$ induces a homomorphism, $\mu$:
\begin{eqnarray*}
\ext{2}{F^\times}\cong \hoz{2}{F^\times}&\to&\hoz{2}{\specl{2}{F}},\\
 a\wedge b &\mapsto& 
\left([\tilde{a}|\tilde{b}]-[\tilde{b}|\tilde{a}]\right)\otimes 1\in B_2(\specl{2}{F})\otimes_{\gr{\Z}{\specl{2}{F}}}\Z.
\end{eqnarray*}

Mazzoleni's calculations (see \cite{mazz:sus}, Lemma 6)
 show that $\mu(\ext{2}{F^\times})=\ker{\omega}$ and that 
there is an isomorphism $\mu(\ext{2}{F^\times})\cong 2\cdot\milk{2}{F}$ given by
$\mu(a\wedge b)\mapsto 2\{ a,b\}$.

On the other hand, a straightforward calculation shows that 
\[
\ee{2}\left(\mu(a\wedge b)\right) = 
\fgen{a}\ssb{b,\frac{1}{ab}}-\ssb{b,\frac{1}{b}}-\fgen{a}\ssb{1,\frac{1}{a}}+
\fgen{b}\ssb{1,\frac{1}{b}}+\ssb{a,\frac{1}{a}}-\fgen{b}\ssb{a,\frac{1}{ab}}:=C_{a,b}
\]

Now by the diagram above, 
\[
\T{2}(C_{a,b})=\T{2}(\ee{2}\left(\mu(a\wedge b)\right) )\in
\ker{\mwk{2}{F}\to I^2(F)} \cong 2\milk{2}{F}.
\]
  Recall that the natural embedding 
$2\milk{2}{F}\to \mwk{2}{F}$ is given by $2\{ a,b\}\mapsto [a^2][b]=[a][b]-[b][a]$ and the composite
\[
\xymatrix{2\milk{2}{F}\ar[r]&\mwk{2}{F}\ar[r]^-{\kk{2}}&\milk{2}{F}}
\]
is the natural inclusion map.   Since 
\begin{eqnarray*}
\kk{2}\left(\T{2}(C_{a,b})\right) &=&
\left\{ b,\frac{1}{ab}\right\}-\left\{ b,\frac{1}{b}\right\}-\left\{ 1,\frac{1}{a}\right\}+
\left\{ 1,\frac{1}{b}\right\}+\left\{ a,\frac{1}{a}\right\}-\left\{ a,\frac{1}{ab}\right\}\\
&=& \{ a,b\}-\{ b,a\}=2\{ a,b\},
\end{eqnarray*}
it follows that we have a commutative diagram  with exact rows
\[
\xymatrix{
0\ar[r]
&
\mu(\ext{2}{F^\times})\ar[r]\ar[d]^-{\cong}
&
\hoz{2}{\specl{2}{F}}\ar[r]^-{\omega}\ar[d]^-{\T{2}\circ\ee{2}}
&
I^2(F)\ar[r]\ar[d]^-{=}
&
0\\
0\ar[r]
&
2\milk{2}{F}\ar[r]
&
\mwk{2}{F}\ar[r]
&
I^2(F)\ar[r]
&
0
}
\]
proving the theorem. 
\end{proof}

\section{$\am$-modules}\label{sec:am}

From the results of the last section, it follows that there is a $\gr{\Z}{F^\times}$-decomposition 
\[
\SSn{F^2}\cong \mwk{2}{F}\oplus \gr{\Z}{F^\times}\oplus ?
\]

It is not difficult to determine that the missing factor is isomorphic to the $1$-dimensional vector space $F$  
(with the tautological $F^\times$-action). 
However, as we will see, this extra term will not play any role in the calculations of  $\hoz{n}{\specl{k}{F}}$. 

As  $\grf{F}$-modules, our main objects of interest (Milnor-Witt $K$-theory, the homology of the special linear group, 
the powers of the fundamental ideal in the Grothendieck-Witt ring) are what we call below `\mult'; there exists $m\geq 1$ 
such that, for all $a\in F^\times$, $\fgen{a^m}$ acts trivially. This is certainly not true of the vector space $F$ above.  
In this section we formalise this difference, and use this formalism to prove an analogue of Suslin's Theorem 1.8 (\cite{sus:homgln}) 
(see 
Theorem \ref{thm:speclam} below).


\emph{Throughout the remainder of this article, $F$ will denote a field of characteristic $0$.}

Let $\mset{F}\subset\grf{F}$ denote the multiplicative set generated by the elements\\
 $\{ \ffist{a}=\fgen{a}-1\ |\ a\in F^\times\setminus
\{ 1\}\}$. Note that $0\not\in\mset{F}$, since the elements of $\mset{F}$ map to units under the natural 
ring homomorphism $\grf{F}\to F$. We will also let $\msetp{\Q}\subset\grf{\Q}$ denote the multiplicative set generated by 
 $\{ \ffist{a}=\fgen{a}-1\ |\ a\in \Q^\times\setminus
\{ \pm 1\}\}$.

\begin{defi} A $\grf{F}$-module $M$ is said to be \emph{\mult} if there exists 
$s\in\msetp{\Q}$ with $sM=0$.
\end{defi}
\begin{defi}
We will say that a $\grf{F}$-module is \emph{\add} if every $s\in \msetp{\Q}$ acts as an automorphism on $M$.  
\end{defi}
\begin{exa}
Any trivial $\grf{F}$-module $M$ is \mult, since $\ffist{a}$ annihilates $M$ for all $a\not=1$.
\end{exa}
\begin{exa}
$\gw{F}$, and more generally $I^n(F)$, is \mult since $\ffist{a^2}$ annihilates these modules for all $a\in F^\times$.
\end{exa}

\begin{exa} 
Similarly, the groups $\hoz{n}{\specl{n}{F}}$ are \mult since they are annihilated by the elements $\ffist{a^m}$.
\end{exa}

\begin{exa}
Any vector space over $F$, with the induced action of $\grf{F}$, is \add since all elements of $\mset{F}$ act as automorphisms.
\end{exa}

\begin{exa} \label{exa:addtens}
More generally, if $V$ is a vector space over $F$, then for all $r\geq 1$, 
the $r$th tensor power $\tens{r}{\Z}{V}=\tens{r}{\Q}{V}$ is an \add 
module since, if $a\in \Q\setminus\{\pm 1\}$, $\fgen{a}$ acts as multiplication by $a^r$ and hence $\ffist{a}$ acts as multiplication 
by $a^r-1$. For the same reasons, the $r$th exterior power, $\extr{r}{\Z}{V}$, is an \add module.   
\end{exa}

\begin{rem} Observe that if $\ffist{a^m}$ acts as an automorphism of the $\grf{F}$-module $M$ for some $a\in F^\times$, $m>1$, then 
so does $\ffist{a}$, since $\ffist{a^m}=\ffist{a}(\fgen{a^{m-1}}+\cdots +\fgen{a}+1)=(\fgen{a^{m-1}}+\cdots +\fgen{a}+1)\ffist{a}$ in
$\grf{F}$. 
\end{rem}


\begin{lem}\label{lem:multses}
Let
\[
0\to M_1\to M\to M_2\to 0
\]
be a short exact sequence of $\grf{F}$-modules. 

Then $M$ is \mult if and only if  $M_1$ and $M_2$ are.
\end{lem}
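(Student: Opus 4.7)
The plan is to verify both directions directly from the definition of \mult, exploiting the fact that $\msetp{\Q}$ is by construction closed under multiplication.

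For the forward implication, suppose $sM = 0$ for some $s \in \msetp{\Q}$. The submodule $M_1$ inherits the relation $sM_1 = 0$ immediately, and the quotient $M_2$ inherits $sM_2 = 0$ because annihilation passes to quotients. So the \emph{same} $s$ witnesses that both $M_1$ and $M_2$ are \mult.

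For the reverse implication, suppose $s_1 M_1 = 0$ and $s_2 M_2 = 0$ with $s_i \in \msetp{\Q}$. Set $s := s_1 s_2$, which lies in $\msetp{\Q}$ since this set is multiplicatively closed by definition. For any $m \in M$, let $\bar m$ denote its image in $M_2$. Then $s_2 \bar m = 0$, so $s_2 m$ lies in the image of $M_1 \hookrightarrow M$. Hence $s_1(s_2 m) = 0$, i.e.\ $sm = 0$. Therefore $sM = 0$ and $M$ is \mult.

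There is no real obstacle here: the argument is a one-line annihilator chase, and the only structural input is that $\msetp{\Q}$ is closed under multiplication so that the product $s_1 s_2$ remains a legitimate witness. Note that the argument uses nothing about $\Q$ versus $F$, nor about the characteristic of $F$; it is formal in the multiplicative set $\msetp{\Q}$.
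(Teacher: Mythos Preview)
Your proof is correct and takes essentially the same approach as the paper's own proof: both directions use the same annihilator argument, with $s = s_1 s_2$ witnessing the reverse implication. The paper's version is simply more terse, omitting the explicit verification that $s_2 m$ lands in $M_1$.
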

\begin{proof}
Suppose $M$ is \mult. If  $s\in \msetp{\Q}$ satisfies $sM=0$, it follows that 
$sM_1=sM_2=0$.

Conversely, if $M_1$ and $M_2$ are \mult then there exist $s_1,s_2\in\msetp{\Q}$ with 
$s_iM_i=0$ for $i=1,2$. It follows that $sM=0$ for $s=s_1s_2\in \msetp{\Q}$. 
\end{proof}

\begin{lem}\label{lem:addses}
Let
\[
0\to A_1\to A\to A_2\to 0
\] 
be a short exact sequence of $\grf{F}$-modules. If $A_1$ and $A_2$ are \add modules, then so is $A$.
\end{lem}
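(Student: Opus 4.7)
The plan is to apply the snake lemma to the endomorphism ``multiplication by $s$'' on the given short exact sequence, for an arbitrary $s \in \msetp{\Q}$.

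More precisely, fix $s \in \msetp{\Q}$. Since the maps in the sequence are $\grf{F}$-linear, multiplication by $s$ defines a commutative diagram
\[
\xymatrix{
0 \ar[r] & A_1 \ar[r] \ar[d]^-{s} & A \ar[r] \ar[d]^-{s} & A_2 \ar[r] \ar[d]^-{s} & 0 \\
0 \ar[r] & A_1 \ar[r] & A \ar[r] & A_2 \ar[r] & 0
}
\]
with exact rows. By hypothesis, the outer two vertical maps are automorphisms, so both their kernels and their cokernels vanish. The snake lemma then produces an exact sequence
\[
0 \to \ker(s|_{A_1}) \to \ker(s|_A) \to \ker(s|_{A_2}) \to \coker(s|_{A_1}) \to \coker(s|_A) \to \coker(s|_{A_2}) \to 0,
\]
in which the first, third, fourth and sixth terms are zero. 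Hence $\ker(s|_A) = 0$ and $\coker(s|_A) = 0$, so multiplication by $s$ is an automorphism of $A$. As $s \in \msetp{\Q}$ was arbitrary, $A$ is \add.

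There is no real obstacle here; the only thing to verify is that ``multiplication by $s$'' is a well-defined $\grf{F}$-module endomorphism that commutes with the morphisms in the sequence, which is immediate since $s \in \grf{F}$ and the sequence consists of $\grf{F}$-module maps. Alternatively, one could appeal directly to the five lemma applied to the same diagram after replacing each $A_i$ by its image/kernel under $s$, but the snake lemma formulation is the cleanest.
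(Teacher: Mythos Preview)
Your proof is correct and is exactly the argument underlying the paper's one-line proof (``This is immediate from the definition''): you have simply spelled out via the snake lemma why multiplication by $s$ on $A$ is bijective once it is so on $A_1$ and $A_2$.
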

\begin{proof}
This is immediate from the definition.
\end{proof}
\begin{lem} \label{lem:kercoker}
Let $\phi:M\to N$ be a homomorphism of $\grf{F}$-modules.
\begin{enumerate}
\item If $M$ and $N$ are \mult, then so are $\ker{\phi}$ and $\coker{\phi}$.
\item If $M$ and $N$ are \add, then so are $\ker{\phi}$ and $\coker{\phi}$.
\end{enumerate}
\end{lem}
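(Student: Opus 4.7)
The plan is to treat the two parts separately, with part (1) essentially immediate from the definition and part (2) reducing to a small diagram chase.

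For part (1), I would observe that if $sM = 0$ for some $s \in \msetp{\Q}$, then $s$ annihilates every submodule of $M$; in particular $\ker{\phi}$ is multiplicative, witnessed by the same $s$. Dually, if $s'N = 0$ for some $s' \in \msetp{\Q}$, then $s'$ annihilates every quotient of $N$, so $\coker{\phi} = N/\image{\phi}$ is multiplicative. No interaction between the hypotheses on $M$ and $N$ is needed here, and Lemma \ref{lem:multses} is not even required.

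For part (2), the content is that the class of additive modules is also closed under subobjects and quotients. Fix $s \in \msetp{\Q}$; I must show $s$ acts as an automorphism on $\ker{\phi}$ and on $\coker{\phi}$. On $\ker{\phi}$, injectivity of multiplication by $s$ is inherited from $M$. For surjectivity, given $m \in \ker{\phi}$ I lift it through $s$ on $M$ to some $m'$ with $sm'=m$; then $s\phi(m') = \phi(sm') = \phi(m) = 0$, and since $s$ acts injectively on $N$ this forces $\phi(m') = 0$, so $m' \in \ker{\phi}$. The argument for $\coker{\phi}$ is the mirror image: surjectivity of $s$ on $\coker{\phi}$ is inherited from $N$, and for injectivity, given $n \in N$ with $sn \in \image{\phi}$, write $sn = \phi(m)$, lift $m = sm'$ using that $s$ is surjective on $M$, deduce $s(n-\phi(m')) = 0$ in $N$, and conclude $n = \phi(m')$ from the injectivity of $s$ on $N$.

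The main (and only) subtlety is making sure, in both halves of part (2), that one invokes the automorphism property on both $M$ and $N$ — surjectivity of $s$ on one side and injectivity on the other. I do not expect any real obstacle; the statement is formal and the proof is a routine diagram chase that could just as easily be phrased by applying the snake lemma to the vertical ``multiplication by $s$'' maps on the short exact sequence $0 \to \ker{\phi} \to M \to \image{\phi} \to 0$ and on $0 \to \image{\phi} \to N \to \coker{\phi} \to 0$, together with Lemma \ref{lem:addses} applied to $\image{\phi}$.
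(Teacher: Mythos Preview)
Your proof is correct and follows essentially the same approach as the paper, which is extremely terse: for (1) it simply cites Lemma~\ref{lem:multses}, and for (2) it just asserts that an $s$ acting as an automorphism on $M$ and $N$ must act as an automorphism on $\ker{\phi}$ and $\coker{\phi}$. Your argument supplies the routine details the paper omits; the only cosmetic difference is that for (1) you argue directly rather than invoking Lemma~\ref{lem:multses}, which is indeed unnecessary here.
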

\begin{proof}
\begin{enumerate}
\item This follows from Lemma \ref{lem:multses} above.
\item  If $s\in \msetp{\Q}$, then $s$ acts as an 
automorphism of $M$ and $N$, and hence of $\coker{\phi}$ and $\ker{\phi}$.
\end{enumerate}
\end{proof}
\begin{cor}\label{cor:homol} 
Let $C=(C_\bullet, d)$ be a complex of $\grf{F}$-modules. If $C_\bullet$ is \add (i.e. if each $C_n$ is an \add module),
 then each $H_n(C)$ is 
an $\add$ module. If each $C_n$ is \mult then each $H_n(C)$ is a \mult module. 
\end{cor}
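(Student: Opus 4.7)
The plan is to reduce both statements directly to Lemma \ref{lem:kercoker} by expressing each homology module $H_n(C)$ as an iterated kernel/cokernel of maps between the $C_i$. Concretely, I would proceed as follows.

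First, for each $n$, view the differential $d_n:C_n\to C_{n-1}$ as a homomorphism of $\grf{F}$-modules. Under either hypothesis (all $C_i$ \add, or all $C_i$ \mult), Lemma \ref{lem:kercoker} applied to $d_n$ gives that $\ker{d_n}$ inherits the same property. Similarly, $\ker{d_{n+1}}$ is \add (respectively \mult), and then a second application of Lemma \ref{lem:kercoker} to the inclusion $\ker{d_{n+1}}\hookrightarrow C_{n+1}$ shows that the cokernel of this inclusion, which is canonically isomorphic to $\image{d_{n+1}}$, is also \add (resp.\ \mult).

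Finally, $H_n(C)$ is the cokernel of the inclusion $\image{d_{n+1}}\hookrightarrow\ker{d_n}$. Since both source and target have been shown to be \add (resp.\ \mult), one last application of Lemma \ref{lem:kercoker} yields the desired conclusion for $H_n(C)$.

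There is no real obstacle here: the argument is a formal three-step unwinding of the definition of homology, each step being a direct invocation of Lemma \ref{lem:kercoker}. The only small point worth noting is that one should identify $\image{d_{n+1}}$ with $\coker{\ker{d_{n+1}}\hookrightarrow C_{n+1}}$ so that Lemma \ref{lem:kercoker} applies literally, rather than trying to quote a separate statement about quotients of \add or \mult modules.
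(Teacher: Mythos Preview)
Your proof is correct and is essentially the argument the paper has in mind: the corollary is stated without proof in the paper, as an immediate consequence of Lemma \ref{lem:kercoker}, and your three-step unwinding of $H_n(C)$ as an iterated kernel/cokernel is exactly how one makes that implication explicit.
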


\begin{lem}\label{lem:am0}
Let $M$ be a \mult $\grf{F}$-module and $A$ an \add $\grf{F}$-module. 
Then $\hom{\grf{F}}{M}{A}=0$ and $\hom{\grf{F}}{A}{M}=0$.
\end{lem}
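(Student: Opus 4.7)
The plan is to exploit the incompatibility between the defining properties of \mult and \add modules directly: a \mult module is annihilated by some element $s \in \msetp{\Q}$, whereas every such $s$ acts as an automorphism on an \add module. Since any $\grf{F}$-linear map intertwines these two actions, such a map must be zero.

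More concretely, I would first handle $\hom{\grf{F}}{M}{A} = 0$. Given a $\grf{F}$-linear $\phi : M \to A$, choose $s \in \msetp{\Q}$ with $sM = 0$. For any $m \in M$, linearity gives $s\phi(m) = \phi(sm) = 0$ in $A$. Because $A$ is \add, multiplication by $s$ is an automorphism of $A$, hence injective, so $\phi(m) = 0$.

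For $\hom{\grf{F}}{A}{M} = 0$, I would run the dual argument. Given $\psi : A \to M$ and $s \in \msetp{\Q}$ with $sM = 0$, for any $a \in A$ the additivity of $A$ provides $a' \in A$ with $a = sa'$, and then $\psi(a) = \psi(sa') = s\psi(a') = 0$.

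There is no real obstacle here; the statement is essentially a formal consequence of the definitions given in Section \ref{sec:am}, together with the fact that a $\grf{F}$-linear map commutes with the action of every element of $\grf{F}$, in particular with every $s \in \msetp{\Q}$. The only mild care needed is to notice that the same element $s$ can be used for both directions (annihilation on the \mult side, invertibility on the \add side), which is automatic since $s$ is a single element of the ring acting through its two distinct roles on the two modules.
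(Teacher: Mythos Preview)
Your proof is correct and matches the paper's own proof essentially line for line: pick $s\in\msetp{\Q}$ annihilating $M$, use injectivity of $s$ on $A$ for the first direction, and surjectivity of $s$ on $A$ for the second.
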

\begin{proof}
Let $f:M\to A$ be a $\grf{F}$-homomorphism. Every 
$s\in \msetp{\Q}$  acts as an automorphism of $A$. However, there exists $s\in \msetp{\Q}$ 
with $sM=0$. Thus, for $m\in M$, $0=f(sm)=sf(m)\imp f(m)=0$.

Let $g:A\to M$ be a $\grf{F}$-homomorphism. Again, choose $s\in \msetp{\Q}$ acting as an automorphism of $A$ and annihilating 
$M$. If $a\in A$, then there exists $b\in a$ with $a=sb$. Hence $g(a)=sg(b)=0$ in $M$.
\end{proof}
\begin{lem}
If $P$ is a $\grf{F}$-module and if $A$ is an \add submodule and $M$ a \mult submodule, then $A\cap M=0$.
\end{lem}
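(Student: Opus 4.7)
The plan is to unwind the definitions directly: any element in the intersection must simultaneously be killed by some $s\in\msetp{\Q}$ (because $M$ is \mult) and lie in a submodule on which every element of $\msetp{\Q}$ acts as an automorphism (because $A$ is \add). These two conditions force the element to be zero.

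More precisely, suppose $x\in A\cap M$. Since $M$ is \mult, there exists $s\in\msetp{\Q}$ with $sM=0$, so in particular $sx=0$. On the other hand, since $A$ is \add, multiplication by this same $s$ is an automorphism of $A$, and in particular injective on $A$. Because $x\in A$ and $sx=0$, injectivity gives $x=0$.

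There is no real obstacle here; the only thing worth noting is that this is essentially the same argument as the first half of Lemma \ref{lem:am0}, which already established $\hom{\grf{F}}{M}{A}=0$ by the identical mechanism. The present statement is a submodule version of that vanishing, applied to the inclusion $A\cap M\hookrightarrow A$ composed with $A\cap M\hookrightarrow M$. I would simply write the two-line argument above without invoking Lemma \ref{lem:am0}, since it is not logically shorter to cite it.
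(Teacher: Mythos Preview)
Your proof is correct and is essentially the same as the paper's: the paper simply observes that there exists $s\in\msetp{\Q}$ which annihilates $M$ (hence any submodule of $M$) but is injective on $A$ (hence on any submodule of $A$), so $A\cap M=0$. Your element-wise phrasing is just a more explicit version of the same argument.
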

\begin{proof}
There exists $s\in\grf{\Q}$ which annihilates any submodule of $M$ but is injective on any submodule of $A$.  
\end{proof}

\begin{lem}\label{lem:split}\ 

\begin{enumerate}
\item If 
\[
\xymatrix{
0\ar[r]&M\ar[r]&H\ar[r]^-{\pi}&A\ar[r]&0
}
\]
is an exact sequence of $\grf{F}$-modules with $M$  \mult \ and $A$ \add then the sequence splits (over $\grf{F}$).
\item Similarly, if 
\[
\xymatrix{
0\ar[r]&A\ar[r]&H\ar[r]&M\ar[r]&0
}
\]
is an exact sequence of $\grf{F}$-modules with $M$  \mult \ and $A$ \add then the sequence splits.
\end{enumerate}
\end{lem}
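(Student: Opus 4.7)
The strategy is to exploit the very different behaviour of a distinguished element of $\grf{F}$ on the two types of modules. Concretely, since $M$ is multiplicative, there exists $s\in\msetp{\Q}$ annihilating $M$; since $A$ is additive, the same $s$ acts as an automorphism on $A$. Because $F^{\times}$ is abelian, $\grf{F}$ is commutative, so $s$ is central and every construction involving multiplication by $s$ will automatically respect the $\grf{F}$-action. This reduces both splittings to purely formal manipulations.

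For part (1), consider the endomorphism of $H$ given by multiplication by $s$. Since $sM=0$ and $M$ is the kernel of $\pi$, this endomorphism vanishes on $M$ and therefore factors through $\pi$, producing a $\grf{F}$-linear map $\bar{s}\colon A\to H$ characterised by $\bar{s}(\pi(h))=sh$ for every $h\in H$. Composing with $\pi$ yields
\[
\pi\circ\bar{s}=s\cdot\id_A.
\]
Because $s$ acts invertibly on $A$, we have a well-defined $\grf{F}$-linear inverse $s^{-1}\colon A\to A$, and the map $\bar{s}\circ s^{-1}\colon A\to H$ is then a $\grf{F}$-linear section of $\pi$. This gives the desired splitting.

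For part (2), view $A$ as a submodule of $H$. For any $h\in H$, the image of $sh$ in $M$ equals $s$ times the image of $h$ in $M$, which is zero; hence multiplication by $s$ defines a $\grf{F}$-linear map $\sigma\colon H\to A$, $h\mapsto sh$. Its restriction to $A$ is $s\cdot\id_A$, which is invertible by hypothesis. Then $s^{-1}\circ\sigma\colon H\to A$ is a $\grf{F}$-linear retraction, and the sequence splits.

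The only point that requires any care is the verification that $s^{-1}$, defined a priori only as a set-theoretic inverse of multiplication by $s$ on $A$, is actually $\grf{F}$-linear; but this is automatic since $s$ is central in $\grf{F}$, so commutation of $s$ with every ring element is inherited by its inverse. Everything else is a diagram chase, and no further input beyond the definitions of \add and \mult modules and Lemma~\ref{lem:am0}'s underlying idea is needed.
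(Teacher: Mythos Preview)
Your proof is correct and follows essentially the same approach as the paper: both exploit the element $s\in\msetp{\Q}$ that annihilates $M$ and acts invertibly on $A$, using multiplication by $s$ on $H$ followed by inversion on $A$ to produce the required section/retraction. The only cosmetic difference is that for part~(1) the paper phrases the splitting as the inverse of the isomorphism $\pi|_{sH}\colon sH\to A$, whereas you describe it as the composite $\bar{s}\circ s^{-1}$; these are the same map.
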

\begin{proof}\ 
As above we can find $s\in \grf{\Q}$ such that $s\cdot M=0$ and $s$ acts as an 
automorphism of $A$.
\begin{enumerate}
\item  
Then $s H$ is a $\grf{F}$-submodule of $H$ and $\pi$ induces an isomorphism 
$s H\cong A$, since $\pi(sH)=s\pi(H)=sA=A$ and if $\pi(sh)=0$ then $s\pi(h)=0$ in $A$, so that $\pi(h)=0$ and $h\in M$.

\item  We have  $s H=A$ and multiplication by $s$ gives an automorphism, $\alpha$, of $A$. Thus the 
$\grf{F}$-homomorphism 
$H\to A, h\mapsto \alpha^{-1}(s\cdot h)$ splits the sequence. 
\end{enumerate}
\end{proof}

\begin{defi} We will say that a $\grf{F}$-module $H$ is \emph{an $\am$ module} 
if there exists a \mult
 $\grf{F}$-module $M$ and an \add $\grf{F}$ module $A$  and 
 an isomorphism of $\grf{F}$-modules $H\cong A\oplus M$. 
 \end{defi}

 \begin{lem}
 Let $H$ be an $\am$ module and let  
 \( \phi:H\to A\oplus M\) be an isomorphism of $\grf{F}$-modules, with $M$ \mult and 
 $A$ \add.
 
 Then 
 \[
 \phi^{-1}(A)=\bigcup_{A'\subset H, A' \mathrm{\add}}A'\qquad \mbox{ and }\qquad
  \phi^{-1}(M)=\bigcup_{M'\subset H, M' \mathrm{\mult}}M'
 \]
 \end{lem}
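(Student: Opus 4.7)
The plan is to establish each equality by two containments, leveraging Lemma \ref{lem:am0} which asserts that there are no nonzero $\grf{F}$-homomorphisms between an \add module and a \mult module (in either direction). I will focus on the first equality; the second follows by an entirely symmetric argument.

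For the containment $\phi^{-1}(A) \subset \bigcup A'$, observe that $\phi^{-1}(A)$ is itself an \add submodule of $H$, since $\phi$ restricts to an isomorphism $\phi^{-1}(A) \cong A$ of $\grf{F}$-modules, and the property of being \add transfers across isomorphisms. Hence $\phi^{-1}(A)$ appears as one of the submodules $A'$ in the union on the right, giving this containment immediately.

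For the reverse containment $\bigcup A' \subset \phi^{-1}(A)$, let $A' \subset H$ be any \add submodule. Let $\pi_M \colon A \oplus M \to M$ denote the projection onto the \mult factor and consider the composite
\[
A' \hookrightarrow H \stackrel{\phi}{\longrightarrow} A \oplus M \stackrel{\pi_M}{\longrightarrow} M.
\]
This is a $\grf{F}$-homomorphism from the \add module $A'$ to the \mult module $M$, so it is zero by Lemma \ref{lem:am0}. Therefore $\phi(A') \subset \ker{\pi_M} = A$, i.e.\ $A' \subset \phi^{-1}(A)$, as required.

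The second equality is proved by the same argument, using instead the projection $\pi_A \colon A\oplus M \to A$ and the vanishing of $\hom{\grf{F}}{M'}{A}$ for any \mult submodule $M' \subset H$, again supplied by Lemma \ref{lem:am0}. There is no real obstacle here; the content of the lemma is entirely encapsulated in the mutual orthogonality of \add and \mult modules under $\hom{\grf{F}}{-}{-}$, so once that orthogonality is invoked the argument is essentially formal.
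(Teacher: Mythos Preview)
Your proof is correct and follows essentially the same approach as the paper's: the key step in both is to use Lemma~\ref{lem:am0} to show that the composite of the inclusion of an additive (resp.\ multiplicative) submodule with the projection onto the multiplicative (resp.\ additive) factor vanishes. Your write-up is simply more explicit than the paper's about the easy containment (that $\phi^{-1}(A)$ is itself additive and hence appears in the union), which the paper leaves implicit.
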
 
 \begin{proof}
 Let $M' \subset H$ be multiplicative. Then the composite
 \[
 \xymatrix{
 M'\ar[r]&H\ar[r]^-{\phi}&A\oplus M\ar[r]&A
 }
 \]
 is zero by Lemma \ref{lem:am0}, and thus $M'\subset \phi^{-1}(M)$.

An analogous argument can be applied to $\phi^{-1}(A)$. 
\end{proof}
 
It follows that the submodules $\phi^{-1}(A)$ and $\phi^{-1}(M)$ are independent of 
the choice of $\phi$, $A$ and $M$. 
We will denote the first as $\ad{H}$ and the second 
 as $\mul{H}$. 
 
 Thus if $H$ is an $\am$ module then there is a canonical decomposition 
 $H=\ad{H}\oplus\mul{H}$, where $\ad{H}$ (resp. $\mul{H}$) is the maximal \add (resp. 
 \mult ) submodule of $H$. We have canonical projections 
 \[
 \ad{\pi}:H\to\ad{H}, \qquad \mul{\pi}:H\to\mul{H}.
 \]
 
 \begin{lem}\label{lem:R}
 Let $H$ be a $\am$ module. Suppose that $H$ is also a module over 
 a ring $R$  and that the action of $R$ commutes with that of $\grf{F}$. Then 
 $\ad{H}$ and $\mul{H}$ are $R$-submodules of $H$. 
 \end{lem}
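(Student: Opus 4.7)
The plan is to reduce the claim to the two immediately preceding results: the characterization of $\ad{H}$ and $\mul{H}$ as (canonical) maximal \add and \mult $\grf{F}$-submodules of $H$, together with Lemma \ref{lem:kercoker}, which guarantees that the image of an \add (resp.\ \mult) module under a $\grf{F}$-homomorphism is again \add (resp.\ \mult).

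First I would fix $r\in R$ and form the map $\mu_r:H\to H$ given by $h\mapsto rh$. The hypothesis that the action of $R$ commutes with that of $\grf{F}$ says exactly that $\mu_r$ is a $\grf{F}$-linear endomorphism of $H$. I would then look at the image $\mu_r(\ad{H})$. This is a $\grf{F}$-submodule of $H$ and, being a quotient of the \add module $\ad{H}$, it is itself \add by Lemma \ref{lem:kercoker}(2). The preceding lemma identifies $\ad{H}$ with the union (equivalently, the sum) of all \add $\grf{F}$-submodules of $H$, so $\mu_r(\ad{H})\subseteq \ad{H}$, i.e.\ $r\cdot\ad{H}\subseteq\ad{H}$. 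Since this holds for every $r\in R$, the submodule $\ad{H}$ is $R$-stable.

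The argument for $\mul{H}$ is entirely parallel: $\mu_r(\mul{H})$ is a quotient of a \mult module, hence \mult by Lemma \ref{lem:kercoker}(1), and is therefore contained in $\mul{H}$ by the maximality clause of the preceding lemma.

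I do not anticipate any real obstacle; the proof is essentially bookkeeping, and the only point that deserves care is that the preceding lemma genuinely furnishes an \emph{intrinsic} description of $\ad{H}$ and $\mul{H}$, independent of a chosen splitting $\phi:H\xrightarrow{\cong}A\oplus M$. Once that intrinsic description is in hand, the commutativity hypothesis on the $R$- and $\grf{F}$-actions immediately forces $\mu_r$ to respect both canonical summands.
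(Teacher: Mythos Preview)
Your proof is correct. It differs slightly from the paper's argument, though both are one-line applications of the basic structure theory. The paper argues by composing $r\cdot:\ad{H}\to H$ with the canonical projection $\mul{\pi}:H\to\mul{H}$; this composite is a $\grf{F}$-homomorphism from an \add module to a \mult one, hence zero by Lemma~\ref{lem:am0}, so $r\cdot\ad{H}\subset\ker{\mul{\pi}}=\ad{H}$. You instead argue that $\mu_r(\ad{H})$ is \add (as a quotient of an \add module, via Lemma~\ref{lem:kercoker}) and then invoke the maximality characterization of $\ad{H}$ from the preceding lemma. Both routes are equally short and valid; the paper's is marginally more self-contained in that it uses only the projection and the Hom-vanishing lemma, while yours makes explicit use of the intrinsic description of $\ad{H}$ as the maximal \add submodule.
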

 \begin{proof}
 Let $r\in R$. Then the composite 
 \[
 \xymatrix{
 \ad{H}\ar[r]^-{r\cdot}&H\ar[r]^-{\mul{\pi}}&\mul{H}
 }
 \]
 is a $\grf{F}$-homomorphism and thus is $0$ by Lemma \ref{lem:am0}. It follows that 
 $r\cdot \ad{H}\subset \ker{\mul{\pi}}=\ad{H}$.
 \end{proof}
 
\begin{lem}\label{lem:oplus}
 Let $f:H\to H'$ be a $\grf{F}$-homomorphism of $\am$ modules. 

Then there exist $\grf{F}$-homomorphisms $\ad{f}:\ad{H}\to\ad{H'}$ and 
$\mul{f}:\mul{H}\to\mul{H'}$ such that $f=\ad{f}\oplus\mul{f}$.

Suppose that  $H$ and $H'$ are modules over a ring $R$ and that the $R$-action commutes 
with the $\grf{F}$-action in each case. If $f$ is an $R$-homomorphism, then so are 
$\ad{f}$ and $\mul{f}$.
\end{lem}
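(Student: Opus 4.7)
The plan is to exploit the canonical decomposition $H = \ad{H} \oplus \mul{H}$ (and similarly for $H'$) together with Lemma \ref{lem:am0}, which asserts that there are no nonzero $\grf{F}$-homomorphisms between an \add module and a \mult module in either direction.

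First I would consider the restriction $f|_{\ad{H}} : \ad{H} \to H'$. Composing with the canonical projection $\mul{\pi} : H' \to \mul{H'}$ yields a $\grf{F}$-homomorphism from an \add module to a \mult module, which vanishes by Lemma \ref{lem:am0}. Hence $f(\ad{H}) \subseteq \ker{\mul{\pi}} = \ad{H'}$, so we may define $\ad{f} : \ad{H} \to \ad{H'}$ as the corestriction of $f|_{\ad{H}}$. By the entirely symmetric argument, composing $f|_{\mul{H}}$ with $\ad{\pi}: H' \to \ad{H'}$ gives a $\grf{F}$-map from a \mult module to an \add module, hence zero, so $f(\mul{H}) \subseteq \mul{H'}$ and we define $\mul{f}: \mul{H} \to \mul{H'}$ analogously. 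Since $H = \ad{H} \oplus \mul{H}$ and $H' = \ad{H'} \oplus \mul{H'}$ with $f$ respecting both summands, we have $f = \ad{f} \oplus \mul{f}$.

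For the second assertion, suppose $H$ and $H'$ carry commuting $R$-actions and that $f$ is $R$-linear. By Lemma \ref{lem:R}, $\ad{H}, \mul{H}, \ad{H'}, \mul{H'}$ are $R$-submodules. Since $\ad{f}$ and $\mul{f}$ are just the restrictions (and corestrictions) of $f$ to these $R$-submodules, they are automatically $R$-homomorphisms.

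I do not anticipate any real obstacle here: the proof is essentially a formal consequence of the canonical nature of the $\am$ decomposition established in the preceding lemma combined with Lemma \ref{lem:am0}. The only point to be careful about is invoking the correct projection to land in a \mult (resp. \add) target so that Lemma \ref{lem:am0} applies; once that is set up, everything follows by inspection.
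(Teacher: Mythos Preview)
Your proof is correct and follows exactly the approach the paper has in mind: the paper's own proof is simply the one-line ``This is immediate from Lemmas \ref{lem:am0} and \ref{lem:R},'' and you have spelled out precisely how those two lemmas are applied.
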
 
\begin{proof}
This is immediate from Lemmas \ref{lem:am0} and \ref{lem:R}.
\end{proof}

\begin{lem}\label{lem:am}
If
\[
\xymatrix{
0\ar[r]&L\ar[r]^-{j}&H\ar[r]^-{\pi}&K\ar[r]&0
}
\]
is a short exact sequence of $\grf{F}$-modules and if $L$ and $K$ are $\am$ modules, then so is $H$.
\end{lem}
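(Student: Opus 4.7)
The plan is to construct explicit submodules $A, M \subset H$ with $A$ \add, $M$ \mult, such that $H = A \oplus M$. The natural choices are: let $A$ be the sum of all \add submodules of $H$ and $M$ the sum of all \mult submodules of $H$. Using Lemmas \ref{lem:addses} and \ref{lem:multses} together with Lemma \ref{lem:kercoker}, any sum of two \add (resp.\ \mult) submodules inside $H$ is again \add (resp.\ \mult), since it is a quotient of the external direct sum. Passing to the filtered union, $A$ is \add and $M$ is \mult. Moreover $A \cap M = 0$ by the lemma immediately preceding \ref{lem:split}. Thus the only real content is to show $A + M = H$.

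For this, let $\pi\colon H \to K$ denote the surjection and write $K = \ad{K} \oplus \mul{K}$. I would analyse the two preimages separately. Set $Q := \pi^{-1}(\ad{K})$. It sits in an exact sequence $0 \to L \to Q \to \ad{K} \to 0$, and quotienting by $\mul{L} \subset L$ gives a sequence $0 \to \ad{L} \to Q/\mul{L} \to \ad{K} \to 0$, whose outer terms are \add, so $Q/\mul{L}$ is \add by Lemma \ref{lem:addses}. Then $0 \to \mul{L} \to Q \to Q/\mul{L} \to 0$ is a \mult-by-\add sequence, hence splits by Lemma \ref{lem:split}(2). The image of the splitting gives an \add submodule $Q' \subset Q \subset H$ mapping onto $\ad{K}$. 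A symmetric argument applied to $P := \pi^{-1}(\mul{K})$, quotienting by $\ad{L}$ and using Lemma \ref{lem:split}(1), produces a \mult submodule $P' \subset P \subset H$ mapping onto $\mul{K}$.

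By construction $Q' \subset A$ and $P' \subset M$, so $\pi(A + M) \supset \ad{K} + \mul{K} = K$. Hence $H = L + A + M$. Finally $L = \ad{L} + \mul{L} \subset A + M$ trivially, giving $A + M = H$. Combined with $A \cap M = 0$, this yields $H = A \oplus M$, so $H$ is an $\am$-module.

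The main obstacle — which turns out to be tractable thanks to Lemma \ref{lem:split} — is producing enough \add and \mult submodules of $H$ to surject onto the corresponding summands of $K$. Once the two splittings above are in hand, everything else is formal: the conclusion is simply that pieces of $L$ and lifts of $\ad{K}$, $\mul{K}$ together generate $H$. Notice that the proof does not require $L \to H$ to respect the $\ad{}/\mul{}$ decomposition in any structured way beyond the automatic fact (Lemma \ref{lem:am0}) that $\grf{F}$-maps between \add and \mult modules vanish; that vanishing is what makes the two splittings compatible when assembled into $H$.
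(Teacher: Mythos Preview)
Your argument is essentially correct and close in spirit to the paper's, but two small points deserve attention.

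First, you have swapped the labels in your appeals to Lemma~\ref{lem:split}: the sequence $0 \to \mul{L} \to Q \to Q/\mul{L} \to 0$ has multiplicative kernel and additive cokernel, which is the shape of part~(1), not~(2); dually for the sequence $0 \to \ad{L} \to P \to P/\ad{L} \to 0$, which is the shape of part~(2).

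Second, the claim that the sum $M$ of all multiplicative submodules of $H$ is itself multiplicative does not follow just by ``passing to the filtered union'': the definition requires a \emph{single} $s \in \msetp{\Q}$ annihilating all of $M$, and for an arbitrary directed union no such uniform $s$ is guaranteed. You never actually need this, however. Your own construction already shows that $A_0 := Q' + \ad{L}$ and $M_0 := P' + \mul{L}$ satisfy $A_0 + M_0 = H$, and as \emph{finite} sums these are additive and multiplicative respectively by the quotient-of-direct-sum argument you give. Working with $A_0$ and $M_0$ directly avoids the issue entirely and yields $H = A_0 \oplus M_0$.

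For comparison, the paper proceeds slightly more economically: it only uses $\tilde{H} := \pi^{-1}(\mul{K})$ (your $P$), quotients by $j(\mul{L})$, splits the resulting sequence $0 \to \ad{L} \to \tilde{H}/j(\mul{L}) \to \mul{K} \to 0$ via Lemma~\ref{lem:split}(2) to produce a multiplicative submodule $H_m \subset \tilde{H}$, observes that $H/H_m$ is additive (being an extension of $\ad{L}$ by $\ad{K}$), and then applies Lemma~\ref{lem:split}(1) once more to $0 \to H_m \to H \to H/H_m \to 0$. Your symmetric treatment of both preimages $\pi^{-1}(\ad{K})$ and $\pi^{-1}(\mul{K})$ is a perfectly valid alternative.
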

\begin{proof}
Let $\tilde{H}=\pi^{-1}(\mul{K})$.
Then the exact sequence
\[
0\to L\to \tilde{H}\to \mul{K}\to 0
\] 
gives the exact sequence
\[
0\to \frac{L}{\mul{L}}\to \frac{\tilde{H}}{j(\mul{L})}\to \mul{K}\to 0.
\]
Since $L/\mul{L}\cong\ad{L}$ is \add, this latter sequence is split, by Lemma \ref{lem:split} (2).

So $\tilde{H}/j(\mul{L})$ is a $\am$ module, and there is a $\grf{k}$-isomorphism
\[
\xymatrix{
\tilde{H}/j(\mul{L})\ar[r]^-{\phi}_-{\cong}&\ad{L}\oplus\mul{K}.
}
\] 
Let $\bar{\phi}$ be the composite
\[
\xymatrix{
\tilde{H}\ar[r]&\tilde{H}/j(\mul{L})\ar[r]^-{\phi}&\ad{L}\oplus\mul{K}.
}
\]

Let $H_m=\bar{\phi}^{-1}(\mul{K}) \subset \tilde{H}\subset H$. Then, we have an exact sequence
\[
0\to\mul{L}\to H_m\to\mul{K}\to 0
\]
so that $H_m$ is \mult. 

On the other hand, since $\tilde{H}/H_m\cong\ad{L}$ and $H/\tilde{H}\cong \ad{K}$, 
we have a short exact sequence
\[
0\to \ad{L}\to \frac{H}{H_m}\to \ad{K}\to 0. 
\]
This implies that $H/H_m$ is \add, and thus $H$ is $\am$ by Lemma \ref{lem:split} (1). 
\end{proof}

\begin{lem} \label{lem:homol} Let $(C_\bullet,d)$ be a complex of $\grf{k}$-modules. If each $C_n$ is $\am$, then $\homol{\bullet}{C}$ is 
$\am$, and furthermore
\begin{eqnarray*}
\homol{\bullet}{\ad{C}}=\ad{\homol{\bullet}{C}}\\
\homol{\bullet}{\mul{C}}=\mul{\homol{\bullet}{C}}\\
\end{eqnarray*}
\end{lem}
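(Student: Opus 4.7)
The plan is to use the functoriality of the $\am$-decomposition (Lemma \ref{lem:oplus}) to split the complex $C_\bullet$ itself into a direct sum of an \add complex and a \mult complex, and then invoke Corollary \ref{cor:homol} to compute each piece of the homology.

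First, since each $C_n$ is an $\am$-module, it has a canonical decomposition $C_n = \ad{C_n} \oplus \mul{C_n}$. Applying Lemma \ref{lem:oplus} to the $\grf{F}$-homomorphism $d_n : C_n \to C_{n-1}$, we obtain $d_n = \ad{d_n} \oplus \mul{d_n}$, where $\ad{d_n} : \ad{C_n} \to \ad{C_{n-1}}$ and $\mul{d_n} : \mul{C_n} \to \mul{C_{n-1}}$. From $d_{n-1} \circ d_n = 0$ and Lemma \ref{lem:am0} (which forces cross terms between \add and \mult parts to vanish), we get $\ad{d_{n-1}} \circ \ad{d_n} = 0$ and $\mul{d_{n-1}} \circ \mul{d_n} = 0$. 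Hence $\ad{C_\bullet} := (\ad{C_n}, \ad{d_n})$ and $\mul{C_\bullet} := (\mul{C_n}, \mul{d_n})$ are genuine subcomplexes of $\grf{F}$-modules, and as complexes
\[
C_\bullet = \ad{C_\bullet} \oplus \mul{C_\bullet}.
\]

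Since homology commutes with direct sums, this yields $\homol{n}{C} = \homol{n}{\ad{C}} \oplus \homol{n}{\mul{C}}$ for every $n$. By Corollary \ref{cor:homol}, $\homol{n}{\ad{C}}$ is an \add module and $\homol{n}{\mul{C}}$ is a \mult module. Therefore $\homol{n}{C}$ is an $\am$-module, and by the uniqueness/maximality characterization of $\ad{(-)}$ and $\mul{(-)}$ (stated in the lemma that identifies $\mul{H}$ as the union of all \mult submodules and dually for $\ad{H}$), we obtain the identifications
\[
\ad{\homol{n}{C}} = \homol{n}{\ad{C}}, \qquad \mul{\homol{n}{C}} = \homol{n}{\mul{C}}.
\]

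The only step requiring care is the verification that Lemma \ref{lem:oplus} applies compatibly at every degree so that the two pieces really do assemble into subcomplexes; this is immediate from the canonicity of the decomposition, since the composite $\ad{C_n} \hookrightarrow C_n \xrightarrow{d_n} C_{n-1} \twoheadrightarrow \mul{C_{n-1}}$ is a $\grf{F}$-homomorphism from an \add module to a \mult module and hence vanishes by Lemma \ref{lem:am0}. I do not anticipate a genuine obstacle here: once the boundary map is seen to respect the canonical decomposition, the rest is formal.
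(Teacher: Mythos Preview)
Your proof is correct and follows the same approach as the paper: the paper's proof is the single sentence ``The differentials $d$ decompose as $d=\ad{d}\oplus\mul{d}$ by Lemma \ref{lem:oplus},'' and you have simply unpacked that line, invoking Corollary \ref{cor:homol} and the uniqueness of the $\am$-decomposition to finish. One minor remark: once Lemma \ref{lem:oplus} gives $d=\ad{d}\oplus\mul{d}$, the vanishing of $\ad{d}\circ\ad{d}$ and $\mul{d}\circ\mul{d}$ follows directly from $d\circ d=0$ by reading off components, so the separate appeal to Lemma \ref{lem:am0} for ``cross terms'' is redundant (there are no cross terms to kill).
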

\begin{proof}
The differentials $d$ decompose as $d=\ad{d}\oplus\mul{d}$ by Lemma \ref{lem:oplus}.
\end{proof}

\begin{thm}\label{thm:eram}
Let $(E^r,d^r)$ be a first quadrant spectral sequence of $\grf{k}$-modules converging to the $\grf{k}$-module 
$H_\bullet=\{ H_n\}_{n\geq 0}$.

If for some $r_0\geq 1$ all of the modules $E^{r_0}_{p,q}$ are $\am$, then the same holds for all the modules $E^r_{p,q}$ for 
all $r\geq r_0$ and hence for the modules $E^\infty_{p,q}$.

Furthermore, $H_\bullet$ is $\am$ and the spectral sequence decomposes as a direct sum $E^r=\ad{E^r}\oplus\mul{E^r}$ ($r\geq r_0$)
with $\ad{E^r}$ converging to $\ad{H_\bullet}$ and $\mul{E^r}$ converging to $\mul{H_\bullet}$.
\end{thm}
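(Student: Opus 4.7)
The plan is to proceed in three stages: induct on $r \geq r_0$ to show every page is $\am$, deduce the same for $E^\infty$ using the first-quadrant hypothesis, and then pass to $H_\bullet$ via the convergence filtration. The key engine throughout is Lemma \ref{lem:oplus}: every $\grf{F}$-morphism between $\am$ modules splits canonically as a direct sum of maps on its \add and \mult parts, so the assignments $\ad{(-)}$ and $\mul{(-)}$ are exact functors on the category of $\am$ modules, and any diagram of $\grf{F}$-morphisms between $\am$ modules decomposes into two independent pieces.

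For the inductive step on the page number, suppose $E^r_{p,q}$ is $\am$ for all $(p,q)$. By Lemma \ref{lem:oplus} the differential decomposes as $d^r = \ad{d^r} \oplus \mul{d^r}$, so the entire $r$-th page splits as a direct sum of an \add complex and a \mult complex. Lemma \ref{lem:homol} then gives that each $E^{r+1}_{p,q}$ is $\am$ with canonical decomposition $\ad{E^{r+1}} = \homol{\bullet}{\ad{E^r}}$ and $\mul{E^{r+1}} = \homol{\bullet}{\mul{E^r}}$. Because we are in a first-quadrant spectral sequence, $E^\infty_{p,q} = E^r_{p,q}$ for $r > \max(p, q+1)$, so $E^\infty_{p,q}$ inherits the $\am$ property together with a canonical $\ad/\mul$ decomposition.

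For the final stage, fix $n \geq 0$ and use the convergence filtration $0 = F_{-1}H_n \subseteq F_0 H_n \subseteq \cdots \subseteq F_n H_n = H_n$ with $F_p H_n / F_{p-1} H_n \cong E^\infty_{p, n-p}$. Induct on $p$: given that $F_{p-1}H_n$ and $E^\infty_{p,n-p}$ are $\am$, Lemma \ref{lem:am} applied to the extension $0 \to F_{p-1}H_n \to F_p H_n \to E^\infty_{p, n-p} \to 0$ shows that $F_p H_n$ is $\am$, so in particular $H_n$ itself is $\am$. Since the inclusion and projection are $\grf{F}$-morphisms between $\am$ modules, Lemma \ref{lem:oplus} splits the whole sequence into its \add and \mult halves, so the filtrations $\ad{F_\bullet H_n}$ and $\mul{F_\bullet H_n}$ realize $\ad{H_n}$ and $\mul{H_n}$ as the abutments of the sub-spectral sequences $\ad{E^r}$ and $\mul{E^r}$ respectively.

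The only genuine technical point — and the main obstacle — is verifying that Lemma \ref{lem:am} is compatible with the canonical $\ad/\mul$ decomposition, i.e. that the \add part of an extension of $\am$ modules coincides with the extension of the \add parts (and similarly for \mult). This is forced by the maximality characterisation of $\ad{H}$ and $\mul{H}$ together with Lemma \ref{lem:oplus} applied to the sub-inclusion and the quotient projection. Once this compatibility is granted, the rest is a routine diagram chase through the filtration and the stabilised pages.
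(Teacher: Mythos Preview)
Your proposal is correct and follows essentially the same route as the paper: induct on the page using Lemma~\ref{lem:homol}, use first-quadrant boundedness to pass to $E^\infty$, and then induct up the convergence filtration via Lemma~\ref{lem:am}. The paper is terser about your ``technical point'' (it simply re-invokes Lemma~\ref{lem:homol} for the splitting of the spectral sequence and its abutment), but the underlying mechanism---that Lemma~\ref{lem:oplus} makes $\ad{(-)}$ and $\mul{(-)}$ behave as exact functors on $\am$ modules---is exactly what you identify.
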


\begin{proof}
Since $E^{r+1}=\homol{}{E^r,d^r}$ for all $r$, the first statement follows from Lemma \ref{lem:homol}. 

Since $E^r$ is a first quadrant 
spectral sequence (and, in particular, is bounded), it follows that for any fixed $(p,q)$, $E^\infty_{p,q}=E^r_{p,q}$ 
for all sufficiently large $r$. Thus $E^\infty$ is also $\am$. 

Now $H_n$ admits a filtration $0=\filt{0}{H_n}\subset\cdots\subset\filt{n}{H_n}=H_n$ with corresponding 
quotients $\grd{p}{H_n}\cong E^\infty_{p,n-p}$. 

Since all the quotients are $\am$, it follows by Lemma \ref{lem:am}, together with an induction on the filtration length, that $H_n$ 
is $\am$.

The final two statements follow again from Lemma \ref{lem:homol}. 
\end{proof}

If $G$ is a subgroup of $\gnl{V}$, we let $SG$ denote $G\cap\spcl{V}$.

\begin{thm}\label{thm:speclam}
Let $V$, $W$ be finite-dimensional vector spaces over $F$ and let 
$G_1\subset \gnl{W}$, $G_2\subset\gnl{V}$ be subgroups and suppose that $G_2$ contains the group $F^\times$ of 
scalar matrices.

Let $M$ be a subspace of $\hom{F}{V}{W}$ for which $G_1M=M=MG_2$.  

Let 
\[
G=\begin{pmatrix}
G_1&M\\
0&G_2
\end{pmatrix}
\subset \gnl{W\oplus V}.
\]

Then, for $i\geq 1$,  the groups $\hoz{i}{SG}$ are $\am$ and the natural embedding $j:S(G_1\times G_2)\to SG$ induces an isomorphism
\[
\hoz{i}{S(G_1\times G_2)}\cong \mul{\hoz{i}{SG}}.
\]
\end{thm}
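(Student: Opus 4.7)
The plan is to apply the Hochschild--Serre spectral sequence to the split short exact sequence
\[
1 \longrightarrow U \longrightarrow SG \longrightarrow S(G_1 \times G_2) \longrightarrow 1,
\]
where $U$ is the unipotent normal subgroup of $SG$ consisting of matrices $\left(\begin{smallmatrix} I_W & m \\ 0 & I_V\end{smallmatrix}\right)$, $m \in M$, which is isomorphic as an abelian group to the underlying $\Q$-vector space of $M$, and the embedding $j$ is a splitting. This gives
\[
E^2_{p,q} = \ho{p}{S(G_1 \times G_2)}{\hoz{q}{U}} \Longrightarrow \hoz{p+q}{SG}.
\]
Since $U$ is a $\Q$-vector space, $\hoz{q}{U} \cong \extr{q}{\Z}{M}$ is itself a $\Q$-vector space for $q \geq 1$, while $\hoz{0}{U} = \Z$ carries the trivial action.

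The next step is to pin down the $\grf{F}$-action on the $E^2$-page. The $\grf{F}$-module structure on $\hoz{i}{SG}$ is induced by conjugation through the determinant $\det\colon G \to F^\times$, whose image contains $(F^\times)^n$ because $G_2$ contains all scalars $bI_V$, where $n = \dim{F}{V}$. For $b \in F^\times$, let $g_b = (I_W, bI_V) \in G_1 \times G_2 \subset G$; it is central in $G_1 \times G_2$ and has determinant $b^n$. Conjugation by $g_b$ then acts trivially on $S(G_1 \times G_2)$, and a direct matrix computation shows it acts on $U$ by $m \mapsto b^{-1}m$, hence on $\hoz{q}{U} \cong \extr{q}{\Z}{M}$ by the scalar $b^{-q}$. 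Therefore $\fgen{b^n}$ acts as multiplication by $b^{-q}$ on $E^2_{p,q}$. In particular, for $b \in \Q \setminus \{\pm 1\}$ we have $\ffist{b^n} \in \msetp{\Q}$, and for $q = 0$ this element annihilates $E^2_{p,0} = \hoz{p}{S(G_1 \times G_2)}$, so that row is \mult; for $q \geq 1$ the scalar $b^{-q} - 1$ is a nonzero rational and hence an automorphism of the $\Q$-vector space $E^2_{p,q}$, so each such $E^2_{p,q}$ is \add.

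Theorem \ref{thm:eram} now yields that $\hoz{i}{SG}$ is $\am$ and that the spectral sequence decomposes as $E^r = \ad{E^r} \oplus \mul{E^r}$. The \mult part is concentrated in the single row $q = 0$, and Lemma \ref{lem:am0} forces every differential between the \mult row and any \add row to vanish on every page. Consequently $E^\infty_{p,0} = E^2_{p,0} = \hoz{p}{S(G_1 \times G_2)}$, so the edge homomorphism $\hoz{i}{SG} \to \hoz{i}{S(G_1 \times G_2)}$ induced by the projection $\pi\colon SG \to S(G_1 \times G_2)$ restricts to an isomorphism on $\mul{\hoz{i}{SG}}$. To see that the inverse of this isomorphism is $j_{\ast}$: the source $\hoz{i}{S(G_1 \times G_2)}$ is \mult, so Lemma \ref{lem:am0} forces $j_{\ast}$ to factor through $\mul{\hoz{i}{SG}}$; and $\pi \circ j = \mathrm{id}$ implies $\pi_{\ast} \circ j_{\ast} = \mathrm{id}$, so $j_{\ast}$ realises the inverse of the edge map on \mult parts.

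The main obstacle is the $\grf{F}$-action computation in the middle step: verifying that conjugation by the central lift $g_b$ induces exactly the scalar $b^{-q}$ on $\hoz{q}{U}$, and that this action is the one recorded by $\fgen{b^n}$ in the $\grf{F}$-module structure on $\hoz{i}{SG}$. Once that is in place, the $\am$-module machinery of Section \ref{sec:am} together with Theorem \ref{thm:eram} assembles everything else, and the identification of the isomorphism with $j_{\ast}$ reduces to a standard edge-map argument.
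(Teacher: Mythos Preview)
Your proof is correct and follows essentially the same approach as the paper's: both use the Hochschild--Serre spectral sequence for the split extension $1\to M\to SG\to S(G_1\times G_2)\to 1$, identify the $q=0$ row as \mult\ and the $q\geq 1$ rows as \add\ via the central element $\rho_b=(I_W,bI_V)$ of determinant $b^{\dim V}$, and then read off the conclusion. The only organisational differences are that you package the endgame via Theorem~\ref{thm:eram} and Lemma~\ref{lem:am0}, whereas the paper argues slightly more directly (using the splitting $j$ to see $E^2_{n,0}=E^\infty_{n,0}$, then Lemmas~\ref{lem:addses} and~\ref{lem:split}); and you correctly compute the conjugation action on $M$ as $m\mapsto b^{-1}m$, where the paper writes ``scalar multiplication by $a$'' (a harmless sign discrepancy for the argument). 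One small point worth making explicit: you show only that $\ffist{b^n}$ acts invertibly on $E^2_{p,q}$ for $q\geq 1$, but additivity requires every generator $\ffist{b}$ of $\msetp{\Q}$ to act invertibly; this follows from the Remark after Example~\ref{exa:addtens}, since $\ffist{b^n}=\ffist{b}(\fgen{b^{n-1}}+\cdots+1)$.
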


\begin{proof}
We begin by noting that the groups $\hoz{i}{SG}$ are $\grf{F}$-modules: 
The action of $F^\times$ is derived from the short exact sequence
\[\xymatrix{
1\ar[r]& SG\ar[r]& G\ar[r]^-{\det}&F^\times\ar[r]&1
}
\]
We have a split extension of groups (split by the map $j$) which is $F^\times$-stable:
\[
\xymatrix{
0\ar[r]&M\ar[r]&SG\ar[r]^-{\pi}&S(G_1\times G_2)\ar[r]&1.
}
\]
The resulting Hochschild-Serre spectral sequence has the form 
\[
E^2_{p,q}=\ho{p}{S(G_1\times G_2)}{\hoz{q}{M}}\Longrightarrow \hoz{p+q}{SG}.
\]  
This spectral sequence exists in the category of $\grf{F}$-modules and  all differentials and edge homomorphisms 
are $\grf{F}$-maps.

Since the map $\pi$ is split by $j$ it induces a split surjection on integral homology groups. Thus 
\[
\hoz{n}{S(G_1\times G_2)}=E^2_{n,0}=E^\infty_{n,0} \quad \mbox{ for all } n\geq 0.
\]

Observe furthermore that the $\grf{F}$-module $\hoz{n}{S(G_1\times G_2)}$ is \mult: Given $a\in F^\times$, the element 
\[
\rho_a:=
\begin{pmatrix}
\id{W}&0\\
0&a\cdot\id{V}
\end{pmatrix}
\in G
\]
has determinant $a^m$ ( $m=\dim{F}{V}$) 
and centralizes $S(G_1\times G_2)$. It follows that $\fgen{a^m}$ acts trivially on $\hoz{n}{S(G_1\times G_2)}$ 
for all $n$; i.e. $\ffist{a^m}$ annihilates $\hoz{n}{S(G_1\times G_2)}$.


Recall (Example \ref{exa:addtens} above) that for $q\geq 1$, the modules $\hoz{q}{M}=\extr{q}{\Z}{M}$, with the $\grf{F}$-action derived 
from the action of $F$ by scalars on $M$, are \add modules. 

Now if $a\in F^\times$, then conjugation by $\rho_a$ is trivial on $S(G_1\times G_2)$ but acts on $M$ as scalar multiplication by 
$a$. It follows that for $q>0$, $\ffist{a^m}$ acts as an automorphism on $\ho{p}{S(G_1\times G_2)}{\hoz{q}{M}}$ for all 
$a\in \Q\setminus \{\pm 1\}$. Thus, for $q>0$,
the groups $\ho{p}{S(G_1\times G_2)}{\hoz{q}{M}}$  are \add $\grf{F}$-modules; i.e., 
all $E^2_{p,q}$ are \add\  for $q>0$. It follows at once that the groups $E^\infty_{p,q}$ are \add\ 
for all $q>0$. Thus, from the convergence of the spectral sequence,  we have a short exact sequence
\[
0\to H\to \hoz{n}{SG}\to E^\infty_{n,0}=j\left(\hoz{n}{S(G_1\times G_2)}\right)\to 0
\]   
and $H$ has a filtration whose graded quotients are all \add. 

So $\hoz{n}{SG}$ is $\am$ as claimed, and $\mul{\hoz{n}{SG}}\cong\hoz{n}{S(G_1\times G_2)}$.
  
\end{proof}

\begin{cor}
Suppose that $W'\subset W$. Then there is a corresponding inclusion $\saff{W'}{V} \to \saff{W}{V}$. This inclusion induces an 
isomorphism
\[
\xymatrix{
\mul{\hoz{n}{\saff{W'}{V}}} \ar[r]_-{\cong}&\mul{\hoz{n}{\saff{W}{V}}}\cong\hoz{n}{\spcl{V}}\\
}
\]  
for all $n\geq 1$. 
\end{cor}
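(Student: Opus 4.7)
The plan is to recognize this as a direct consequence of Theorem \ref{thm:speclam}. I would apply that theorem with $G_1 = \{\id{W}\} \subset \gnl{W}$, $G_2 = \gnl{V}$ (which visibly contains the scalar matrices), and $M = \hom{F}{V}{W}$; the resulting block group $G$ is precisely $\aff{W}{V}$, so $SG = \saff{W}{V}$, while $S(G_1 \times G_2)$ is canonically identified with $\spcl{V}$. The theorem then yields that $\hoz{n}{\saff{W}{V}}$ is an $\am$-module and that the block-diagonal embedding
\[
j_W : \spcl{V} \to \saff{W}{V}, \qquad A \mapsto \diag{\id{W}, A},
\]
induces an isomorphism $\hoz{n}{\spcl{V}} \to \mul{\hoz{n}{\saff{W}{V}}}$. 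Running exactly the same argument with $W'$ in place of $W$ gives the analogous isomorphism $\hoz{n}{\spcl{V}} \to \mul{\hoz{n}{\saff{W'}{V}}}$ induced by $j_{W'}$.

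What remains is a naturality check. I would pick a complement $W''$ with $W = W' \oplus W''$, and observe that the inclusion $\iota : \saff{W'}{V} \to \saff{W}{V}$ extends a matrix in $\saff{W'}{V}$ by the identity on $W''$. On block-diagonal elements this gives $\iota(\diag{\id{W'}, A}) = \diag{\id{W}, A}$, so $\iota \circ j_{W'} = j_W$ as homomorphisms $\spcl{V} \to \saff{W}{V}$. Since both $\hoz{n}{\saff{W'}{V}}$ and $\hoz{n}{\saff{W}{V}}$ are $\am$-modules, Lemma \ref{lem:oplus} tells us that the induced map $\iota_*$ on $n$th homology splits as $\ad{\iota_*} \oplus \mul{(\iota_*)}$ and that the passage to \mult parts is functorial. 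Projecting onto \mult parts therefore produces a commutative triangle
\[
\xymatrix{
& \hoz{n}{\spcl{V}} \ar[dl]_-{\cong} \ar[dr]^-{\cong} & \\
\mul{\hoz{n}{\saff{W'}{V}}} \ar[rr]^-{\mul{(\iota_*)}} & & \mul{\hoz{n}{\saff{W}{V}}}
}
\]
in which the two slanted arrows are the isomorphisms produced in the first paragraph. Hence $\mul{(\iota_*)}$ is an isomorphism, as claimed.

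I do not anticipate any real obstacle: the corollary is essentially a translation of Theorem \ref{thm:speclam} into the special case $G_1 = \{\id{W}\}$, $G_2 = \gnl{V}$, combined with the trivial compatibility of the two block-diagonal splittings with the inclusion $W' \subset W$. The one thing worth being explicit about is the choice of splitting $j_W$, since Theorem \ref{thm:speclam} gives the isomorphism via a specific embedding, and the naturality argument depends on the fact that these embeddings can be chosen uniformly in $W$.
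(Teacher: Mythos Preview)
Your proposal is correct and matches the paper's intent: the corollary is stated immediately after Theorem~\ref{thm:speclam} with no proof given, precisely because it follows by applying that theorem with $G_1=\{\id{W}\}$, $G_2=\gnl{V}$, $M=\hom{F}{V}{W}$, together with the evident compatibility of the embeddings $j_{W'}$ and $j_W$ under the inclusion $\iota$. Your naturality check via $\iota\circ j_{W'}=j_W$ and Lemma~\ref{lem:oplus} is exactly the detail one would supply.
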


\section{The spectral sequences}\label{sec:ss}

\textit{Recall that $F$ is a field of characteristic $0$ throughout this section.}

In this section we use the complexes $\cgen{\bullet}{W,V}$ to construct spectral sequences converging to $0$ in dimensions less than 
$n=\dim{F}{V}$, and to $\SSn{W,V}$ in dimension $n$. By projecting onto the \mult part, we obtain spectral sequences with good 
properties: the terms in the $E^1$-page are just the kernels and cokernels of the stabilization maps 
$f_{t,n}:\hoz{n}{\specl{t}{F}}\to\hoz{n}{\specl{t+1}{F}}$. We then prove that the higher differentials are all zero. Since 
the spectral sequences converge to $0$ in low degrees, this already implies the main stability result (Corollary \ref{cor:stab}); the 
maps $f_{t,n}$ are isomorphisms for $t\geq n+1$ and are surjective for $t=n$. The remainder of the paper is devoted to an analysis 
of the case $t=n-1$, which requires some more delicate calculations.

Let $\compt{\bullet}{W,V}$ denote the truncated complex.
\[
\compt{p}{W,V}=
\left\{
\begin{array}{cc}
\cgen{p}{W,V},& p\leq \dim{F}{V}\\
0,& p>\dim{F}{V}
\end{array}
\right.
\]  

Thus 
\[
\homol{p}{\compt{\bullet}{W,V}}=
\left\{
\begin{array}{ll}
0,& p\not=n\\
H(W,V),& p=n
\end{array}
\right.
\]
where $n=\dim{F}{V}$.

Thus the  natural action of $\saff{W}{V}$ on $\compt{\bullet}{W,V}$ gives rise to a spectral sequence $\EE{W}{V}$
 which has the form
\[
E^1_{p,q}=\ho{p}{\saff{W}{V}}{\compt{q}{W,V}}\Longrightarrow \ho{p+q-n}{\saff{W}{V}}{H(W,V)}.
\]

The groups $\compt{q}{W,V}$ are permutation modules for $\saff{W}{V}$ and thus the $E^1$-terms (and the differentials $d^1$) can be 
computed in terms of the homology of stabilizers.

Fix a basis $\{ e_1,\ldots,e_n\}$ of $V$. Let $V_r$ be the span of $\{ e_1,\ldots,e_r\}$ and let $V'_s$ be the span of 
$\{ e_{n-s},\ldots,e_n\}$, so that $V=V_r\oplus V'_{n-r}$ if $0\leq r\leq n$.

For any $0\leq q\leq n-1$, the group $\saff{W}{V}$ acts transitively on the basis of $\compt{q}{W,V}$ and the stabilizer of 
\[
\big((0,e_1),\ldots,(0,e_q)\big)
\] 
is $\saff{W\oplus V_q}{V'_{n-q}}$.

Thus, for $q\leq n-1$,
\[
E^1_{p,q}= \ho{p}{\saff{W}{V}}{\compt{q}{W,V}}\cong \hoz{p}{\saff{W\oplus V_q}{V'_{n-q}}}
\] 
by Shapiro's Lemma.

By the results in section \ref{sec:am} we have:

\begin{lem}\label{lem:eeam}
 The terms $E^1_{p,q}$ in the spectral sequence $\EE{W}{V}$ are $\am$ for $q>0$, and 

\[
\mul{(E^1_{p,q})}=\hoz{p}{\spcl{V'_{n-q}}}\cong\hoz{p}{\specl{n-q}{F}}.
\]
\end{lem}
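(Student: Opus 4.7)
The plan is to reduce the lemma to a direct application of Theorem \ref{thm:speclam} after identifying the stabilizers appearing in Shapiro's Lemma. For $0 < q \leq n-1$ the identification
\[
E^1_{p,q} \cong \hoz{p}{\saff{W\oplus V_q}{V'_{n-q}}}
\]
has already been established in the excerpt via Shapiro's Lemma, using the transitivity of the $\saff{W}{V}$-action on the basis of $\compt{q}{W,V}$ and the computation of the stabilizer of $((0,e_1),\ldots,(0,e_q))$.

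Next I would observe that $\saff{W\oplus V_q}{V'_{n-q}}$ is precisely of the form $SG$ considered in Theorem \ref{thm:speclam}. Explicitly, take $G_1 := \{\id_{W\oplus V_q}\} \subset \gnl{W\oplus V_q}$, $G_2 := \gnl{V'_{n-q}}$, and $M := \hom{F}{V'_{n-q}}{W\oplus V_q}$. The subgroup $G_2$ contains the scalars, the conditions $G_1 M = M = M G_2$ are trivially satisfied, and by inspection the resulting block upper-triangular group $G$ equals $\aff{W\oplus V_q}{V'_{n-q}}$, so $SG = \saff{W\oplus V_q}{V'_{n-q}}$ and $S(G_1\times G_2) = \spcl{V'_{n-q}}$.

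Theorem \ref{thm:speclam} now gives at once, for every $p\geq 1$, that $\hoz{p}{SG}$ is $\am$ and that the inclusion $\spcl{V'_{n-q}} \hookrightarrow SG$ induces an isomorphism
\[
\hoz{p}{\spcl{V'_{n-q}}} \xrightarrow{\ \cong\ } \mul{\hoz{p}{SG}}.
\]
Choosing any basis identifies $\spcl{V'_{n-q}}$ with $\specl{n-q}{F}$ since $\dim{F}{V'_{n-q}} = n-q$. The case $p=0$ is trivial: $E^1_{0,q} = \Z$ with trivial $F^\times$-action, which is annihilated by every $\ffist{a}$ and hence is itself \mult, so it equals its own \mult part, matching $\hoz{0}{\specl{n-q}{F}} = \Z$.

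There is no real obstacle here; all the work has been done in Theorem \ref{thm:speclam}, and the lemma is essentially a bookkeeping statement that casts the stabilizer groups in the correct block form. The only point requiring a little care is matching the indexing ($W \oplus V_q$ plays the role of the theorem's ``$W$'', while $V'_{n-q}$ plays the role of its ``$V$''), and verifying that $\gnl{V'_{n-q}}$ indeed contains the scalar subgroup required by the hypothesis of Theorem \ref{thm:speclam}.
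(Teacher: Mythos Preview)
Your proof is correct and follows exactly the approach the paper intends: the paper simply writes ``By the results in section \ref{sec:am}'' and leaves the reader to unpack the application of Theorem \ref{thm:speclam}, which is precisely what you have done by identifying $\saff{W\oplus V_q}{V'_{n-q}}$ as $SG$ for the choice $G_1=\{\id{}\}$, $G_2=\gnl{V'_{n-q}}$, $M=\hom{F}{V'_{n-q}}{W\oplus V_q}$. Your separate treatment of $p=0$ is also fine; note that the lemma is only used later for the subsequence $\EEp{W}{V}$ with $p>0$, so the case $q=n$ (where $E^1_{0,n}=\grf{F}$) never arises in the application.
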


For $q=n$, the orbits of $\saff{W}{V}$ on the basis of $\compt{n}{W,V}$ are in bijective correspondence with $F^\times$ via 

\[
\big((w_1,v_1),\ldots,(w_n,v_n)\big)\mapsto \det\left([v_1|\cdots|v_n]_{\mathcal{E}}
\right).
\]

The stabilizer of any basis element of $\compt{n}{W,V}$ is trivial. Thus
\[
E^1_{p,n}=
\left\{
\begin{array}{ll}
\grf{F},& p=0\\
0,& p>0
\end{array}
\right.
\]

Of course, $E^1_{p,q}=0$ for $q>n$.

The first column of the $E^1$-page of the spectral sequence $\EE{W}{V}$ has the form 
\[
E^1_{0,q}=\left\{
\begin{array}{ll}
\Z,& q<n\\
\grf{F},&q=n\\
0,& q>n
\end{array}
\right.
\]
and the differentials are easily computed:
For $q<n$
\[
d^1_{0,q}:E^1_{0,q}\to E^1_{0,q} = 
\left\{
\begin{array}{ll}
\id{\Z},& q \mbox{ is odd }\\
0,& q \mbox{ is even }
\end{array}
\right.
\]
and
\[
d^1_{0,n}:\grf{F}\to \Z=
\left\{
\begin{array}{ll}
\mbox{augmentation },& n \mbox{ odd }\\
0,& n \mbox{ even }
\end{array}
\right.
\]

It follows that $E^2_{0,q}=0$ for $q\not= n$ and 
\[
E^2_{0,n}=
\left\{
\begin{array}{ll}
\aug{F^\times},& n \mbox{ odd }\\
\grf{F},& n \mbox{ even }
\end{array}
\right. 
\]

Note that the composite
\[
\xymatrix{
\SSS(W,V)\ar[r]^-{\mbox{\tiny edge}}&E^\infty_{0,n}\subset E^2_{0,n}=\Aa{n}
}
\]
is just the map $\Dd{W,V}$ of section \ref{sec:prelim} above.

\begin{lem}
The map $\Dd{W,V}$ is a split surjective homomorphism of $\grf{F}$-modules.
\end{lem}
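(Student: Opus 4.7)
The plan is to reduce to the previously established case $W=0$, which is Lemma \ref{lem:dn}(3), by functorially pushing forward the splitting there along the natural map induced by the inclusion $0\subset W$. This is the cleanest available strategy because none of the relevant structure depends on the $W$-coordinates: the determinant of a basis element of $\cgen{n}{W,V}$ sees only its $V$-components.

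First I would invoke the inclusion of complexes $\cgen{\bullet}{V}=\cgen{\bullet}{0,V}\hookrightarrow\cgen{\bullet}{W,V}$, $(v_1,\ldots,v_p)\mapsto ((0,v_1),\ldots,(0,v_p))$, which is equivariant for the block-diagonal embedding of $\gnl{V}$ into $\aff{W}{V}$. Restricting to the $n$-th kernel gives an $\spcl{V}$-equivariant map $H(V)\to H(W,V)$, and since $\spcl{V}$ sits inside $\saff{W}{V}$, passing to coinvariants yields a well-defined homomorphism $\iota:\SSS(V)\to\SSS(W,V)$. Because the determinants of corresponding basis elements agree, $\iota$ fits into a commutative square
\begin{equation*}
\xymatrix{
\SSS(V)\ar[r]^-{\iota}\ar[d]_-{\Dd{V}}&\SSS(W,V)\ar[d]^-{\Dd{W,V}}\\
\grf{F}\ar@{=}[r]&\grf{F}.
}
\end{equation*}

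The one point that deserves care, and which I expect to be the main (though still minor) technical obstacle, is to check that $\iota$ is $\grf{F}$-linear. For this, given $a\in F^\times$, one lifts $a$ to $\diag{a,1,\ldots,1}$ inside $\gnl{V}$ and simultaneously, via the block-diagonal embedding, inside $\saff{W}{V}$; the inclusion of complexes intertwines conjugation by and multiplication by these two lifts, so the induced map on coinvariants intertwines the $F^\times$-actions. Once this is in hand, the diagram above is a diagram of $\grf{F}$-modules.

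Finally, by Lemma \ref{lem:dn}(1) and (3), $\Dd{V}=\Dd{n}$ has image $\Aa{n}$ and admits a $\grf{F}$-splitting $\sigma:\Aa{n}\to\SSS(V)$. Then $\iota\circ\sigma:\Aa{n}\to\SSS(W,V)$ is $\grf{F}$-linear and satisfies $\Dd{W,V}\circ(\iota\circ\sigma)=\Dd{V}\circ\sigma=\id{\Aa{n}}$ by commutativity of the square. This simultaneously shows that the image of $\Dd{W,V}$ contains $\Aa{n}$ (agreeing with the identification of the image coming from the spectral sequence discussion that precedes the lemma) and produces the desired $\grf{F}$-splitting.
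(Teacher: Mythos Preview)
Your proposal is correct and follows essentially the same approach as the paper: both reduce to the case $W=0$ (Lemma~\ref{lem:dn}) via the natural map of complexes $\cgen{\bullet}{V}\to\cgen{\bullet}{W,V}$, which induces a commutative triangle of $\grf{F}$-modules relating $\Dd{V}$ and $\Dd{W,V}$. The paper is terser---it simply asserts the commutative diagram is one of $\grf{F}$-modules---whereas you spell out the verification of $\grf{F}$-linearity of $\iota$; but the argument is the same.
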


\begin{proof}
If $W=0$, this is Lemma \ref{lem:dn} (1) and (3), since $V\cong F^n$. 

In general the natural map of complexes $\compt{\bullet}{V}\to\compt{\bullet}{W,V}$ gives rise to a commutative diagram of 
$\grf{F}$-modules
\[
\xymatrix{
\SSS(V)\ar[rr]\ar[dr]_-{\Dd{V}}&&\SSS(W,V)\ar[dl]^-{\Dd{W,V}}\\
&\Aa{n}&
}
\]
\end{proof}

We let $\SSp{W,V}:=\ker{\Dd{W,V}:\SSS(W,V)\to \Aa{n}}$, so that $\SSS(W,V)\cong \SSp{W,V}\oplus \Aa{n}$ for all $W, V$.

\begin{cor} In the spectral sequence $\EE{W}{V}$, we have $E^2_{0,q}=E^\infty_{0,q}$ for all $q\geq 0$.

All higher differentials $d^r_{0,q}:E^r_{0,q}\to E^r_{r-1,q+r}$ are zero.
\end{cor}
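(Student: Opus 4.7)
The plan is to derive both assertions directly from the surjectivity of $\Dd{W,V}$ established in the previous lemma, together with the identification of the edge homomorphism of the spectral sequence $\EE{W}{V}$ recorded just before the statement.

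For $q \neq n$ the explicit computation of the first column carried out above gives $E^2_{0,q} = 0$, hence trivially $E^\infty_{0,q} = 0 = E^2_{0,q}$. The substantive case is $q = n$, where $E^2_{0,n} = \Aa{n}$. The remark immediately preceding the corollary identifies the natural composite
\[
\SSS(W,V) \longrightarrow E^\infty_{0,n} \hookrightarrow E^2_{0,n} = \Aa{n}
\]
with the map $\Dd{W,V}$. Since the previous lemma shows $\Dd{W,V}$ to be surjective, the second map here must be surjective as well. But it is also an inclusion, so it is forced to be an equality, giving $E^\infty_{0,n} = E^2_{0,n}$.

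Once $E^\infty_{0,q} = E^2_{0,q}$ is established for all $q$, the statement about higher differentials is automatic: the chain $E^\infty_{0,q} \subseteq \cdots \subseteq E^{r+1}_{0,q} \subseteq E^r_{0,q} \subseteq E^2_{0,q}$ of subquotients collapses to equalities, which forces every higher differential entering or leaving $E^r_{0,q}$ to vanish for $r \geq 2$. I do not anticipate a real obstacle: the whole argument is a formal consequence of surjectivity of $\Dd{W,V}$ combined with the edge-map identification, and the only point requiring a little care is keeping straight the direction of the nontrivial differentials in the author's first-quadrant indexing convention.
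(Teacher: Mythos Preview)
Your argument is correct and is exactly the reasoning the paper leaves implicit (the corollary is stated without proof, as an immediate consequence of the surjectivity of $\Dd{W,V}$ and the identification of the edge map). One small terminological slip: since the $p=0$ column admits no incoming differentials for $r\geq 2$, the groups $E^r_{0,q}$ form a chain of \emph{subgroups}, not merely subquotients; but this is precisely what makes your surjectivity-forces-equality step go through.
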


It follows that the spectral sequences $\EE{W}{V}$ decompose as a direct sum of two spectral sequences 
\[
\EE{W}{V}=\EEz{W}{V}\oplus \EEp{W}{V}
\]
where $\EEz{W}{V}$ is the first column of $\EE{W}{V}$ and $\EEp{W}{V}$ involves only the terms $E^r_{p,q}$ with $q>0$.

The spectral sequence $\EEz{W}{V}$ converges in degree $d$ to 
\[
\left\{
\begin{array}{ll}
0,& d\not= n\\
\Aa{n},& d=n
\end{array}
\right.
\] 

The spectral sequence $\EEp{W}{V}$ converges in degree $d$ to 
\[
\left\{
\begin{array}{ll}
0,& d<n\\
\SSp{W,V},& d=n\\
\ho{d-n}{\saff{W}{V}}{H(W,V)},&d>n
\end{array}
\right.
\] 

By Lemma \ref{lem:eeam} above, all the terms of the spectral sequence $\EEp{W}{V}$ are $\am$. We thus have

\begin{cor}\label{cor:sfam}\ 

\begin{enumerate}
\item The $\grf{F}$-modules $\SSp{W,V}$ are $\am$.
\item The graded submodule $\ad{\SSp{F^\bullet}}\subset \SSS(F^\bullet)$ is an ideal.
\end{enumerate}
\end{cor}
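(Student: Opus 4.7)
My plan is to handle the two parts separately, with part (1) being essentially a direct appeal to the spectral sequence machinery already assembled, and part (2) being a manipulation in the $\add/\mult$ decomposition framework.

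For part (1), I would invoke Theorem \ref{thm:eram} applied to the spectral sequence $\EEp{W}{V}$. The decomposition $\EE{W}{V} = \EEz{W}{V} \oplus \EEp{W}{V}$ was established just before the corollary, and $\EEp{W}{V}$ involves only the terms $E^1_{p,q}$ with $q > 0$. By Lemma \ref{lem:eeam}, every such term is $\am$. Since $\EEp{W}{V}$ is a first-quadrant spectral sequence converging in total degree $n$ to $\SSp{W,V}$, Theorem \ref{thm:eram} immediately yields that $\SSp{W,V}$ is $\am$.

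For part (2), the first observation is that $\SSp{F^\bullet}$ is already a graded ideal of $\SSS(F^\bullet)$: by Lemma \ref{lem:dn}(2), $\Dd{\bullet}:\SSS(F^\bullet)\to \Aa{\bullet}$ is a homomorphism of graded $\grf{F}$-algebras, and $\SSp{F^\bullet}$ is its kernel. Thus for $x \in \ad{\SSp{F^n}}$ and $y \in \SSS(F^m)$ we already have $x \ssprod y \in \SSp{F^{n+m}}$ and $y \ssprod x \in \SSp{F^{m+n}}$. What remains is to verify that these products land in the \add summand $\ad{\SSp{F^{n+m}}}$, i.e.\ that they are killed by the canonical projection $\mul{\pi}$ onto $\mul{\SSp{F^{n+m}}}$.

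The key step exploits the $\grf{F}$-balance of the product $\ssprod$ together with the interplay between \add and \mult modules. By part (1), $\mul{\SSp{F^{n+m}}}$ is \mult, so we may choose $s \in \msetp{\Q}$ annihilating it. Since $\ad{\SSp{F^n}}$ is \add, the same $s$ acts as an automorphism there, so we can write $x = s x'$ for some $x' \in \ad{\SSp{F^n}}$. The $\grf{F}$-balance of $\ssprod$ then gives
\[
x \ssprod y = (sx') \ssprod y = s\,(x' \ssprod y),
\]
and applying the $\grf{F}$-linear map $\mul{\pi}$ yields $\mul{\pi}(x \ssprod y) = s \cdot \mul{\pi}(x' \ssprod y) = 0$, so $x\ssprod y \in \ad{\SSp{F^{n+m}}}$. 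The same argument, with $x$ placed on the right, handles $y \ssprod x$.

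I do not foresee a serious obstacle: part (1) is a clean application of Theorem \ref{thm:eram}, and part (2) reduces to the divisibility trick above once one notices that $\SSp{F^\bullet}$ is already an ideal. The only point demanding a little care is ensuring that the chosen $s \in \msetp{\Q}$ simultaneously annihilates the target \mult module and acts as an automorphism on the source \add module, which is automatic from the definitions.
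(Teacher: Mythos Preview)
Your proof is correct and follows essentially the same approach as the paper: part (1) is identical, and for part (2) the paper simply cites Lemma \ref{lem:R} (together with the fact that $\SSp{F^\bullet}$ is an ideal by Lemma \ref{lem:dn}(2)), whose proof via Lemma \ref{lem:am0} is exactly the divisibility trick you wrote out explicitly. In other words, you have unpacked the content of Lemma \ref{lem:R}/\ref{lem:am0} in situ rather than citing it.
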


\begin{proof}\ 

\begin{enumerate}
\item This follows from Theorem \ref{thm:eram}.
\item This follows from Lemma \ref{lem:R}, since $\SSp{F^\bullet}$ is an ideal in $\SSS(F^\bullet)$ by Lemma \ref{lem:dn} (2). 
\end{enumerate}
\end{proof}

\begin{cor}\label{cor:ssp}
The natural embedding $H(V)\to H(W,V)$ induces an isomorphism
\[
\xymatrix{
\mul{\SSp{V}}\ar[r]^-{\cong}&\mul{\SSp{W,V}}.
}
\]
\end{cor}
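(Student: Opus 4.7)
The plan is to realize $\SSp{V}\to \SSp{W,V}$ as the degree-$n$ abutment of a morphism of spectral sequences, and then apply the machinery of Section~\ref{sec:am} page by page.

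First I would observe that the natural inclusion of complexes $\compt{\bullet}{0,V}\to \compt{\bullet}{W,V}$ is $\spcl{V}$-equivariant for the embedding $\spcl{V}\hookrightarrow \saff{W}{V}$, $g\mapsto \diag{\id{W},g}$. Applying $B_{\bullet}(\spcl{V})\otimes_{\spcl{V}}-$ on the source and $B_{\bullet}(\saff{W}{V})\otimes_{\saff{W}{V}}-$ on the target therefore yields a morphism $\EE{0}{V}\to \EE{W}{V}$ of the associated spectral sequences of Section~\ref{sec:ss}. Because a morphism of spectral sequences preserves the column filtration, this morphism respects the decomposition $\EE{W}{V}=\EEz{W}{V}\oplus\EEp{W}{V}$ (and likewise $\EE{0}{V}=\EEz{0}{V}\oplus\EEp{0}{V}$); on the $\mathcal{E}^0$-summand it is the identity on $\Aa{n}$, while on the $\mathcal{E}^+$-summand its degree-$n$ abutment is precisely the map $\SSp{V}\to \SSp{W,V}$ induced by $H(V)\to H(W,V)$.

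Second, at the $E^1$-page of the $\mathcal{E}^+$-summands, Shapiro's Lemma (as in Lemma~\ref{lem:eeam}) identifies the map for $0<q\leq n-1$ with
\[
\hoz{p}{\saff{V_q}{V'_{n-q}}}\longrightarrow \hoz{p}{\saff{W\oplus V_q}{V'_{n-q}}},
\]
induced by the evident inclusion of groups. The corollary following Theorem~\ref{thm:speclam} says precisely that this map becomes an isomorphism after applying $\mul{-}$.

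Finally, Lemma~\ref{lem:eeam} ensures that every term of both $\mathcal{E}^+$-spectral sequences is $\am$, so Theorem~\ref{thm:eram} produces decompositions $\EEp{0}{V}=\ad{\EEp{0}{V}}\oplus\mul{\EEp{0}{V}}$ and similarly for $\EEp{W}{V}$, with multiplicative summands converging to $\mul{\SSp{V}}$ and $\mul{\SSp{W,V}}$ respectively. By Lemma~\ref{lem:oplus} the morphism of spectral sequences restricts to the multiplicative summands, and by the previous step this restriction is an $E^1$-isomorphism; hence it is an $E^\infty$-isomorphism, and therefore an isomorphism of abutments, yielding $\mul{\SSp{V}}\cong \mul{\SSp{W,V}}$. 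The main obstacle is essentially just bookkeeping—verifying that the morphism of spectral sequences is compatible with both the $\mathcal{E}^0$/$\mathcal{E}^+$-decomposition from Section~\ref{sec:ss} and the additive/multiplicative decomposition inside each $\mathcal{E}^+$ from Theorem~\ref{thm:eram}—but both compatibilities are formal consequences of functoriality (the second via Lemma~\ref{lem:oplus}), so the genuine content is the $E^1$-isomorphism furnished by the corollary to Theorem~\ref{thm:speclam}.
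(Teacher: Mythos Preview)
Your proof is correct and follows essentially the same route as the paper's: build the morphism of spectral sequences $\EEEp{V}\to\EEp{W}{V}$ from the map of complexes, pass to multiplicative parts via Theorem~\ref{thm:eram}, and invoke the corollary to Theorem~\ref{thm:speclam} to see that the induced $E^1$-map is an isomorphism. One small omission: your stated range $0<q\leq n-1$ misses $q=0$ (which is part of $\mathcal{E}^+$, since that summand consists of the columns $p>0$ and includes the bottom row $\hoz{p}{\specl{n}{F}}$), but the same corollary with $W'=0\subset W$ covers this case, and for $q=n$ both sides vanish for $p>0$.
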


\begin{proof}
The map of complexes of $\spcl{V}$-modules  $\compt{\bullet}{V}\to \compt{\bullet}{W,V}$ gives rise to a map of spectral 
sequences $\EEEp{V}\to\EEp{W}{V}$ and hence a map $\mul{\EEEp{V}}\to\mul{\EEp{W}{V}}$. The induced map on the $E^1$-terms is
\[
\xymatrix{
\hoz{p}{\specl{n-q}{F}}\ar[r]^-{\id{}}\ar[d]^-{\cong}&\hoz{p}{\specl{n-q}{F}}\ar[d]^-{\cong}\\
\mul{\ho{p}{\spcl{V}}{\compt{q}{V}}}\ar[r]&\mul{\ho{p}{\saff{W}{V}}{\compt{q}{W,V}}}
}
\]
and thus is an isomorphism. 

It follows that there is an induced isomorphism of abutments
\[
\mul{\SSp{V}}\cong \mul{\SSp{W,V}}
\]
and
\[
\mul{\ho{k}{\spcl{V}}{H(V)}}\cong \mul{\ho{k}{\saff{W}{V}}{H(W,V)}}.
\]
\end{proof}

For convenience, we now \emph{define} 
\[
\mul{\SSS(W,V)}:=\frac{\SSS(W,V)}{\ad{\SSp{W,V}}}
\]
(even though $\SSS(W,V)$ is not an $\am$ module).

This gives:

\begin{cor}\label{cor:isom}
\begin{eqnarray*}
\mul{\SSS(W,V)}\cong \mul{\SSp{W,V}}\oplus \Aa{n}\cong \mul{\SSp{V}}\oplus \Aa{n} \cong \mul{\SSS(V)}
\end{eqnarray*}
as $\grf{F}$-modules, and $\mul{\SSS(F^\bullet)}$ is a graded $\grf{F}$-algebra. 
\end{cor}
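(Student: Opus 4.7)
The plan is to assemble the three isomorphisms from ingredients already established, and then deduce the algebra statement from the ideal property of $\ad{\SSp{F^\bullet}}$.

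First I would note the canonical splitting $\SSS(W,V) \cong \SSp{W,V}\oplus \Aa{n}$ of $\grf{F}$-modules coming from the split surjection $\Dd{W,V}:\SSS(W,V)\to\Aa{n}$, together with the fact that $\SSp{W,V}$ is an $\am$ module by Corollary \ref{cor:sfam}(1). Hence $\SSp{W,V}$ has a canonical decomposition $\SSp{W,V}=\ad{\SSp{W,V}}\oplus \mul{\SSp{W,V}}$, and so
\[
\mul{\SSS(W,V)}:=\frac{\SSS(W,V)}{\ad{\SSp{W,V}}}\cong \mul{\SSp{W,V}}\oplus \Aa{n},
\]
which is the first isomorphism.

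The middle isomorphism $\mul{\SSp{W,V}}\cong \mul{\SSp{V}}$ is exactly the content of Corollary \ref{cor:ssp}, applied to the map of complexes $\compt{\bullet}{V}\to\compt{\bullet}{W,V}$. For the third isomorphism, I would simply specialize the first one to the case $W=0$: since $\SSS(V)=\SSp{V}\oplus\Aa{n}$ and $\SSp{V}$ is $\am$, the quotient $\SSS(V)/\ad{\SSp{V}}$ is isomorphic to $\mul{\SSp{V}}\oplus \Aa{n}$. Concatenating the three isomorphisms then gives the chain in the statement.

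Finally, for the graded algebra claim, the key input is Corollary \ref{cor:sfam}(2), which asserts that $\ad{\SSp{F^\bullet}}$ is a graded ideal of $\SSS(F^\bullet)$. The graded $\grf{F}$-algebra structure on $\SSS(F^\bullet)$ from Lemma \ref{lem:sprod} then descends to the quotient $\mul{\SSS(F^\bullet)}=\SSS(F^\bullet)/\ad{\SSp{F^\bullet}}$, making it a graded $\grf{F}$-algebra, and the projection $\SSS(F^\bullet)\to\mul{\SSS(F^\bullet)}$ is a homomorphism of such algebras.

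There is no real obstacle here: all the real work has already been done in Corollaries \ref{cor:sfam} and \ref{cor:ssp} (whose proofs rested on Theorem \ref{thm:speclam} and the $\am$-module machinery of Section \ref{sec:am}). The only small point to keep in mind is that $\SSS(W,V)$ itself need not be an $\am$ module (since $\Aa{n}$ may be all of $\grf{F}$, which is neither \add nor \mult), so $\mul{\SSS(W,V)}$ is only defined by the quotient formula — one must therefore resist the temptation to invoke the general $\am$ decomposition directly on $\SSS(W,V)$ and instead argue on the summand $\SSp{W,V}$.
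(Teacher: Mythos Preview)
Your proof is correct and follows exactly the approach implied by the paper, which states the corollary without proof since it is immediate from Corollaries \ref{cor:sfam} and \ref{cor:ssp} together with the definition of $\mul{\SSS(W,V)}$. One minor quibble: the graded $\grf{F}$-algebra structure on $\SSS(F^\bullet)$ is established in the ``Products'' subsection rather than in Lemma \ref{lem:sprod} (which merely gives an explicit formula), but this does not affect the argument.
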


\begin{lem}\label{lem:corinv}
For any $k\geq 1$, the corestriction map 
\[
\cores{}{}:\hoz{i}{\specl{k}{F}}\to\hoz{i}{\specl{k+1}{F}}
\]
is $F^\times$-invariant;i.e. if $a\in F^\times$ and $z\in \hoz{i}{\specl{k}{F}}$, then 
\[
\cores{}{}(\fgen{a}z)=\fgen{a}\cores{}{}(z)=\cores{}{}(z).
\]
\end{lem}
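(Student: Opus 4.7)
The plan is to unpack the definition of the $\grf{F}$-action and exploit the freedom in choosing a lift of $a$ to $\genl{k+1}{F}$ of determinant $a$. Recall that the action of $\fgen{a}$ on $\hoz{i}{\specl{m}{F}}$ is, by definition, induced by conjugation by any element $g\in\genl{m}{F}$ with $\det(g)=a$, together with the fact that inner automorphisms of a group act trivially on its integral homology. So any two such lifts produce the same action.

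First I would establish the equality $\cores{}{}(\fgen{a}z)=\fgen{a}\cores{}{}(z)$. For this, pick $g=\mathrm{diag}(a,1,\ldots,1)\in\genl{k}{F}$, and use its canonical image $\tilde{g}=\mathrm{diag}(a,1,\ldots,1,1)\in\genl{k+1}{F}$, which again has determinant $a$. The inclusion $\specl{k}{F}\hookrightarrow\specl{k+1}{F}$ is then equivariant with respect to conjugation by $g$ on the source and by $\tilde{g}$ on the target. By naturality of the induced map on homology, $\cores{}{}\circ(c_g)_\ast=(c_{\tilde{g}})_\ast\circ\cores{}{}$, which is precisely the first equality.

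Next I would prove $\fgen{a}\cores{}{}(z)=\cores{}{}(z)$ by choosing a more clever lift of $a$. Take $g'=\mathrm{diag}(1,\ldots,1,a)\in\genl{k+1}{F}$, which also has determinant $a$ and can therefore be used to compute the action of $\fgen{a}$ on $\hoz{i}{\specl{k+1}{F}}$. A direct matrix computation shows that $g'$ centralizes the image of $\specl{k}{F}$ (embedded as the upper-left $k\times k$ block), so the conjugation automorphism $c_{g'}$ restricts to the identity on $\specl{k}{F}$. Naturality of corestriction applied to the commutative square with $c_{g'}$ on the target and the identity on the source yields $\fgen{a}\cdot\cores{}{}(z)=(c_{g'})_\ast\cores{}{}(z)=\cores{}{}(z)$.

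There is no serious obstacle here; the only thing to be careful about is the well-definedness of the $F^\times$-action, so that we are permitted to use different lifts of $a$ in the two arguments above. This is exactly what allows the proof to work.
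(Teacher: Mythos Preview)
Your proof is correct. The paper's argument takes a slightly different route for the second equality: rather than choosing the lift $\mathrm{diag}(1,\ldots,1,a)\in\genl{k+1}{F}$ that centralizes the embedded $\specl{k}{F}$, it uses the fact that the scalar matrices $a\cdot I_k$ and $a\cdot I_{k+1}$ are central to conclude that $\fgen{a^k}$ acts trivially on $\hoz{i}{\specl{k}{F}}$ and $\fgen{a^{k+1}}$ acts trivially on $\hoz{i}{\specl{k+1}{F}}$, and then combines these algebraically:
\[
\cores{}{}(\fgen{a}z)=\cores{}{}(\fgen{a^{k+1}}z)=\fgen{a^{k+1}}\cores{}{}(z)=\cores{}{}(z).
\]
Your argument is more direct, since a single well-chosen lift of determinant $a$ does the job in one step. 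The paper's route has the minor side benefit that the triviality of $\fgen{a^m}$ on $\hoz{i}{\specl{m}{F}}$ is a fact reused elsewhere (for instance in showing these homology groups are \mult and in the proof of Theorem~\ref{thm:speclam}).
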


\begin{proof}
Of course, $\cores{}{}$ is a homomorphism of $\grf{F}$-modules. However, for $a\in F^\times$, $\fgen{a^k}$ acts trivially on 
$\hoz{i}{\specl{k}{F}}$ while $\fgen{a^{k+1}}$ acts trivially on $\hoz{i}{\specl{k+1}{F}}$ so that 
\[
\cores{}{}(\fgen{a}z)=\cores{}{}(\fgen{a^{k+1}}z)=\fgen{a^{k+1}}\cores{}{}(z)=\cores{}{}(z).
\]
\end{proof}

\begin{lem}\label{lem:d1}
For $0\leq q < n$, the differentials of the spectral sequence $\mul{\EEp{W}{V}}$
\[
d^1_{p,q}:\mul{(E^1_{p,q})}\cong\hoz{p}{\specl{n-q}{F}}
\to \mul{(E^1_{p,q-1})}\cong
\hoz{p}{\specl{n-q+1}{F}}
\]
are zero when $q$ is even and are equal to the corestriction map when $q$ is odd.
\end{lem}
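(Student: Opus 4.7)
The plan is to apply formula (\ref{formula}) to the map of permutation modules $d_q: \cgen{q}{W,V} \to \cgen{q-1}{W,V}$ and simplify after projecting to the \mult parts. I would take the standard orbit representatives $x_q = ((0,e_1), \ldots, (0,e_q))$ and $x_{q-1} = ((0,e_1), \ldots, (0,e_{q-1}))$, with stabilizers $H_q = \saff{W\oplus V_q}{V'_{n-q}}$ and $H_{q-1} = \saff{W\oplus V_{q-1}}{V'_{n-q+1}}$. The boundary is $d_q = \sum_{i=1}^q (-1)^{i+1} \partial_i$; for each face map $\partial_i$, I would choose an element $g_i \in \saff{W}{V}$ with $\partial_i x_q = g_i \cdot x_{q-1}$. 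One takes $g_q = 1$, and for $i < q$ one takes $g_i$ to act on $V$ by the cyclic permutation $e_i \mapsto e_{i+1} \mapsto \cdots \mapsto e_q \mapsto \pm e_i$ (with the sign chosen so that $\det g_i = 1$) and by the identity on all other basis vectors.

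Next I would compute that each such $g_i$ normalizes $H_q$. Writing $H_q$ in the block form
\[
\begin{pmatrix} I_W & 0 & N_W \\ 0 & I_{V_q} & N_{V_q} \\ 0 & 0 & B \end{pmatrix}, \qquad B \in \spcl{V'_{n-q}},
\]
conjugation by $g_i$ leaves $N_W$ and $B$ unchanged and only permutes $N_{V_q}$ through the action of $G'_i = g_i|_{V_q}$. In particular $g_i^{-1} H_q g_i = H_q$, hence $g_i^{-1} H_q g_i \cap H_{q-1} = H_q \cap H_{q-1} = H_q$, and formula (\ref{formula}) collapses to
\[
d^1_{p,q}(z) = \cores{H_q}{H_{q-1}}\Bigl(\sum_{i=1}^q (-1)^{i+1}\, g_i^{-1}\cdot z\Bigr).
\]

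The key observation is that on the \mult part, conjugation by each $g_i$ acts trivially. Indeed, the isomorphism $\hoz{p}{\specl{n-q}{F}} \xrightarrow{\cong} \mul{\hoz{p}{H_q}}$ is induced by the inclusion $\spcl{V'_{n-q}} \hookrightarrow H_q$ of the bottom-right block, and $g_i$ fixes $V'_{n-q}$ pointwise, so it centralizes $\spcl{V'_{n-q}}$. By functoriality of homology, the conjugation action of $g_i$ on $\mul{\hoz{p}{H_q}}$ is the identity. Thus the sum collapses to
\[
d^1_{p,q}(z) = \Bigl(\sum_{i=1}^q (-1)^{i+1}\Bigr)\, \cores{H_q}{H_{q-1}}(z),
\]
which vanishes when $q$ is even and equals $\cores{H_q}{H_{q-1}}(z)$ when $q$ is odd.

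Finally, I would identify $\cores{H_q}{H_{q-1}}$ on the \mult part with the corestriction $\cores{}{}: \hoz{p}{\specl{n-q}{F}} \to \hoz{p}{\specl{n-q+1}{F}}$. The two chains $\spcl{V'_{n-q}} \subset \spcl{V'_{n-q+1}} \subset H_{q-1}$ and $\spcl{V'_{n-q}} \subset H_q \subset H_{q-1}$ describe the same inclusion of $\spcl{V'_{n-q}}$ into $H_{q-1}$, and since $\cores{\spcl{V'_{n-q+1}}}{H_{q-1}}$ is exactly the isomorphism $\hoz{p}{\specl{n-q+1}{F}} \xrightarrow{\cong} \mul{\hoz{p}{H_{q-1}}}$, functoriality of corestriction gives the stated identification. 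The main obstacle is the verification in the second step: checking that $g_i$ normalizes $H_q$ cleanly and, crucially, that the $\pm$ sign needed for $\det g_i = 1$ does not interfere with the action on $V'_{n-q}$, so that the \mult-triviality of the conjugation is preserved.
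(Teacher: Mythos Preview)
Your argument is correct and follows essentially the same route as the paper: write the simplicial boundary as a sum of translates of the base point, invoke formula~(\ref{formula}), observe that each translating element normalizes $H_q$, and conclude that on the multiplicative part the conjugations act trivially so the sum collapses to an alternating sum of corestrictions. The one minor difference is where the determinant correction is placed: you absorb the sign into the $V_q$-block (sending $e_q\mapsto \pm e_i$), so your $g_i$ genuinely centralizes $\spcl{V'_{n-q}}$ and the conjugation is literally trivial on the multiplicative part; the paper instead takes $\sigma_i$ to be the honest permutation matrix on $V_q$ and places the compensating sign $\epsilon_i$ in a block $\tau_i\in\gnl{V'_{n-q}}$, obtaining $\cores{}{}(\fgen{\epsilon_i}z)$, and then invokes Lemma~\ref{lem:corinv} (the $F^\times$-invariance of corestriction) to drop the $\fgen{\epsilon_i}$. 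Your choice is slightly more economical since it bypasses that lemma, while the paper's choice makes the cancellation mechanism a bit more transparent as an instance of the general $\grf{F}$-module structure.
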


\begin{proof}
$d^1$ is derived from the map $d_q:\compt{q}{W,V}\to\compt{q-1}{W,V}$ of permutation modules. Here
\begin{eqnarray*}
d_q\big((0,e_1),\ldots,(0,e_q)\big)
&=& 
\sum_{i=1}^q (-1)^{i+1}
\big((0,e_1),\ldots,\widehat{(0,e_i)},\ldots,(0,e_q)\big)\\
&=&
\sum_{i=1}^q(-1)^{i+1}\phi_i\big((0,e_1),\ldots,(0,e_{q-1})\big)
\end{eqnarray*}
where $\phi_i\in \saff{W}{V}$ can be chosen to be of the form 
\[
\phi_i=
\begin{pmatrix}
\id{W}&0\\
0&\psi_i
\end{pmatrix},\quad
\psi_i=
\begin{pmatrix}
\sigma_i&0\\
0&\tau_i
\end{pmatrix}
\in \gnl{V}
\]
with $\sigma_i\in \gnl{V_q}$ a permutation matrix of determinant $\epsilon_i$ and $\tau_i\in\gnl{V'_{n-q}}$ also of determinant 
$\epsilon_i$. 

$\phi_i$ normalises $\saff{W\oplus V_q,V'_{n-q}}$ and $\spcl{V'_{n-q}}$. Thus for $z\in \hoz{p}{\spcl{V'_{n-q}}}$, 
\begin{eqnarray*}
d^1(z)&=&\sum_{i=1}^q(-1)^{i+1}\cores{}{}(\tau_i z)\\
&=& \sum_{i=1}^q(-1)^{i+1}\cores{}{}(\fgen{\epsilon_i}z)\\
&=& \sum_{i=1}^q(-1)^{i+1}\cores{}{}(z)
=\left\{
\begin{array}{ll}
\cores{}{}(z),& q\mbox{ odd}\\
0,& q\mbox{ even}\\
\end{array}
\right.
\end{eqnarray*}
\end{proof}

Let $E:=\ssb{-1,1}\in \mul{\SSS(F^2)}$. $E$ is represented by the element 
\[
\tilde{E}:=d_3(e_1,e_2,e_2-e_1)=(e_2,e_2-e_1)-(e_1,e_2-e_1)+(e_1,e_2)\in H(F^2)\subset \compt{2}{F^2}.
\]

Multiplication by $\tilde{E}$ induces a map of complexes of $\genl{n-2}{F}$-modules
\[
\shift{2}{\compt{\bullet}{F^{n-2}}}\to \compt{\bullet}{F^n}
\]

There is an induced map of spectral sequences $\shift{2}{\EEE{F^{n-2}}}\to\EEE{F^n}$, which in turn induces a map 
$\shift{2}{\EEEp{F^{n-2}}}\to \EEEp{F^n}$, and hence a map $\shift{2}{\mul{\EEEp{F^{n-2}}}}\to \mul{\EEEp{F^n}}$.

By the work above, the $E^1$-page of $\mul{\EEEp{F^n}}$ has the form 
\[
E^1_{p,q}=\hoz{p}{\specl{n-q}{F}} \quad (p>0)
\]
while the $E^1$-page of $\shift{2}{\mul{\EEEp{F^{n-2}}}}$ has the form 
\[
{E'}^1_{p,q}=
\left\{
\begin{array}{ll}
\hoz{p}{\specl{(n-2)-(q-2)}{F}}=\hoz{p}{\specl{n-q}{F}},& q\geq 2, p>0\\
0,& q\leq 1 \mbox{ or } p=0
\end{array}
\right.
\]

\begin{lem}\label{lem:period}
For $q\geq 2$ (and $p>0$), the map 
\[
{E'}^1_{p,q}\cong \hoz{p}{\specl{n-q}{F}}\to E^1_{p,q}=\hoz{p}{\specl{n-q}{F}}
\]
induced by $\tilde{E}\ssprod - $ is the identity map.
\end{lem}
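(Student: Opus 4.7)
The plan is to trace the map $\tilde{E}\ssprod -$ through the chain-level description of Shapiro's Lemma and formula~\eqref{formula}, then project onto the \mult parts.

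First I would expand the product explicitly:
\[
\tilde{E}\ssprod (e_1,\ldots,e_{q-2}) = x_1 - x_2 + x_3 \in \compt{q}{F^n},
\]
where, under the identification $F^{n-2}\oplus F^2 = F^n$ placing $F^{n-2}$ in the first $n-2$ coordinates,
\begin{align*}
x_1 &= (e_n,\ e_n - e_{n-1},\ e_1,\ldots,e_{q-2}),\\
x_2 &= (e_{n-1},\ e_n - e_{n-1},\ e_1,\ldots,e_{q-2}),\\
x_3 &= (e_{n-1},\ e_n,\ e_1,\ldots,e_{q-2}).
\end{align*}
When $q=n$ both sides of the claimed identity vanish for $p>0$, so I may assume $2\leq q\leq n-1$.

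Next I would analyse stabilizers. Because $g(e_n)=e_n$ together with $g(e_n-e_{n-1})=e_n-e_{n-1}$ force $g(e_{n-1})=e_{n-1}$, all three $x_i$ share the same stabilizer $K\subset\specl{n}{F}$, namely the stabilizer of $x_3$, and (since $q\leq n-1$) all three lie in the single $\specl{n}{F}$-orbit of $(e_1,\ldots,e_q)$. Similarly, under the restricted action of $\specl{n-2}{F}$ (embedded via block-diagonal with $I_2$ on the $F^2$-factor), each $x_i$ has stabilizer equal to the image $\tilde H$ of $\saff{F^{q-2}}{F^{n-q}}$. I would choose lifts $\alpha,\beta\in\specl{n}{F}$ of the form $\mathrm{diag}(I_{n-2},A)$ with $A\in\specl{2}{F}$ such that $\alpha\cdot x_3=x_1$ and $\beta\cdot x_3=x_2$; by construction they act trivially on the $F^{n-2}$-factor and therefore commute with $\tilde H\subset\specl{n}{F}$.

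The $E^1$-level map factors as the map induced by $\tilde{E}\ssprod -$ followed by change of groups from $\specl{n-2}{F}$ to $\specl{n}{F}$. Applying formula~\eqref{formula} to the $\specl{n-2}{F}$-equivariant map $\tilde{E}\ssprod -:\compt{q-2}{F^{n-2}}\to\compt{q}{F^n}$, the contribution in each of the three $\specl{n-2}{F}$-Shapiro summands (one for each $x_i$) is simply $\pm\mathrm{id}$, with the sign coming from the expansion above, since source and target stabilizers coincide. For the subsequent change of groups, the three $\specl{n-2}{F}$-orbits all collapse into the single $\specl{n}{F}$-orbit of $x_3$; because $\alpha,\beta$ commute with $\tilde H$ and $\tilde H\subset K$, the conjugations appearing in formula~\eqref{formula} act trivially and all the relevant intersections are just $\tilde H$. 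Each summand thus contributes $\pm\cores{\tilde H}{K}$, and the signs $(+,-,+)$ sum to $+1$, yielding the net induced map $\cores{\tilde H}{K}:\hoz{p}{\tilde H}\to\hoz{p}{K}$.

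Finally I would invoke Theorem~\ref{thm:speclam} applied to both $\tilde H\cong\saff{F^{q-2}}{F^{n-q}}$ and to $K$ (which after a basis permutation is isomorphic to $\saff{F^q}{F^{n-q}}$): in each case the copy of $\specl{n-q}{F}$ embedded as the central block acting on $\mathrm{span}(e_{q-1},\ldots,e_{n-2})$ realises the \mult part. Since $\cores{}{}$ is functorial for group inclusions, the composite $\hoz{p}{\specl{n-q}{F}}\to\hoz{p}{\tilde H}\xrightarrow{\cores{\tilde H}{K}}\hoz{p}{K}$ equals the direct inclusion-induced map $\hoz{p}{\specl{n-q}{F}}\to\hoz{p}{K}$, which is precisely the canonical isomorphism onto $\mul{\hoz{p}{K}}$ identifying both sides with $\hoz{p}{\specl{n-q}{F}}$. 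This proves the lemma. The main technical obstacle is the careful bookkeeping through Shapiro's Lemma — first handling the three distinct $\specl{n-2}{F}$-orbits, then tracking how they collapse to a single $\specl{n}{F}$-orbit under change of groups.
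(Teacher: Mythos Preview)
Your argument is correct and follows essentially the same route as the paper. The paper carries out the computation directly at the level of the bar resolution: writing $\tilde{E}\ssprod(e_3,\ldots,e_q)=(g_1-g_2+1)(e_1,\ldots,e_q)$ with $g_1,g_2\in\specl{n}{F}$ block-diagonal (identity on the $F^{n-2}$ factor), one sees that $z\otimes(e_3,\ldots,e_q)\mapsto z(g_1-g_2+1)\otimes(e_1,\ldots,e_q)$, and since the $g_i$ centralise $\specl{n-q}{F}$ this represents $w$ again. Your version unpacks the same computation through formula~\eqref{formula} in two stages (first over $\specl{n-2}{F}$, then change of groups to $\specl{n}{F}$); your $\alpha,\beta$ are the paper's $g_1,g_2$, and the crucial observation that they commute with the stabiliser is identical.
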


\begin{proof}
There is a commutative diagram
\[
\xymatrix{
{E'}^1_{p,q}=\hoz{p}{\specl{n-q}{F}}\ar[r]\ar[d]^-{\mul{(\tilde{E}\ssprod -)}}
&\hoz{p}{\saff{F^{q-2}}{F^{n-q}}}\ar[r]^-{\cong}\ar[d]^-{\tilde{E}\ssprod -}
&\ho{p}{\specl{n-2}{F}}{\compt{q-2}{F^{n-2}}}\ar[d]^-{\tilde{E}\ssprod -}\\
E^1_{p,q}=\hoz{p}{\specl{n-q}{F}}\ar[r]
&\hoz{p}{\saff{F^q}{F^{n-q}}}\ar[r]^-{\cong}
&\ho{p}{\specl{n}{F}}{\compt{q}{F^n}}
}
\]

We number the standard basis of  $F^{n-2}$ $e_3,\ldots,e_n$ so that the inclusion $\specl{n-2}{F}\to\specl{n}{F}$ has the form
\[
A\mapsto 
\begin{pmatrix}
I_2&0\\
0&A
\end{pmatrix}.
\] 

So we have a commutative diagram of  inclusions of groups 
\[
\xymatrix{
\specl{n-q}{F}\ar[r]\ar[d]^-{=}
&\saff{F^{q-2}}{F^{n-q}}\ar[r]\ar[d]
&\specl{n-2}{F}\ar[d]\\
\specl{n-q}{F}\ar[r]
&\saff{F^q}{F^{n-q}}\ar[r]
&\specl{n}{F}.
}
\]

Let $B_\bullet = B_\bullet(\specl{n}{F})$ be the right bar resolution of $\specl{n}{F}$. We can use it to compute the homology of 
any of the groups occurring in this diagram. 

Suppose now that $q\geq 2$ and we have a class, $w$, in ${E'}^1_{p,q}=\hoz{p}{\specl{n-q}{F}}$ represented by a cycle 
\[
z\otimes 1 \in B_p\otimes_{\gr{\Z}{\specl{n-q}{F}}}\Z.
\]  

Its image in $\ho{p}{\specl{n-2}{F}}{\compt{q-2}{F^{n-2}}}$ is represented by $z\otimes(e_3,\ldots,e_q)$. The image of this in 
$\ho{p}{\specl{n}{F}}{\compt{q}{F^n}} $ is 
\begin{eqnarray*}
z\otimes \left[\tilde{E}\ssprod (e_3,\ldots,e_q)\right]&=& z\otimes\left[(e_2,e_2-e_1,e_3,\ldots)-(e_1,e_2-e_1,e_3,\ldots)+
(e_1,e_2,e_3,\ldots)\right]\\
&=& z\otimes \left[(g_1-g_2+1)(e_1,e_2,e_3,\ldots)\right]\in B_p\otimes_{\gr{\Z}{\specl{n}{F}}}\compt{q}{F^n}
\end{eqnarray*}
where 
\[
g_1=
\begin{pmatrix}
0&-1&0&\hdots&0\\
1&1&0&\hdots&0\\
0&0&1&0&\vdots\\
\vdots&\vdots&0&\ddots&0\\
0&0&0&\hdots&1\\
\end{pmatrix},\quad
g_2=
\begin{pmatrix}
1&-1&0&\hdots&0\\
0&1&0&\hdots&0\\
0&0&1&0&\vdots\\
\vdots&\vdots&0&\ddots&0\\
0&0&0&\hdots&1\\
\end{pmatrix}
\in\specl{n}{F}.
\]

This corresponds to the element in $\hoz{p}{\specl{n-q}{F}}$ represented by 
\[
z(g_1-g_2+1)\otimes 1 \in B_p\otimes_{\gr{\Z}{\specl{n-q}{F}}}\Z
\]

Since the elements $g_i$ centralize $\specl{n-q}{F}$ it follows that this is $(g_1-g_2+1)\cdot w = w$.
\end{proof}

Recall that the spectral sequence $\mul{\EEEp{F^n}}$ converges in degree $n$ to $\mul{\SSp{F^n}}$. Thus there is a filtration
\[
0=\fil{-1}{n}\subset\fil{0}{n}\subset\fil{1}{n}\subset \cdots \fil{n}{n}=\mul{\SSp{F^n}}
\]
with 
\[
\frac{\fil{i}{n}}{\fil{i-1}{n}}\cong E^\infty_{n-i,i}.
\]

The $E^1$-page of $\mul{\EEEp{F^n}}$ has the form
\begin{eqnarray*}
\xymatrix{0&0&0&\hdots&0\\
0&\hoz{1}{\specl{2}{F}}\ar[d]&\hoz{2}{\specl{2}{F}}\ar[d]&\hdots&\hoz{n}{\specl{2}{F}}\ar[d]\\
\vdots&\vdots\ar[d]^-{\cores{}{}}&\vdots\ar[d]^-{\cores{}{}}&\hdots&\vdots\ar[d]^-{\cores{}{}}\\
0&\hoz{1}{\specl{n-2}{F}}\ar[d]^-{0}&\hoz{2}{\specl{n-2}{F}}\ar[d]^-{0}&\hdots&\hoz{n}{\specl{n-2}{F}}\ar[d]^-{0}\\
0&\hoz{1}{\specl{n-1}{F}}\ar[d]^-{\cores{}{}}&\hoz{2}{\specl{n-1}{F}}\ar[d]^-{\cores{}{}}&\hdots&\hoz{n}{\specl{n-1}{F}}
\ar[d]^-{\cores{}{}}\\
0&\hoz{1}{\specl{n}{F}}&\hoz{2}{\specl{n}{F}}&\hdots&\hoz{n}{\specl{n}{F}}\\
}
\end{eqnarray*}

\begin{thm}\label{thm:main}\ 

\begin{enumerate}
\item The higher differentials $d^2,d^3,\ldots, $ in the spectral sequence $\mul{\EEEp{F^n}}$ are all $0$.
\item $\mul{\SSn{F^{n-2}}}\cong E\ssprod\mul{\SSn{F^{n-2}}}$ and this latter is a direct summand of $\mul{\SSn{F^n}}$.
\end{enumerate}
\end{thm}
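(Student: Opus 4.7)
The plan is to prove (1) by induction on $n$ and then derive (2) from the resulting degeneration, combined with the periodicity map at the level of abutments. The base cases $n = 2, 3$ are immediate: $\mul{\EEEp{F^n}}$ has only rows $q \in \{0, 1\}$ in these cases, so every higher differential $d^r: E^r_{p,q} \to E^r_{p+r-1, q-r}$ with $r \geq 2$ lands in rows $q - r < 0$ and vanishes trivially.

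For the inductive step of (1), assume (1) holds for $n - 2$. The essential tool is the map of spectral sequences $\phi: \shift{2}{\mul{\EEEp{F^{n-2}}}} \to \mul{\EEEp{F^n}}$ induced by multiplication by $\tilde{E}$. By Lemma \ref{lem:period}, $\phi$ is an isomorphism on $E^1$-terms at positions $(p,q)$ with $p > 0$ and $q \geq 2$; by Lemma \ref{lem:d1} the $d^1$-differentials on the two sides match on those terms, so $\phi$ descends to an isomorphism of $E^2$-terms in rows $q \geq 2$. I then argue by secondary induction on $r \geq 2$ that every $d^r$ in the target vanishes and that $\phi$ remains an isomorphism on $E^{r+1}$-terms in rows $q \geq 2$: a target $d^r$ whose source lies in row $q \in \{0, 1\}$ lands in row $q - r < 0$ and so vanishes trivially; a target $d^r$ whose source lies in row $q \geq 2$ lifts, via the current $E^r$-isomorphism, to a class in the shifted source, where the outer inductive hypothesis forces $d^r = 0$, so naturality of $\phi$ gives vanishing downstairs. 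With $d^r = 0$ on both sides, $E^{r+1} = E^r$ throughout, so the isomorphism in rows $q \geq 2$ persists to the next page.

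For (2), the degeneration from (1) identifies the associated graded of the abutment filtration with the $E^2$-page on both sides. On associated graded, $E \ssprod -: \mul{\SSp{F^{n-2}}} \to \mul{\SSp{F^n}}$ is an isomorphism onto the subquotients in rows $q \geq 2$ and zero onto those in rows $q \in \{0, 1\}$; hence it is filtration-preserving and realises a natural identification $\mul{\SSp{F^n}} / \fil{1}{n} \cong \mul{\SSp{F^{n-2}}}$ as a section of the corresponding quotient map. This forces simultaneous injectivity and the direct summand property. The statement for the full $\mul{\SSn{F^\bullet}}$ then follows from the canonical decomposition $\mul{\SSn{F^n}} \cong \mul{\SSp{F^n}} \oplus \Aa{n}$ and compatibility of $E \ssprod -$ with the $\Aa{\bullet}$-splittings of Lemma \ref{lem:dn}, under which the induced map $\Aa{n-2} \to \Aa{n}$ is essentially the natural inclusion of these subgroups of $\grf{F}$.

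I expect the main obstacle to be the secondary induction inside the inductive step of (1): the vanishing of higher target differentials must be forced using only (a) the isomorphism on $E^r$-terms in rows $q \geq 2$ provided by $\phi$, and (b) dimensional vanishing for source rows $q \in \{0, 1\}$. The argument succeeds precisely because differentials issuing from the low rows automatically land in negative rows, so the rows that the periodicity map cannot reach are exactly those from which no nontrivial higher differential can originate.
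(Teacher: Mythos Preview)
Your proposal is correct and follows essentially the same approach as the paper: induction on $n$ for (1) using the periodicity map $\tilde{E}\ssprod -$ of Lemma~\ref{lem:period}, and the induced map on abutments for (2). Your secondary induction on $r$ simply makes explicit what the paper leaves implicit, and your extension of (2) from $\mul{\SSp{F^\bullet}}$ to $\mul{\SSn{F^\bullet}}$ via the $\Aa{\bullet}$-splittings is exactly what the paper carries out separately in Corollary~\ref{cor:E}.
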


\begin{proof}\ 

\begin{enumerate}
\item We will use induction on $n$. For $n\leq 2$ the statement is true for trivial reasons.

On the other hand, if $n>2$, 
by Lemma \ref{lem:period}, the map 
\[
\tilde{E}\ssprod -:\shift{2}{\mul{\EEEp{F^{n-2}}}}\to \mul{\EEEp{F^n}}
\]
 induces an 
isomorphism on $E^1$-terms for $q\geq 2$. By induction (and the fact that ${E'}^1_{p,q}=0$ for $q\leq 1$), the result follows for $n$.  
\item 
The map of spectral sequences $\shift{2}{\mul{\EEEp{F^{n-2}}}}\to \mul{\EEEp{F^n}}$ induces a homomorphism on abutments
\[
\xymatrix{
\mul{\SSp{F^{n-2}}}\ar[r]^-{E\ssprod-}&\mul{\SSp{F^n}}
}
\]
By Lemma \ref{lem:period} again, it follows that the composite
\[
\xymatrix{
\mul{\SSp{F^{n-2}}}\ar[r]^-{E\ssprod-}&\mul{\SSp{F^n}}\ar[r]&\left(\mul{\SSp{F^n}}\right)/{\fil{1}{n}}
}
\]
is an isomorphism.

Thus $\mul{\SSp{F^{n-2}}}\cong E\ssprod\mul{\SSp{F^{n-2}}}$ and
\[
\mul{\SSp{F^n}}\cong \left(E\ssprod\mul{\SSp{F^{n-2}}}\right)\oplus \fil{1}{n}.
\]
\end{enumerate}
\end{proof}

As a corollary we obtain the following general homology stability result for the homology 
of special linear groups:
\begin{cor}\label{cor:stab}
\item The corestriction maps $\hoz{p}{\specl{n-1}{F}}\to\hoz{p}{\specl{n}{F}}$ are isomorphisms for $p< n-1$ and are surjective 
when $p=n-1$.
\end{cor}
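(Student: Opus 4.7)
The plan is to read off the stability range directly from the spectral sequence $\mul{\EEEp{F^n}}$ constructed in this section. Recall that this spectral sequence converges to $0$ in total degrees $d < n$ (and to $\mul{\SSp{F^n}}$ in degree $d = n$), and that its $E^1$-page in the range $1 \leq p$, $0 \leq q \leq n-1$ is identified by Lemma \ref{lem:eeam} with $\hoz{p}{\specl{n-q}{F}}$. Crucially, by Theorem \ref{thm:main}(1), all higher differentials $d^r$ for $r \geq 2$ vanish, so $E^2 = E^\infty$. Combined with convergence to $0$, this forces $E^2_{p,q} = 0$ whenever $p + q < n$ (with $p \geq 1$).

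Next I would read off the two relevant $E^2$-entries using the explicit description of $d^1$ given by Lemma \ref{lem:d1}: in odd rows the differential is corestriction, in even rows it is zero. In row $q = 0$, the outgoing differential lands in $E^1_{p,-1} = 0$, while the incoming one is $d^1_{p,1} = \cores{}{} : \hoz{p}{\specl{n-1}{F}} \to \hoz{p}{\specl{n}{F}}$; hence
\[
E^2_{p,0} = \coker\!\bigl(\cores{}{} : \hoz{p}{\specl{n-1}{F}} \to \hoz{p}{\specl{n}{F}}\bigr).
\]
In row $q = 1$, the outgoing differential $d^1_{p,1}$ is again this corestriction, while the incoming $d^1_{p,2}$ is zero since $q = 2$ is even, so
\[
E^2_{p,1} = \ker\!\bigl(\cores{}{} : \hoz{p}{\specl{n-1}{F}} \to \hoz{p}{\specl{n}{F}}\bigr).
\]

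Forcing $E^2_{p,0} = 0$ for $p < n$ yields surjectivity of $\cores{}{}$ for all $1 \leq p \leq n-1$, and forcing $E^2_{p,1} = 0$ for $p + 1 < n$ yields injectivity for $1 \leq p \leq n-2$, i.e.\ for $p < n-1$. The case $p = 0$ is trivial since $\hoz{0}{\specl{k}{F}} = \Z$ for all $k \geq 1$. Putting these together gives the corollary exactly as stated. There is no real obstacle here: all the work has been done in Theorem \ref{thm:main} and Lemma \ref{lem:d1}, and the corollary is essentially a bookkeeping consequence of the vanishing pattern on the $E^2$-page.
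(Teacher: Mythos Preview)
Your proposal is correct and follows essentially the same approach as the paper: both invoke Theorem \ref{thm:main}(1) to get $E^2 = E^\infty$, use Lemma \ref{lem:d1} to identify the $E^2$-terms as kernels/cokernels of corestriction maps, and then read off the vanishing from the fact that the abutment is zero in degrees below $n$. The only cosmetic difference is that the paper writes down the general formula for $E^2_{p,q}$ at all $(p,q)$, while you focus on the two rows $q=0,1$ that directly give the statement; both are equally valid.
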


\begin{proof}
 Using (1) of  Theorem \ref{thm:main} and Lemma \ref{lem:d1}, we have (for the spectral sequence $\mul{\EEEp{F^n}}$):
\[
E^\infty_{p,q}=E^2_{p,q}=\frac{\ker{d^1}}{\image{d^1}}=
\left\{
\begin{array}{ll}
\ker{\hoz{p}{\specl{n-q}{F}}\to\hoz{p}{\specl{n-q+1}{F}}}& q\mbox{ odd}\\
\coker{\hoz{p}{\specl{n-q-1}{F}}\to\hoz{p}{\specl{n-q}{F}}}&q\mbox{ even}
\end{array}
\right.
\]

But the abutment of the spectral sequence is $0$ in dimensions less than $n$.
It follows that $E^\infty_{p,q}=0$ whenever $p+q\leq n-1$.  
\end{proof}

\begin{rem}
Note that in the spectral sequence $\mul{\EEEp{F^n}}$, 
\[
E^\infty_{n,0}= \coker{\hoz{n}{\specl{n-1}{F}}\to\hoz{n}{\specl{n}{F}}}=\SH{n}{F}.
\]
 Clearly, 
the edge homomorphism $\hoz{n}{\specl{n}{F}}\to E^\infty_{n,0}\to \mul{\SSn{F^n}}$ is just the iterated connecting homomorphism $\ee{n}$ of 
section \ref{sec:prelim} above. Thus we have:
\end{rem}
\begin{cor}
The maps 
\[
\ee{\bullet}:\SH{\bullet}{F}\to\mul{\SSn{F^\bullet}}
\]
define an injective homomorphism of graded $\grf{F}$-algebras.
\end{cor}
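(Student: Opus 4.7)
The plan is to observe that nearly every ingredient of this corollary has already been assembled in the preceding sections, so the task reduces to noting the right consequence of each, with injectivity being the one genuinely new (but routine) conclusion.

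First I would recall that Lemma~\ref{lem:ee} already shows $\ee{\bullet}\colon \hoz{\bullet}{\specl{\bullet}{F}} \to \SSn{F^\bullet}$ is a homomorphism of graded $\grf{F}$-algebras, and the subsequent vanishing lemma (applied to $V = W \oplus W'$ with $W' \neq 0$) guarantees that $\ee{n}$ kills the image of $\hoz{n}{\specl{n-1}{F}}$; hence $\ee{\bullet}$ descends to a graded $\grf{F}$-algebra homomorphism $\SH{\bullet}{F} \to \SSn{F^\bullet}$. Composing with the quotient $\SSn{F^\bullet} \to \mul{\SSn{F^\bullet}}$, which is a graded $\grf{F}$-algebra homomorphism because $\ad{\SSp{F^\bullet}}$ is a graded ideal (Corollary~\ref{cor:sfam}(2)), produces the asserted map of graded $\grf{F}$-algebras $\ee{\bullet}\colon \SH{\bullet}{F} \to \mul{\SSn{F^\bullet}}$.

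The content therefore lies entirely in injectivity, and for this I would exploit the spectral sequence $\mul{\EEEp{F^n}}$ converging in total degree $n$ to $\mul{\SSp{F^n}}$. By Theorem~\ref{thm:main}(1) all higher differentials vanish, so $E^\infty_{n,0} = E^2_{n,0}$. Inspecting the $E^1$-page, $E^1_{n,-1} = 0$ so there is no outgoing $d^1$ from $E^1_{n,0} \cong \hoz{n}{\specl{n}{F}}$, while the incoming $d^1_{n,1}\colon E^1_{n,1} \cong \hoz{n}{\specl{n-1}{F}} \to E^1_{n,0}$ is the corestriction map by Lemma~\ref{lem:d1} (the case $q=1$, odd). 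Consequently $E^2_{n,0} \cong \SH{n}{F}$.

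Finally, the convergence filtration on $\mul{\SSp{F^n}}$ has $\fil{0}{n} \cong E^\infty_{n,0}$ as its bottommost piece, so $\SH{n}{F}$ injects into $\mul{\SSp{F^n}}$, and thence into $\mul{\SSn{F^n}}$ (since $\mul{\SSn{F^n}} \cong \mul{\SSp{F^n}} \oplus \Aa{n}$ by Corollary~\ref{cor:isom}). The identification of the edge homomorphism $\hoz{n}{\specl{n}{F}} \to E^\infty_{n,0} \to \mul{\SSn{F^n}}$ with $\ee{n}$, recorded in the remark immediately preceding the corollary, then shows that the induced map $\SH{n}{F} \to \mul{\SSn{F^n}}$ is exactly $\ee{n}$, and is therefore injective. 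The only real obstacle was proving the collapse and computation of $E^\infty_{n,0}$, and that was already dispatched in Theorem~\ref{thm:main} together with Lemma~\ref{lem:d1}; what remains here is simply to read off the edge.
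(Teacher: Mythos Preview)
Your proposal is correct and follows essentially the same approach as the paper. The paper itself offers no separate proof for this corollary: it is stated as an immediate consequence of the preceding remark (which identifies $E^\infty_{n,0}$ with $\SH{n}{F}$ and the edge map with $\ee{n}$), and you have simply spelled out the details---the algebra-homomorphism and factorization facts from Section~\ref{sec:prelim}, the collapse from Theorem~\ref{thm:main}(1), the computation of $E^2_{n,0}$ via Lemma~\ref{lem:d1}, and the injection of $\fil{0}{n}=E^\infty_{n,0}$ into $\mul{\SSn{F^n}}$---that the paper leaves implicit.
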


\begin{cor}\label{cor:E}
$\mul{\SSn{F^2}}=\fil{1}{2}\oplus \grf{F}E$ and for all $n\geq 3$, 
\[
\mul{\SSn{F^n}}=(E\ssprod\mul{\SSn{F^{n-2}}})\oplus \fil{1}{n}\cong \mul{\SSn{F^{n-2}}}\oplus \fil{1}{n}.
\]
\end{cor}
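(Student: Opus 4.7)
The plan is to assemble the corollary from two previously established decompositions: the splitting $\mul{\SSn{F^n}} \cong \mul{\SSp{F^n}} \oplus \Aa{n}$ of Corollary \ref{cor:isom}, and the direct-sum decomposition $\mul{\SSp{F^n}} \cong (E \ssprod \mul{\SSp{F^{n-2}}}) \oplus \fil{1}{n}$ of Theorem \ref{thm:main}(2). The bridge between them is the computation $\Dd{2}(E) = \fgen{1}$, which follows by substituting $a_1 = -1$, $a_2 = 1$ into the formula for $\Dd{n}$ on generators derived in the proof of Lemma \ref{lem:dn}(1). Combined with the multiplicativity $\Dd{n+m}(x \ssprod y) = \Dd{n}(x)\cdot \Dd{m}(y)$ from Lemma \ref{lem:dn}(2), this yields the key identity $\Dd{n}(E \ssprod x) = \Dd{n-2}(x)$ for every $x \in \mul{\SSn{F^{n-2}}}$.

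For the base case $n = 2$: in the $E^1$-page of $\mul{\EEEp{F^2}}$, we have $E^1_{p,1} = \hoz{p}{\specl{1}{F}} = 0$ for $p \geq 1$ (since $\specl{1}{F}$ is trivial) and $E^1_{p,2} = 0$ for $p \geq 1$, so the only potentially nonzero filtration quotient of $\mul{\SSp{F^2}}$ is $E^\infty_{2,0} = \SH{2}{F}$, forcing $\fil{1}{2} = \mul{\SSp{F^2}}$. Since $\Dd{2}(E) = \fgen{1}$, multiplication by $E$ is a $\grf{F}$-splitting of $\Dd{2}\colon \mul{\SSn{F^2}} \to \grf{F} = \Aa{2}$ with image $\grf{F}E$, giving $\mul{\SSn{F^2}} = \fil{1}{2} \oplus \grf{F}E$.

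For $n \geq 3$: since $n$ and $n-2$ have the same parity, $\Aa{n} = \Aa{n-2}$ as subsets of $\grf{F}$, so the identity $\Dd{n}(E \ssprod x) = \Dd{n-2}(x)$ shows that $E \ssprod \Aa{n-2}$ maps isomorphically onto $\Aa{n}$ under $\Dd{n}$, and therefore serves as a $\grf{F}$-complement to $\mul{\SSp{F^n}} = \ker\Dd{n}$ inside $\mul{\SSn{F^n}}$. Splicing this with the Theorem \ref{thm:main}(2) decomposition produces
\[
\mul{\SSn{F^n}} = (E \ssprod \mul{\SSp{F^{n-2}}}) \oplus (E \ssprod \Aa{n-2}) \oplus \fil{1}{n} = (E \ssprod \mul{\SSn{F^{n-2}}}) \oplus \fil{1}{n}.
\]
The final isomorphism $E \ssprod \mul{\SSn{F^{n-2}}} \cong \mul{\SSn{F^{n-2}}}$ holds on the $\mul{\SSp{F^{n-2}}}$-summand by Theorem \ref{thm:main}(2) and on the $\Aa{n-2}$-summand because $\Dd{n}(E\ssprod a) = \Dd{n-2}(a)$ is injective on that splitting. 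Since this is essentially an assembly of results already in hand, the argument poses no substantial obstacle; the only subtlety is to choose the splitting of $\Aa{n}$ in $\mul{\SSn{F^n}}$ to be $E \ssprod \Aa{n-2}$ so that it is compatible with the factor $E \ssprod -$ appearing in the Theorem \ref{thm:main}(2) piece.
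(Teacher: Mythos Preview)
Your proof is correct and follows essentially the same approach as the paper: both combine the splitting $\mul{\SSn{F^n}}\cong\mul{\SSp{F^n}}\oplus\Aa{n}$ with Theorem~\ref{thm:main}(2), using that $\Dd{2}(E)=\fgen{1}$ (hence $\Dd{n}(E\ssprod x)=\Dd{n-2}(x)$) to realise the $\Aa{n}$-summand as $E\ssprod\Aa{n-2}$. The paper's proof is terser—it simply records the explicit splittings $\grf{F}E^{\ssprod n/2}$ or $\SSn{F}\ssprod E^{\ssprod(n-1)/2}$ and leaves the verification to the reader—while you spell out why $E\ssprod\Aa{n-2}$ is a complement to $\ker\Dd{n}$; but the underlying argument is the same.
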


\begin{proof} Clearly $\mul{\SSp{F^2}}=\fil{2}{1}$, while for $n\geq 3$ we have 
\[
\mul{\SSn{F^n}}=
\left\{
\begin{array}{ll}
\mul{\SSp{F^n}}\oplus \grf{F}E^{\ssprod \frac{n}{2}}& n \mbox{ even}\\
\mul{\SSp{F^n}}\oplus \left(\SSn{F}\ssprod E^{\ssprod \frac{n-1}{2}}\right)& n \mbox{ odd}
\end{array}
\right.
\] 
\end{proof}

\begin{cor} For all $n\geq 3$,
\[
\mul{\SSn{F^n}}\cong
\left\{
\begin{array}{ll}
 \fil{1}{n}\oplus \fil{1}{n-2}\oplus \cdots \oplus \fil{1}{2}\oplus \grf{F}& n \mbox{ even}\\
\fil{1}{n}\oplus \fil{1}{n-2}\oplus \cdots \oplus \fil{1}{3}\oplus \aug{F^\times}& n \mbox{ odd}
\end{array}
\right.
\]
as a $\grf{F}$-module.
\end{cor}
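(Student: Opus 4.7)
The plan is to iterate the isomorphism $\mul{\SSn{F^n}} \cong \mul{\SSn{F^{n-2}}} \oplus \fil{1}{n}$ of Corollary \ref{cor:E} down to the base cases $n=2$ (when $n$ is even) or $n=1$ (when $n$ is odd), and then identify the residual summand with $\grf{F}$ or $\aug{F^\times}$ respectively.

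For the base case $n=2$, Corollary \ref{cor:E} already supplies the decomposition $\mul{\SSn{F^2}} = \fil{1}{2} \oplus \grf{F}E$, so the only remaining task is to check that the cyclic submodule $\grf{F}E$ is free of rank one. For this I would use that $\Dd{2}(E) = \Dd{2}(\ssb{-1,1}) = \fgen{1}$, as computed in the proof of Lemma \ref{lem:dn}. Because $\Dd{2}$ is $\grf{F}$-linear, the composite $\grf{F} \to \grf{F}E \to \grf{F}$, $r \mapsto rE \mapsto r$, is the identity, so $r \mapsto rE$ is injective and $\grf{F}E \cong \grf{F}$.

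For the base case in the odd induction, observe that $\Dd{1}:\SSn{F} \to \Aa{1} = \aug{F^\times}$ is already an isomorphism (by the proof of Lemma \ref{lem:dn}), so $\SSp{F^1} = 0$, giving $\mul{\SSn{F^1}} = \SSn{F} \cong \aug{F^\times}$ as a $\grf{F}$-module. (Equivalently, one may take $n=3$ as the odd base case and identify $\SSn{F}\ssprod E$ with $\SSn{F}$ via the map $x \mapsto x\ssprod E$, whose composite with $\Dd{3}$ is $\Dd{1}$ by the algebra homomorphism property of $\Dd{\bullet}$.)

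Finally, for general $n \geq 3$ I would induct on $n$ in steps of two, applying Corollary \ref{cor:E} to obtain
\[
\mul{\SSn{F^n}} \cong \fil{1}{n} \oplus \mul{\SSn{F^{n-2}}}
\]
and then substituting the inductive hypothesis. Unwinding gives $\fil{1}{n} \oplus \fil{1}{n-2} \oplus \cdots \oplus \fil{1}{2} \oplus \grf{F}$ for even $n$ and $\fil{1}{n} \oplus \fil{1}{n-2} \oplus \cdots \oplus \fil{1}{3} \oplus \aug{F^\times}$ for odd $n$. There is no serious obstacle at this stage: all substantive content (the periodicity via $\tilde{E}\ssprod-$, the vanishing of higher differentials, and the splitting off of $\fil{1}{n}$) is already packaged in Theorem \ref{thm:main} and Corollary \ref{cor:E}, so the present corollary is essentially bookkeeping for the two-step recursion.
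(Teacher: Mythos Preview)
Your proposal is correct and follows exactly the approach the paper intends: the corollary is stated without proof precisely because it is immediate by iterating Corollary~\ref{cor:E}, and your handling of the base cases (identifying $\grf{F}E\cong\grf{F}$ via $\Dd{2}$ and $\SSn{F}\cong\aug{F^\times}$ via $\Dd{1}$) supplies the only details that need checking.
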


Note that $\fil{1}{1}=\SSn{F}=\aug{F^\times}$, and for all $n\geq 2$,
 $\fil{1}{n}$ fits into an exact sequence associated to the spectral sequence $\mul{\EEEp{F^n}}$:
\[
0\to E^\infty_{n,0}=\fil{0}{n}\to \fil{1}{n}\to E^\infty_{n-1,1}\to 0.
\]

\begin{cor}\label{cor:exact}
For all $n\geq 2$ we have an exact sequence
\[
\hoz{n}{\specl{n-1}{F}}\to\hoz{n}{\specl{n}{F}}\to\fil{1}{n}\to \hoz{n-1}{\specl{n-1}{F}}\to\hoz{n-1}{\specl{n}{F}}\to 0.
\]
\end{cor}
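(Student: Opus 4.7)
The plan is to assemble the exact sequence directly from the low-degree information contained in the spectral sequence $\mul{\EEEp{F^n}}$, using the filtration on its abutment together with the computation of $E^\infty$.

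First, I will identify the two lowest pieces of the filtration $0=\fil{-1}{n}\subset \fil{0}{n}\subset \fil{1}{n}\subset\cdots$ of $\mul{\SSp{F^n}}$. By Theorem \ref{thm:main}, all higher differentials vanish, so $E^\infty_{p,q}=E^2_{p,q}$. The differential $d^1$ was computed in Lemma \ref{lem:d1}: it is corestriction when $q$ is odd and zero when $q$ is even. Therefore, using that $d^1_{n,1}\colon \hoz{n}{\specl{n-1}{F}}\to \hoz{n}{\specl{n}{F}}$ is corestriction and that $d^1_{n,0}$ lands in a column of zeros,
\[
\fil{0}{n}\ \cong\ E^\infty_{n,0}\ =\ \coker\!\left(\hoz{n}{\specl{n-1}{F}}\to\hoz{n}{\specl{n}{F}}\right),
\]
and similarly, using that $d^1_{n-1,2}=0$ (since $q=2$ is even) and $d^1_{n-1,1}$ is corestriction,
\[
\fil{1}{n}/\fil{0}{n}\ \cong\ E^\infty_{n-1,1}\ =\ \ker\!\left(\hoz{n-1}{\specl{n-1}{F}}\to\hoz{n-1}{\specl{n}{F}}\right).
\]

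Next I would splice these two isomorphisms together. The first yields the short exact sequence
\[
\hoz{n}{\specl{n-1}{F}}\to\hoz{n}{\specl{n}{F}}\to\fil{0}{n}\to 0,
\]
where the second arrow is the edge map of the spectral sequence (equivalently $\ee{n}$, after observing that its image lies in $\fil{0}{n}\subset \fil{1}{n}$). Post-composing with the inclusion $\fil{0}{n}\hookrightarrow \fil{1}{n}$ gives the first three terms of the desired sequence. The second isomorphism gives
\[
0\to\fil{0}{n}\to\fil{1}{n}\to\ker\!\left(\hoz{n-1}{\specl{n-1}{F}}\to\hoz{n-1}{\specl{n}{F}}\right)\to 0,
\]
and combining this with the tautological inclusion of the kernel produces the four-term exact segment
\[
\fil{0}{n}\to\fil{1}{n}\to\hoz{n-1}{\specl{n-1}{F}}\to\hoz{n-1}{\specl{n}{F}}.
\]

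Finally I would verify exactness term-by-term: at $\hoz{n}{\specl{n}{F}}$ it follows from the surjectivity onto $\fil{0}{n}$; at $\fil{1}{n}$ from $\ker(\fil{1}{n}\to\hoz{n-1}{\specl{n-1}{F}})=\fil{0}{n}$; at $\hoz{n-1}{\specl{n-1}{F}}$ from the definition of the middle map as the tautological inclusion of the kernel; and surjectivity at the right end is furnished by Corollary \ref{cor:stab}, which asserts that $\hoz{n-1}{\specl{n-1}{F}}\to\hoz{n-1}{\specl{n}{F}}$ is surjective. This is purely a diagram chase, so there is no serious obstacle; the only point requiring a little care is checking that the splicing map $\hoz{n}{\specl{n}{F}}\to \fil{1}{n}$ agrees (up to the inclusion $\fil{0}{n}\hookrightarrow \fil{1}{n}$) with the edge homomorphism landing in $\fil{0}{n}$, which is immediate from the definition of the filtration on the abutment.
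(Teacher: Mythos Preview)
Your proof is correct and follows essentially the same approach as the paper: the corollary is stated in the paper immediately after the short exact sequence $0\to E^\infty_{n,0}=\fil{0}{n}\to \fil{1}{n}\to E^\infty_{n-1,1}\to 0$, and your argument simply spells out the splicing of this with the identifications of $E^\infty_{n,0}$ and $E^\infty_{n-1,1}$ coming from Lemma~\ref{lem:d1} and Theorem~\ref{thm:main}, together with the surjectivity from Corollary~\ref{cor:stab}. One tiny wording point: when you say $d^1_{n,0}$ ``lands in a column of zeros'', you mean it lands in the (nonexistent) row $q=-1$; the conclusion that $d^1_{n,0}=0$ is of course unaffected.
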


\begin{lem}
For all $n\geq 2$, the map $\T{n}$ induces a surjective map $\fil{1}{n}\to\mwk{n}{F}$.
\end{lem}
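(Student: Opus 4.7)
The plan is to exploit Corollary \ref{cor:E}, which gives a canonical direct sum decomposition of $\mul{\SSn{F^n}}$ in which $\fil{1}{n}$ appears as one summand. The key observation is that $\T{n}$ will annihilate the complementary summand because $\T{2}(E)=0$, so the restriction of $\T{n}$ to $\fil{1}{n}$ carries the same image as $\T{n}$ on the whole of $\mul{\SSn{F^n}}$.

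First I would verify that $\T{n}:\SSn{F^n}\to\mwk{n}{F}$ descends to a homomorphism $\mul{\SSn{F^n}}=\SSn{F^n}/\ad{\SSp{F^n}}\to\mwk{n}{F}$. By construction $\ad{\SSp{F^n}}$ is \add, while $\mwk{n}{F}$ is \mult: being a $\gw{F}$-module, the element $\ffist{4}\in\msetp{\Q}$ acts as $\pfist{4}=\wgen{4}-\wgen{1}=0$ and therefore annihilates it. Lemma \ref{lem:am0} then forces $\T{n}|_{\ad{\SSp{F^n}}}=0$, so $\T{n}$ factors through the quotient. The resulting map is surjective because, by Theorem \ref{thm:matmoore}, the symbols $[a_1]\cdots[a_n]=\T{n}(\ssb{a_1,\ldots,a_n})$ additively generate $\mwk{n}{F}$ for every $n\geq 2$.

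Next I would invoke Corollary \ref{cor:E}: $\mul{\SSn{F^2}}=\fil{1}{2}\oplus\grf{F}E$ and, for $n\geq 3$, $\mul{\SSn{F^n}}=\bigl(E\ssprod\mul{\SSn{F^{n-2}}}\bigr)\oplus\fil{1}{n}$. It therefore suffices to show that $\T{n}$ vanishes on the complementary summand. In the base case, $\T{2}(E)=[-1][1]=0$ by relation (i) of Theorem \ref{thm:matmoore}, and $\grf{F}$-linearity of $\T{2}$ extends this vanishing to all of $\grf{F}E$. For $n\geq 3$, the fact that $\T{\bullet}$ is a homomorphism of graded $\grf{F}$-algebras gives $\T{n}(E\ssprod y)=\T{2}(E)\cdot\T{n-2}(y)=0$ for every $y\in\mul{\SSn{F^{n-2}}}$. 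Combining the two steps, the image of $\T{n}|_{\fil{1}{n}}$ coincides with the image of $\T{n}$ on $\mul{\SSn{F^n}}$, which is all of $\mwk{n}{F}$.

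There is no serious obstacle in this argument once Corollary \ref{cor:E} and the $\am$ formalism of Section \ref{sec:am} are available; the whole proof reduces to the one computational identity $[-1][1]=0$ in $\mwk{2}{F}$. The only subtlety worth double-checking is that $\mwk{n}{F}$ is genuinely \mult as a $\grf{F}$-module, but this is immediate from its $\gw{F}$-module structure as noted above.
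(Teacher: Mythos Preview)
Your argument is correct and follows essentially the same route as the paper: both show that $\T{n}$ factors through $\mul{\SSn{F^n}}$ because $\mwk{n}{F}$ is multiplicative, note that $\T{n}$ is surjective on $\SSn{F^n}$, and then use Corollary~\ref{cor:E} together with $\T{2}(E)=0$ to conclude that the complementary summand is killed. You supply more detail (explicitly invoking Lemma~\ref{lem:am0} and identifying $\ffist{4}$ as an annihilator), but the structure is identical.
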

\begin{proof}
First observe that since $\mwk{n}{F}$ is generated by the elements of the form $[a_1]\cdots[a_n]$ 
it follows from the definition of $\T{n}$ that $\T{n}:\SSn{F^n}\to\mwk{n}{F}$ is surjective for all $n\geq 1$.

Next, since $\mwk{\bullet}{F}$ is multiplicative, $\T{\bullet}$ factors through an algebra homomorphism 
$\mul{\SSn{F^\bullet}}\to\mwk{\bullet}{F}$.
The lemma thus follows from  Corollary \ref{cor:E} and the fact that $\T{2}(E)=0$.
\end{proof}

\begin{lem}
$\fil{1}{2}=\fil{0}{2}$ and $\T{2}:\fil{1}{2}\to\mwk{2}{F}$ is an isomorphism.
\end{lem}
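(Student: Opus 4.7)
The plan is to combine the filtration exact sequence given just before the lemma with the already-established identifications for the bottom filtration piece, reducing everything to Theorem \ref{thm:T2}.

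First I would analyze the short exact sequence
\[
0 \to \fil{0}{2} \to \fil{1}{2} \to E^\infty_{1,1} \to 0
\]
associated to the spectral sequence $\mul{\EEEp{F^2}}$. To show $\fil{1}{2}=\fil{0}{2}$, it suffices to establish $E^\infty_{1,1}=0$. By part (1) of Theorem \ref{thm:main}, all higher differentials vanish, so $E^\infty_{1,1}=E^2_{1,1}$. By Lemma \ref{lem:d1}, the differential $d^1_{1,1}$ (with $q=1$ odd) is the corestriction
\[
\hoz{1}{\specl{1}{F}} \to \hoz{1}{\specl{2}{F}},
\]
and the differential entering $E^1_{1,1}$ from above is zero (since $q=2$ is even). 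Thus $E^2_{1,1}$ is a subquotient of $\hoz{1}{\specl{1}{F}}$, which is $0$ because $\specl{1}{F}$ is trivial. Hence $\fil{1}{2}=\fil{0}{2}$.

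Next I would identify $\fil{0}{2}$ explicitly. We have $\fil{0}{2}=E^\infty_{2,0}=E^2_{2,0}$, and by Lemma \ref{lem:d1} (with $q=0$ the differential out is zero, and the differential into $E^1_{2,0}$ is corestriction from $\hoz{2}{\specl{1}{F}}=0$), this equals $\hoz{2}{\specl{2}{F}}$. Under this identification, the edge homomorphism $\hoz{2}{\specl{2}{F}} \to E^\infty_{2,0}$ is the identity, and by the remark preceding the corollary that $\ee{\bullet}$ is injective, the composition
\[
\hoz{2}{\specl{2}{F}} \to \fil{0}{2} \hookrightarrow \mul{\SSn{F^2}}
\]
coincides with $\ee{2}$. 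Therefore $\ee{2}$ maps $\hoz{2}{\specl{2}{F}}$ isomorphically onto $\fil{0}{2}=\fil{1}{2}$.

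Finally, by Theorem \ref{thm:T2}, the composite $\T{2}\circ\ee{2}:\hoz{2}{\specl{2}{F}}\to\mwk{2}{F}$ is an isomorphism. Since $\ee{2}$ is itself an isomorphism onto $\fil{1}{2}$, the restriction of $\T{2}$ to $\fil{1}{2}$ must be an isomorphism onto $\mwk{2}{F}$. No step here looks like a genuine obstacle: once the vanishing $\hoz{1}{\specl{1}{F}}=0$ collapses the top entry of the filtration exact sequence, everything reduces to the already-proven Theorem \ref{thm:T2}. The only point requiring mild care is correctly identifying the edge map with $\ee{2}$, which is exactly the content of the remark just above.
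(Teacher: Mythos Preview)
Your proof is correct and follows essentially the same route as the paper's one-line argument: the paper simply notes that $\hoz{1}{\specl{1}{F}}=0$ forces $\fil{1}{2}=\fil{0}{2}=E^\infty_{2,0}=\ee{2}(\hoz{2}{\specl{2}{F}})$ and then invokes Theorem \ref{thm:T2}, which is exactly what you have spelled out in detail. One minor remark: when you say the differential into $E^1_{1,1}$ vanishes ``since $q=2$ is even'', Lemma \ref{lem:d1} only applies for $q<n=2$; the correct reason is that $E^1_{1,2}=0$ (as $E^1_{p,n}=0$ for $p>0$), though this is moot anyway since $E^1_{1,1}=\hoz{1}{\specl{1}{F}}=0$ already.
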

\begin{proof} Since $\hoz{1}{\specl{1}{F}}=0$, $\fil{1}{2}=\fil{0}{2}=E^\infty_{2,0}=\ee{2}(\hoz{2}{\specl{2}{F}})$. 
Now apply Theorem \ref{thm:T2}.
\end{proof}

It is natural to define elements $\ksp{a,b}\in \fil{0}{2}\subset\mul{\SSn{F^2}}$ by $\ksp{a,b}:=\T{2}^{-1}([a][b])$.

\begin{lem} \label{lem:ab}
In $\mul{\SSn{F^2}}$ we have the formula
\[
\ksp{a,b}=\ssb{a}\ssprod\ssb{b}-\ffist{a}\ffist{b}E.
\]
\end{lem}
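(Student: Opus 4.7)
The plan is to show both sides of the claimed identity lie in $\fil{0}{2}\subset\mul{\SSn{F^2}}$ and have the same image $[a][b]\in\mwk{2}{F}$ under $\T{2}$. Since the preceding lemma identifies $\T{2}:\fil{0}{2}\to\mwk{2}{F}$ as an isomorphism, this will force the equality. The conceptual content is that $\ffist{a}\ffist{b}\cdot E$ is precisely the $\grf{F}\cdot E$-component of $\ssb{a}\ssprod\ssb{b}$ in the direct-sum decomposition $\mul{\SSn{F^2}}=\fil{0}{2}\oplus\grf{F}\cdot E$ coming from the splitting of $\Dd{2}$, and this component is computed by $\Dd{2}$.

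To place the right-hand side in $\fil{0}{2}$ (equivalently, after passage to $\mul{\SSn{F^2}}$, into $\mul{\SSp{F^2}}=\fil{1}{2}=\fil{0}{2}$), I would apply $\Dd{2}$ to it. Using that $\Dd{\bullet}:\SSS(F^\bullet)\to\Aa{\bullet}$ is a graded $\grf{F}$-algebra homomorphism (Lemma \ref{lem:dn}(2)), together with the explicit computation $\Dd{1}(\ssb{c})=\ffist{c}$ for $c\in F^\times$ and $\Dd{2}(E)=\fgen{1}$ carried out in the proof of Lemma \ref{lem:dn}, one obtains
\[
\Dd{2}\bigl(\ssb{a}\ssprod\ssb{b}-\ffist{a}\ffist{b}E\bigr)
=\ffist{a}\ffist{b}-\ffist{a}\ffist{b}\cdot\fgen{1}=0.
\]
Hence $\ssb{a}\ssprod\ssb{b}-\ffist{a}\ffist{b}E\in\SSp{F^2}$, and its image in $\mul{\SSn{F^2}}$ lies in $\mul{\SSp{F^2}}=\fil{0}{2}$.

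Next I would apply the graded $\grf{F}$-algebra homomorphism $\T{\bullet}$. Multiplicativity gives
\[
\T{2}(\ssb{a}\ssprod\ssb{b})=\T{1}(\ssb{a})\cdot\T{1}(\ssb{b})=[a][b],
\]
while $\T{2}(E)=\T{2}(\ssb{-1,1})=[-1][1]=0$ in $\mwk{2}{F}$ by relation (i) of Theorem \ref{thm:matmoore}. Therefore
\[
\T{2}\bigl(\ssb{a}\ssprod\ssb{b}-\ffist{a}\ffist{b}E\bigr)=[a][b]=\T{2}(\ksp{a,b}),
\]
and since both elements lie in $\fil{0}{2}$ and $\T{2}$ is an isomorphism there, they coincide in $\mul{\SSn{F^2}}$.

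There is no genuine obstacle; the argument is essentially a bookkeeping exercise in the structure theorems already established. The only point requiring care is to keep track of the two distinct roles played by $E$: as the generator of the complement $\grf{F}\cdot E$ of $\SSp{F^2}$ inside $\SSn{F^2}$ (where $\Dd{2}(E)=\fgen{1}$), and as an element killed by $\T{2}$ (where $[-1][1]=0$). These two facts together pin down the $E$-coefficient as $\ffist{a}\ffist{b}$.
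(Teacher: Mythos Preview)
Your proof is correct and is essentially the same as the paper's. The paper phrases it as checking that both sides have the same image under the isomorphism $(\T{2},\Dd{2}):\mul{\SSn{F^2}}\cong\mwk{2}{F}\oplus\grf{F}$, while you split this into first showing the right-hand side lies in $\ker\Dd{2}=\fil{0}{2}$ and then matching under $\T{2}$; these are the same argument.
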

\begin{proof}
The results above show that the maps $\T{2}$ and $\Dd{2}$ induce an isomorphism 
\[
(\T{2},\Dd{2}):\mul{\SSn{F^2}}\cong\mwk{2}{F}\oplus \grf{F}.
\]
Since $\Dd{2}(\ssb{a}\ssprod\ssb{b})=\ffist{a}\ffist{b}$, while $\Dd{2}(E)=1$, the result follows.
\end{proof}
\begin{thm} \label{thm:star}
\ 
\begin{enumerate}
\item The product $\ssprod$ respects the filtrations on $\SSn{F^n}$; i.e. for all $n,m\geq 1$ and $i,j\geq 0$
\[
\fil{i}{n}\ssprod\fil{j}{m}\subset \fil{i+j}{n+m}.
\]
\item For $n\geq 1$, let $\epsilon_{n+1,1}$ denote the composite $\fil{1}{n+1}\to E^{\infty}_{n,1}=E^2_{n,1}\to \hoz{n}{\specl{n}{F}}$.
For all $a\in F^\times$ and for all $n\geq 1$ the following diagram commutes:
\[
\xymatrix{\fil{0}{n}\ar[r]^-{\ssb{a}\ssprod}&\fil{1}{n+1}\ar[d]^-{\epsilon_{n+1,1}}\\
\hoz{n}{\specl{n}{F}}\ar[u]^-{\ee{n}}\ar[r]^-{\ffist{a}\cdot}&\hoz{n}{\specl{n}{F}}\\}
\] 
\end{enumerate}
\end{thm}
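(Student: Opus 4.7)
For part (1), the plan is to exploit the multiplicative structure carried by the family of spectral sequences $\EEEp{F^n}$. The pairing $\ssprod\colon\compt{\bullet}{F^n}\otimes\compt{\bullet}{F^m}\to\compt{\bullet}{F^{n+m}}$ is manifestly additive in the complex degree, and together with the block-diagonal inclusion $\spcl{F^n}\times\spcl{F^m}\hookrightarrow\spcl{F^{n+m}}$ it induces, via the Eilenberg--Zilber cross product on bar resolutions, a pairing of the associated bicomplexes respecting the column filtration by complex degree. Passing to associated graded pieces at the abutment then yields $\fil{i}{n}\ssprod\fil{j}{m}\subset\fil{i+j}{n+m}$; the descent of the product to the $\mul$-quotient is guaranteed by Corollary \ref{cor:sfam}, which shows that $\ad{\SSp{F^\bullet}}$ is an ideal in $\SSn{F^\bullet}$.

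For part (2), I would represent everything at the chain level. Choose a cycle $\alpha\in B_n(\spcl{F^n})$ representing $z$; then $\ee{n}(z)\in\fil{0}{n}$ is represented by $\alpha\otimes 1\in B_n(\spcl{F^n})\otimes_{\spcl{F^n}}\compt{0}{F^n}$, while $\ssb{a}\in\mul{\SSn{F}}$ is represented by $(a)-(1)\in\compt{1}{F}$. Combining via the chain-level product,
\[
\ssb{a}\ssprod\ee{n}(z)\;\leftrightarrow\;\alpha\otimes\bigl((ae_{n+1})-(e_{n+1})\bigr)\in B_n(\spcl{F^{n+1}})\otimes_{\spcl{F^{n+1}}}\compt{1}{F^{n+1}},
\]
which lies in bidegree $(n,1)$ and hence represents a class in $\fil{1}{n+1}$ consistent with part (1). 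The map $\epsilon_{n+1,1}$ assigns to this element its image in $E^\infty_{n,1}=E^1_{n,1}/\image{d^1_{n,1}}$.

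To compute this image, introduce the $\gnl{F^{n+1}}$-equivariant endomorphism $\phi\colon\cgen{1}{F^{n+1}}\to\cgen{1}{F^{n+1}}$, $v\mapsto(av)-(v)$, and note that $\alpha\otimes\bigl((ae_{n+1})-(e_{n+1})\bigr)=(\mathrm{id}\otimes\phi)(\alpha\otimes e_{n+1})$. Under the Shapiro isomorphism with base $e_{n+1}$, combined with Theorem \ref{thm:speclam}, the class $\alpha\otimes e_{n+1}$ is identified with $z\in\hoz{n}{\specl{n}{F}}\cong\mul{\hoz{n}{H}}$, where $H=\mathrm{stab}(e_{n+1})$. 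Setting $\tilde{a}=\diag{a^{-1},1,\ldots,1,a}\in\specl{n+1}{F}$, one has $\tilde{a}e_{n+1}=ae_{n+1}$ and $\tilde{a}^{-1}H\tilde{a}=H$, so formula (\ref{formula}) applied to $\phi$ with base point $e_{n+1}$ collapses (the restrictions and corestrictions being identities) to $\phi_{\ast}(z)=\tilde{a}^{-1}\cdot z-z$, where $\tilde{a}^{-1}\cdot$ is the action on $\hoz{n}{H}$ induced by conjugation. Restricting this conjugation to the block-diagonal $\spcl{F^n}\subset H$ gives conjugation by $\diag{a,1,\ldots,1}\in\gnl{F^n}$, of determinant $a$, which by the very definition of the $\grf{F}$-action on $\hoz{n}{\specl{n}{F}}$ realizes the action of $\fgen{a}$. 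Hence $\phi_{\ast}(z)=\fgen{a}\cdot z-z=\ffist{a}\cdot z$, as required.

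The main verification requiring care is the application of formula (\ref{formula}): one must confirm that the two basis vectors $ae_{n+1}$ and $e_{n+1}$ appearing in $\phi(e_{n+1})$ lie in distinct $H$-orbits on $F^{n+1}\setminus\{0\}$ (they do, since $H$ fixes the line $Fe_{n+1}$ pointwise), and that $\tilde{a}$ truly normalizes $H$ so that the restriction and corestriction in the formula degenerate to identities. Once these routine checks are done, the determinant-$a$ observation immediately converts the abstract conjugation automorphism into multiplication by $\fgen{a}$ and completes the argument.
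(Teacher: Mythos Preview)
Your argument is correct and follows essentially the same approach as the paper: both represent $\ssb{a}\ssprod\ee{n}(z)$ by the bidegree-$(n,1)$ chain $\alpha\otimes\bigl((ae)-(e)\bigr)$ and identify its class in $E^1_{n,1}\cong\hoz{n}{H}$ with $\ffist{a}z$ via conjugation by a diagonal element of determinant $a$; the paper does this by the direct tensor manipulation $x\otimes(ge_1-e_1)=x(g-1)\otimes e_1$ with $g=\diag{a,1,\ldots,1,a^{-1}}$, while you package the same computation through formula~(\ref{formula}) applied to the endomorphism $\phi(v)=(av)-(v)$. One cosmetic slip worth noting: you write $E^\infty_{n,1}=E^1_{n,1}/\image{d^1_{n,1}}$, but in fact $E^\infty_{n,1}=E^2_{n,1}=\ker{d^1_{n,1}}\subset E^1_{n,1}$ (since $d^1_{n,2}=0$ by Lemma~\ref{lem:d1}), a subgroup rather than a quotient; this does not affect your computation, which correctly lands in $E^1_{n,1}$ and hence automatically in $E^2_{n,1}$.
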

\begin{proof}\ 
\begin{enumerate}
\item The filtration on $\mul{\SSn{F^n}}$ is derived from the spectral sequence $\EEE{F^n}$. This is the spectral sequence of the 
double complex $B_\bullet\otimes_{\specl{n}{F}}\compt{\bullet}{F^n}$, regarded as a filtered complex by truncating $\compt{\bullet}{F^n}$ 
at $i$ for $i=0,1,\ldots$. Since the product $\ssprod$ is derived from a graded bilinear pairing on the complexes $\compt{\bullet}{F^n}$, 
the result easily follows.
\item The spectral sequence $\EEE{F^{n+1}}$ calculates 
\[
\ho{\bullet}{\specl{n+1}{F}}{\compt{}{F^{n+1}}}\cong 
\ho{\bullet}{\specl{n+1}{F}}{H(F^{n+1})}[n+1]
\]
 (where $[n+1]$ denotes a degree shift by $n+1$).

Let $C[1,n]$ denote the truncated complex
\[
\xymatrix{
\compt{1}{F^{n+1}}\ar[r]^-{d_1}&\compt{0}{F^{n+1}}\\
}
\]
and let $Z_1$ denote the kernel of $d_1$.  Then
\[
\ho{\bullet}{\specl{n+1}{F}}{C[1,n]}\cong\ho{\bullet}{\specl{n+1}{F}}{Z_1}[1].
\]
If $\mathcal{F}_i$ denotes the filtration on $\ho{\bullet}{\specl{n+1}{F}}{\compt{}{F^{n+1}}}$ associated to 
 the spectral sequence $\EEE{F^{n+1}}$, then from the definition of this filtration
\[
\image{\ho{k}{\specl{n+1}{F}}{C[1,n]}\to\ho{k}{\specl{n+1}{F}}{\compt{}{F^{n+1}}}}=\mathcal{F}_1\ho{k}{\specl{n+1}{F}}{\compt{}{F^{n+1}}}.
\]
In particular, 
\[
\fil{1}{n+1}\cong \image{\ho{n+1}{\specl{n+1}{F}}{C[1,n]}\to\ho{n+1}{\specl{n+1}{F}}{\compt{}{F^{n+1}}}}
\]
and with this identification the diagram
\[
\xymatrix{
\ho{n}{\specl{n+1}{F}}{Z_1}\ar[r]^-{\cong}\ar[drr]&\ho{n+1}{\specl{n+1}{F}}{C[1,n]}\ar[r]&\fil{1}{n+1}\ar[d]^-{\epsilon_{n+1,1}}\\
&&\ho{n}{\specl{n+1}{F}}{\compt{1}{F^{n+1}}}\\
}
\]
commutes (and  $\ho{n}{\specl{n+1}{F}}{\compt{1}{F^{n+1}}}\cong\hoz{n}{\saff{F}{F^n}}$ by Shapiro's Lemma, of course).

We consider $\specl{n}{F}\subset \saff{F}{F^n}\subset 
\specl{n+1}{F}\subset \genl{n+1}{F}$ where the first inclusion is obtained by inserting a $1$ in the $(1,1)$ 
position. Let $B_\bullet$ denote a projective resolution of $\Z$ over $\gr{\Z}{\genl{n+1}{F}}$. Let $z\in \hoz{n}{\specl{n}{F}}$ be 
represented by $x\otimes 1\in B_n\otimes_{\gr{\Z}{\specl{n}{F}}}\Z = B_n\otimes_{\gr{\Z}{\specl{n}{F}}}\compt{0}{F^n}$. Then 
$\ssb{a}\ssprod\ee{n}(z)$ is represented by $z\otimes [(ae_1)-(e_1)]\in B_n\otimes_{\specl{n+1}{F}}Z_1$ which maps to the element of
$\ho{n}{\specl{n+1}{F}}{\compt{1}{F^{n+1}}}$ represented by $z(g-1)\otimes (e_1)$ where $g=\mathrm{diag}(a,1,\ldots,1,a^{-1})$. But this is 
just the image of $\ffist{a}z$ under the map $\hoz{n}{\specl{n}{F}}\to\hoz{n}{\saff{F}{F^n}}\cong 
\ho{n}{\specl{n+1}{F}}{\compt{1}{F^{n+1}}}$. 
\end{enumerate}
\end{proof}
\begin{lem}\label{lem:f13} The map $\T{3}:\fil{1}{3}\to \mwk{3}{F}$ is an isomorphism.
\end{lem}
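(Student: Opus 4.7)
The strategy is to sandwich $\T{3}:\fil{1}{3}\to\mwk{3}{F}$ between the two short exact sequences
\[
0\to\fil{0}{3}\to\fil{1}{3}\xrightarrow{\epsilon_{3,1}} E^\infty_{2,1}\to 0\quad\text{and}\quad 0\to 2\milk{3}{F}\to\mwk{3}{F}\to I^3(F)\to 0,
\]
the first coming from the filtration on $\fil{1}{3}$ in $\mul{\EEEp{F^3}}$ (using $d^2=d^3=\cdots=0$ from Theorem \ref{thm:main}) and the second from Morel's theorem, and to conclude by the Five Lemma once the two end maps induced by $\T{3}$ have been identified as isomorphisms.

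For the left end, $\fil{0}{3}=E^\infty_{3,0}=\SH{3}{F}$, and the result of \cite{hutchinson:tao2} recalled in \S\ref{sec:hommwk} says exactly that $\T{3}\circ\ee{3}$ has image $2\milk{3}{F}$ and kernel equal to the image of $\hoz{3}{\specl{2}{F}}$, so it descends to an isomorphism $\fil{0}{3}\xrightarrow{\cong}2\milk{3}{F}$. For the right end, $E^\infty_{2,1}=\ker\bigl(f_{2,2}:\hoz{2}{\specl{2}{F}}\to\hoz{2}{\specl{3}{F}}\bigr)$; Theorem \ref{thm:T2} identifies the source with $\mwk{2}{F}$ via $\T{2}\circ\ee{2}$, Matsumoto--Moore identifies the target with $\milk{2}{F}$, and the commutative square of \S\ref{sec:hommwk} comparing $\T{n}\circ\ee{n}$ with Suslin's map $\hoz{n}{\genl{n}{F}}\to\milk{n}{F}$ forces $f_{2,2}$ to correspond to the canonical projection $\mwk{2}{F}\twoheadrightarrow\milk{2}{F}$, whose kernel is $I^3(F)$. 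Hence $E^\infty_{2,1}\cong I^3(F)$ naturally.

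Commutativity of the right square (and agreement of the two identifications of $E^\infty_{2,1}$ with $I^3(F)$) reduces to a single computation on Pfister generators. Given $a,b,c\in F^\times$, take $\ksp{b,c}\in\fil{0}{2}$ with $\T{2}(\ksp{b,c})=[b][c]$; by Theorem \ref{thm:star}(1) we have $\ssb{a}\ssprod\ksp{b,c}\in\fil{1}{1}\ssprod\fil{0}{2}\subset\fil{1}{3}$. On one hand $\T{3}(\ssb{a}\ssprod\ksp{b,c})=[a][b][c]\in\mwk{3}{F}$, which projects to $\pfist{a,b,c}\in I^3(F)$; on the other hand, by Theorem \ref{thm:star}(2), $\epsilon_{3,1}(\ssb{a}\ssprod\ksp{b,c})=\ffist{a}\cdot(\T{2}\circ\ee{2})^{-1}([b][c])$, which under $\hoz{2}{\specl{2}{F}}\cong\mwk{2}{F}$ corresponds to $\eta[a][b][c]=\pfist{a,b,c}\in I^3(F)$. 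Since Pfister $3$-forms generate $I^3(F)$, this simultaneously yields surjectivity of $\epsilon_{3,1}$ onto $I^3(F)$ and commutativity of the right square with vertical map the identity. The Five Lemma then finishes the proof.

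The main obstacle is the naturality/compatibility step: showing that under the Matsumoto--Moore and $\T{2}\circ\ee{2}$ identifications the stabilization $f_{2,2}$ really is the canonical quotient $\mwk{2}{F}\to\milk{2}{F}$. This is a matter of diagram chasing using the commutative square of \S\ref{sec:hommwk} and the surjectivity part of Matsumoto--Moore, but it is the only non-formal piece of the argument.
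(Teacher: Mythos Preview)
Your proposal is correct and follows essentially the same route as the paper: both sandwich $\T{3}$ between the filtration sequence $0\to\fil{0}{3}\to\fil{1}{3}\to E^\infty_{2,1}\to 0$ and Morel's sequence $0\to 2\milk{3}{F}\to\mwk{3}{F}\to I^3(F)\to 0$, invoke \cite{hutchinson:tao2} for the left end, and settle the right end by the computation $\epsilon_{3,1}(\ssb{a}\ssprod\ksp{b,c})=\ffist{a}\hsp{b,c}$ via Theorem~\ref{thm:star}(2), finishing with the Five Lemma.

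One remark on presentation: your final paragraph identifies as the ``main obstacle'' the claim that $f_{2,2}$ corresponds to the canonical projection $\mwk{2}{F}\to\milk{2}{F}$, and your justification via the square in \S\ref{sec:hommwk} is a bit loose (that square involves $\genl{2}{F}$, not $\specl{3}{F}$). The paper sidesteps this entirely: it simply names the induced right-hand vertical map $\alpha:I^3(F)\to I^3(F)$, notes it is surjective because $\T{3}$ is, and then the computation on $\ssb{a}\ssprod\ksp{b,c}$ shows $\alpha(\pfist{a,b,c})=\pfist{a,b,c}$, hence $\alpha=\id{}$. Since you carry out exactly this computation anyway, your argument is complete without ever needing the separate identification of $f_{2,2}$; you can safely drop that paragraph.
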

\begin{proof}
Consider the short exact sequence 
\[
0\to E^\infty_{3,0}\to \fil{1}{3}\to E^\infty_{2,1}\to 0.
\]
Here $\ee{3}$ induces an isomorphism 
\[
E^\infty_{3,0}\cong \coker{\hoz{3}{\specl{2}{F}}\to \hoz{3}{\specl{3}{F}}}.
\] 
By the main result of \cite{hutchinson:tao2} (Theorem 4.7 - see also section \ref{sec:hommwk} of this article), 
$\T{3}$ thus induces an isomorphism
$E^\infty_{3,0}\cong 2\milk{3}{F}\subset \mwk{3}{F}$.

On the other hand, 
\[
E^\infty_{2,1}\cong \ker{\hoz{2}{\specl{2}{F}}\to\hoz{2}{\specl{3}{F}}}\cong I^3(F)
\] 

Thus we have a commutative diagram
\[
\xymatrix{
0\ar[r]
&E^\infty_{3,0}\ar[d]^-{\T{3}}_-{\cong}\ar[r]
&\fil{1}{3}\ar[d]^-{\T{3}}\ar[r]^-{\rho}
&I^3(F)\ar[d]^-{\alpha}\ar[r]
&0\\
0\ar[r]
&2\milk{3}{F}\ar[r]
&\mwk{3}{F}\ar[r]^-{p_3}
&I^3(F)\ar[r]
&0
}
\]
where the vertical arrows are surjections. 

Now the inclusion $I^3(F)\to \mwk{2}{F}$ is given by $\pfist{a,b,c}\mapsto \ffist{a}[b][c]$. Thus the inclusion 
$j:I^3(F)\to\hoz{2}{\specl{2}{F}}$ is given by $\pfist{a,b,c}\mapsto \ffist{a}\hsp{b,c}$ where $\hsp{b,c}=\ee{2}^{-1}(\ksp{b,c})$.
Thus for all $a,b,c\in F^\times$ we have 
\[
j\circ\rho(\ssb{a}\ssprod\ksp{b,c})=\epsilon_{3,1}(\ssb{a}\ssprod\ksp{b,c})=\ffist{a}\hsp{b,c}
\]
using Theorem \ref{thm:star} (2), and thus $\rho(\ssb{a}\ssprod\ksp{b,c})=\pfist{a,b,c}\in I^3(F)$.  It follows from the diagram 
that 
\[
\alpha(\pfist{a,b,c})=\alpha\circ\rho(\ssb{a}\ssprod\ksp{b,c})=p_3\circ \T{3}(\ssb{a}\ssprod\ksp{b,c})=\pfist{a,b,c}
\]
so that $\alpha$ is the identity map, and the result follows.
\end{proof}
\begin{lem}
For all $a\in F^\times$, $\ssb{a}\ssprod E = E \ssprod \ssb{a}$ in 
$\mul{\SSn{F^3}}$.
\end{lem}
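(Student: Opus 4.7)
The plan is to reduce the equality in $\mul{\SSn{F^3}}$ to a computation of invariants, using the natural $\grf{F}$-linear map
\[
\Phi := (\T{3},\Dd{3}) \colon \mul{\SSn{F^3}} \longrightarrow \mwk{3}{F}\oplus \aug{F^\times}.
\]
Both components factor through $\mul{\SSn{F^3}}$: $\Dd{3}$ vanishes on $\ad{\SSp{F^3}}\subset \SSp{F^3}=\ker\Dd{3}$ by definition, and $\T{3}$ vanishes there by Lemma \ref{lem:am0}, since $\mwk{3}{F}$ is a \mult $\grf{F}$-module (the relation $\wgen{a^2}=\wgen{1}$ in $\gw{F}$ shows that $\ffist{a^2}$ annihilates the $\gw{F}$-module $\mwk{3}{F}$) while $\ad{\SSp{F^3}}$ is \add by definition.

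Next I would verify that $\Phi$ is injective. Suppose $\Phi(x)=0$. Since $\Dd{3}$ is split by the embedding of $\SSn{F}\ssprod E$ (Lemma \ref{lem:dn}(3)), its kernel in $\mul{\SSn{F^3}}$ is $\mul{\SSp{F^3}}$, so $\Dd{3}(x)=0$ places $x$ in $\mul{\SSp{F^3}}$. By Theorem \ref{thm:main}(2), combined with $\SSp{F}=0$ (Lemma \ref{lem:dn}(3) for $n=1$ makes $\Dd{1}$ an isomorphism), this submodule equals $\fil{1}{3}$. Then $\T{3}(x)=0$ on $\fil{1}{3}$ forces $x=0$ by Lemma \ref{lem:f13}, establishing injectivity of $\Phi$.

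The conclusion is a direct multiplicativity calculation. Using that $\T{\bullet}$ and $\Dd{\bullet}$ are homomorphisms of graded $\grf{F}$-algebras together with $\T{2}(E)=[-1][1]=0$ (relation (i) of Theorem \ref{thm:matmoore}), $\Dd{2}(E)=\fgen{1}$, $\T{1}(\ssb{a})=[a]$, and $\Dd{1}(\ssb{a})=\ffist{a}$, I obtain
\[
\T{3}(\ssb{a}\ssprod E) = [a]\cdot 0 = 0 = 0\cdot [a] = \T{3}(E\ssprod \ssb{a})
\]
and
\[
\Dd{3}(\ssb{a}\ssprod E) = \ffist{a}\cdot \fgen{1} = \ffist{a} = \fgen{1}\cdot \ffist{a} = \Dd{3}(E\ssprod \ssb{a}).
\]
Hence $\Phi(\ssb{a}\ssprod E)=\Phi(E\ssprod \ssb{a})$, and by injectivity $\ssb{a}\ssprod E = E\ssprod \ssb{a}$ in $\mul{\SSn{F^3}}$. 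The only substantive input, and the closest thing to an obstacle, is Lemma \ref{lem:f13} (itself resting on the main result of \cite{hutchinson:tao2}); everything else is formal bookkeeping built on the multiplicativity of $\T{\bullet}$ and $\Dd{\bullet}$ and the vanishing $\T{2}(E)=0$.
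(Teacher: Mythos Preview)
Your proof is correct and follows essentially the same approach as the paper. The paper defines the difference $R_a:=\ssb{a}\ssprod E - E\ssprod\ssb{a}$, observes that $\Dd{3}(R_a)=0$ so that $R_a\in\mul{\SSp{F^3}}=\fil{1}{3}$, notes $\T{3}(R_a)=0$ since $\T{2}(E)=0$, and concludes $R_a=0$ by Lemma~\ref{lem:f13}; you package the same argument as injectivity of $\Phi=(\T{3},\Dd{3})$, which is a cosmetic reformulation (and indeed the paper records that $(\T{3},\Dd{3})$ is an isomorphism immediately afterward).
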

\begin{proof}
By the calculations above, $\fil{1}{3}=\mul{\SSp{F^3}}=\ker{\Dd{3}}$. Thus \\
$R_a:= \ssb{a}\ssprod E- E \ssprod \ssb{a}\in \fil{1}{3}$. But then $\T{3}(R_a)=0 $ since $\T{2}(E)=0$ and thus $R_a=0$ by 
the previous lemma.
\end{proof}

\begin{lem}\label{lem:identities}\  

\begin{enumerate}
\item For all $a,b,c\in F^\times$ 
\[
\ssb{a}\ssprod\ksp{b,c}=\ksp{a,b}\ssprod\ssb{c}\mbox{ in } \mul{\SSn{F^3}}.
\]
\item
For all $a,b,c\in F^\times$ 
\[
\ssb{a}\ssprod\ssb{b}\ssprod\ssb{c} = \ssb{c}\ssprod\ssb{a}\ssprod\ssb{b}\mbox{ in }
\mul{\SSn{F^3}}.
\]
\item For all $a,b,c,d\in F^\times$
\[
\ksp{a,b}\ssprod\ksp{c,d}=\ksp{a,c^{-1}}\ssprod\ksp{b,d}\mbox{ in }\mul{\SSn{F^4}}.
\]
\end{enumerate}
\end{lem}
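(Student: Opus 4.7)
The plan is to treat each of the three identities by working inside a filtration level where the map $\T{k}$ is known to be injective, reducing the claim to a computation in $\mwk{\bullet}{F}$. Parts (1) and (2) are immediate applications of Lemma \ref{lem:f13}; part (3) requires a longer chain of manipulations, combining Lemma \ref{lem:ab}, the identity (1), and a couple of auxiliary identities in $\fil{1}{2}$ that reduce to $\mwk{2}{F}$-computations via $\T{2}|_{\fil{1}{2}}$.

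For (1), since $\ssb{a}\in\fil{1}{1}$ and $\ksp{b,c}\in\fil{1}{2}$, Theorem \ref{thm:star}(1) places both $\ssb{a}\ssprod\ksp{b,c}$ and $\ksp{a,b}\ssprod\ssb{c}$ in $\fil{2}{3}$; because $E^\infty_{1,2}=\hoz{1}{\specl{1}{F}}=0$ in $\mul{\EEEp{F^3}}$, one has $\fil{2}{3}=\fil{1}{3}$. Both sides then map to $[a][b][c]\in\mwk{3}{F}$ under $\T{3}$, and Lemma \ref{lem:f13} concludes. For (2), the difference lies in $\ker{\Dd{3}}\cap\mul{\SSn{F^3}}=\mul{\SSp{F^3}}=\fil{1}{3}$ (the two $\Dd{3}$-images both equal $\ffist{a}\ffist{b}\ffist{c}$ because $\grf{F}$ is commutative), and two applications of Lemma \ref{lem:mwkid1}(3) together with $\wgen{-1}^2=1$ show that $[a][b][c]=[c][a][b]$ in $\mwk{3}{F}$; Lemma \ref{lem:f13} again gives the result.

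The plan for (3) is to first prove two auxiliary identities. The first, $\ksp{b,c}=\ksp{c^{-1},b}$ in $\fil{1}{2}$, reduces via $\T{2}|_{\fil{1}{2}}$ to $[c^{-1}][b]=[b][c]$ in $\mwk{2}{F}$; this follows from $[c^{-1}]=-\wgen{-1}[c]$ (derived from Lemma \ref{lem:mwkid1}(2) together with $\wgen{c^{-1}}=\wgen{c}$ and the computation $\wgen{c}[c]=\wgen{-1}[c]$, which uses Lemma \ref{lem:mwkid1}(1), (3) and $\eta h=0$) and one further application of Lemma \ref{lem:mwkid1}(3). The second,
\[
\ffist{a}\ffist{b}E\ssprod\ksp{c,d}=\ffist{c}\ffist{d}\ksp{a,b}\ssprod E\quad\text{in }\mul{\SSn{F^4}},
\]
is obtained by expanding the two sides of $\ssb{w}\ssprod\ssb{a}\ssprod\ksp{b,c}=\ssb{w}\ssprod\ksp{a,b}\ssprod\ssb{c}$ (a consequence of (1) and associativity) by means of Lemma \ref{lem:ab}. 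Applying Lemma \ref{lem:ab}, (1), the first auxiliary identity, (1) again, and Lemma \ref{lem:ab} once more in sequence to $\ksp{a,b}\ssprod\ksp{c,d}$ then yields
\[
\ksp{a,b}\ssprod\ksp{c,d}-\ksp{a,c^{-1}}\ssprod\ksp{b,d}
=\ffist{b}\ffist{d}\ksp{a,c^{-1}}\ssprod E-\ffist{a}\ffist{b}E\ssprod\ksp{c,d},
\]
and substituting via the second auxiliary identity (with relabelling $(a,b,c,d)\mapsto(a,c^{-1},b,d)$) rewrites the right-hand side as $\ffist{a}E\ssprod\bigl(\ffist{c^{-1}}\ksp{b,d}-\ffist{b}\ksp{c,d}\bigr)$.

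The final step is to show $\ffist{c^{-1}}\ksp{b,d}=\ffist{b}\ksp{c,d}$ in $\fil{1}{2}$. Both elements lie in $\fil{1}{2}$, so by $\T{2}|_{\fil{1}{2}}$ it suffices to verify $\eta[c^{-1}][b][d]=\eta[b][c][d]$ in $\mwk{2}{F}$; since $\eta[c^{-1}]=\pfist{c^{-1}}=\pfist{c}=\eta[c]$, this reduces further to $\eta[c][b][d]=\eta[b][c][d]$, which follows from Lemma \ref{lem:mwkid1}(3) and $\eta h=0$. The hard part of the argument is the bookkeeping in the chain of rewrites, and the observation that the residual ``$E$-terms'' collapse precisely because of the Morel relation $\eta h=0$ in $\mwk{\bullet}{F}$, making the whole difference vanish without any appeal to a not-yet-established injectivity of $\T{4}$ on $\fil{1}{4}$ or on $\SH{4}{F}$.
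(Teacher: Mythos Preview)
Your arguments for (1) and (2) are essentially the paper's: the paper packages the same idea as the single observation that $(\T{3},\Dd{3}):\mul{\SSn{F^3}}\to\mwk{3}{F}\oplus\aug{F^\times}$ is an isomorphism, and then checks both components; you instead note that the $\Dd{3}$-components agree, place the difference in $\fil{1}{3}$, and invoke Lemma~\ref{lem:f13}. These are equivalent. (A cleaner way to land in $\fil{1}{3}$ in (1) is simply to note $\Dd{3}(\ssb{a}\ssprod\ksp{b,c})=\Dd{1}(\ssb{a})\cdot\Dd{2}(\ksp{b,c})=0$, which sidesteps any ambiguity about what $\fil{1}{1}$ means.)

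For (3) your route is genuinely different from the paper's, and it is correct. The paper expands both $\ksp{a,b}\ssprod\ksp{c,d}$ and $\ksp{a,c^{-1}}\ssprod\ksp{b,d}$ fully via Lemma~\ref{lem:ab} into $\ssb{\cdot}^{\ssprod 4}$, $L_{2,2}\ssprod E$, and $E^{\ssprod 2}$ terms, then uses the symmetry $\ffist{x}\ssb{y}=\ffist{y}\ssb{x}$ (packaged as the permutation-invariance of $L_{i,j}$) together with $\ksp{b,c}=\ksp{c^{-1},b}$, and finishes with an appeal to part (2). You instead run a chain of moves using (1) to pass from $\ksp{a,b}\ssprod\ksp{c,d}$ to $\ksp{a,c^{-1}}\ssprod\ksp{b,d}$ plus two $E$-error terms; your ``second auxiliary identity'' $\ffist{a}\ffist{b}\,E\ssprod\ksp{c,d}=\ffist{c}\ffist{d}\,\ksp{a,b}\ssprod E$ is indeed a direct consequence of (1) (expand $\ssb{w}\ssprod\ssb{a}\ssprod\ksp{b,c}=\ksp{w,a}\ssprod\ssb{b}\ssprod\ssb{c}$ on both sides via Lemma~\ref{lem:ab}), and the residual term $\ffist{c^{-1}}\ksp{b,d}-\ffist{b}\ksp{c,d}$ lies in $\fil{1}{2}$ and is killed by $\T{2}$ exactly because $\eta[c][b]-\eta[b][c]=-\eta h[b][c]=0$. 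The upshot is that your argument never uses part (2) or the $L_{i,j}$ symmetry, trading those for one extra $\T{2}$-computation; the paper's version is more of a head-on expansion but makes the role of the cyclic identity (2) explicit. Both are valid and of comparable length.
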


\begin{proof}
The calculations above have established that the map 
\[
(\T{3},\Dd{3}):\mul{\SSn{F^3}}\to \mwk{3}{F}\oplus\aug{F^\times}
\]
is an isomorphism.
\begin{enumerate}
\item This follows from the identities
\[
\T{3}(\ssb{a}\ssprod\ksp{b,c})=[a][b][c]=\T{3}(\ksp{a,b}\ssprod\ssb{c})\mbox{ and } 
\Dd{3}(\ssb{a}\ssprod\ksp{b,c})=\ffist{a,b,c}=\Dd{3}(\ksp{a,b}\ssprod\ssb{c})
\]

\item 
This follows from the fact that $[a][b][c]=[c][a][b]$ in $\mwk{3}{F}$.
\item 
We begin by observing that, since $\SSn{F}\cong\aug{F^\times}$ as a $\grf{F}$-module
we have $\ffist{a}\ssb{b}=\ssb{ab}-\ssb{a}-\ssb{b}=\ffist{b}\ssb{a}$ 
for all $a,b\in F^\times$.

For $x_1,\ldots,x_n\in F^\times$ and $i,j\geq 1$ with $i+j=n$ we set 
\[
L_{i,j}(x_1,\ldots,x_n):=\ffist{x_1}\cdots\ffist{x_i}\left(\ssb{x_{i+1}}\ssprod\cdots\ssprod
\ssb{x_n}\right)\in \mul{\SSn{F^j}}.
\]
 By the observation just made,  we have
\[
L_{i,j}(x_1,\ldots,x_n)=L_{i,j}(x_{\sigma(1)},\ldots,x_{\sigma(n)})
\]
for any permutation $\sigma$ of $1,\ldots,n$. 

So 
\begin{eqnarray*}
\ksp{a,b}\ssprod\ksp{c,d}=
\left(\ssb{a}\ssprod\ssb{b}-\ffist{a}\ffist{b}E\right)\ssprod\left(\ssb{c}\ssprod\ssb{d}
-\ffist{c}\ffist{d}E\right)\\
=\ssb{a}\ssprod\ssb{b}\ssprod\ssb{c}\ssprod\ssb{d}-2L_{2,2}(a,b,c,d)\ssprod E 
+ \ffist{a}\ffist{b}\ffist{c}\ffist{d}E^{\ssprod 2}
\end{eqnarray*}
Let $R=\ksp{a,b}\ssprod\ksp{c,d}-\ksp{a,c^{-1}}\ssprod\ksp{b,d}$.

So $R=$
\begin{eqnarray*}
\ssb{a}\ssprod\ssb{b}\ssprod\ssb{c}\ssprod\ssb{d}-
\ssb{a}\ssprod\ssb{c^{-1}}\ssprod\ssb{b}\ssprod\ssb{d}
-2(L_{2,2}(a,b,c,d)-L_{2,2}(a,c^{-1},b,d))\ssprod E\\ +
\ffist{a}\ffist{d}\left[(\ffist{b}\ffist{c}-\ffist{c^{-1}}\ffist{b})E\right]\ssprod E.
\end{eqnarray*}
However, since $\ksp{b,c}=\ksp{c^{-1},b}$ in $\mul{\SSn{F^2}}$ we have (by Lemma
\ref{lem:ab})
\[
(\ffist{b}\ffist{c}-\ffist{c^{-1}}\ffist{b})E=\ssb{b}\ssprod\ssb{c}-\ssb{c^{-1}}\ssprod\ssb{b}.
\]
Thus
\[
\ffist{a}\ffist{d}\left[(\ffist{b}\ffist{c}-\ffist{c^{-1}}\ffist{b})E\right]\ssprod E =
(L_{2,2}(a,b,c,d)-L_{2,2}(a,c^{-1},b,d))\ssprod E
\]
and hence $R=$
\[
\ssb{a}\ssprod\ssb{b}\ssprod\ssb{c}\ssprod\ssb{d}-
\ssb{a}\ssprod\ssb{c^{-1}}\ssprod\ssb{b}\ssprod\ssb{d}
-(L_{2,2}(a,b,c,d)-L_{2,2}(a,c^{-1},b,d))\ssprod E.
\]
Now 
\begin{eqnarray*}
(L_{2,2}(a,b,c,d)-L_{2,2}(a,c^{-1},b,d))\ssprod E&=&
\ssb{a}\ssprod\ssb{d}\ssprod\left[ (\ffist{b}\ffist{c}-\ffist{c^{-1}}\ffist{b})E\right]\\
&=&\ssb{a}\ssprod\ssb{d}\ssprod\left[ \ssb{b}\ssprod\ssb{c}-\ssb{c^{-1}}\ssprod\ssb{b}
\right]\\
&=&\ssb{a}\ssprod(\ssb{d}\ssprod\ssb{b}\ssprod\ssb{c})-
\ssb{a}\ssprod(\ssb{d}\ssprod\ssb{c^{-1}}\ssprod\ssb{b})\\
&=&\ssb{a}\ssprod\ssb{b}\ssprod\ssb{c}\ssprod\ssb{d}-
\ssb{a}\ssprod\ssb{c^{-1}}\ssprod\ssb{b}\ssprod\ssb{d}
\end{eqnarray*}
using (2) in the last step.
\end{enumerate}
\end{proof}
\begin{thm}\label{thm:mm}
For all $n\geq 2$ there is a homomorphism $\mm{n}:\mwk{n}{F}\to\fil{1}{n}$ such that 
the composite $\T{n}\circ\mm{n}$ is the identity map.
\end{thm}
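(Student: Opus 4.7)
The plan is to proceed by induction on $n$. For the base cases $n=2$ and $n=3$ we already know from Theorem \ref{thm:T2} and Lemma \ref{lem:f13} that $\T{n}\colon\fil{1}{n}\to\mwk{n}{F}$ is an isomorphism, so we set $\mm{n}:=\T{n}^{-1}$. For $n\geq 4$, I would try the recursive definition
\[
\mm{n}([a_1][a_2]\cdots[a_n]) \;:=\; \ksp{a_1,a_2}\ssprod \mm{n-2}([a_3]\cdots[a_n])
\]
on the Matsumoto--Moore generators of $\mwk{n}{F}$ from Theorem \ref{thm:matmoore}. By Theorem \ref{thm:star}(1), the image lies in $\fil{0}{2}\ssprod\fil{1}{n-2}\subseteq\fil{1}{n}$, and since $\T{\bullet}$ is a multiplicative homomorphism with $\T{2}(\ksp{a_1,a_2})=[a_1][a_2]$, the composition $\T{n}\circ\mm{n}$ is the identity on the generator.

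The main task is to verify that the recipe descends to a well-defined homomorphism from $\mwk{n}{F}$, i.e.\ that it respects each of the five Matsumoto--Moore relations. Relations (iii), (iv), (v), and (ii) for indices $i\geq 4$ involve only the arguments passed to $\mm{n-2}$, so by bilinearity of $\ssprod$ they follow at once from the inductive hypothesis applied to $\mm{n-2}$. Relation (i) at $i\in\{1,2\}$ and relation (ii) at $i=2$ reduce to the corresponding identities in $\fil{1}{2}$, which are automatic because $\T{2}\colon\fil{1}{2}\cong\mwk{2}{F}$ is an isomorphism.

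The only genuinely new case is relation (ii) at $i=3$, which demands
\[
\ksp{a_1,a_2}\ssprod\mm{n-2}([a_3][a_4]\cdots[a_n]) \;=\; \ksp{a_1,a_3^{-1}}\ssprod\mm{n-2}([a_2][a_4]\cdots[a_n])
\]
in $\fil{1}{n}$. For $n=4$ this is exactly Lemma \ref{lem:identities}(3); for $n\geq 6$ one expands $\mm{n-2}$ one further step via the same recursion to isolate leading factors $\ksp{a_3,a_4}$ and $\ksp{a_2,a_4}$, after which Lemma \ref{lem:identities}(3) applied to the pair of leading $\ksp{\cdot,\cdot}$'s closes the case.

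The hard part will be the single remaining case $n=5$, where the recursion bottoms out at $\mm{3}=\T{3}^{-1}$, an abstract inverse with no convenient product expansion as a $\ksp{\cdot,\cdot}$. Here one must argue more directly in $\mul{\SSn{F^5}}$, either by exploiting the isomorphism $\T{3}$ together with an ad hoc calculation using the $\grf{F}$-module structure and the identities of Lemma \ref{lem:identities}, or by adopting the alternative global formulation in which $\mm{n}$ is defined as the projection of $\ssb{a_1}\ssprod\cdots\ssprod\ssb{a_n}$ onto the summand $\fil{1}{n}$ in the direct-sum decomposition $\mul{\SSn{F^n}}=(E\ssprod\mul{\SSn{F^{n-2}}})\oplus\fil{1}{n}$ of Corollary \ref{cor:E}; in that formulation one instead checks that each Matsumoto--Moore relation, evaluated naively, lifts to an element of the complementary summand $E\ssprod\mul{\SSn{F^{n-2}}}$, which is killed by $\T{n}$ because $\T{2}(E)=0$.
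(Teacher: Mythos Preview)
Your plan is close to the paper's, but it has a genuine gap in the odd case, and you have already put your finger on it: at $n=5$ your recursion bottoms out at the abstract inverse $\mm{3}=\T{3}^{-1}$, and you never explain how to carry out the verification of relation (ii) at $i=3$ there. Without that, the induction does not get off the ground for odd $n$, since well-definedness of $\mm{5}$ is needed before you can ``expand one further step'' for $n\geq 7$.

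The missing observation is that $\T{3}^{-1}$ is not abstract at all: it is given by the explicit formula
\[
\T{3}^{-1}\bigl([a][b][c]\bigr)\;=\;\ssb{a}\ssprod\ksp{b,c}\;=\;\ksp{a,b}\ssprod\ssb{c},
\]
which is exactly Lemma~\ref{lem:identities}(1) (the right-hand element lies in $\fil{1}{1}\ssprod\fil{0}{2}\subset\fil{1}{3}$ by Theorem~\ref{thm:star}(1), and its image under $\T{3}$ is $[a][b][c]$). You never invoke part~(1) of Lemma~\ref{lem:identities}, only part~(3), and that is precisely the ingredient the paper uses to handle the odd case.

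With this in hand the paper dispenses with induction altogether and writes down the closed formula
\[
\ksb{a_1,\ldots,a_n}=
\begin{cases}
\ksp{a_1,a_2}\ssprod\cdots\ssprod\ksp{a_{n-1},a_n},& n\text{ even},\\
\ssb{a_1}\ssprod\ksp{a_2,a_3}\ssprod\cdots\ssprod\ksp{a_{n-1},a_n},& n\text{ odd},
\end{cases}
\]
and then checks the Matsumoto--Moore relations uniformly: relation~(ii) across a $\ksp{\cdot,\cdot}$ block uses $\T{2}\colon\fil{1}{2}\cong\mwk{2}{F}$; relation~(ii) between two adjacent $\ksp{\cdot,\cdot}$ blocks is Lemma~\ref{lem:identities}(3); and relation~(ii) across the interface between $\ssb{a_1}$ and $\ksp{a_2,a_3}$ follows by first using Lemma~\ref{lem:identities}(1) to rewrite $\ssb{a_1}\ssprod\ksp{a_2,a_3}=\ksp{a_1,a_2}\ssprod\ssb{a_3}$ and then applying the $\mwk{2}{F}$ identity $\ksp{a_1,a_2}=\ksp{a_2^{-1},a_1}$. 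The remaining relations (i), (iii)--(v) live in a single trailing $\ksp{\cdot,\cdot}$ block and follow from the isomorphism $\T{2}$.

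Your fallback suggestion (project $\ssb{a_1}\ssprod\cdots\ssprod\ssb{a_n}$ onto the summand $\fil{1}{n}$ of Corollary~\ref{cor:E}) is in the right spirit but would need its own argument; the paper does not go that route here.
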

\begin{proof}
For $n\geq 2$ and $a_1,\ldots,a_n\in F^\times$, let 
\[
\ksb{a_1,\ldots,a_n}:=
\left\{
\begin{array}{ll}
\ksp{a_1,a_2}\ssprod\cdots\ssprod\ksp{a_{n-1},a_n},& n \mbox{ even }\\
\ssb{a_1}\ssprod\ksp{a_2,a_3}\ssprod\cdots\ssprod\ksp{a_{n-1},a_n},& n \mbox{ odd }
\end{array}
\right\}
\in\fil{1}{n}\subset \mul{\SSn{F^n}}.
\]
By Lemma \ref{lem:identities} (1) and (3), as well as the definition of $\ksp{x,y}$, 
the elements $\ksb{a_1,\ldots,a_n}$ satisfy the `Matsumoto-Moore' relations (see Section \ref{sec:hommwk} above), 
and thus there is a well-defined homomorphism 
of groups
\[
\mm{n}:\mwk{n}{F}\to \fil{1}{n}, \qquad[a_1]\cdots[a_n]\mapsto \ksb{a_1,\ldots,a_n}.
\] 
Since $\T{n}(\ksb{a_1,\ldots,a_n})=[a_1]\cdots[a_n]$, the result follows.
\end{proof}

\begin{cor}
The subalgebra of $\SH{2\bullet}{F}$ generated by $\SH{2}{F}=
\hoz{2}{\specl{2}{F}}$ is isomorphic to $\mwk{2\bullet}{F}$ and is  a direct summand 
of $\SH{2\bullet}{F}$.
\end{cor}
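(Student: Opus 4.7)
The plan is to leverage the composite graded $\grf{F}$-algebra homomorphism $\phi_\bullet := \T{\bullet}\circ\ee{\bullet}:\SH{\bullet}{F}\to\mwk{\bullet}{F}$, which is an isomorphism in degree $2$ by Theorem \ref{thm:T2}, together with the section $\mm{\bullet}$ from Theorem \ref{thm:mm} and the observation that $\mwk{2\bullet}{F}$ is generated as an algebra by $\mwk{2}{F}$ (since each Matsumoto-Moore generator $[a_1]\cdots[a_{2n}]$ of $\mwk{2n}{F}$ factors as $([a_1][a_2])\cdots([a_{2n-1}][a_{2n}])$). Denote by $A_\bullet\subset\SH{2\bullet}{F}$ the subalgebra generated by $\SH{2}{F}$, and for $a,b\in F^\times$ write $\hsp{a,b}\in\SH{2}{F}$ for $(\T{2}\circ\ee{2})^{-1}([a][b])$, so that $\ee{2}(\hsp{a,b})=\ksp{a,b}\in\fil{0}{2}$.

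First I will verify that $\phi_{2n}$ restricts to an isomorphism $A_{2n}\cong\mwk{2n}{F}$. For surjectivity, any generator $[a_1]\cdots[a_{2n}]$ equals $\phi_{2n}\bigl(\hsp{a_1,a_2}\cdots\hsp{a_{2n-1},a_{2n}}\bigr)$, so $\phi_{2n}(A_{2n})$ contains all such elements and hence all of $\mwk{2n}{F}$. For injectivity, recall that $\ee{\bullet}$ is an injective homomorphism of graded $\grf{F}$-algebras (the corollary immediately following Corollary \ref{cor:stab}) and that $\ee{\bullet}$ respects products by Lemma \ref{lem:ee}, so
\[
\ee{2n}\bigl(\hsp{a_1,a_2}\cdots\hsp{a_{2n-1},a_{2n}}\bigr)=\ksp{a_1,a_2}\ssprod\cdots\ssprod\ksp{a_{2n-1},a_{2n}}=\ksb{a_1,\ldots,a_{2n}}=\mm{2n}([a_1]\cdots[a_{2n}]).
\]
Since these elements generate $A_{2n}$ and $\mwk{2n}{F}$ respectively, we obtain $\ee{2n}(A_{2n})=\mm{2n}(\mwk{2n}{F})$. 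Because $\T{2n}\circ\mm{2n}=\id$ by Theorem \ref{thm:mm}, the map $\T{2n}$ is injective on $\ee{2n}(A_{2n})$; combined with the injectivity of $\ee{2n}$, this yields that $\phi_{2n}|_{A_{2n}}$ is an isomorphism.

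Finally, for the direct summand claim, the map
\[
r_\bullet:=\bigl(\phi_{2\bullet}|_{A_{2\bullet}}\bigr)^{-1}\circ\phi_{2\bullet}:\SH{2\bullet}{F}\longrightarrow A_{2\bullet}
\]
is a $\grf{F}$-algebra retraction of the inclusion $A_{2\bullet}\hookrightarrow\SH{2\bullet}{F}$, splitting the graded module into $A_{2\bullet}\oplus\ker{r_\bullet}$. The only genuinely delicate point in this argument is the identification $\ee{2n}(A_{2n})=\mm{2n}(\mwk{2n}{F})$, which hinges on $\ee{\bullet}$ being multiplicative and on the explicit formula for $\ksb{a_1,\ldots,a_{2n}}$ in even degree from Theorem \ref{thm:mm}; everything else is formal manipulation of sections and retractions.
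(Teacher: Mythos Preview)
Your proof is correct and follows precisely the approach the paper intends: the paper's own proof is the single line ``This is immediate from Theorems~\ref{thm:T2} and~\ref{thm:mm},'' and you have carefully unpacked that immediacy by exhibiting $\T{\bullet}\circ\ee{\bullet}$ as a graded algebra map that is an isomorphism on $A_{2\bullet}$ (via the key identity $\ee{2n}(A_{2n})=\mm{2n}(\mwk{2n}{F})$, which is exactly how the paper defines $\mm{2n}$ in even degree), and then using it to build a retraction. The only additional ingredient you invoke beyond the two cited theorems is the injectivity of $\ee{\bullet}$ on $\SH{\bullet}{F}$, which the paper records as a corollary just after Corollary~\ref{cor:stab}; this is entirely natural and is implicitly part of what the paper means by ``immediate.''
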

\begin{proof}
This is immediate from Theorems  \ref{thm:T2} and \ref{thm:mm}.
\end{proof}

\section{Decomposabilty}\label{sec:decomp}
\textit{Recall that $F$ is a field  of characteristic $0$ throughout this section.}

In \cite{sus:tors}, Suslin proved that $\hoz{n}{\genl{n}{F}}/\hoz{n}{\genl{n-1}{F}}\cong \milk{n}{F}$. This is, in particular,
 a decomposability 
result. It says that $\hoz{n}{\genl{n}{F}}$ is generated, modulo the image of $\hoz{n}{\genl{n-1}{F}}$ by products of $1$-dimensional 
cycles. In this section we will prove analogous results for the special linear group, with Milnor-Witt $K$-theory replacing 
Milnor $K$-theory. To do 
this, we prove the decomposability of the algebra $\mul{\SSn{F^\bullet}}$ (for $n\geq 3$). Theorem \ref{thm:dec} is an analogue of 
Suslin's Proposition 3.3.1. The proof is essentially identical, and we reproduce it here for the convenience of the reader. From this 
we deduce our decomposability result (Theorem \ref{thm:ind}), which requires still 
a little more work than in the case of the general linear group.

\begin{lem}
For any finite-dimensional vector spaces $W$ and $V$, the image of the pairing 
\begin{eqnarray}\label{eqn:mul}
\SSS(W,V)\otimes H(W)\to \mul{\SSS(W\oplus V)}
\end{eqnarray}
coincides with the image of the pairing
\begin{eqnarray}\label{eqn:mul2}
\SSS(V)\otimes \SSS(W)\to \mul{\SSS(W\oplus V)} 
\end{eqnarray}
\end{lem}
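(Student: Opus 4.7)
The plan is to establish the two containments separately, using the $\am$-decomposition machinery of Section \ref{sec:am} for the nontrivial direction.

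For the easy containment (image of pairing \eqref{eqn:mul2} inside image of pairing \eqref{eqn:mul}), I would invoke naturality of the chain-level pairing in the first-factor parameter $W'$. The inclusion of complexes $\compt{\bullet}{V}\hookrightarrow\compt{\bullet}{W,V}$ intertwines $\ssprod$ and induces $\SSS(V)\to\SSS(W,V)$ compatibly with the product. Given $\bar{x}_0\in\SSS(V)$ with image $\bar{x}\in\SSS(W,V)$ and any lift $\tilde{y}\in H(W)$ of $\bar{y}\in\SSS(W)$, we have $\bar{x}_0\ssprod\bar{y}=\bar{x}\ssprod\tilde{y}$ in $\SSS(W\oplus V)$. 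This containment therefore holds already in $\SSS(W\oplus V)$, a fortiori after projection to $\mul{\SSS(W\oplus V)}$.

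For the reverse containment modulo $\ad{\SSp{W\oplus V}}=\ker{\SSS(W\oplus V)\to\mul{\SSS(W\oplus V)}}$, I would use the decomposition $\SSS(W,V)=\ad{\SSp{W,V}}\oplus\mul{\SSp{W,V}}\oplus\Aa{n}$ (with $n=\dim V$), which exists because $\SSp{W,V}$ is $\am$ (Corollary \ref{cor:sfam}) and $\Dd{W,V}$ splits. By Corollary \ref{cor:ssp} the natural map $\mul{\SSp{V}}\to\mul{\SSp{W,V}}$ is an isomorphism, and the compatibility of $\Dd{V}$ with $\Dd{W,V}$ (via the factorisation of $\Dd{V}$ through $\Dd{W,V}$) makes $\SSS(V)\to\SSS(W,V)$ an isomorphism on $\Aa{n}$-summands. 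Hence every $\bar{x}\in\SSS(W,V)$ can be written $\bar{x}=\bar{x}_V+\bar{x}_A$, with $\bar{x}_V$ the image of some $\bar{x}_V^0\in\SSS(V)$ and $\bar{x}_A\in\ad{\SSp{W,V}}$. By the previous paragraph $\bar{x}_V\ssprod\tilde{y}$ lies in the image of \eqref{eqn:mul2}, so it suffices to show $\bar{x}_A\ssprod\tilde{y}\in\ad{\SSp{W\oplus V}}$.

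This last step is the main obstacle. The plan is first to extend Lemma \ref{lem:dn}(2) to the pairing $\SSS(W,V)\otimes H(W)\to\SSS(W\oplus V)$: the same chain-level argument via block-diagonal matrices and the determinant map yields the identity $\Dd{W\oplus V}(\bar{x}\ssprod\tilde{y})=\Dd{W,V}(\bar{x})\cdot\Dd{W}(\tilde{y})$, where $\Dd{W}$ is interpreted on $H(W)$ as the composite $H(W)\twoheadrightarrow\SSS(W)\xrightarrow{\Dd{W}}\Aa{\dim W}$. Consequently $\SSp{W,V}\ssprod H(W)\subset\SSp{W\oplus V}$, so for each fixed $\tilde{y}\in H(W)$, left-multiplication by $\tilde{y}$ defines a $\grf{F}$-linear map $\phi_{\tilde{y}}:\SSp{W,V}\to\SSp{W\oplus V}$ between $\am$-modules (Corollary \ref{cor:sfam}). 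By Lemma \ref{lem:oplus} such a map necessarily respects the canonical $\am$-decomposition, giving $\phi_{\tilde{y}}(\ad{\SSp{W,V}})\subset\ad{\SSp{W\oplus V}}$, which is exactly what is needed.
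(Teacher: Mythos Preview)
Your proof is correct and follows essentially the same route as the paper's: reduce the first pairing to one out of $\mul{\SSS(W,V)}$ by killing the additive summand, then invoke the isomorphism $\mul{\SSS(V)}\cong\mul{\SSS(W,V)}$ of Corollary~\ref{cor:isom} (equivalently Corollary~\ref{cor:ssp}). The paper compresses this into two lines; you have supplied the details it leaves implicit, in particular the verification (via the extended multiplicativity of $\Dd{\bullet}$ and Lemma~\ref{lem:oplus}) that $\ad{\SSp{W,V}}\ssprod H(W)\subset\ad{\SSp{W\oplus V}}$, which is exactly what underlies the paper's first assertion that the image of~\eqref{eqn:mul} equals the image of $\mul{\SSS(W,V)}\otimes H(W)$.
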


\begin{proof}
The image of the pairing (\ref{eqn:mul}) is equal to the image of 
\[
\mul{\SSS(W,V)}\otimes H(W)\to \mul{\SSS(W\oplus V)}
\]
which coincides with the image of
\[
\mul{\SSS(V)}\otimes\mul{\SSS(W)}\to \mul{\SSS(W\oplus V)}
\] 
by the isomorphism of Corollary \ref{cor:isom}.
\end{proof}

Let $\SSd{F^n}\subset \mul{\SSS(F^n)}$ be \emph{the $\grf{F}$-submodule of decomposable elements}; i.e.
  $\SSd{F^n}$ is the image of 
\[
\xymatrix{
\bigoplus_{p+q=n, p,q >0}\left( \mul{\SSS(F^p)}\otimes\mul{\SSS(F^q)}\right)\ar[r]^-{\ssprod}&\mul{\SSS(F^n)}.
}
\]

More generally, note that if $V=V_1\oplus V_2=V'_1\oplus V'_2$ and if $\dim{F}{V_i}=\dim{F}{V'_i}$ for $i=1,2$, then the image of 
$\SSS(V_1)\otimes\SSS(V_2)\to \SSS(V)$ coincides with $\SSS(V'_1)\otimes\SSS(V'_2)\to \SSS(V)$. This follows from the fact that there 
exists $\phi\in\spcl{V}$ with $\phi(V_i)=V'_i$ for $i=1,2$.  

Therefore $\SSd{F^n}$ is the image of 

\[
\xymatrix{
\bigoplus_{F^n=V_1\oplus V_2, V_i\not= 0}\left( \mul{\SSS(V_1)}\otimes\mul{\SSS(V_2)}\right)\ar[r]^-{\ssprod}&\mul{\SSS(F^n)}.
}
\]

If $x=\sum_in_i(x^i_1,\ldots,x^i_p)\in \cgen{p}{V}$ and $y=\sum_jm_j(y^j_1,\ldots,y^j_q)\in\cgen{q}{V}$ and if 
$(x^i_1,\ldots,x^i_p,y^j_1,\ldots,y^j_q)\in\xgen{p+q}{V}$ for all $i,j$, then we let 
\[
x\psprod y:= \sum_{i,j}n_im_j(x^i_1,\ldots,x^i_p,y^j_1,\ldots,y^j_q)\in\cgen{p+q}{V}.
\]

Of course, if $x\in \cgen{p}{V_1}$ and $y\in \cgen{q}{V_2}$ with $V=V_1\oplus V_2$, then $x\psprod y=x\ssprod y$. Furthermore,
when $x\psprod y$ is defined, we have 
\[
d(x\psprod y)=d(x)\psprod y +(-1)^px\psprod d(y).
\] 

\begin{thm}\label{thm:dec}
Let $n\geq 1$. For any $a_1,\ldots,a_n,b\in F^\times$ and for any $1\leq i\leq n$
\[
\ssb{a_1,\ldots,ba_i,\ldots,a_n}\cong\fgen{b}\ssb{a_1,\ldots,a_n}\pmod{\SSd{F^n}}.
\]
\end{thm}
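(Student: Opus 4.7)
By permutation symmetry (which preserves $\SSd{F^n}$), it suffices to treat the case $i=n$, that is, to prove $\ssb{a_1,\ldots,a_{n-1},ba_n}\equiv\fgen{b}\ssb{a_1,\ldots,a_n}\pmod{\SSd{F^n}}$.

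The strategy is to invoke Theorem \ref{thm:pres} with parameters $b_1,\ldots,b_{n-1},b_n$ where $b_n=b$ and the remaining $b_j$'s are chosen as generic pairwise distinct nonzero elements, all distinct from $b$. This yields an explicit expression
\[
\ssb{b_1 a_1,\ldots,b_{n-1}a_{n-1},ba_n}-\ssb{a_1,\ldots,a_n}=\sum_{k=1}^n(-1)^{n+k}\fgen{(-1)^{n+k}a_k}T_k,
\]
in which each $T_k=\ssb{a_1(b_1-b_k),\ldots,\widehat{a_k(b_k-b_k)},\ldots,a_n(b_n-b_k),b_k}$ is a ``capped'' symbol with one entry equal to the parameter $b_k$. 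The crucial technical step is to show that each such $T_k$ lies in $\SSd{F^n}$ up to a controlled $\fgen{\cdot}$-scalar. This is achieved by matching $T_k$ against the ``main'' summand of the product formula in Lemma \ref{lem:sprod} applied to the manifestly decomposable product $\ssb{a_1(b_1-b_k),\ldots,\widehat{\cdot},\ldots,a_n(b_n-b_k)}\ssprod\ssb{b_k}$ with suitably chosen auxiliary parameters; the remaining summand groups in that formula are themselves ``capped'' symbols of comparable shape, which can be handled recursively on the symbol length, with the base case $n=2$ verified directly using Lemma \ref{lem:ab} and Theorem \ref{thm:T2}.

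Once all $T_k$ are decomposable in this sense, the resulting congruence $\ssb{b_1 a_1,\ldots,b_{n-1}a_{n-1},ba_n}\equiv\fgen{b_1\cdots b_{n-1}b}\ssb{a_1,\ldots,a_n}\pmod{\SSd{F^n}}$ is compared with the analogous identity obtained by replacing $b_n=b$ with $b_n=1$ (adjusting the other $b_j$'s to maintain distinctness if necessary). Subtracting the two identities isolates the target statement, with Remark \ref{rem:ssn} interpreting the scaling of a single coordinate as the $\fgen{\cdot}$-action. The principal obstacle will be the combinatorial bookkeeping: tracking the signs and $\fgen{\cdot}$-prefactors through Lemma \ref{lem:sprod}'s four-group expansion and verifying that the accumulated correction terms all lie in $\SSd{F^n}$ with the scalar prefactors collapsing cleanly to $\fgen{b}$ (without parasitic factors). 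This is essentially the combinatorial heart of Suslin's proof of the analogous Proposition 3.3.1 of \cite{sus:homgln}, reproduced here in the Milnor-Witt setting.
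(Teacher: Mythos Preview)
Your proposal takes a fundamentally different route from the paper, and it has a real gap at its center.

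The paper's proof (which, as it says, reproduces Suslin's argument for Proposition~3.3.1 in \cite{sus:homgln}) is a short chain-level computation. It writes
\[
\ssb{a_1,\ldots,ba_i,\ldots,a_n}-\fgen{b}\ssb{a_1,\ldots,a_n}
= d\Big((e_1,\ldots,e_{i-1})\psprod\big((e_i)-(be_i)\big)\psprod(e_{i+1},\ldots,e_n,a)\Big)
\]
and then applies the Leibniz rule for $\psprod$ together with an auxiliary vector $u$ to break the result into three pieces $X_1,X_2,X_3$, each of which visibly lies in the image of a pairing $H(W,V')\otimes H(W')\to H(F^n)$ for an explicit direct-sum decomposition $F^n=V'\oplus V$ or $W\oplus W'$. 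No presentation, no product formula, no induction.

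Your plan instead applies Theorem~\ref{thm:pres} and then tries to show that each resulting term $T_k=\ssb{a_1(b_1-b_k),\ldots,\widehat{\phantom{x}},\ldots,a_n(b_n-b_k),b_k}$ lies in $\SSd{F^n}$ ``up to a controlled $\fgen{\cdot}$-scalar,'' by matching against the main term of Lemma~\ref{lem:sprod} and handling the leftovers ``recursively on the symbol length.'' There are two problems. First, each $T_k$ is simply a generic symbol $\ssb{c_1,\ldots,c_n}$; asserting that such a symbol is decomposable up to a $\fgen{\cdot}$-scalar is precisely the content of the theorem (iterated), so the argument is circular. Second, Lemma~\ref{lem:sprod} expresses a product of an $(n-1)$-symbol and a $1$-symbol as a sum of $n$-symbols: the length never drops, and the auxiliary parameters $b_j,b'_j$ must be pairwise distinct, so you cannot force the ``main'' term to equal $T_k$ on the nose. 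There is no decreasing invariant on which your recursion could terminate. (Incidentally, the claim that this combinatorics ``is essentially the combinatorial heart of Suslin's proof'' is not accurate: Suslin's proof is the geometric one the paper gives.)

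If you want to salvage a presentation-based approach, you would need a genuinely new idea to break the circularity --- for instance, a filtration on symbols for which the extra terms produced by Lemma~\ref{lem:sprod} are strictly lower. Nothing of the sort is available in the paper, and your proposal does not supply one.
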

\begin{proof} Let $a=a_1e_1+\cdots + ba_ie_i+\cdots a_ne_n$. 

We have
\begin{eqnarray*}
\ssb{a_1,\ldots,ba_i,\ldots,a_n}-\fgen{b}\ssb{a_1,\ldots,a_n}&=&
 d(e_1,\ldots,e_i,\ldots,e_n,a)-d(e_1,\ldots,b_ie_i,\ldots,e_n,a)\\
&=&d\bigg((e_1,\ldots,e_{i-1})\psprod \left( (e_i)-(be_i)\right)\psprod (e_{i+1},\ldots,e_n,a)\bigg)\\
&=&d(e_1,\ldots,e_{i-1})\psprod \left( (e_i)-(be_i)\right)\psprod (e_{i+1},\ldots,e_n,a)\\
&+&(-1)^i(e_1,\ldots,e_{i-1})\psprod \left( (e_i)-(be_i)\right)\psprod d(e_{i+1},\ldots,e_n,a)
\end{eqnarray*}

Let $u=a_1e_1+\cdots+a_{i-1}e_{i-1}+ba_ie_i=a-\sum_{j=i+1}^na_je_j$.Then 
\[
(-1)^{i-1}(e_1,\ldots,e_{i-1})=d\big((e_1,\ldots,e_{i-1})\psprod (u)\big) - d(e_1,\ldots,e_{i-1})\psprod (u)
\]
and
\[
(e_{i+1},\ldots,e_n,a)=d\big((u)\psprod(e_{i+1},\ldots,e_n,a)\big)+(u)\psprod d(e_{i+1},\ldots,e_n,a). 
\]
Thus $\ssb{a_1,\ldots,ba_i,\ldots,a_n}-\fgen{b}\ssb{a_1,\ldots,a_n}=X_1-X_2+X_3$ where 
\begin{eqnarray*}
X_1 &=& d(e_1,\ldots,e_{i-1})\psprod\big((e_i)-(be_i)\big)\psprod d(u,e_{i+1},\ldots,e_n,a),\\
X_2&=&d(e_1,\ldots,e_{i-1},u)\psprod\big((e_i)-(be_i)\big)\psprod d(e_{i+1},\ldots,e_n,a),\mbox{ and }\\
X_3&=&d(e_1,\ldots,e_{i-1})\psprod \bigg[\big((e_i)-(be_i)\big)\psprod (u)+(u)\psprod \big((e_i)-(be_i)\big)\bigg]\psprod
d(e_{i+1},\ldots,e_n,a)\\
\end{eqnarray*}

We show that each $X_i$ is decomposable: Let $V\subset F^n$ be the span of $u, e_{i+1},\ldots,e_n$ (which is also equal to the 
span of $a,e_{i+1},\ldots, e_n$), and let $V'$ be the span of $e_1,\ldots,e_{i-1}$. Then $F^n=V'\oplus V$ and 
$d(u,e_{i+1},\ldots,e_n,a)\in H(V)$ while\\
  $d(e_1,\ldots,e_{i-1})\psprod\big((e_i)-(be_i)\big)\in H(V,V')$. 

Thus $X_1$ lies in 
the image of 
\[
\xymatrix{
H(V,V')\otimes H(V)\ar[r]^-{\ssprod} &\mul{\SSn{F^n}}
}
\]
and so is decomposable.

Similarly, if we let $W$ be the span of $e_1,\ldots,e_i$ and $W'$ the span of $e_{i+1},\ldots,e_n$, then 
\[
d(e_1,\ldots,e_{i-1},u)\psprod\big((e_i)-(be_i)\big),\ d(e_1,\ldots,e_{i-1})\psprod
 \bigg[\big((e_i)-(be_i)\big)\psprod (u)+(u)\psprod \big((e_i)-(be_i)\big)\bigg]\in H(W)
\]
and $d(e_{i+1},\ldots,e_n,a)\in H(W,W')$.Thus $X_2, X_3$ lie in 
the image of 
\[
\xymatrix{
H(W)\otimes H(W,W')\ar[r]^-{\ssprod} &\mul{\SSn{F^n}}
}
\]
and are also decomposable. 
\end{proof}

Let $\SSind{F^n}:=\mul{\SSn{F^n}}/\SSd{F^n}$.

The main goal of this section is to show that $\SSind{F^n}=0$ for all $n\geq 3$ (Theorem \ref{thm:ind} below).

\begin{lem} 
For all $n\geq 3$, $\SSind{F^n}$ is a \mult $\grf{F}$-module.
\end{lem}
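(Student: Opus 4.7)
The plan is to combine the decomposition of $\mul{\SSn{F^n}}$ given by Corollary \ref{cor:E} with the elementary fact that the groups $\hoz{i}{\specl{k}{F}}$ are \mult, in order to display $\SSind{F^n}$ as a quotient of a module built from \mult pieces.

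First I would observe that, since $n\geq 3$, both factors in a product of the form $E\ssprod x$ with $x\in \mul{\SSn{F^{n-2}}}$ lie in strictly positive degree ($E$ in degree $2$, and $x$ in degree $n-2\geq 1$), so the whole summand $E\ssprod \mul{\SSn{F^{n-2}}}$ sits inside the decomposable part $\SSd{F^n}$. Hence projection onto the second factor in the decomposition
\[
\mul{\SSn{F^n}}=(E\ssprod\mul{\SSn{F^{n-2}}})\oplus \fil{1}{n}
\]
of Corollary \ref{cor:E} induces a surjection $\fil{1}{n}\twoheadrightarrow \SSind{F^n}$, and by Lemma \ref{lem:kercoker} it is enough to show that $\fil{1}{n}$ itself is \mult.

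Next I would unpack $\fil{1}{n}$ using the filtration on $\mul{\SSp{F^n}}$ produced by the spectral sequence $\mul{\EEEp{F^n}}$; this yields a short exact sequence of $\grf{F}$-modules
\[
0\to E^\infty_{n,0}\to \fil{1}{n}\to E^\infty_{n-1,1}\to 0.
\]
By Theorem \ref{thm:main} all higher differentials in $\mul{\EEEp{F^n}}$ vanish, so $E^\infty_{p,q}=E^2_{p,q}$; combined with Lemma \ref{lem:d1}, the outer terms are respectively a quotient of $\hoz{n}{\specl{n}{F}}$ and a submodule of $\hoz{n-1}{\specl{n-1}{F}}$. Taking $a=2\in\Q^\times\setminus\{\pm 1\}$, the scalar matrix $aI_k$ has determinant $a^k$ and centralises $\specl{k}{F}$, so $\ffist{a^k}\in\msetp{\Q}$ annihilates $\hoz{i}{\specl{k}{F}}$ for every $i$. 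Hence both $\hoz{n}{\specl{n}{F}}$ and $\hoz{n-1}{\specl{n-1}{F}}$ are \mult, and so are the two outer terms of the sequence above by Lemma \ref{lem:kercoker}.

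Finally, Lemma \ref{lem:multses} says that an extension of \mult modules is again \mult, so $\fil{1}{n}$ is \mult, and therefore so is its quotient $\SSind{F^n}$. There is no substantial obstacle here: the real work has gone into the periodicity/decomposition of Corollary \ref{cor:E} and into the collapse of $\mul{\EEEp{F^n}}$ (Theorem \ref{thm:main}), after which the present lemma is a purely formal consequence.
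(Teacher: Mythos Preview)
Your argument is correct, but it takes a longer route than the paper's. The paper simply observes that for $n\geq 3$ the summand $\Aa{n}\subset\mul{\SSn{F^n}}$ is realised as $\grf{F}E^{\ssprod n/2}$ (for $n$ even) or $\SSn{F}\ssprod E^{\ssprod(n-1)/2}$ (for $n$ odd), hence is decomposable; therefore $\mul{\SSp{F^n}}$ already surjects onto $\SSind{F^n}$, and $\mul{\SSp{F^n}}$ is \mult\ by construction (it is the \mult\ part of an $\am$ module). That is the whole proof.

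You instead invoke the finer decomposition of Corollary~\ref{cor:E} to get a surjection from $\fil{1}{n}$, and then work to show $\fil{1}{n}$ is \mult\ via the short exact sequence and the spectral sequence terms. This is valid but unnecessary: since $\fil{1}{n}\subset\mul{\SSp{F^n}}$ is a submodule of a \mult\ module, it is automatically \mult\ by Lemma~\ref{lem:multses}, and your detour through $E^\infty_{n,0}$, $E^\infty_{n-1,1}$ and the homology of $\specl{k}{F}$ can be skipped. What your approach does buy is exactly the content of the Remark immediately following the lemma in the paper, namely that $\fil{1}{n}\to\SSind{F^n}$ is surjective; so you have effectively proved the lemma and the remark together.
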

\begin{proof}
We have
\[
\Aa{n}\cong
\left\{
\begin{array}{ll}
\grf{F}E^{\ssprod n/2},&n \mbox{ even}\\
\SSn{F}\ssprod E^{\ssprod (n-1)/2},&n \mbox{ odd}
\end{array}
\right.
\]
and these modules are  decomposable for all $n\geq 3$. It follows that the map 
\[
\mul{\SSp{F^n}}\to \SSind{F^n}
\]
is surjective for all $n\geq 3$.
\end{proof}
\begin{rem}
Since $E\ssprod \mul{\SSn{F^{n-2}}}\subset \SSd{F^n}$, in fact we have that $\fil{1}{n}\to \SSind{F^n}$ is surjective.
\end{rem}

Theorem \ref{thm:dec} shows that for all $a_1,\ldots,a_n\in F^\times$
\[
\ssb{a_1,\ldots,a_n}\cong \fgen{\prod_ia_i}\ssb{1,\ldots,1}\pmod{\SSd{F^n}}.
\]

In other words the map 
\[
\grf{F}\to \SSind{F^n},\quad \alpha\mapsto \alpha\ssb{1,\ldots, 1}
\]
is a surjective homomorphism of $\grf{F}$-modules. Thus, we are required to establish that 
$\ssb{1,\ldots,1}\in \SSd{F^n}$ for all $n\geq 3$.

For convenience below, we will let $\SSf{n}{F}$ denote the free $\grf{F}$-module on the symbols $\ssf{a_1,\ldots,a_n}$, 
 $a_1,\ldots,a_n\in F^\times$. Let $p_n:\SSf{n}{F}\to\SSn{F^n}$ be the $\grf{F}$-module homomorphism sending 
$\ssf{a_1,\ldots,a_n}$ to $\ssb{a_1,\ldots,a_n}$. We will say that $\sigma\in \SSn{F^n}$ is represented by 
$\tilde{\sigma}\in \SSf{n}{F}$ if $p_n(\tilde{\sigma})=\sigma$.

Note that $\SSf{\bullet}{F}$ can be given the structure of a graded $\grf{F}$-algebra by setting 
\[
\ssf{a_1,\ldots,a_n}\cdot\ssf{a_{n+1},\ldots,a_{n+m}}:=\ssf{a_1,\ldots,a_{n+m}}; 
\]
i.e., we can identify $\SSf{\bullet}{F}$ with the tensor algebra over $\grf{F}$ on the free module with basis $\ssf{a}$, $a\in F^\times$.

Let $\Pi_\bullet:\SSf{\bullet}{F}\to \grf{F}[x]$ be the homomorphism of graded $\grf{F}$-algebras sending 
$\ssf{a}$ to $\fgen{a}x$.

For all $n\geq 1$ we have a commutative square of surjective homomorphisms of $\grf{F}$-modules
\[
\xymatrix{
\SSf{n}{F}\ar[r]^-{\Pi_n}\ar[d]^-{p_n}&\grf{F}\cdot x^n\ar[d]^-{\gamma_n}\\
\SSn{F^n}\ar[r]&\SSind{F^n}\\
}
\] 
where $\gamma_n(x^n)=\ssb{1,\ldots,1}$.

\begin{lem} \label{lem:odd}
If $n$ is odd and $n\geq 3$ then $\SSind{F^n}=0$; i.e.,  
\[\mul{\SSn{F^n}}=\SSd{F^n}.\]
\end{lem}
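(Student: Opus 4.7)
The plan is to apply Theorem~\ref{thm:dec}, which reduces the statement to showing that the single element $\ssb{1,\ldots,1}$ (with $n$ ones) lies in $\SSd{F^n}$, since we already know $\SSind{F^n}$ is the cyclic $\grf{F}$-module generated by this class.

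First, I would compute the $\Aa{n}$-component via $\Dd{n}$: using the formula derived in the proof of Lemma~\ref{lem:dn}, for $n=2k+1$ odd, one obtains $\Dd{n}(\ssb{1,\ldots,1}) = -k\,\ffist{-1}$. Since $\Dd{n}(\ssb{-1}\ssprod E^{\ssprod k}) = \ffist{-1}\cdot\fgen{1}^k = \ffist{-1}$, and the element $\ssb{-1}\ssprod E^{\ssprod k}$ is a non-trivial $\ssprod$-product (so decomposable as $n\geq 3$), it suffices to prove that
\[
\alpha := \ssb{1,\ldots,1} + k\,\ssb{-1}\ssprod E^{\ssprod k} \in \mul{\SSp{F^n}}
\]
is decomposable.

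Next, iterate Theorem~\ref{thm:main}(2) to obtain the direct-summand decomposition
\[
\mul{\SSp{F^n}} = \bigl(E\ssprod\mul{\SSp{F^{n-2}}}\bigr) \oplus \fil{1}{n};
\]
the first summand is manifestly in $\SSd{F^n}$, so the problem reduces to showing $\fil{1}{n}\subset\SSd{F^n}$. Here I would use the section $\mm{n}:\mwk{n}{F}\to\fil{1}{n}$ of Theorem~\ref{thm:mm}, which for odd $n$ sends $[a_1]\cdots[a_n]$ to the genuinely decomposable $\ssprod$-product $\ksb{a_1,\ldots,a_n} = \ssb{a_1}\ssprod\ksp{a_2,a_3}\ssprod\cdots\ssprod\ksp{a_{n-1},a_n}$. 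Thus $\mm{n}(\mwk{n}{F})\subset\SSd{F^n}$, and because $\T{n}\circ\mm{n}=\id$ gives $\fil{1}{n} = \mm{n}(\mwk{n}{F}) \oplus \ker(\T{n}|_{\fil{1}{n}})$, the proof reduces to showing that $\ker(\T{n}|_{\fil{1}{n}})$ is decomposable.

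For the base case $n=3$, this kernel is zero by Lemma~\ref{lem:f13}, so $\fil{1}{3}=\mm{3}(\mwk{3}{F})\subset\SSd{F^3}$, completing the argument. For odd $n\geq 5$ I would proceed by induction on $n$, combining the exact sequence of Corollary~\ref{cor:exact} with Theorem~\ref{thm:star}(2) to identify the filtration quotient $\fil{1}{n}/\fil{0}{n}\cong E^\infty_{n-1,1}$ as being generated by products $\ssb{a}\ssprod\ee{n-1}(z)$, which are manifestly decomposable, and using the inductive hypothesis on $\mul{\SSn{F^{n-2}}}$ together with the splitting coming from $\mm{n}$ to handle the piece $\fil{0}{n} = \SH{n}{F}$. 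The main obstacle is executing this inductive step cleanly---in particular, proving the needed surjection onto $\ker(\T{n}|_{\fil{1}{n}})$ by decomposable classes without appealing circularly to the main stability theorem, whose base case rests on the present lemma.
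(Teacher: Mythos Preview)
Your approach has a genuine circularity for $n\geq 5$, which you yourself flag at the end but do not resolve. The isomorphism $\T{n}:\fil{1}{n}\to\mwk{n}{F}$ is Corollary~\ref{cor:f1n}, and that is deduced \emph{from} Theorem~\ref{thm:ind}, whose odd case is precisely the present lemma. At this point in the paper all you have is that $\mm{n}$ is a section of $\T{n}$ (Theorem~\ref{thm:mm}); there is no independent control over $\ker(\T{n}|_{\fil{1}{n}})$ for odd $n\geq 5$, and your inductive sketch does not supply one. The case $n=3$ is fine because Lemma~\ref{lem:f13} is already available, but that case does not bootstrap upward.

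The paper's proof bypasses all the structural machinery and is essentially a two-line computation using only the presentation of $\SSn{F^n}$ (Theorem~\ref{thm:pres}) and the surjection $\Pi_n:\SSf{n}{F}\to\grf{F}\cdot x^n$ set up just before the lemma. One writes down the fundamental relation $R_b$ with $a_i=1$ and the specific choice $b_i=i$, obtaining
\[
\Pi_n(R_b)=\Bigl[\fgen{n!}-\fgen{1}-\sum_{j=1}^n(-1)^{n+j}\fgen{j!(n-j)!}\Bigr]x^n.
\]
The $j=n$ term cancels $\fgen{n!}$, and for $n$ odd the remaining terms $j=1,\ldots,n-1$ pair off under $j\leftrightarrow n-j$ with opposite signs and identical $\fgen{j!(n-j)!}$, leaving $-\fgen{1}x^n$. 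Since $R_b$ represents $0$ in $\SSn{F^n}$, this forces $\ssb{1,\ldots,1}=0$ in $\SSind{F^n}$. No induction, no filtration, no $\mm{n}$.
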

\begin{proof} From the fundamental relation in $\SSn{F^n}$ (Theorem \ref{thm:pres}), if $b_1,\ldots, b_n$ are distinct elements of 
$F^\times$, then  $0\in \SSn{F^n}$ is represented by 
\[
R_b:=\ssf{b_1,\ldots,b_n}-\ssf{1,\ldots,1}
-\sum_{j=1}^n(-1)^{n+j}\fgen{(-1)^{n+j}}\ssf{b_1-b_j,\ldots,\widehat{b_j-b_j},\ldots,b_n-b_j,b_j}\in\SSf{n}{F}.
\] 
Now
\[
\Pi_n(R_b)=\left[\fgen{\prod_i b_i}-\fgen{1}-\sum_{j=1}^n(-1)^{n+j}\fgen{(b_j-b_1)\cdots(b_j-b_{j-1})\cdot(b_{j+1}-b_j)\cdots(b_n-b_j)
\cdot b_j}
\right]x^n.
\]
Now choose $b_i=i$, $i=1,\ldots,n$.
Then
\[
\Pi_n(R_b)=\left[\fgen{n!}-\fgen{1}-\sum_{j=1}^n(-1)^{n+j}\fgen{j!(n-j)!}\right]x^n=-\fgen{1}x^n \mbox{ since $n$ is odd}.
\]
It follows that $-\ssb{1,\ldots,1}=0$ in $\SSind{F^n}$ as required.
\end{proof}

The case $n$ even requires a little more work.

The maps $\{p_n\}_n$ do not define a map of graded algebras. However, we do have the following:
\begin{lem}\label{lem:prod}
For $1\not=a\in F^\times$, let 
\[
L(x):=\fgen{-1}\ssf{1-x,1}-\fgen{x}\ssf{1-\frac{1}{x},\frac{1}{x}}+\ssf{1,1}\in \SSf{2}{F}.
\]

Then for all $a_1,\ldots,a_n\in F^\times\setminus\{ 1\}$, the product
\[
\prod_{i=1}^n\ssb{1,a_i}=\ssb{1,a_1}\ssprod\cdots\ssprod\ssb{1,a_n}\in \SSn{F^{2n}}
\]
is represented by $\prod_iL(a_i)\in \SSf{2n}{F}$.
\end{lem}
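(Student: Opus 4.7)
The proof would be by induction on $n$. For $n=1$, I apply Theorem \ref{thm:pres} to $\SSn{F^2}$ with $a_1=1$, $a_2=a$, choosing $b_1=1$, $b_2=1/a$ (distinct because $a\neq 1$). Since $b_1 a_1 = b_2 a_2 = 1$, the left-hand side of the fundamental relation is $\ssb{1,1}$, and rearrangement yields
\[
\ssb{1,a} \;=\; \ssb{1,1} + \fgen{-1}\ssb{1-a,1} - \fgen{a}\ssb{1-1/a,1/a} \;=\; p_2(L(a)),
\]
which is the $n=1$ case.

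For the inductive step, I would write the product as $\ssb{1,a_1}\ssprod Q$ where $Q = \ssb{1,a_2}\ssprod\cdots\ssprod\ssb{1,a_n}$. By the inductive hypothesis, $Q = p_{2n-2}(\prod_{i\geq 2} L(a_i))$ expands as a $\grf{F}$-linear combination of basis elements $\ssb{c_1,\ldots,c_{2n-2}}$, and by $\grf{F}$-bilinearity of $\ssprod$ it suffices to verify that for each such basis element the product $\ssb{1,a_1}\ssprod\ssb{c_1,\ldots,c_{2n-2}}$ contributes precisely $p_{2n}(L(a_1)\cdot\ssf{c_1,\ldots,c_{2n-2}})$. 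To compute each such product, I would invoke Lemma \ref{lem:sprod} with $b_1=1$, $b_2=1/a_1$, so that $b_1 a_1 = b_2 a_2 = 1$. Under this choice, the last term of the formula yields $\ssb{1,1,c_1,\ldots,c_{2n-2}}$, matching the $\ssf{1,1}$ summand of $L(a_1)$; the two contributions in the second sum of Lemma \ref{lem:sprod} yield summands starting with $\ssb{1-a_1,1,\ldots}$ and $\ssb{1-1/a_1,1/a_1,\ldots}$, matching the remaining two summands of $L(a_1)$.

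The main difficulty will be verifying that the two ``mixed'' cross-sums of Lemma \ref{lem:sprod}, along with any scalar discrepancies, combine correctly across all basis elements appearing in $Q$ to give the required identity. I would handle this by choosing the auxiliary parameters $b'_j$ in Lemma \ref{lem:sprod} compatibly with the parameters used in the inductive expansion of $Q$, so that the extra contributions can be recognized as instances of Theorem \ref{thm:pres} applied within the right-factor slots, producing no net discrepancy. The technical crux is the bookkeeping required to carry out this matching, which I expect can be organized by induction on the word length in $\SSf{\bullet}{F}$.
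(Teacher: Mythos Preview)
Your base case $n=1$ is correct and is indeed an instance of Theorem \ref{thm:pres} with the parameters you chose.

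The inductive step, however, has a genuine gap. The maps $p_n:\SSf{n}{F}\to\SSn{F^n}$ do \emph{not} assemble into a ring homomorphism: concatenation in $\SSf{\bullet}{F}$ does not correspond to the product $\ssprod$ on $\SSn{F^\bullet}$. So from $\ssb{1,a_1}=p_2(L(a_1))$ and $Q=p_{2n-2}\bigl(\prod_{i\ge2}L(a_i)\bigr)$ you cannot conclude $\ssb{1,a_1}\ssprod Q=p_{2n}\bigl(L(a_1)\cdot\prod_{i\ge2}L(a_i)\bigr)$ without further argument. Your plan is to apply Lemma \ref{lem:sprod} term by term to $\ssb{1,a_1}\ssprod\ssb{c_1,\ldots,c_{2n-2}}$; but as you yourself note, that formula produces a double sum and two single sums in addition to the term you want, and you give no mechanism by which these extra contributions cancel across the various basis elements in $Q$. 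A quick sign check already shows trouble: with $b_1=1$, $b_2=1/a$, the $i=1$ contribution to the second sum in Lemma \ref{lem:sprod} has scalar $\fgen{1}$, whereas the matching summand $\fgen{-1}\ssf{1-a,1}$ of $L(a)$ carries $\fgen{-1}$. So the cross-terms are genuinely doing work, and the ``bookkeeping'' you defer is the entire content of the lemma.

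The paper's proof is not inductive at all. It works directly at the chain level: the cycle representing $\prod_i\ssb{1,a_i}$ in $H(F^{2n})$ is a signed sum of $2n\times 2n$ block-diagonal matrices $T(j,a)=\mathrm{diag}(T_{j_1}(a_1),\ldots,T_{j_n}(a_n))$, where $T_1,T_2,T_3$ are the three $2\times2$ matrices arising from $d_3(e_1,e_2,e_1+ae_2)$. Because each $a_i\neq 1$, the single vector $e=(1,\ldots,1)$ is in general position with respect to all of these simultaneously, so one global application of the homotopy $s_e$ rewrites the whole cycle as a sum of boundaries $d_{2n+1}([T(j,a)\,|\,e])$. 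Remark \ref{rem:ssn} then reads off each such boundary as $\fgen{\det T(j,a)}\ssb{\sigma_1(T(j,a)^{-1}),\ldots,\sigma_{2n}(T(j,a)^{-1})}$; since $T(j,a)^{-1}$ is block diagonal and $e$ restricts to $(1,1)$ on each block, both the determinant and the row sums factor blockwise, and the result is literally $\prod_iL(a_i)$ in $\SSf{2n}{F}$. The crucial idea you are missing is this single globally compatible choice of homotopy vector, which makes the factorisation automatic rather than a matter of cancellation.
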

\begin{proof}
For convenience of notation, we will represent standard basis elements of $\cgen{q}{F^n}$ as $n\times q$ matrices $[v_1|\cdots|v_q]$.

Let $e=(1,\ldots,1)$ and  let $\sigma_i(C)$ denote the sum of the entries in the $i$th row of the 
$n\times n$ matrix $C$. By Remark \ref{rem:ssn}, if $A\in\genl{n}{F}$ and $[A|e]\in \xgen{n+1}{F^n}$ then
 $d_{n+1}([A|e])$ represents $\fgen{\det{A}}\ssb{\sigma_1(A^{-1}),\ldots,\sigma_n(A^{-1})}\in \SSn{F^n}$.

Now, for $a\not= 1$, $\ssb{1,a}$ is represented in $\SSn{F^2}$ by
\[
d_3\left(
\begin{bmatrix}
1&0&1\\
0&1&a
\end{bmatrix}
\right)
=
\begin{bmatrix}
0&1\\
1&a
\end{bmatrix}
-
\begin{bmatrix}
1&1\\
0&a
\end{bmatrix}
+
\begin{bmatrix}
1&0\\
0&1
\end{bmatrix}
=T_1(a)-T_2(a)+T_3(a)\in \cgen{2}{F^2}.
\]

From the definition of the product $\ssprod$, it follows that $\ssb{1,a_1}\ssprod\cdots\ssprod\ssb{1,a_n}$ is represented by 
\[
Z:=\sum_{j=(j_1,\ldots,j_n)\in (1,2,3)^n}(-1)^{k(j)}
\begin{bmatrix}
T_{j_1}(a_1)&&\\
&\ddots&\\
&&T_{j_n}(a_n)
\end{bmatrix}
=\sum_j(-1)^{k(j)}T(j,a).
\]
where $k(j):=\card{\{ i\leq n | j_i=2\}}$

Since $a_i\not= 1$ for all $i$, the vector $e=(1,\ldots,1)$ is in general position with respect to the columns of all these matrices.
Thus we can use the partial homotopy operator $s_e$ to write this cycle as a boundary:
\[
Z=\sum_j(-1)^{k(j)}d_{2n+1}\left([T(j,a)|e] \right).
\]

By the remarks above 
\[
d_{2n+1}\left( [T(j,a)|e] \right) = \fgen{\prod_i\det{T_{j_i}(a_i)}}
\ssb{\sigma_{1}(T_{j_1}(a_1)),\sigma_{2}(T_{j_1}(a_1)),\sigma_{1}(T_{j_2}(a_2)),\ldots,\sigma_{1}(T_{j_n}(a_n)),\sigma_{2}(T_{j_n}(a_n))}.
\]
This is represented by 
\begin{eqnarray*}
\fgen{\prod_i\det{T_{j_i}(a_i)}}
\ssf{\sigma_{1}(T_{j_1}(a_1)),\sigma_{2}(T_{j_1}(a_1)),\sigma_{1}(T_{j_2}(a_2)),\ldots,\sigma_{1}(T_{j_n}(a_n)),\sigma_{2}(T_{j_n}(a_n))}\\
= \prod_{i=1}^n\bigg( \fgen{\det{T_{j_i}(a_i)}}\ssf{\sigma_{1}(T_{j_i}(a_i)),\sigma_{2}(T_{j_i}(a_i))}\bigg)\in \SSf{2n}{F}.
\end{eqnarray*}
Thus $Z$ is represented by 
\begin{eqnarray*}
\sum_j(-1)^{k(j)}\prod_{i=1}^n\bigg( \fgen{\det{T_{j_i}(a_i)}}\ssf{\sigma_{1}(T_{j_i}(a_i)),\sigma_{2}(T_{j_i}(a_i))}\bigg)\\
=\prod_{i=1}^n\bigg(\sum_{j=1}^3(-1)^{j+1}\fgen{\det{T_j(a_i)}}\ssf{\sigma_{1}(T_{j}(a_i)),\sigma_{2}(T_{j}(a_i))}\bigg)
=\prod_{i=1}^nL(a_i)\in \SSf{2n}{F}.
\end{eqnarray*}
\end{proof}

Observe that all of our \mult modules (and in particular $\mul{\SSn{F^n}}$) have the 
following property: they admit a finite filtration $0=M_0\subset M_1\subset \cdots \subset M_t=M$ such that each of the associated 
quotients $M_{r}/M_{r-1}$ is annihilated by $\aug{(F^\times)^{k_r}}$ for some $k_r\geq 1$. From this observation it easily follows that 
\begin{lem}\label{lem:easy}

\[
\SSind{F^n}=0 \iff \SSind{F^n}/(\aug{(F^\times)^r}\cdot \SSind{F^n})=0\mbox{ for all } r\geq 1.
\]
\end{lem}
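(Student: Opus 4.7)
The forward implication is trivial: if $\SSind{F^n} = 0$ then every quotient of it vanishes. For the converse, the plan is to exploit the filtration property of our \mult modules noted just before the statement, together with a short induction on the filtration length.

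Write $M := \SSind{F^n}$ and suppose that $\aug{(F^\times)^r}\cdot M = M$ for every $r \geq 1$. By the observation preceding the lemma, $M$ admits a finite filtration $0 = M_0 \subset M_1 \subset\cdots \subset M_t = M$ whose successive quotients $M_i/M_{i-1}$ are each annihilated by some $\aug{(F^\times)^{k_i}}$. I will argue by induction on $t$; the case $t = 0$ is trivial.

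For the inductive step, specialise the hypothesis to $r = k_t$. Since $\aug{(F^\times)^{k_t}}$ annihilates $M/M_{t-1}$, we have $\aug{(F^\times)^{k_t}}\cdot M \subseteq M_{t-1}$; combined with $\aug{(F^\times)^{k_t}}\cdot M = M$, this forces $M = M_{t-1}$. The submodule $M_{t-1}$ thereby inherits the hypothesis $\aug{(F^\times)^r}\cdot M_{t-1} = M_{t-1}$ for every $r$, while its filtration has length $t-1$, so the inductive hypothesis yields $M_{t-1} = 0$ and hence $M = 0$. There is no real obstacle here; the substantive point is just the clash between the assumption that $M$ equals $\aug{(F^\times)^r}\cdot M$ for every $r$ and the existence of some step in the filtration whose quotient is annihilated by one such ideal.
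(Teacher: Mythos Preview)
Your proof is correct and follows exactly the approach the paper indicates. The paper itself does not spell out a proof here; it simply states the filtration property of the relevant multiplicative modules and asserts that the lemma ``easily follows'' from it. Your induction on the filtration length is the natural way to make that assertion precise, and all the steps are valid.
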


\begin{thm} \label{thm:ind}
$\SSind{F^n}=0$ for all $n\geq 3$.
\end{thm}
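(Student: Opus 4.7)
The odd case is already Lemma \ref{lem:odd}, so the plan is to handle even $n=2m$ with $m\geq 2$ by combining Lemma \ref{lem:prod} with the commutative square featuring $\Pi_n$ and $\gamma_n$, and then invoking Lemma \ref{lem:easy}.

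First I would fix any $a\in F^\times\setminus\{1\}$ and consider the $m$-fold product
\[
\sigma := \ssb{1,a}\ssprod\cdots\ssprod\ssb{1,a}\in \SSn{F^{2m}}
\]
(with $m$ factors). Because $m\geq 2$, $\sigma$ is decomposable, so its image in $\SSind{F^{2m}}$ is zero. On the other hand, Lemma \ref{lem:prod} (applied with all $a_i=a$) says that $\sigma$ is represented by $L(a)^m\in \SSf{2m}{F}$, and since $\Pi_\bullet$ is a homomorphism of graded $\grf{F}$-algebras, $\Pi_{2m}(L(a)^m)=\lambda(a)^m\,x^{2m}$ where
\[
\lambda(a):=\fgen{a-1}-\fgen{(a-1)/a}+1\in \grf{F}.
\]
Chasing the commutative square just above Lemma \ref{lem:odd} then yields the key identity $\lambda(a)^m\cdot\ssb{1,\ldots,1}=0$ in $\SSind{F^{2m}}$.

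The main obstacle is cancelling the factor $\lambda(a)^m$. Here I would first compute $\epsilon(\lambda(a))=1-1+1=1$, so that $1-\lambda(a)^m\in \aug{F^\times}$, and then apply Lemma \ref{lem:easy} to reduce to proving $\ssb{1,\ldots,1}=0$ in each quotient $\SSind{F^{2m}}/(\aug{(F^\times)^r}\cdot \SSind{F^{2m}})$, $r\geq 1$. In such a quotient the element $\lambda(a)^m$ becomes a unit of $\grf{F}/\aug{(F^\times)^r}$, since $(1-\lambda(a)^m)^r\equiv 0$ there and hence $\sum_{k=0}^{r-1}(1-\lambda(a)^m)^k$ serves as its inverse. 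Multiplying through by this inverse kills $\ssb{1,\ldots,1}$ modulo $\aug{(F^\times)^r}$, and Lemma \ref{lem:easy} finishes the job.

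Combined with the remark that $\SSind{F^{2m}}$ is generated over $\grf{F}$ by $\ssb{1,\ldots,1}$ (a direct consequence of Theorem \ref{thm:dec}), this yields $\SSind{F^{2m}}=0$. The subtlety in the last step rests on the already-established fact that $\SSind{F^{2m}}$ is a \mult $\grf{F}$-module, which is exactly what allows the filtration hypothesis underlying Lemma \ref{lem:easy} to kick in.
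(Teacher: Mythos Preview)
Your overall strategy is the same as the paper's, but there is a genuine gap in the step where you claim $\lambda(a)^m$ becomes a unit modulo $\aug{(F^\times)^r}$. You assert that $(1-\lambda(a)^m)^r\equiv 0$ in $\grf{F}/\aug{(F^\times)^r}\cong\gr{\Z}{F^\times/(F^\times)^r}$, apparently because $1-\lambda(a)^m\in\aug{F^\times}$. This conflates the $r$-th power of the augmentation ideal $(\aug{F^\times})^r$ with the ideal $\aug{(F^\times)^r}$ generated by $\ffist{b}$ for $b$ an $r$-th power; the former is \emph{not} contained in the latter. Concretely, a computation gives $1-\lambda(a)=\fgen{a-1}\ffist{a^{-1}}$, so $(1-\lambda(a)^m)^r$ is a multiple of $\ffist{a^{-1}}^r$, and already for $r=2$ one has $(\fgen{a}-1)^2=-2(\fgen{a}-1)$ in $\gr{\Z}{\Z/2}$, which is nonzero. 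So for a fixed $a$ there is no reason for $\lambda(a)^m$ to act invertibly on the quotient.

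The paper repairs exactly this point by allowing $a$ to depend on $r$: one first passes to the quotient by $\aug{(F^\times)^r}$ and \emph{then} chooses $a\in(F^\times)^r\setminus\{1\}$ (possible since $F$ is infinite). In $\gr{\Z}{F^\times/(F^\times)^r}$ one then has $(a-1)/a\equiv a-1$, whence $\lambda(a)=\fgen{a-1}-\fgen{(a-1)/a}+1\equiv 1$ on the nose, so $\Pi_{2m}(L(a)^m)=x^{2m}$ and the image of the decomposable element $\sigma$ in the quotient is exactly $\ssb{1,\ldots,1}$. Your argument becomes correct with this single adjustment: do not fix $a$ once and for all, but pick $a$ to be an $r$-th power for each $r$.
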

\begin{proof} 
The case $n$ odd has already been dealt with in Lemma \ref{lem:odd}

For the even case, by Lemma \ref{lem:easy} it will be enough to prove  that for all $r\geq 1$
\[
\gr{\Z}{F^\times/(F^\times)^r}\otimes_{\grf{F}}\SSind{F^n}=0.
\]

Fix $r\geq 1$. If $a\in (F^\times)^r\setminus\{ 1\}$, then 
\[
\Pi_2(L(a))=\left(\fgen{a-1}-\fgen{1-\frac{1}{a}}+\fgen{1}\right)x^2 = \fgen{1}x^2\in \gr{\Z}{F^\times/(F^\times)^r}x^2
\]
since 
\[
1-\frac{1}{a}=\frac{a-1}{a}\equiv a-1\pmod{(F^\times)^r}.
\]

Now let $n>1$ and choose $a_1,\ldots,a_n\in (F^\times)^r\setminus\{ 1\}$. Let $\sigma=\ssb{1,a_1}\ssprod\cdots\ssprod\ssb{1,a_n}
\in \SSn{F^{2n}}$, so that $\sigma\mapsto 0$ in $\SSind{F^{2n}}$. By Lemma \ref{lem:prod}, $\sigma$ is represented by 
$\tilde{\sigma}=\prod_{i=1}^nL(a_i)$ in $\SSf{2n}{F}$ and thus
\[
\Pi_{2n}(\tilde{\sigma})=\prod_{i=1}^n\big(\Pi_2(L(a_i))\big)=\fgen{1}\in \gr{\Z}{F^\times/(F^\times)^r}x^{2n}
\]
so that the image of $\sigma$ in $\gr{\Z}{F^\times/(F^\times)^r}\otimes_{\grf{F}}\SSind{F^{2n}}$ is $1\otimes\ssb{1,\ldots, 1}$. 
This proves the 
theorem.
\end{proof}

\begin{cor}\label{cor:f1n}
For all $n\geq 2$, the map $\T{n}$ induces an isomorphism $\fil{1}{n}\cong\mwk{n}{F}$.
\end{cor}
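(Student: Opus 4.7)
The proof proceeds by induction on $n$, with base cases $n = 2$ (the lemma immediately preceding the corollary) and $n = 3$ (Lemma \ref{lem:f13}) already established. For the inductive step with $n \geq 4$, assume $\T{k} : \fil{1}{k} \cong \mwk{k}{F}$ for every $2 \leq k < n$. Since $\mm{n}$ of Theorem \ref{thm:mm} is a section of $\T{n}$, one has a direct sum decomposition $\fil{1}{n} = \mm{n}(\mwk{n}{F}) \oplus K_n$ with $K_n = \ker(\T{n}) \cap \fil{1}{n}$, so it suffices to prove $\mm{n}$ is surjective, i.e.\ that $K_n = 0$.

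To show this, take any $\sigma \in \fil{1}{n}$. By Theorem \ref{thm:ind}, $\sigma \in \SSd{F^n}$, so we may write $\sigma = \sum_i x_i \ssprod y_i$ with $x_i \in \mul{\SSn{F^{p_i}}}$, $y_i \in \mul{\SSn{F^{q_i}}}$, $p_i + q_i = n$, and $p_i, q_i \geq 1$. Corollary \ref{cor:E} combined with the inductive hypothesis then expresses each $x_i$ and $y_i$ as a $\grf{F}$-linear combination of terms of the form $E^{\ssprod j} \ssprod \ksb{c_1, \ldots, c_{k - 2j}}$, setting $\ksb{c} := \ssb{c}$ for a degree-one factor. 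The crucial algebraic identity to verify is
\[
\ksb{a_1, \ldots, a_p} \ssprod \ksb{b_1, \ldots, b_q} \equiv \ksb{a_1, \ldots, a_p, b_1, \ldots, b_q} \pmod{E \ssprod \mul{\SSn{F^{n-2}}}}.
\]
When the parities are compatible this is immediate from the definition of $\ksb{\cdots}$ in Theorem \ref{thm:mm}. In the remaining cases, one migrates a stray $\ssb{\cdot}$ factor using the identity $\ssb{a} \ssprod \ksp{b, c} = \ksp{a, b} \ssprod \ssb{c}$ of Lemma \ref{lem:identities}(1), which persists in higher degrees via the $\grf{F}$-bilinearity of $\ssprod$, and rewrites an adjacent product $\ssb{a} \ssprod \ssb{b}$ as $\ksp{a, b} + \ffist{a} \ffist{b} E$ by Lemma \ref{lem:ab}; the $E$-terms fall into the complementary summand and are absorbed by the congruence.

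Combining this multiplicativity with the observation that $E$ commutes with every $\ksb{\cdots}$ (a consequence of the commutation $\ssb{a} \ssprod E = E \ssprod \ssb{a}$ from just before Lemma \ref{lem:identities}, unfolded across $\ksp{b, c} = \ssb{b} \ssprod \ssb{c} - \ffist{b} \ffist{c} E$), one gathers all $E$-factors on one side to obtain $x_i \ssprod y_i = \mm{n}(t_i) + E \ssprod z_i$ for appropriate $t_i \in \mwk{n}{F}$ and $z_i \in \mul{\SSn{F^{n-2}}}$. Projecting the equation $\sigma = \sum_i x_i \ssprod y_i$ onto the $\fil{1}{n}$ direct summand of Corollary \ref{cor:E} annihilates every $E$-multiple, leaving $\sigma = \mm{n}\bigl( \sum_i t_i \bigr) \in \mm{n}(\mwk{n}{F})$, as required. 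The principal obstacle is the parity case analysis underlying the multiplicative congruence and the special handling of degree-one factors, which requires the combined use of Lemmas \ref{lem:identities}(1) and \ref{lem:ab} along with careful accounting of the $E$-terms contributed at each step.
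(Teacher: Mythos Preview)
Your proof is correct and follows essentially the same strategy as the paper's: use Theorem~\ref{thm:ind} together with induction and Corollary~\ref{cor:E} to show that $\fil{1}{n}$ is generated by the elements $\ksb{a_1,\ldots,a_n}$, hence that $\mm{n}$ is surjective. The paper streamlines your parity case-analysis by first noting that decomposability plus the degree-$2$ description imply $\mul{\SSn{F^\bullet}}$ is generated as a $\grf{F}$-algebra by the $\ssb{a}$ and $E$; centrality of $E$ and the congruence $\ssb{a_1}\ssprod\cdots\ssprod\ssb{a_n}\equiv\ksb{a_1,\ldots,a_n}\pmod{\langle E\rangle}$ then follow immediately, without tracking $\ksb{\cdots}$ through the induction.
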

\begin{proof}
Since, by the computations above, $\mul{\SSn{F^2}}=\SSn{F}^{\ssprod 2}+\grf{F}E$  
it follows, using Theorem \ref{thm:ind} and induction on $n$, that $\mul{\SSn{F^\bullet}}$
 is generated as a $\grf{F}$-algebra by $\{\ssb{a}\in\SSn{F}| 1\not= a\in F^\times\}$ 
 and $E$. 
 
 Thus $E$ is central in the algebra $\mul{\SSn{F^\bullet}}$ and  for all $n\geq 2$, 
 \[
 \frac{\mul{\SSn{F^n}}}{E\ssprod \mul{\SSn{F^{n-2}}}}
 \] 
 is generated by the elements of the form $\ssb{a_1}\ssprod\cdots\ssprod\ssb{a_n}$, 
 and hence also by the elements $\ksb{a_1,\ldots,a_n}$ since 
 $\ksp{a,b}\equiv \ssb{a}\ssprod\ssb{b}\pmod{\il{E}}$ for all $a,b\in F^\times$.
 
 Since 
 \[
\fil{1}{n}\cong \frac{\mul{\SSn{F^n}}}{E\ssprod \mul{\SSn{F^{n-2}}}}
 \]
 by Corollary \ref{cor:E}, it follows that $\fil{1}{n}$ is generated by the elements 
 $\ksb{a_1,\ldots,a_n}$, and thus that the homomorphisms $\mm{n}$ of Theorem 
 \ref{thm:mm} are surjective.
\end{proof}

\begin{cor} For all $n\geq 3$,
\[
\mul{\SSn{F^n}}\cong
\left\{
\begin{array}{ll}
 \mwk{n}{F}\oplus \mwk{n-2}{F}\oplus \cdots \oplus \mwk{2}{F}\oplus \grf{F}& n \mbox{ even}\\[5pt]
\mwk{n}{F}\oplus \mwk{n-2}{F}\oplus \cdots \oplus \mwk{3}{F}\oplus \aug{F^\times}& n \mbox{ odd}
\end{array}
\right.
\]
as a $\grf{F}$-module.
\end{cor}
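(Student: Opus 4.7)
The plan is a direct assembly using two results already established. The immediately preceding corollary provides a $\grf{F}$-module decomposition
\[
\mul{\SSn{F^n}} \cong \fil{1}{n} \oplus \fil{1}{n-2} \oplus \cdots \oplus \fil{1}{k_0} \oplus T,
\]
where $k_0 = 2$ and $T = \grf{F}$ if $n$ is even, while $k_0 = 3$ and $T = \aug{F^\times}$ if $n$ is odd. On the other hand, Corollary \ref{cor:f1n} supplies, for each $k \geq 2$, a $\grf{F}$-module isomorphism $\T{k}\colon \fil{1}{k} \xrightarrow{\cong} \mwk{k}{F}$. Applying this termwise to each summand $\fil{1}{n}, \fil{1}{n-2}, \ldots, \fil{1}{k_0}$ yields exactly the asserted decomposition.

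The only verification required is that each identification $\fil{1}{k} \cong \mwk{k}{F}$ is genuinely an isomorphism of $\grf{F}$-modules, not merely of abelian groups. This is immediate, since both $\T{k}$ (constructed in Section \ref{sec:prelim}) and the Matsumoto--Moore-type section $\mm{k}$ from Theorem \ref{thm:mm} are defined as $\grf{F}$-module homomorphisms, and the $\grf{F}$-action on $\mwk{k}{F}$ through $\ffist{a}\cdot x = \eta[a]\cdot x$ is the same structure that appears on both sides of the isomorphism.

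There is no substantive obstacle at this step: the real work has been done earlier. The deep inputs are Theorem \ref{thm:main} (which forces the spectral sequence $\mul{\EEEp{F^n}}$ to degenerate at $E^2$ and yields the filtration-layer decomposition of the preceding corollary) together with the decomposability Theorem \ref{thm:ind} and the Matsumoto--Moore relations of Lemma \ref{lem:identities} (which upgrade the surjection $\T{k}$ to an isomorphism in Corollary \ref{cor:f1n}). Once those are in hand, the statement at issue is pure bookkeeping.
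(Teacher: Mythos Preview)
Your proof is correct and matches the paper's (implicit) argument: the corollary is stated in the paper without proof, being an immediate combination of the earlier decomposition $\mul{\SSn{F^n}}\cong\bigoplus_{k}\fil{1}{k}\oplus T$ with Corollary~\ref{cor:f1n}. Your remark that $\T{k}$ is a $\grf{F}$-module map is on point and is exactly what the paper establishes when introducing $\T{n}$.
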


\begin{cor}
\label{cor:even}
 For all even $n\geq 2$ the cokernel of the map 
\[
\hoz{n}{\specl{n-1}{F}}\to\hoz{n}{\specl{n}{F}}
\]
is isomorphic to $\mwk{n}{F}$.
\end{cor}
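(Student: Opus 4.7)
The plan is to exploit the filtration on $\mul{\SSp{F^n}}$ coming from the spectral sequence $\mul{\EEEp{F^n}}$ of Section~\ref{sec:ss}, together with the isomorphism $\T{n}\colon\fil{1}{n}\cong\mwk{n}{F}$ of Corollary~\ref{cor:f1n} and its inverse $\mm{n}$ from Theorem~\ref{thm:mm}. Recall from the remark preceding that corollary that the edge homomorphism $\ee{n}$ identifies $\SH{n}{F}=\coker{\hoz{n}{\specl{n-1}{F}}\to\hoz{n}{\specl{n}{F}}}$ with $E^\infty_{n,0}=\fil{0}{n}$. The filtration inclusion $\fil{0}{n}\subset\fil{1}{n}$, composed with $\T{n}$, then gives a canonical injection $\SH{n}{F}\hookrightarrow\mwk{n}{F}$, and the whole task reduces to showing that this map is onto, equivalently that $\fil{0}{n}=\fil{1}{n}$, whenever $n$ is even.

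To achieve this, I will argue that the image of $\mm{n}$ already lies in the smaller submodule $\fil{0}{n}$. By the definition in Theorem~\ref{thm:mm},
\[
\mm{n}([a_1][a_2]\cdots[a_n])=\ksb{a_1,\ldots,a_n}=\ksp{a_1,a_2}\ssprod\ksp{a_3,a_4}\ssprod\cdots\ssprod\ksp{a_{n-1},a_n}
\]
for $n$ even, so $\mm{n}$ is determined on generators by a product of $n/2$ symbols, each of which lies in $\fil{0}{2}$ by construction (note $\fil{0}{2}=\fil{1}{2}$). Theorem~\ref{thm:star}(1) guarantees that $\ssprod$ respects the filtration, and iterating the inclusion $\fil{0}{p}\ssprod\fil{0}{q}\subset\fil{0}{p+q}$ shows $\ksb{a_1,\ldots,a_n}\in\fil{0}{n}$. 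Since the elements $\ksb{a_1,\ldots,a_n}$ generate $\image{\mm{n}}=\fil{1}{n}$ as an abelian group, this forces $\fil{1}{n}\subset\fil{0}{n}$, hence equality.

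Combining the two steps yields $\SH{n}{F}=\fil{0}{n}=\fil{1}{n}\cong\mwk{n}{F}$ via $\T{n}$, which is exactly the assertion of the corollary. There is no serious obstacle remaining once the machinery is in place: the multiplicativity of the filtration (Theorem~\ref{thm:star}(1)), the identification $\fil{1}{n}\cong\mwk{n}{F}$ (Corollary~\ref{cor:f1n}, whose proof rests on the decomposability Theorem~\ref{thm:ind}), and the explicit description of $\mm{n}$ (Theorem~\ref{thm:mm}) do all the heavy lifting. The only new observation is that for even $n$ the symbol $\ksb{a_1,\ldots,a_n}$ decomposes completely into factors from $\fil{0}{2}$; for odd $n$ the leading factor $\ssb{a_1}$ instead lies in $\SSn{F}\cong\aug{F^\times}$ rather than in $\fil{0}{1}=0$, which is precisely why the cokernel in the odd case (Corollary~\ref{cor:last}) shrinks to the proper subgroup $2\milk{n}{F}$.
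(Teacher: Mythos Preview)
Your proof is correct and follows essentially the same route as the paper's: both arguments show $\fil{0}{n}=\fil{1}{n}$ for even $n$ by observing that each generator $\ksb{a_1,\ldots,a_n}$ of $\fil{1}{n}$ is a $\ssprod$-product of elements $\ksp{a_{2i-1},a_{2i}}\in\fil{0}{2}$, and hence lies in $\fil{0}{n}$. The only cosmetic difference is that you invoke the filtration-multiplicativity statement of Theorem~\ref{thm:star}(1) directly, whereas the paper lifts each $\ksp{a_{2i-1},a_{2i}}$ to $\hsp{a_{2i-1},a_{2i}}\in\hoz{2}{\specl{2}{F}}$ via $\ee{2}^{-1}$ and then uses the multiplicativity of $\ee{\bullet}$ (Lemma~\ref{lem:ee}(2)) to conclude $\ksb{a_1,\ldots,a_n}\in\image{\ee{n}}=\fil{0}{n}$; these are two packagings of the same fact.
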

\begin{proof}
Recall that $\ee{2}$ induces an isomorphism $\hoz{2}{\specl{2}{F}}\cong\fil{1}{2}=\fil{0}{2}$. 
Let $\hsp{a,b}$ denote the generator $\ee{2}^{-1}(\ksp{a,b})$ of 
$\hoz{2}{\specl{2}{F}}$. Then for even $n$ 
\begin{eqnarray*}
\ksb{a_1,\ldots,a_n}&=&\ksp{a_1,a_2}\ssprod\cdots\ssprod\ksp{a_{n-1},a_n}\\
                                &=&\ee{2}(\hsp{a_1,a_2})\ssprod\cdots\ssprod\ee{2}
                                (\hsp{a_{n-1},a_n})\\
                                &=&\ee{n}(\hsp{a_1,a_2}\times\cdots\times\hsp{a_{n-1},a_n})
\end{eqnarray*}
by Lemma \ref{lem:ee} (2).

Since $\fil{1}{n}$ is generated by the elements $\ksb{a_1,\ldots,a_n}$, 
it follows that $\fil{1}{n}= \ee{n}(\hoz{n}{\specl{n}{F}})=E^\infty_{n,0}=\fil{0}{n}$, 
proving the result.
\end{proof}
\begin{cor}\label{cor:odd}
For all odd $n\geq 1$ the maps 
\[
\hoz{n}{\specl{k}{F}}\to\hoz{n}{\specl{k+1}{F}}
\]
are isomorphisms for $k\geq n$.
\end{cor}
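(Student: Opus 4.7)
The plan is to reduce the statement to a single injectivity claim and then extract it from the filtration structure already computed. First, Corollary \ref{cor:stab} gives that $f_{k,n}:\hoz{n}{\specl{k}{F}}\to\hoz{n}{\specl{k+1}{F}}$ is an isomorphism for $k\geq n+1$ and surjective for $k=n$, regardless of the parity of $n$. So the only thing left to prove is that for odd $n\geq 1$ the map $f_{n,n}$ is injective.

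To get at the kernel of $f_{n,n}$, I would apply the five-term exact sequence of Corollary \ref{cor:exact} with $n$ replaced by $n+1$:
\[
\hoz{n+1}{\specl{n}{F}}\to\hoz{n+1}{\specl{n+1}{F}}\to\fil{1}{n+1}\to \hoz{n}{\specl{n}{F}}\xrightarrow{f_{n,n}}\hoz{n}{\specl{n+1}{F}}\to 0.
\]
By exactness, $\ker(f_{n,n})$ is the cokernel of $\hoz{n+1}{\specl{n+1}{F}}\to\fil{1}{n+1}$. This map factors as the surjection $\hoz{n+1}{\specl{n+1}{F}}\twoheadrightarrow \SH{n+1}{F}$ induced by $\ee{n+1}$, which identifies $\SH{n+1}{F}$ with $\fil{0}{n+1}= E^{\infty}_{n+1,0}$, followed by the inclusion $\fil{0}{n+1}\hookrightarrow\fil{1}{n+1}$. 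Hence
\[
\ker(f_{n,n})\cong \fil{1}{n+1}/\fil{0}{n+1}=E^{\infty}_{n,1}.
\]

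Now comes the key input: since $n$ is odd, $n+1$ is even, and I would invoke (the proof of) Corollary \ref{cor:even} to show that $\fil{1}{n+1}=\fil{0}{n+1}$. Explicitly, Corollary \ref{cor:f1n} tells us $\fil{1}{n+1}$ is generated by the elements $\ksb{a_1,\ldots,a_{n+1}}$; for even $n+1$ these factor as $\ksp{a_1,a_2}\ssprod\cdots\ssprod\ksp{a_n,a_{n+1}}$, and applying Lemma \ref{lem:ee}(2) shows
\[
\ksb{a_1,\ldots,a_{n+1}}=\ee{n+1}\bigl(\hsp{a_1,a_2}\times\cdots\times\hsp{a_n,a_{n+1}}\bigr),
\]
so every generator lies in the image $\fil{0}{n+1}$ of $\ee{n+1}$. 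Thus $\fil{1}{n+1}/\fil{0}{n+1}=0$, giving $\ker(f_{n,n})=0$ and completing the proof together with the surjectivity from Corollary \ref{cor:stab}.

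There is no real obstacle here: the heavy lifting has already been done in establishing Corollaries \ref{cor:stab}, \ref{cor:exact}, \ref{cor:f1n} and \ref{cor:even}. The one point to be vigilant about is the compatibility of the edge homomorphism identifications---specifically that the composite $\hoz{n+1}{\specl{n+1}{F}}\to\fil{1}{n+1}$ appearing in the exact sequence is precisely $\ee{n+1}$ followed by $\fil{0}{n+1}\hookrightarrow\fil{1}{n+1}$, which is a direct consequence of the construction of the spectral sequence $\mul{\EEEp{F^{n+1}}}$.
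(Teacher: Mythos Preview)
Your proof is correct and follows essentially the same route as the paper: reduce via Corollary~\ref{cor:stab} to the injectivity of $f_{n,n}$, identify $\ker(f_{n,n})$ with $E^\infty_{n,1}=\fil{1}{n+1}/\fil{0}{n+1}$ in the spectral sequence $\mul{\EEEp{F^{n+1}}}$, and invoke the proof of Corollary~\ref{cor:even} (that $\fil{1}{n+1}=\fil{0}{n+1}$ for $n+1$ even) to conclude. The only cosmetic difference is that you phrase the identification of $\ker(f_{n,n})$ via the exact sequence of Corollary~\ref{cor:exact}, whereas the paper reads it off directly from the spectral sequence; these are the same thing.
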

\begin{proof}
In view of Corollary \ref{cor:stab}, the only point at issue is the injectivity of 
\[
\hoz{n}{\specl{n}{F}}\to\hoz{n}{\specl{n+1}{F}}.
\]
But the proof of Corollary \ref{cor:even} shows that the term 
\[
\fil{1}{n+1}/E^\infty_{n+1,0}\cong E^\infty_{n,1}=\ker{\hoz{n}{\specl{n}{F}}\to\hoz{n}{\specl{n+1}{F}}}
\]
in the spectral sequence $\mul{\EEEp{F^{n+1}}}$ is zero.
\end{proof}
\begin{cor}\label{cor:last}  If 
$n\geq 3$  is odd, then
\begin{eqnarray*}
\coker{\hoz{n}{\specl{n-1}{F}}\to\hoz{n}{\specl{n}{F}}}\cong 2\milk{n}{F}\\
\ker{\hoz{n-1}{\specl{n-1}{F}}\to\hoz{n-1}{\specl{n}{F}}}\cong I^n(F).
\end{eqnarray*}
\end{cor}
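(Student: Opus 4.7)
The starting point is the five-term exact sequence from Corollary \ref{cor:exact}. Since Corollary \ref{cor:stab} gives that $f_{n-1,n-1}$ is surjective, that sequence truncates to
\[
0\longrightarrow \coker{f_{n-1,n}}\longrightarrow \fil{1}{n}\longrightarrow \ker{f_{n-1,n-1}}\longrightarrow 0.
\]
Corollary \ref{cor:f1n} then identifies $\fil{1}{n}$ with $\mwk{n}{F}$ via $\T{n}$, so the task reduces to showing that this short exact sequence coincides with the canonical short exact sequence
\[
0\longrightarrow 2\milk{n}{F}\longrightarrow \mwk{n}{F}\longrightarrow I^n(F)\longrightarrow 0
\]
recalled in Section \ref{sec:background}.

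I proceed by induction on odd $n\geq 3$; the base case $n=3$ is Lemma \ref{lem:f13}, whose proof yields in particular $\T{3}(\fil{0}{3})=2\milk{3}{F}$. For the inductive step on odd $n\geq 5$, one containment is relatively soft. By Theorem \ref{thm:star}(1) the filtration is compatible with $\ssprod$, so $\fil{0}{3}\ssprod \fil{0}{n-3}\subseteq \fil{0}{n}$. Applying the graded algebra homomorphism $\T{\bullet}$, invoking the base case for the first factor and Corollary \ref{cor:even} for the second (which gives $\T{n-3}(\fil{0}{n-3})=\mwk{n-3}{F}$ since $n-3$ is even), and using that $h\in\mwk{0}{F}$ is central together with the identity $\mwk{3}{F}\cdot\mwk{n-3}{F}=\mwk{n}{F}$, the image $\T{n}(\fil{0}{n})$ contains $h\cdot\mwk{n}{F}=2\milk{n}{F}$. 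Consequently the map $\mwk{n}{F}\to\ker{f_{n-1,n-1}}$ kills $2\milk{n}{F}$ and induces a natural surjection $I^n(F)\twoheadrightarrow \ker{f_{n-1,n-1}}$.

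The reverse containment $\T{n}(\fil{0}{n})\subseteq 2\milk{n}{F}$, equivalent to injectivity of this surjection, is the main obstacle. One natural route is to establish a refined decomposability statement for the subalgebra $\SH{\bullet}{F}\cong \fil{0}{\bullet}$: that for odd $n\geq 5$, the abelian group $\fil{0}{n}$ is generated by the image of $\fil{0}{3}\ssprod \fil{0}{n-3}$. Granted such a statement, the inductive hypothesis forces the image under $\T{n}$ of every generator into $2\milk{n}{F}$, closing the argument and establishing both isomorphisms of the corollary at once. This refined decomposability does not follow immediately from Theorem \ref{thm:ind}, since a decomposition of an element of $\fil{0}{n}$ as a product in the ambient algebra $\mul{\SSn{F^n}}$ need not have its factors lying in the subgroups $\fil{0}{\bullet}$; extracting a product decomposition that respects the filtration is where the technical heart of the argument lies.
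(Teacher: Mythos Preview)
Your setup and the ``soft'' containment $2\milk{n}{F}\subseteq \T{n}(\fil{0}{n})$ match the paper's argument. The gap you correctly flag---the reverse containment via a ``refined decomposability'' of $\fil{0}{n}$---is real, and the paper does \emph{not} close it along those lines. Proving that every element of $\fil{0}{n}$ decomposes as a sum of products in $\fil{0}{3}\ssprod\fil{0}{n-3}$ would be substantially harder than what is actually needed.

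The paper's route for the reverse containment is much cheaper and uses Theorem~\ref{thm:star}(2) rather than Theorem~\ref{thm:star}(1). For $n$ odd, $n-1$ is even, so Corollary~\ref{cor:even} gives that $\T{n-1}\circ\ee{n-1}:\hoz{n-1}{\specl{n-1}{F}}\to\mwk{n-1}{F}$ is surjective with kernel the image of $\hoz{n-1}{\specl{n-2}{F}}$. Now compute $\epsilon_{n,1}$ on the generators $\ksb{a_1,\ldots,a_n}=\ssb{a_1}\ssprod\ee{n-1}(z)$ of $\fil{1}{n}$: by Theorem~\ref{thm:star}(2), $\epsilon_{n,1}(\ksb{a_1,\ldots,a_n})=\ffist{a_1}z$, and hence
\[
(\T{n-1}\circ\ee{n-1}\circ\epsilon_{n,1})(\ksb{a_1,\ldots,a_n})=\ffist{a_1}[a_2]\cdots[a_n]=\eta\cdot[a_1]\cdots[a_n]=(\eta\circ\T{n})(\ksb{a_1,\ldots,a_n}).
\]
Thus the square with vertical maps $\T{n}$ and $\T{n-1}\circ\ee{n-1}$ and horizontal maps $\epsilon_{n,1}$ and $\eta$ commutes. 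Passing to the quotient by $2\milk{n}{F}$ (legitimate by your soft containment), the induced surjection $\mwk{n}{F}/2\milk{n}{F}\twoheadrightarrow \fil{1}{n}/\ker{\epsilon_{n,1}}$ is followed by a map to $I^n(F)\subset\mwk{n-1}{F}$ whose composite is the isomorphism $\eta:\mwk{n}{F}/2\milk{n}{F}\to I^n(F)$. Hence the surjection is already an isomorphism, which forces $\T{n}(\ker{\epsilon_{n,1}})=2\milk{n}{F}$ and $\image{\epsilon_{n,1}}\cong I^n(F)$. No refined decomposability is needed: the external input is Morel's identification of $\mwk{n}{F}/2\milk{n}{F}$ with $I^n(F)$ via $\eta$, together with the explicit formula for $\epsilon_{n,1}$ from Theorem~\ref{thm:star}(2).
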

\begin{proof} Since we have already proved this result for $n=3$ above, we will assume that $n\geq 5$ ($n$ odd).

Let $a_1,\ldots,a_n\in F^\times$ and let $z\in\hoz{n-1}{\specl{n-1}{F}}$ satisfy $\ee{n-1}(z)=\ksb{a_2,\ldots,a_n}\in \fil{0}{n-1}
\cong \mwk{n-1}{F}$. Thus $\ksb{a_1,\ldots,a_n}=\ssb{a_1}\ssprod\ee{n-1}(z)$ and hence $\epsilon_{n,1}(\ksb{a_1,\ldots,a_n})=
\ffist{a_1}z$ by Theorem \ref{thm:star} (2). It follows that the diagram
\[
\xymatrix{
\fil{1}{n}\ar[r]^-{\epsilon_{n,1}}\ar[d]^-{\T{n}}_-{\cong}&\hoz{n-1}{\specl{n-1}{F}}\ar[d]^-{\T{n-1}\circ\ee{n-1}}\\
\mwk{n}{F}\ar[r]^-{\eta}&\mwk{n-1}{F}\\
}
\] 
commutes.

Now $\ker{\epsilon_{n,1}}=\image{\ee{n}:\hoz{n}{\specl{n}{F}}\to\fil{1}{n}}$. Since $\image{\ee{3})=\T{3}^{-1}(2\milk{3}{F}}$ and 
$\image{\ee{n-3}}=\fil{1}{n-3}=\T{n-3}^{-1}(\mwk{n-3}{F})$ we have 
\[
\T{n}(\image{\ee{n}})=\image{\T{n}\circ\ee{n}}\supset 2\milk{3}{F}\cdot\mwk{n-3}{F}=2\milk{n}{F}\subset\mwk{n}{F}
\]
(using the fact that $\T{\bullet}$ and $\ee{\bullet}$ are algebra homomorphisms).

Thus we get a commutative diagram
\[
\xymatrix{
\frac{\mwk{n}{F}}{2\milk{n}{F}}\ar[r]^-{\T{n}^{-1}}\ar[d]^-{\eta}_{\cong}
&\frac{\fil{1}{n}}{\ker{\epsilon_{n,1}}}\ar[dl]^-{\T{n-1}\circ\ee{n-1}\circ\epsilon_{n,1}}\\
I^n(F)\\
}
\]
from which it follows that the map $\T{n}^{-1}$ in this diagram is an isomorphism, and hence $\image{\ee{n}}=
\ker{\epsilon_{n,1}}\cong 2\milk{n}{F}$ and $\image{\epsilon_{n,1}}\cong I^n(F)$.
\end{proof}
\section{Acknowledgements}
The work in this article was partially funded by  the Science Foundation Ireland Research Frontiers Programme grant 05/RFP/MAT0022.
\bibliographystyle{plain}
\bibliography{BargeMorel}
\end{document}